\xpatchcmd{\@thm}{\fontseries\mddefault\upshape}{}{}{} 
\providecommand{\U}[1]{\protect\rule{.1in}{.1in}}
\newtheorem{theorem}{Theorem}
\newtheorem{claim}[theorem]{Claim}
\newtheorem{corollary}[theorem]{Corollary}
\newtheorem{definition}[theorem]{Definition}
\newtheorem{lemma}[theorem]{Lemma}
\newtheorem{notation}[theorem]{Notation}
\newtheorem*{question*}{Question}
\newtheorem{proposition}[theorem]{Proposition}
\newcommand{\X}{\mathfrak X}
\newcommand{\Xu}{\mathfrak X u}
\begin{document}
	\author[A. Mallick]{Arka Mallick}
	\address{Department of Mathematics\\ Indian Institute of Science\\ Bangalore, India.}
	\email{arkamallick@iisc.ac.in}
	\author[S. Sil]{Swarnendu Sil} 
	\address{Department of Mathematics\\ Indian Institute of Science\\ Bangalore, India.}
	\email{swarnendusil@iisc.ac.in}
	\title[Excess decay in Heisenberg groups]{Excess decay for quasilinear equations in the Heisenberg group and consequences}
	\date{\today}
\thanks{S. Sil's research is partially supported by SERB MATRICS MTR/2023/000885.}
\keywords{Heisenberg group, $p$-Laplacian, quasilinear subelliptic equations, excess decay estimate, nonlinear Stein theorem, Lorentz space, continuity of horizontal gradient} 
\subjclass[2020]{Primary 35B65, 35H20, 35J62; Secondary 35H10, 35J70, 35J75.} 
	\begin{abstract}
		We study regularity results for the solutions of quasilinear subelliptic $p$-Laplace type equation in Heisenberg groups. We prove somewhat surprising excess decay estimates for the constant coefficient homogeneous equation. Excess decay estimates, while well known in the Euclidean case, due to the celebrated works of Uraltseva and Uhlenbeck, was not known in the setting of Heisenberg groups until now and this lack of excess decay estimate is often attributed to the noncommutativity of the horizontal vector fields. Our results show that, in spite of the this noncommutative feature, excess decay estimates analogous to the Euclidean case hold.\smallskip  
		
		To illustrate the potency of our excess decay estimates, we prove two results. First is a H\"{o}lder continuity result that extends the presently known results to the full range $1< p < \infty.$  The second is a sharp borderline continuity result for the horizontal gradient of the solution. More precisely, we show that if  $u \in HW_{\text{loc}}^{1,p}$ satisfies $$\operatorname{div}_{\mathbb{H}} \left( a(x) \lvert \Xu \rvert^{p-2} \Xu \right) \in L^{(Q,1)}_{\text{loc}}$$ in a 
		domain of $\mathbb{H}_{n},$ where $\mathbb{H}_{n}$ is the Heisenberg group with homogeneous dimension $Q=2n+2,$ for $1 < p < \infty, $ with uniformly positive, bounded, Dini continuous scalar function $a$, 
		then $\Xu$ is continuous. This generalizes the classical result by Stein and the work of Kuusi-Mingione for linear and quasilinear equations, respectively,  in the Euclidean case and Folland-Stein for the linear case in the Heisenberg group setting. This result is new even when either $a$ is constant or when the equation is homogeneous. 
	\end{abstract}
		\maketitle 
	\section{Introduction and main results}
	Let $n \geq 1 $  be an integer and let $\mathbb{H}_{n}$ denote the $n$-th Heisenberg group over the reals. Let $Q:=2n+2$ stands for the homogeneous dimension of $\mathbb{H}_{n}.$ Heisenberg groups $\mathbb{H}_{n}$ are important objects in mathematics and mathematical physics alike. From the point of view of mathematics, they appear in diverse areas such as representation theory, harmonic analysis, complex geometry, subRiemannian geometry and metric geometry. They are the simplest nonAbelian examples of  stratified nilpotent Lie groups, which are homogeneous groups and also metric groups. They are also the simplest examples of Carnot groups and Carnot-Carath\`eodory spaces, apart from Euclidean spaces.  They are \textrm{CR} manifolds, which also serve as models for boundaries of domains in $\mathbb{C}^{n+1}$. The study of subLaplacians on Heisenberg groups are also interesting from the point of view PDEs, as these are differential operators defined by the so-called H\"{o}rmander vector fields and are hypoelliptic, but not elliptic. Nonetheless, the study of these subLaplacians and the related singular and fractional integrals is well developed thanks to several seminal contributions, e.g.  \cite{Kohn_Nirenberg_hypoelliptic}, \cite{Hormander_hypoelliptic}, \cite{Koranyi_Vagi_singularintegralsonhomogeneousspaces},  \cite{Folland_Kohn_delbarNeumann}, \cite{Folland_Stein_delbarNeumann}, \cite{Folland_subelliptic}, \cite{Folland-Stein_book}, \cite{Roncal_Thangavelu} and  many others ( see the book \cite{Bonfigliolietal_subLaplacian} and the references therein ). Subelliptic $p$-Laplace type equations has also been studied intensively for quite some time, see the book \cite{Ricciotti_pLaplaceHeisenberg}, also \cite{Capogna_Danielli_book} and the recent survey \cite{Capogna_Citti_Zhong}. Our present contribution seeks to add significant momentum to the understanding of $p$-subLaplacians on Heisenberg groups.

	The main concern of the present article is to remove perhaps the single most significant roadblock to the progress of regularity theory for $p$-Laplace type equations in Heisenberg groups - namely the absence of an excess decay estimate. 
	
	\textbf{Excess decay in the Euclidean case :} If $u \in W^{1,p}_{loc}\left(\Omega\right)$ is a local weak solution to the homogeneous $p$-Laplace equation
	\begin{align*}
		\operatorname{div} \left( \left\lvert \nabla u \right\rvert^{p-2}\nabla u \right)&= 0 &&\text{ in } \Omega, 
	\end{align*} where $\Omega \subset \mathbb{R}^{n}$ is a domain in the $n$-dimensional Euclidean space, we have the celebrated excess decay estimate ( see \cite{Giaquinta_Modica_Uhlenbeck_result}, \cite{hamburgerregularity}, also \cite{Uralceva}, \cite{uhlenbecknonlinearelliptic} and \cite{Ladyzhenskaya_Uraltseva} ) 
	\begin{align}\label{Uhlenbeck Excess decay}
		\fint_{B_{\rho}} \left\lvert V \left( \nabla u\right) - \left( V \left( \nabla u\right)\right)_{\rho}\right\rvert^{2} \leq C \left( \frac{\rho}{R}\right)^{2\beta} 	\fint_{B_{R}} \left\lvert V \left( \nabla u\right) - \left( V \left( \nabla u\right)\right)_{R}\right\rvert^{2}, 
	\end{align}
	for any balls $B_{\rho} \subset B_{R} \subset \Omega,$ for some $\beta \in (0,1),$ where $V\left(\xi\right):= \left\lvert \xi \right\rvert^{\frac{p-2}{2}}\xi$ and $C\equiv C(n,p)>0$ is a constant. Later, DiBenedetto-Manfredi \cite{DiBenedettoManfredi} obtained a different type of excess decay estimate, in terms of $\nabla u$ itself. They showed the excess decay estimate  
	\begin{align}\label{DiBenedettoManfredi Excess deay}
		\fint_{B_{\rho}} \left\lvert \nabla u - \left( \nabla u \right)_{\rho}\right\rvert^{2} \leq C \left( \frac{\rho}{R}\right)^{2\alpha} 	\fint_{B_{R}} \left\lvert \nabla u - \left(  \nabla u \right)_{R}\right\rvert^{2}, 
	\end{align} 
	again for any balls $B_{\rho} \subset B_{R} \subset \Omega,$ for some $\alpha \in (0,1)$ and for some constant $C\equiv C(n,p)>0$. The H\"{o}lder continuity of $\nabla u$ follows immediately from either of these excess decay estimates. The key to proving \eqref{Uhlenbeck Excess decay} is a type of reverse H\"{o}lder inequality ( see \cite{Giaquinta_Modica_Uhlenbeck_result}, \cite{hamburgerregularity} ), which follows from the relevant Caccioppoli inequality in these case. However, the proof of \eqref{DiBenedettoManfredi Excess deay} goes via an iteration argument, which is adapted in \cite{DiBenedettoManfredi} from its parabolic counterparts in  \cite{DiBenedettoFriedman1}, \cite{DiBenedettoFriedman2}. A Caccioppoli type inequality is also instrumental in this case as well.  These excess decay estimates have also proved pivotal for the  subsequent development of the regularity theory for quasilinear equations and systems with coefficients and non-zero right hand sides ( see e.g. \cite{DiBenedettoManfredi}, \cite{DuzaarMingione_gradient_linearnonlinearpotential} \cite{DuzaarMingione_gradientcontinuityestimates}, \cite{DuzaarMingione_gradientcontinuity_nonlinearpotential}, \cite{KuusiMingione_Steinequationslinearpotentials}, \cite{KuusiMingione_nonlinearStein} etc ).
	  	
	\textbf{Setting of Heisenberg groups :} Coming to the setting of Heisenberg groups, if one considers the analogous equation 
	\begin{align*}
		\operatorname{div}_{\mathbb{H}} \left( \left\lvert \X u\right\rvert^{p-2}\X u\right) &= 0 &&\text{ in } \Omega,
	\end{align*}
	where $\operatorname{div}_{\mathbb{H}}$ denotes the horizontal divergence, $\X u$ denotes the horizontal gradient and now $\Omega \subset \mathbb{H}_{n}$ is a domain in the Heisenberg group $\mathbb{H}_{n},$ the main difference from the Euclidean case is that although $\mathbb{H}_{n} \simeq \mathbb{R}^{2n+1}$ as sets, only the horizontal derivatives $\X_{1}, \ldots, \X_{2n}$ appears in the equation, not the full Euclidean gradient $ \left( X_{1}u, \ldots, X_{2n}u, Tu \right).$ Perhaps the even more significant difference is that unlike the Euclidean derivatives, the horizontal derivatives do not commute and the vertical derivative $Tu$ is the only nontrivial commutator 
	\begin{align}\label{expression of Tu intro}
		Tu = X_{i}X_{n+i}u - X_{n+i}X_{i}u \qquad \text{ for all } 1\leq i \leq n. 
	\end{align}  
	Practically all the difficulties of the Heisenberg group setting, compared to the Euclidean case, can be traced back to the difficulty of estimating the vertical derivative $Tu,$ which appears as soon as one tries to write down the equations satisfied by the horizontal first derivatives $X_{i}u.$ These equations (see \eqref{equation:horizontal} and \eqref{equation:horizontal2} ) are still subelliptic, but they are inhomogeneous, whereas one obtains only homogeneous equations in the Euclidean case. Reverse H\"{o}lder type inequalities, as mentioned above, follows from Caccippoli inequalities for homogeneous elliptic equations, but for inhomogeneous equations, they are considerably harder to obtain and do not, in general, hold. In this case, this task is even more problematic due to the fact that the right hand side contains terms involving the vertical derivative $Tu$ and as the expression for $Tu$ in \eqref{expression of Tu intro} shows, morally, $Tu$ is more like a second order derivative than a first order derivative. 
	
	Due to these difficulties, the progress in the Heisenberg group setting was much slower. First result was due to Capogna \cite{Capogna_regularity}, who proved a regularity result which was enough to conclude $C^{\infty}$ regularity in the nondegenerate case. After this, next significant breakthrough was achieved in Domokos \cite{Domokos_differentiabilityofTu}, who proved a higher integrability estimate for $Tu$ when $1<p<4$. For a long time, all available regularity results were restricted to a small range of $p$, since these relied crucially on the result by Domokos (  \cite{Marchi}, \cite{Manfredi_Mingione_Heisenberg}, \cite{Mingione_ZatorskaGoldstein_Zhong} etc ).

	\textbf{Zhong's mixed Caccioppoli and oscillation control by energy :} The first result to break this barrier is due to Zhong \cite{zhong2018regularityvariationalproblemsheisenberg}. In this remarkable work, Zhong proved a number of mixed Caccioppoli inequalities and used them to prove the the H\"{o}lder continuity of $\X u$ for $p \geq 2.$ Later, this program was carried to completion in Mukherjee-Zhong \cite{Mukherjee_Zhong}, where  H\"{o}lder continuity of $\X u$  was proved also for the range $1 < p < 2$, using the ideas of Tolksdorf \cite{Tolksdorf_regularity}.   
	
	However, both these results established H\"{o}lder continuity by deriving oscillation estimates by `energy'. More precisely, the estimate is 
	\begin{align}\label{oscillation control by energy Zhong}
		\left\lvert \Xu \left(x\right) - \Xu \left(y\right)\right\rvert \leq
		c\Big(\frac{\rho}{R}\Big)^{\alpha_{1}}\Big(\fint_{B_{R}}\left\lvert \X u \right\rvert^{p}\, dx\Big)^{\frac{1}{p}}, \quad \text{ for all } x, y \in B_{\rho},
	\end{align}
	for any metric balls $B_{\rho} \subset B_{R/2} \subset B_R \subset \subset \Omega.$ This is considerably weaker than estimates of the form \eqref{Uhlenbeck Excess decay} or \eqref{DiBenedettoManfredi Excess deay}, as the energy in $B_{R}$ can be much larger than the average osicillation in $B_{R}.$ For proving H\"{o}lder continuity of $\Xu,$ \eqref{oscillation control by energy Zhong} is sufficient, but this is not quite strong enough so that one can derive other results using the same perturbation techniques as in the Euclidean case. Indeed, the only perturbation type results that is known so far is the following: If $2-1/Q < p < \infty$ and  $u$ is a weak solution to the equation   
	\begin{align*}
		\operatorname{div}_{\mathbb{H}} \left( a(x)\left\lvert \X u\right\rvert^{p-2}\X u\right) &= f &&\text{ in } \Omega,
	\end{align*}
	where $a$ is uniformly positive, bounded, H\"{o}lder continuous and $f$ is H\"{o}lder continuous, then  $\X u$ is H\"{o}lder continuous. This was proved in Mukherjee-Sire\cite{Mukherjee_Sire} for the range $p \geq 2$ and in Yu \cite{Holderforpless2} for the range $2-1/Q < p < 2,$ using Wolff potential bounds. In \cite{Mukherjee_Sire}, the authors noted ( see Proposition 3.1 in \cite{Mukherjee_Sire} ) that the mixed Caccioppoli inequalities of Zhong only yields the significantly weaker estimate 
	\begin{align}\label{Mukherjee_Sire_estimate}
		\fint_{B_{\rho}} \left\lvert \nabla u - \left( \nabla u \right)_{\rho}\right\rvert\ \mathrm{d}x \leq C \left( \frac{\rho}{R}\right)^{\alpha} \left[ 	\fint_{B_{R}} \left\lvert \nabla u - \left(  \nabla u \right)_{R}\right\rvert\ \mathrm{d}x + \chi R^{\alpha} \right] ,
	\end{align}   
	where $$\chi = \frac{1}{R_{0}^{\alpha}}\sup\limits_{B_{R_{0}}} \left\lvert \X u \right\rvert, $$ for any metric balls $B_{\rho} \subset B_{R} \subset \subset B_{R_{0}} \subset \Omega.$ The authors in \cite{Mukherjee_Sire} also asserted, both in the Introduction and in Remark 3.2, that the appearance of the extra term with $\chi \neq 0$ is unavoidable and seemed to suggest this as a manifestation of the appearance of the terms involving $Tu$ in the equations for $X_{i}u,$ which are evidently absent in the Euclidean case. 
	
	\textbf{An excess decay estimate in Heisenberg groups :} Obviously, if there were a genuine lack of Euclidean-like excess decay estimates, it is not at all unreasonable to ascribe that lack to the noncommutative nature of the vector fields. Our main contribution in the present work, somewhat surprisingly in this sense, asserts that  Euclidean type excess decay estimates indeed hold in the Heisenberg group setting as well, the noncommutativity of the vector fields notwithstanding.  
	
	\begin{theorem}\label{Main excess decay estimate Intro}
		Suppose $u$ solves the constant coefficient homogeneous $p$-Laplace equation in a domain $\Omega \subset \mathbb{H}_{n},$ with $1< p < \infty,$ 
		\begin{align*}
			\operatorname{div}_{\mathbb{H}} \left( \left\lvert \X u\right\rvert^{p-2}\X u\right) &= 0 &&\text{ in } \Omega. 
		\end{align*}
		Let $1\leq q \leq 2$ be a real number. Then there exists constants $\alpha \in (0, 1),$ and $C_{0} \geq 1,$ both depending only on $n$ and $p$,   such that for any $x_{0} \in \Omega$ and any $0 < \rho < R$ such that $B_{R}\left(x_{0}\right) \subset \subset \Omega,$ we have 
		\begin{align}\label{Main excess decay estimate Intro estimate}
			\left( \fint_{B_{\rho}\left(x_{0}\right)} \left\lvert \X u - \left( \X u\right)_{x_{0}, \rho}\right\rvert^{q} \right)^{\frac{1}{q}}\leq C_{0} \left( \frac{\rho}{R}\right)^{\alpha} 	\left( \fint_{B_{R}\left(x_{0}\right)} \left\lvert \X u - \left( \X u\right)_{x_{0}, R}\right\rvert^{q} \right)^{\frac{1}{q}}.  
		\end{align} 
	\end{theorem}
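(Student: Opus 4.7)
My plan is to adapt the DiBenedetto--Manfredi dichotomy to the Heisenberg setting, separating a \emph{nondegenerate} regime where $(\Xu)_{R}$ dominates the excess, from a \emph{degenerate} regime where the Zhong--Mukherjee--Zhong oscillation-by-energy estimate \eqref{oscillation control by energy Zhong} already suffices. \textbf{Setup.} By scaling invariance under the Heisenberg dilations and left translations, I may take $x_{0}=0$ and $R=1$. Setting
\[
E(r):=\Big(\fint_{B_{r}} |\Xu - (\Xu)_{0,r}|^{2}\Big)^{\!1/2}, \qquad m(r):=|(\Xu)_{0,r}|,
\]
the theorem reduces, by standard iteration, to producing $\tau \in (0,1/2)$ and $\sigma \in (0,1)$, depending only on $n$ and $p$, such that $E(\tau) \leq \sigma E(1)$; the case of general $q \in [1,2]$ follows from the case $q=2$ by H\"older on the left and by a Gehring-type reverse H\"older inequality for the excess on the right (itself a consequence of the Caccioppoli estimates of \cite{zhong2018regularityvariationalproblemsheisenberg}). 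Fix a large parameter $K \geq 1$ and split into $m(1) \geq K\,E(1)$ (\emph{nondegenerate}) and $m(1) < K\,E(1)$ (\emph{degenerate}).

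\textbf{Nondegenerate case.} With $K$ large, $\Xu$ is a small $L^{2}$ perturbation of the nonzero constant $(\Xu)_{1}$, and the $p$-Laplacian formally linearised at $(\Xu)_{1}$ is the constant coefficient, uniformly horizontally elliptic operator
\[
\mathcal{L} h \ := \ \sum_{i,j=1}^{2n} A_{ij}\, X_{i}X_{j} h, \qquad A_{ij} := \partial_{\xi_{j}}\bigl(|\xi|^{p-2}\xi_{i}\bigr)\big|_{\xi=(\Xu)_{1}},
\]
with $(A_{ij})$ positive definite for all $p>1$. The key ingredient is a harmonic approximation lemma: for any $\varepsilon>0$ there exists $\eta = \eta(\varepsilon,n,p) > 0$ such that $E(1)/m(1) \leq \eta$ implies the existence of $h$ with $\mathcal{L}h=0$ in $B_{1/4}$ and $(\fint_{B_{1/4}} |\Xu - \X h|^{2})^{1/2} \leq \varepsilon\, E(1)$. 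I would prove this by compactness: if it fails, rescale a violating sequence $u_{k}$ to $v_{k}:=(u_{k}-L_{k})/E_{k}(1)$, where $L_{k}$ is the horizontal affine function with $\X L_{k}=(\Xu_{k})_{1}$, and pass to a weak limit $v$ which, after careful linearisation, satisfies $\mathcal{L}v=0$. Folland--Stein theory for constant coefficient horizontal operators then yields $v\in C^{\infty}$ and the sharp linear decay
\[
\Big(\fint_{B_{\tau}} |\X v - (\X v)_{\tau}|^{2}\Big)^{\!1/2} \ \leq \ C\tau \Big(\fint_{B_{1/4}} |\X v - (\X v)_{1/4}|^{2}\Big)^{\!1/2}.
\]
Combining this with the approximation error and choosing first $\tau$ small, then $K$ large (so $\eta$ is small), gives $E(\tau) \leq \tfrac{1}{2} E(1)$.

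\textbf{Degenerate case.} When $m(1) < K\,E(1)$, applying \eqref{oscillation control by energy Zhong} at scale $\tau$ gives
\[
|\Xu(x) - \Xu(y)| \ \leq \ c\,\tau^{\alpha_{1}}\Big(\fint_{B_{1/2}} |\Xu|^{p}\Big)^{\!1/p}, \qquad x,y \in B_{\tau}.
\]
By the triangle inequality and the reverse H\"older inequality for the excess, $(\fint_{B_{1/2}} |\Xu|^{p})^{1/p} \leq C\bigl(m(1/2)+E(1/2)\bigr)\leq C(K)\, E(1)$. Hence $E(\tau) \leq C(K)\tau^{\alpha_{1}} E(1)$, which is $\leq \tfrac{1}{2} E(1)$ once $\tau$ is chosen sufficiently small.

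\textbf{Main obstacle.} The delicate step is the harmonic approximation in the nondegenerate case. To identify $\mathcal{L}v=0$ in the limit one must pass to the limit in the nonlinear flux $|\Xu_{k}|^{p-2}\Xu_{k}$ while controlling the vertical derivative $Tu_{k}$ that appears as a source term in any differentiated version of the equation. In the $a(x)$-coefficient analysis of \cite{Mukherjee_Sire} this is precisely the step that forced the extra $\chi R^{\alpha}$ term in \eqref{Mukherjee_Sire_estimate}. The strategy here is to exploit the constant coefficient homogeneity by applying Zhong's mixed Caccioppoli inequalities with $(\Xu)_{1}$ as the reference direction, so as to obtain an $L^{2}$ bound on $Tu_{k}$ in terms of $E_{k}(1)$ alone, independent of $\sup|\Xu_{k}|$. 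Securing this uniform bound is what closes the compactness loop and, at the conceptual level, explains how Euclidean-type excess decay can persist despite the noncommutativity of the horizontal vector fields.
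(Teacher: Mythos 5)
Your plan is to run a DiBenedetto--Manfredi-style dichotomy at a single scale, approximating by solutions of the linearised equation via a compactness (harmonic approximation) lemma in the nondegenerate regime. The paper's actual proof is quite different in structure: it uses a \emph{sup}-based dichotomy (either $|\Xu|\geq \tfrac14\|\Xu\|_{\infty,R_i}$ on $B_{R_{i+1}}$, or $\|\Xu\|_{\infty,R_{i+1}}\leq\tfrac12\|\Xu\|_{\infty,R_i}$), iterates it along a geometric sequence of radii, and---this is the crucial new idea---shows that one can determine an integer $i_a$, a priori in terms of $n$ and $p$ alone, so that the nondegenerate branch must occur within $i_a$ steps; otherwise the geometric sup decay makes the sup comparable to the excess and the oscillation-to-energy estimate finishes the argument directly. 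The nondegenerate branch is then handled by an explicit, quantitative comparison with the linearised operator (Lemma~\ref{comparison with linearized lemma}), not by a compactness argument, and that comparison is fed into an iteration lemma (Lemma~\ref{iteration of close to constant lemma}) in which the pinching $\lambda/8B\leq|\xi^s|\leq2A\lambda$ is propagated along all scales.

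There are concrete gaps in your sketch that the paper's structure is designed precisely to avoid. First, the dichotomy $m(1)\geq KE(1)$ versus $m(1)<KE(1)$ in terms of the \emph{average} does not by itself yield the pointwise two-sided bound $\lambda/4B\leq|\Xu|\leq A\lambda$ that every tool in the nondegenerate regime (the linear Caccioppoli \eqref{estimate of XXu in nondegenerate case}, the reverse H\"older \eqref{rev holder bound 1 to 2 star}, the $L^2$ bound on $Tu$ in \eqref{Tu estimate nondegenerate}) actually requires; turning the average condition into a pointwise pinching needs the H\"older estimate of Theorem~\ref{thm:holder} at a quantitatively controlled scale, and the paper's alternative \eqref{the nondegenerate alt at level i} is formulated exactly so as to deliver this. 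Second, the claimed reverse H\"older inequality for the excess ``as a consequence of Zhong's Caccioppoli estimates'' is not available in the degenerate regime: Zhong's mixed inequalities carry $|\Xu|^{p-2}$ weights and do not produce a linear-type Caccioppoli unless $|\Xu|$ is pinched, which is precisely what you lack when $m(1)<KE(1)$. The paper only proves the reverse H\"older (Lemma~\ref{reverse holder inequality lemma}) under the two-sided bound \eqref{two sided bound rev holder}, and is correspondingly careful to invoke it only where the pinching has already been established; your proposed use of it in the degenerate case and in the passage from $q=2$ to general $q\in[1,2]$ is unjustified. Third, there is a circularity in the choice of parameters: $\tau$ must be chosen small for the linear decay, which forces $\varepsilon$ small, which forces $\eta$ small, which forces $K=1/\eta$ large; but then the degenerate estimate $E(\tau)\leq C(K)\tau^{\alpha_1}E(1)$ requires $\tau$ small relative to $K$, closing a loop whose convergence is not established. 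The paper circumvents all three issues simultaneously by organising the argument around the a priori integer $i_a$ and by doing the linearisation comparison explicitly and quantitatively rather than via compactness, so no harmonic approximation lemma is needed. The compactness route you identify as the ``main obstacle'' is indeed the missing piece, and you do not supply it; in particular the assertion that one can bound $Tu_k$ in $L^2$ by $E_k(1)$ alone is exactly the content of \eqref{Tu estimate nondegenerate}, which requires the pointwise pinching you have not yet secured at that stage of the argument.
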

	We also have a excess decay estimate for $V\left(\X u\right).$
	\begin{theorem}\label{Main excess decay estimate V Intro}
		Suppose $u$ solves the constant coefficient homogeneous $p$-Laplace equation in a domain $\Omega \subset \mathbb{H}_{n},$ with $1< p < \infty,$
		\begin{align*}
			\operatorname{div}_{\mathbb{H}} \left( \left\lvert \X u\right\rvert^{p-2}\X u\right) &= 0 &&\text{ in } \Omega. 
		\end{align*}
		Let $1\leq q \leq 2$ be a real number. There exists constants $\beta \in (0, 1)$ and $C_{0,V} \geq 1,$ both depending only on $n$ and $p$,  such that for any $x_{0} \in \Omega$ and any $0 < \rho < R$ such that $B_{R}\left(x_{0}\right) \subset \subset \Omega,$ we have 
		\begin{multline}\label{Main excess decay estimate Intro estimate V}
			\left( \fint_{B_{\rho}\left(x_{0}\right)} \left\lvert V \left( \X u \right)  - \left( V \left( \X u\right) \right)_{x_{0}, \rho}\right\rvert^{q}\, dx \right)^{\frac{1}{q}} \\ \leq C_{0,V} \left( \frac{\rho}{R}\right)^{\beta} \left( 	\fint_{B_{R}\left(x_{0}\right)} \left\lvert V \left( \X u \right)  - \left( V \left( \X u \right) \right)_{x_{0}, R}\right\rvert^{q}\, dx \right)^{\frac{1}{q}}.  
		\end{multline} 
	\end{theorem}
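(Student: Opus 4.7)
The plan is to deduce Theorem \ref{Main excess decay estimate V Intro} from Theorem \ref{Main excess decay estimate Intro} via the standard algebraic equivalence
\[
c_{p}\,(|\xi|^{2} + |\eta|^{2})^{(p-2)/2}\,|\xi - \eta|^{2} \;\leq\; |V(\xi) - V(\eta)|^{2} \;\leq\; C_{p}\,(|\xi|^{2} + |\eta|^{2})^{(p-2)/2}\,|\xi - \eta|^{2},
\]
valid for $\xi,\eta \in \mathbb{R}^{2n}$ not both zero, with constants depending only on $p$. The case $p=2$ is trivial since $V(\Xu)=\Xu$, so I may fix $p\neq 2$. Before splitting cases, I would first observe that Theorem \ref{Main excess decay estimate Intro} together with the standard Campanato embedding yields local H\"older continuity, and hence an a priori local $L^{\infty}$-bound $\|\Xu\|_{L^{\infty}(B_{R/2}(x_{0}))} \leq M$.

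At each scale $R$, I would then run a dichotomy governed by a small parameter $\mu\in(0,1)$ to be chosen depending only on $n,p$, based on whether $|(\Xu)_{x_{0},R}|$ is large or small compared with the $\Xu$-excess $E_{q}(R):=\bigl(\fint_{B_{R}(x_{0})}|\Xu-(\Xu)_{x_{0},R}|^{q}\bigr)^{1/q}$. In the \emph{nondegenerate regime} $|(\Xu)_{x_{0},R}|\geq \mu^{-1}E_{q}(R)$, a Chebyshev-type argument shows $|\Xu(x)|\sim |(\Xu)_{x_{0},R}|$ on a large subset of $B_{R/2}(x_{0})$, so both sides of the algebraic equivalence above are comparable to the weight $|(\Xu)_{x_{0},R}|^{p-2}|\Xu-(\Xu)_{R}|^{2}$, and the excess decay of $\Xu$ from Theorem \ref{Main excess decay estimate Intro} transfers verbatim to an excess decay of $V(\Xu)$, with the Lipschitz-type weight canceling from both sides.

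In the \emph{degenerate regime} $|(\Xu)_{x_{0},R}|< \mu^{-1}E_{q}(R)$, the comparison becomes one-sided. For $p\geq 2$, the right-hand inequality together with the $L^{\infty}$-bound on $\Xu$ gives
\[
|V(\Xu(x))-V(\Xu(y))|^{q}\leq C\,M^{q(p-2)/2}\,|\Xu(x)-\Xu(y)|^{q},
\]
and the conclusion is immediate from Theorem \ref{Main excess decay estimate Intro}. For $1<p<2$, I would instead use the H\"older-type bound $|V(\xi)-V(\eta)|\leq C_{p}|\xi-\eta|^{p/2}$ and Jensen's inequality to compare $V(\Xu)$-excess with a power of the $\Xu$-excess.

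The main obstacle is precisely this last step in the $1<p<2$ degenerate case: passing through $|\,\cdot\,|^{p/2}$ degrades the $q$-exponent to $pq/2$ and threatens the self-similar scaling needed for a geometric decay. The way I plan to close this is via an iteration/stopping-time argument on dyadic balls. At each scale I check the dichotomy: as soon as the regime becomes nondegenerate, the decay transfers cleanly; while we remain degenerate, the smallness of $(\Xu)_{x_{0},R}$ itself decays geometrically along with the $\Xu$-excess, and this smallness absorbs the exponent loss from Jensen, at the cost of replacing $\alpha$ by a possibly smaller $\beta\in(0,\alpha]$, still depending only on $n$ and $p$.
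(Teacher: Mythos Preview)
Your approach is genuinely different from the paper's, and while the overall strategy is reasonable, there is a real gap that you have not addressed.

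\textbf{The gap.} In every branch of your dichotomy you only explain how to \emph{upper}-bound the $V(\Xu)$-excess at scale $\rho$ in terms of the $\Xu$-excess. You never explain how to \emph{lower}-bound the $V(\Xu)$-excess at scale $R$ so that the two sides match. For instance, in your degenerate case with $p\geq 2$ you write that the conclusion ``is immediate from Theorem~\ref{Main excess decay estimate Intro}'', but Theorem~\ref{Main excess decay estimate Intro} together with your pointwise inequality only gives
\[
E^{V}_{q}(\rho)\;\leq\; C\,M^{(p-2)/2}\,(\rho/R)^{\alpha}\,E_{q}(R),
\]
and you still need $M^{(p-2)/2}E_{q}(R)\leq C\,E^{V}_{q}(R)$. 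This is in fact true in your degenerate regime (because $M\leq c\mu^{-1}E_{q}(R)$ via Theorem~\ref{thm:lip Lq bound}, and $E^{V}_{q}(R)\geq c\,E_{q}(R)^{p/2}$ from $|V(\xi)-V(\eta)|\geq c|\xi-\eta|^{p/2}$), but it is a genuine step, not immediate, and the analogous step for $1<p<2$ is more delicate because the weight blows up near zero. Your ``Chebyshev on a large subset'' claim in the nondegenerate branch suffers the same problem: control on a subset of large measure does not transfer to the full integral without a separate bound on the bad set. What actually works here is the observation that in your nondegenerate regime $|(\Xu)_{\rho}|\geq \tfrac{1}{2}|(\Xu)_{R}|$ for \emph{all} $\rho\leq R$ (via the telescoping sum of excesses and Theorem~\ref{Main excess decay estimate Intro}), so the weight is controlled by choosing $\eta=(\Xu)_{\rho}$ rather than by a measure-theoretic argument on $\Xu(x)$ itself.

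\textbf{Comparison with the paper.} The paper does \emph{not} deduce Theorem~\ref{Main excess decay estimate V Intro} from Theorem~\ref{Main excess decay estimate Intro}. Instead it reruns the entire machinery of Section~\ref{excess decay section} with $V(\Xu)$ in place of $\Xu$: it first proves a H\"older estimate for $V(\Xu)$ (Theorem~\ref{thm:holder V}), a reverse H\"older inequality for $V(\Xu)$ in the nondegenerate regime (Lemma~\ref{reverse holder inequality V lemma}), and a $V$-version of the key decay lemma (Lemma~\ref{decay estimate in the nondegenerate regime for V}), and then repeats the alternatives/iteration argument verbatim. The advantage of the paper's route is that it never needs the two-sided conversion between $\Xu$- and $V(\Xu)$-excesses that your approach hinges on; the disadvantage is duplication of the argument. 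Your route, if completed, would be more economical, but you must supply the missing lower bounds on $E^{V}_{q}(R)$ in each branch and replace the Chebyshev heuristic by the stability of averages.
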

	The balls above are understood to be metric balls ( see Section \ref{Heisenberg group prelim} and \ref{Heisenberg group function spaces} ) and for any $\mathbb{R}^{2n}$-valued function $g$, $\left(g\right)_{x_0, R}$ denotes the integral average of $g$ over the metric ball $B_R(x_0)$ (see Notation \ref{notation for avarage}). The two theorems above are exact analogues of their Euclidean counterpart. We would also like to point out that while both results are known in the Euclidean setting, the passage from the exponent $2$  in \eqref{DiBenedettoManfredi Excess deay}  to any exponent $1\leq q \leq 2$ was achieved later, by an argument due to Lieberman \cite{Lieberman_expoentdecreasing}. For \eqref{Uhlenbeck Excess decay}, analogous passage can be achived by reverse H\"{o}lder inequalities established in \cite{Giaquinta_Modica_Uhlenbeck_result} and \cite{hamburgerregularity}. We also note that the proof of \eqref{Uhlenbeck Excess decay} for $1< p < 2$ follows via a duality argument in  \cite{hamburgerregularity}. Our argument yields the full result for the full range of $p$ in one go, without needing additional arguments and thus yields a new proof for the Euclidean case as well, as long as an oscillation to energy bound like \eqref{oscillation control by energy Zhong} is available ( for eaxmple, Tolksdorf \cite{Tolksdorf_regularity} proved such estimates independently of excess decay estimates ). Note also that the inequality in Theorem \ref{Main excess decay estimate Intro} for $q=1$ is exactly \eqref{Mukherjee_Sire_estimate} with $\chi = 0.$

	As we mentioned above, in the Euclidean case, H\"{o}lder continuity of gradient is proved using the excess decay estimates. The main idea behind our excess decay estimates is the converse question: Suppose we already know that the solutions are H\"{o}lder continuous and we have an oscillation estimate by energy. When can we derive the excess decay estimate? 
	
	The main insight of our estimate is that this is possible as soon as we have a linear-like Caccioppoli inequality in the nondegenerate regime. The form of the Caccioppoli inequalities in the general case does not  matter at all. While the Caccioppoli inequalities derived by Zhong in the general case is of mixed type, in the nondegenerate regime, we indeed have a Caccioppoli inequality which looks exactly like the one for a linear homogeneous equation ( see \eqref{estimate of XXu in nondegenerate case} ).  This itself is somewhat surprising, as the equations \eqref{equation:horizontal} and \eqref{equation:horizontal2} are not homogeneous.  These Caccioppoli inequalities imply reverse H\"{o}lder inequalities in the standard manner ( see \eqref{rev holder bound 1 to 2 star} ), but of course, again for the nondegenerate regime only.  
	
	Our iteration lemma ( Lemma \ref{iteration of close to constant lemma} ) is similar in spirit to Lemma 4.4 and Lemma 4.5 in \cite{DiBenedettoFriedman1} ( see also \cite{DiBenedettoFriedman2} ). However, unlike in their situations, we do not have a suitable Caccioppoli inequality for the equation in general. Thus our lemma is weaker as we always need the nondegeneracy condition $\lambda/4B \leq \left\lvert \X u \right\rvert \leq A\lambda.$ Due to this extra hypotheses, our proofs of Theorems \ref{Main excess decay estimate Intro} and \ref{Main excess decay estimate V Intro} diverge significantly from the proof of \eqref{DiBenedettoManfredi Excess deay} given in \cite{DiBenedettoManfredi}. Our result is sufficient to derive excess decay if oscillation estimate by energy is known beforehand. But for the same reasons, it cannot furnish an independent proof of the H\"{o}lder continuity result, while their's can.  We would also like to point out that the idea of treating the nondegenerate regime and the degenerate regime separately is also there in DiBenedetto-Manfredi \cite{DiBenedettoManfredi} ( also earlier in Evans \cite{Evans_pLaplacian} ). However, in the degenerate regime, one usually obtains a sup decay of the form 
	\begin{align}\label{degenerate sup decay}
		\sup\limits_{B_{\sigma R}} \left\lvert \nabla u \right\rvert \leq \eta \sup\limits_{B_{R}} \left\lvert \nabla u \right\rvert
	\end{align} 
	for some constants $\eta, \sigma \in (0,1).$ By iterating this, one can of course conclude the existence of an integer $i \geq 1$ such that  one has 
	\begin{align*}
		\sup\limits_{B_{\sigma^{i} R}} \left\lvert \nabla u \right\rvert \leq \eta^{i} \sup\limits_{B_{R}} \left\lvert \nabla u \right\rvert < 	\fint_{B_{R}} \left\lvert \nabla u - \left(  \nabla u \right)_{R}\right\rvert\ \mathrm{d}x.
	\end{align*}
	However, apriori, this integer $i$ depends on $\fint_{B_{R}} \left\lvert \nabla u - \left(  \nabla u \right)_{R}\right\rvert\ \mathrm{d}x.$ The novelty in our proof is that one can actually reduce to the case where it is possible to determine an integer $i_{a},$ apriori in terms of $n$ and $p$ alone such that this happens ( compare Claim \ref{nondegenerate case must occur claim} ). 
	
	\textbf{Consequences of excess decay estimates :} Our excess decay estimates make the problem in the Heisenberg setting amenable to Euclidean perturbation techniques. As an illustration, we have the following result.  
	\begin{theorem}[H\"{o}lder estimates]\label{campanato estimates}
		Let  $1< p < \infty$ and $0 < \mu_{1}, \mu_{2} < 1$ be real numbers. Let $\Omega \subset \mathbb{H}_{n}$ be open. Suppose 
		\begin{itemize}
			\item[(i)] $a:\Omega \rightarrow [ \nu, L ]$ is $C^{0,\mu_{1}}_{\text{loc}}\left( \Omega \right)$, where $0 < \nu < L < \infty$ and  
			\item[(ii)]$F \in C^{0,\mu_{2}}_{\text{loc}}\left(\Omega; \mathbb{R}^{2n} \right).$ 
		\end{itemize} 
		Let $u \in HW_{\text{loc}}^{1,p} \left(\Omega \right)$ be a local weak solution to 
		\begin{align}\label{p subLaplace divergenceform}
			\operatorname{div}_{\mathbb{H}} \left( a(x) \lvert \X u \rvert^{p-2} \X u \right)   &= \operatorname{div}_{\mathbb{H}}F   &&\text{ in } \Omega. 
		\end{align}
		Suppose $$ \mu := \min \left\lbrace \mu_{1}, \mu_{2} \right\rbrace < \min \left\lbrace \beta, \alpha \right\rbrace, $$ where  $\alpha, \beta \in (0,1)$ are the exponents given by Theorems \ref{Main excess decay estimate Intro} and \eqref{Main excess decay estimate V Intro} respectively. Then we have $$ \X u \in C^{0,\min\left\lbrace \mu, \frac{\mu}{p-1}\right\rbrace }_{\text{loc}} \left( \Omega; \mathbb{R}^{2n}\right).$$ 
	\end{theorem}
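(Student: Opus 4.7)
The plan is a standard Campanato freezing-and-comparison argument, now rendered accessible in the Heisenberg setting by the excess decay of Theorem \ref{Main excess decay estimate Intro}. Fix a ball $B_{R}(x_0) \Subset \Omega$ and let $v$ be the unique weak solution of the frozen homogeneous Dirichlet problem
\begin{align*}
\operatorname{div}_{\mathbb{H}}\bigl(a(x_0)\lvert \X v\rvert^{p-2}\X v\bigr) = 0 \text{ in } B_R(x_0), \qquad v - u \in HW_0^{1,p}(B_R(x_0)).
\end{align*}
Since $a(x_0)$ is a positive constant, it factors out and $v$ solves the constant-coefficient homogeneous $p$-subLaplace equation, so Theorem \ref{Main excess decay estimate Intro} (with exponent $\alpha$) applies to $v$ directly.

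The comparison step proceeds by subtracting the two weak formulations and testing against $\phi = u - v$. Using that one may replace $F$ by $F - F(x_0)$ under the divergence, one obtains
\begin{align*}
\int_{B_R} a(x_0)\bigl(\lvert \X u\rvert^{p-2}\X u - \lvert \X v\rvert^{p-2}\X v\bigr) \cdot \X(u-v) = \int_{B_R} H \cdot \X(u-v),
\end{align*}
where $H := (F - F(x_0)) - (a(x) - a(x_0))\lvert \X u\rvert^{p-2}\X u$ satisfies the pointwise bound $\lvert H\rvert \leq C R^{\mu}$, thanks to the Hölder continuity of $a$ and $F$ together with the \emph{a priori} local $L^\infty$ bound for $\X u$ coming from Zhong and Mukherjee--Zhong. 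The standard vector monotonicity $(|\xi|^{p-2}\xi - |\eta|^{p-2}\eta)\cdot(\xi-\eta) \geq c_p(|\xi|+|\eta|)^{p-2}|\xi-\eta|^2$ then splits the analysis. For $p \geq 2$ one uses $(|\xi|+|\eta|)^{p-2}|\xi-\eta|^2 \geq c|\xi-\eta|^p$ combined with Young's inequality to get $\fint_{B_R}|\X u - \X v|^p \leq C R^{p\mu/(p-1)}$; for $1 < p < 2$ the $L^\infty$ bound allows $(|\X u|+|\X v|)^{p-2}$ to be bounded below by a constant, yielding $\fint_{B_R}|\X u - \X v|^2 \leq C R^{2\mu}$. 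Setting $\tilde{\mu} := \min\{\mu,\ \mu/(p-1)\}$, in either case we arrive at the comparison estimate
\begin{align*}
\left(\fint_{B_R(x_0)} \lvert \X u - \X v\rvert^{q}\right)^{1/q} \leq C\, R^{\tilde{\mu}} \qquad \text{for any } q \in [1,2].
\end{align*}

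With this comparison in hand, set $\Phi(\rho) := \fint_{B_\rho(x_0)}\lvert \X u - (\X u)_{x_0,\rho}\rvert^{q}$. By the triangle inequality and minimality of averages,
\begin{align*}
\Phi(\rho)^{1/q} \leq C\left(\fint_{B_\rho}\lvert \X v - (\X v)_{x_0,\rho}\rvert^{q}\right)^{1/q} + C\left(\fint_{B_\rho}\lvert \X u - \X v\rvert^{q}\right)^{1/q}.
\end{align*}
Applying Theorem \ref{Main excess decay estimate Intro} to $v$ on the first term, and bounding the second by $(R/\rho)^{Q/q}$ times the $B_R$-average of $|\X u - \X v|^q$, one obtains the standard Campanato-type recurrence
\begin{align*}
\Phi(\rho)^{1/q} \leq C\Bigl(\frac{\rho}{R}\Bigr)^{\alpha}\Phi(R)^{1/q} + C\Bigl(\frac{R}{\rho}\Bigr)^{Q/q}R^{\tilde{\mu}}.
\end{align*}
Since by hypothesis $\tilde{\mu} \leq \mu < \alpha$, the usual Giaquinta-type iteration lemma delivers $\Phi(\rho) \leq C\rho^{q\tilde{\mu}}$ for all small $\rho$, and then the Campanato--Morrey characterization of Hölder spaces on $(\mathbb{H}_n, d_{CC})$ yields $\X u \in C^{0,\tilde{\mu}}_{\text{loc}}(\Omega; \mathbb{R}^{2n})$.

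The only genuinely delicate point is the bifurcation at $p = 2$ in the monotonicity inequality, which forces the separate treatment of $p \geq 2$ and $1 < p < 2$ and is the exact origin of the exponent $\min\{\mu, \mu/(p-1)\}$ in the conclusion. A secondary technical issue is that the \emph{a priori} local $L^\infty$ bound for $\X u$ (needed to turn $H$ into an $R^{\mu}$-smallness statement) must be quantified in terms of a local energy; this is not new, but its dependence has to be propagated cleanly through the Campanato iteration. Modulo these bookkeeping matters, the entire argument is classical Euclidean Campanato theory, now legitimized on $\mathbb{H}_n$ precisely because the excess decay of Theorem \ref{Main excess decay estimate Intro} is available.
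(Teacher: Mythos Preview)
Your argument has a genuine gap at the point where you invoke an ``\emph{a priori} local $L^\infty$ bound for $\X u$ coming from Zhong and Mukherjee--Zhong''. Those results (Theorems \ref{thm:lip}--\ref{thm:holder} in the paper) apply only to weak solutions of the \emph{homogeneous constant-coefficient} equation $\operatorname{div}_{\mathbb{H}}(|\X u|^{p-2}\X u)=0$, whereas your $u$ solves the inhomogeneous equation \eqref{p subLaplace divergenceform} with variable coefficient $a(x)$. No $L^\infty$ bound for $\X u$ is available a priori in that generality; indeed, local boundedness of $\X u$ is essentially part of what Theorem \ref{campanato estimates} is asserting. Without it, your bound $|H|\le CR^{\mu}$ fails: the term $(a(x)-a(x_0))|\X u|^{p-2}\X u$ only yields $[\omega(R)]^{p'}\fint_{B_R}|\X u|^{p}$ after Young, and for $1<p<2$ you cannot bound $(|\X u|+|\X v|)^{p-2}$ below. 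You flag this as a ``secondary technical issue'' of bookkeeping, but it is in fact the crux of the proof.

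The paper addresses this by a bootstrap that avoids any $L^\infty$ assumption. First, a two-step comparison is used (through an intermediate $w$ solving the variable-coefficient homogeneous problem, so that minimality gives $\int|\X w|^p\le c\int|\X u|^p$), and the resulting estimates are combined with the sup bound \eqref{Xu:bdd} for $v$ (not $u$) to prove a preliminary Morrey estimate $\X u\in \mathrm{L}^{p,Q-\delta}_{\mathrm{loc}}$ for all $\delta>0$. Only then, with this Morrey control substituting for the missing $L^\infty$ bound, does the Campanato iteration with the excess decay deliver H\"older continuity. For $p>2$ the paper works with $V(\X u)$ and Theorem \ref{Main excess decay estimate V Intro} rather than Theorem \ref{Main excess decay estimate Intro}, precisely because the monotonicity structure then gives comparison estimates in terms of $\fint|\X u|^p$ without any pointwise bound. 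Your single-step comparison and direct appeal to Theorem \ref{Main excess decay estimate Intro} would work \emph{after} local boundedness of $\X u$ is known, but as written the argument is circular.
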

	As an immediate consequence, we have the following, which generalizes the results in \cite{Mukherjee_Sire} and \cite{Holderforpless2} to the full range $1< p < \infty.$  
	\begin{corollary}\label{campanato estimates nondivergence}
		Let  $1< p < \infty$  and let $\Omega \subset \mathbb{H}_{n}$ be open. Suppose 
		\begin{itemize}
			\item[(i)] $a:\Omega \rightarrow [ \nu, L ]$ is $C^{0,\mu_{1}}_{\text{loc}}\left( \Omega \right)$, where $0 < \nu < L < \infty$, $0 < \mu_{1} < 1$ and
			\item[(ii)]$f \in L^{q}_{\text{loc}}\left(\Omega \right)$ with $q>Q.$ 
		\end{itemize} 
		Let $u \in HW_{\text{loc}}^{1,p} \left(\Omega \right)$ be a local weak solution to 
		\begin{align}\label{p subLaplace nondivergenceform}
			\operatorname{div}_{\mathbb{H}} \left( a(x) \lvert \X u \rvert^{p-2} \X u \right)   &= f   &&\text{ in } \Omega. 
		\end{align}
		Suppose $$ \mu := \min \left\lbrace \mu_{1}, 1 - \frac{Q}{q} \right\rbrace < \min \left\lbrace \beta, \alpha \right\rbrace, $$ where  $\alpha, \beta \in (0,1)$ are the exponents given by Theorems \ref{Main excess decay estimate Intro} and \eqref{Main excess decay estimate V Intro} respectively. Then we have $$ \X u \in C^{0,\min\left\lbrace \mu, \frac{\mu}{p-1}\right\rbrace }_{\text{loc}} \left( \Omega; \mathbb{R}^{2n}\right).$$ 
	\end{corollary}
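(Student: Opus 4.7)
\textbf{Proof proposal for Corollary~\ref{campanato estimates nondivergence}.} The plan is to reduce the non-divergence form statement to the divergence form statement of Theorem~\ref{campanato estimates} by representing the right-hand side $f$ as a horizontal divergence of a suitably H\"older continuous vector field. The sole tool beyond Theorem~\ref{campanato estimates} is the Folland-Stein theory of the fundamental solution of the sub-Laplacian on $\mathbb{H}_{n}$.

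First I would fix an arbitrary subdomain $\Omega' \Subset \Omega$ and pick a cutoff $\eta \in C^{\infty}_{c}(\Omega)$ with $\eta \equiv 1$ on a neighborhood of $\overline{\Omega'}$. Setting $\tilde{f} := \eta f$, we have $\tilde{f} \in L^{q}(\mathbb{H}_{n})$ with compact support and $\tilde{f} = f$ on $\Omega'$. Next, I would solve the sub-Laplace equation $\sum_{i=1}^{2n} X_{i}^{2} g = \tilde{f}$ on all of $\mathbb{H}_{n}$ by convolving $\tilde{f}$ with Folland's fundamental solution, which is a constant multiple of $\rho(x)^{2-Q}$, where $\rho$ denotes the Kor\'anyi gauge. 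Differentiating under the convolution, the vector field $F := \X g$ is (up to a constant) the convolution of $\tilde{f}$ with the $(1-Q)$-homogeneous kernel $\X(\rho^{2-Q})$, i.e. a horizontal Riesz potential of order one applied to $\tilde{f}$. Since $q>Q$, the Folland-Stein Riesz potential estimates on stratified groups yield
\begin{align*}
F \in C^{0,\,1 - Q/q}_{\text{loc}}(\mathbb{H}_{n}; \mathbb{R}^{2n}),
\end{align*}
and by construction $\operatorname{div}_{\mathbb{H}} F = \tilde{f} = f$ on $\Omega'$.

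With this choice of $F$, the function $u$ is a local weak solution in $\Omega'$ of
\begin{align*}
\operatorname{div}_{\mathbb{H}} \left( a(x) |\X u|^{p-2} \X u \right) = \operatorname{div}_{\mathbb{H}} F,
\end{align*}
with $a \in C^{0,\mu_{1}}_{\text{loc}}(\Omega')$ and $F \in C^{0,\mu_{2}}_{\text{loc}}(\Omega'; \mathbb{R}^{2n})$ for $\mu_{2} := 1 - Q/q$. The hypothesis $\mu = \min\{\mu_{1}, 1 - Q/q\} < \min\{\alpha,\beta\}$ in Corollary~\ref{campanato estimates nondivergence} is then precisely the hypothesis of Theorem~\ref{campanato estimates} for the exponents $(\mu_{1}, \mu_{2})$. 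Applying that theorem on $\Omega'$ gives $\X u \in C^{0,\min\{\mu,\mu/(p-1)\}}_{\text{loc}}(\Omega'; \mathbb{R}^{2n})$, and since $\Omega' \Subset \Omega$ is arbitrary, the desired local H\"older regularity on all of $\Omega$ follows.

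I do not expect any real obstacle here: the heavy lifting is done by Theorem~\ref{campanato estimates}, and the only additional input is the H\"older continuity of horizontal Riesz potentials of order one, which is a classical consequence of the Folland-Stein theory of singular integrals on homogeneous groups that is already invoked in the introduction. The slight subtlety is purely cosmetic, namely ensuring the localization via the cutoff $\eta$ does not interfere, but this is immediate since $\eta f$ agrees with $f$ on $\Omega'$ and the regularity statement is of local nature.
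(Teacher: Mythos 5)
Your proposal is correct and follows essentially the same route as the paper: write $f = \operatorname{div}_{\mathbb{H}}F$ with $F$ the horizontal gradient of a solution of the sub-Poisson equation, verify $F \in C^{0,\,1-Q/q}_{\text{loc}}$, and then invoke Theorem~\ref{campanato estimates}. The only cosmetic difference is how the H\"older regularity of $F$ is extracted: you cite the H\"older mapping property of the horizontal Riesz potential of order one directly, while the paper goes through the Calder\'on--Zygmund $L^{q}$ estimates for the sub-Laplacian (so $F \in HW^{1,q}_{\text{loc}}$) followed by the sub-Riemannian Morrey embedding (Proposition~\ref{Morrey's theorem}); these are equivalent and both standard consequences of Folland--Stein theory.
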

	The H\"{o}lder spaces above are understood to be Folland-Stein classes ( see Section \ref{Heisenberg group function spaces} ). We emphasize that our proof is via the classical Camapanto method and does neither require any perturbation argument nor any potential estimates. \smallskip

	\textbf{Subelliptic nonlinear Stein theorem : }The efficacy of our excess decay estimate goes beyond easy Campanato style perturbation arguments. In \cite{Stein_steintheorem}, Stein proved the proved the borderline Sobolev embedding result which states that for $n \geq 2,$ $u \in L^{1}(\mathbb{R}^{n})$ and $\nabla u \in L^{(n,1)}(\mathbb{R}^{n}; \mathbb{R}^{n})$ implies $u$ is continuous. Coupled with standard Calderon-Zygmund estimates, which extend to Lorentz spaces, this implies $u \in C^{1}(\mathbb{R}^{n})$ if $\Delta u \in L^{(n,1)}(\mathbb{R}^{n}).$ The search for a nonlinear generalization of this result culminated in 
	Kuusi-Mingione \cite{KuusiMingione_nonlinearStein}, where the authors proved the following general \emph{quasilinear} \emph{vectorial} version
	\begin{align}\label{stein pde version p laplacian}
		\operatorname*{div}  \left( a(x)\lvert \nabla u \rvert^{p-2} \nabla u \right) \in L^{(n,1)}(\mathbb{R}^{n}; \mathbb{R}^{N}) \qquad \implies \qquad u \in C^{1}(\mathbb{R}^{n};\mathbb{R}^{N}), 
	\end{align}
	where $a$ is a uniformly positive Dini continuous function and  $1 < p < \infty .$ For the case of Heisenberg groups, even the linear result that 
	\begin{align*}
		\operatorname{div}_{\mathbb{H}} ( \Xu) ) \in L^{(Q,1)} \implies \X u \text{ is continuous }, 
	\end{align*}
	is difficult to find in the literature. However, this can be shown combining the ideas from \cite{Stein_steintheorem} with the ones from Folland-Stein \cite{Folland-Stein_book}, Folland-Stein \cite{Folland_Stein_delbarNeumann} and Folland \cite{Folland_subelliptic}, see Section \ref{Linear subelliptic Stein} for details.  We prove the following nonlinear Stein type theorem for Hesienberg groups. 
	\begin{theorem}[Subelliptic Nonlinear Stein theorem]\label{Nonlinear Stein theorem intro}
		Let $1 < p < \infty$ and let $\Omega \subset \mathbb{H}_{n}$ be open. Suppose that 
		\begin{itemize}
			\item[(i)] $f \in L_{\text{loc}}^{(Q,1)}\left(\Omega \right),$ where $Q=2n+2$ is the homogeneous dimension of $\mathbb{H}_{n},$
			\item[(ii)] $a:\Omega \rightarrow [\gamma, L]$, where $ 0 < \gamma < L < \infty,$ is Dini continuous.
		\end{itemize}
		Let $u \in HW_{\text{loc}}^{1,p} \left(\Omega\right)$ be a local weak solution to the equation
		\begin{align}\label{Main equation Stein Heisenberg}
			\operatorname{div}_{\mathbb{H}} ( a(x) \lvert \Xu \rvert^{p-2} \Xu) )  = f   &&\text{ in } \Omega. 
		\end{align}
		Then $\X u $ is continuous in $\Omega.$ 
	\end{theorem}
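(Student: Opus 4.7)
The plan is to run a freeze-and-compare Campanato iteration powered by the new excess decay estimate of Theorem~\ref{Main excess decay estimate Intro}, following the conceptual template of Kuusi--Mingione~\cite{KuusiMingione_nonlinearStein} for the Euclidean nonlinear Stein theorem. On each metric ball $B_R(x_0)\Subset\Omega$, introduce the comparison function $v$ solving the frozen homogeneous equation $\operatorname{div}_{\mathbb{H}}(a(x_0)|\X v|^{p-2}\X v)=0$ in $B_R(x_0)$ with $v=u$ on $\partial B_R(x_0)$. Split the excess of $\Xu$ on a smaller concentric ball into the excess of $\X v$ plus the comparison error $\|\Xu-\X v\|_{L^q(B_R)}$: the first piece is absorbed by Theorem~\ref{Main excess decay estimate Intro} applied to $\X v$, while the second must be quantified through the modulus of continuity $\omega_a$ of $a$ and a subelliptic Riesz-type potential of $f$.

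First I would derive the comparison estimate. Testing the weak formulations of the equations for $u$ and $v$ against $u-v$ and exploiting strong monotonicity of $\xi\mapsto|\xi|^{p-2}\xi$ (through the $V$-function in the degenerate range $1<p<2$), one obtains, for exponents $\gamma_1,\gamma_2>0$ depending only on $p$,
\begin{align*}
\fint_{B_R(x_0)}|\Xu-\X v|^{q}\,dx \leq C\,\omega_{a}(R)^{\gamma_1}\bigl(1+\|\Xu\|_{L^\infty(B_R)}\bigr)^{q} + C\,\mathcal{I}_f(x_0,R)^{\gamma_2},
\end{align*}
where $\mathcal{I}_f(x_0,R)$ is the scaled subelliptic Riesz-type quantity attached to $f$, and the $L^\infty$ bound on $\Xu$ is the one supplied by Zhong~\cite{zhong2018regularityvariationalproblemsheisenberg} and Mukherjee--Zhong~\cite{Mukherjee_Zhong}. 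Combining this with Theorem~\ref{Main excess decay estimate Intro} and the triangle inequality produces the recursive estimate
\begin{align*}
E(\sigma R) \leq C\sigma^{\alpha}E(R) + C\,\omega_{a}(R)^{\gamma_1/q}M_0 + C\,\mathcal{I}_f(x_0,R)^{\gamma_2/q},
\end{align*}
with $E(\rho):=(\fint_{B_\rho(x_0)}|\Xu-(\Xu)_{x_0,\rho}|^{q}\,dx)^{1/q}$ and $M_0$ a local $L^\infty$ bound on $\Xu$.

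Next I would iterate on the geometric sequence $R_k=\sigma^k R_0$ for a small but fixed $\sigma\in(0,1)$. Dini continuity of $a$ gives $\sum_k\omega_a(R_k)<\infty$. The Lorentz assumption $f\in L^{(Q,1)}_{\mathrm{loc}}$ translates, via the Heisenberg Riesz potential estimate of type $I_1:L^{(Q,1)}\to L^\infty$ (the same ingredient that powers the linear subelliptic Stein theorem of Section~\ref{Linear subelliptic Stein} through the Folland--Stein kernel bounds), into uniform summability of $\mathcal{I}_f(x_0,R_k)^{\gamma_2/q}$, with tail vanishing as $R_0\to 0$ uniformly in $x_0$ on compact subsets of $\Omega$. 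A standard iteration lemma then yields $E(R_k)\to 0$ and shows that the averages $(\Xu)_{x_0,R_k}$ form a uniformly Cauchy sequence in $x_0$. Lebesgue differentiation identifies the limit with $\Xu(x_0)$ almost everywhere, and the uniform convergence furnishes a continuous representative, which is the desired conclusion.

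The main obstacle will be the comparison estimate in the degenerate range $1<p<2$, where classical monotonicity controls only the $V$-function, so one must carefully pass between the $V$-excess and the $q$-th-power excess and must quantify the coefficient error with the precise exponent compatible with the eventual summability of $\omega_a$. A second, closely related, subtlety is making the subelliptic Riesz estimate local and quantitatively uniform in the base point, so that one extracts continuity (and not merely $\mathrm{VMO}$) from the iteration. Granted Theorem~\ref{Main excess decay estimate Intro} and the linear subelliptic Stein theorem, neither of these is an obstruction in principle: the scheme is structurally the one of \cite{KuusiMingione_nonlinearStein}, with the horizontal gradient and Folland--Stein kernel estimates playing the role of their Euclidean counterparts.
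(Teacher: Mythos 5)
Your overall architecture is the right one --- freeze, compare, iterate with the new excess decay estimate --- and it is indeed the Kuusi--Mingione scheme, which is also what the paper follows. But there are three concrete gaps that prevent the scheme from closing as you have set it up.

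First, the $L^{\infty}$ bound on $\X u$ that you invoke is not available from where you claim. Zhong's Theorem~\ref{thm:lip} and the Mukherjee--Zhong estimates concern the \emph{homogeneous, constant-coefficient} equation $\operatorname{div}_{\mathbb{H}}(|\X u|^{p-2}\X u)=0$. For the inhomogeneous Dini-coefficient equation \eqref{Main equation Stein Heisenberg}, local boundedness of $\X u$ is itself a theorem (Theorem~\ref{pointwise gradient bound theorem} of the paper, the analogue of Theorem~4 in \cite{KuusiMingione_nonlinearStein}), and it is proved by the very iteration you are setting up. Your recursive estimate has the constant $M_0$ on the right-hand side, so inserting an a priori $L^{\infty}$ bound is circular.

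Second, and more fundamentally, the direct comparison $u\leftrightarrow v$ (with $v$ solving the frozen constant-coefficient homogeneous equation) is too crude. The paper, following Kuusi--Mingione, uses a two-step comparison $u\to w_j\to v_j$, where $w_j$ solves the \emph{variable}-coefficient homogeneous equation $\operatorname{div}_{\mathbb{H}}(a(x)|\X w_j|^{p-2}\X w_j)=0$ on $B_j$. The extra step is not cosmetic: the sup estimate and the oscillation-smallness estimate for $w_j$ (Lemma~\ref{estimate for wj}, which comes from Theorem~\ref{dwcontinuityhomogeneousDini}) are the nondegeneracy inputs on which the comparison lemmas depend, and these sup/oscillation bounds are not obtainable for $u$ itself (again circularity) or for the frozen $v$ alone. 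Your proposal does not mention Theorem~\ref{dwcontinuityhomogeneousDini} at all, but the paper stresses that it is precisely the second missing ingredient beyond the excess decay.

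Third, your iteration does not close in the degenerate range $1<p<2$ as written. Passing from the $V$-excess to the raw $L^q$ excess via \eqref{v estimate p less 2} introduces either a power $2/p<1$ on the $V$-excess or a multiplicative factor $|\X u|^{(2-p)/2}$. Carrying that through, the coefficient error in the recursion appears as $\omega_a(R_k)^{\theta}$ with some $\theta<1$, and Dini continuity of $\omega_a$ (i.e.~$\sum_k\omega_a(R_k)<\infty$) does \emph{not} imply $\sum_k\omega_a(R_k)^{\theta}<\infty$ for $\theta<1$; consider $\omega_a(r)\sim(\log(1/r))^{-2}$. So the raw summation diverges. The mechanism that fixes this is the exit-time / nondegeneracy bookkeeping of \cite{KuusiMingione_nonlinearStein}: one works on scales where $|\X w_{j-1}|\sim\lambda$, under which $V$ is bi-Lipschitz with explicit $\lambda$-dependent constants and the modulus $\omega_a$ appears with exponent exactly $1$ (see the hypotheses of Lemma~\ref{finalcomparispbigger2} and Lemma~\ref{finalcomparispsmaller2}). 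You flag the $1<p<2$ difficulty but assert that it is not an obstruction in principle once one has the excess decay and the linear Stein theorem. That is not the case: without the intermediate nondegeneracy machinery the exponent is wrong, and with Dini (as opposed to H\"{o}lder) coefficients the iteration genuinely fails to converge.

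In short: the conceptual skeleton is right, and this is the same skeleton the paper uses, but your proof as written has a circular $L^{\infty}$ input, omits the necessary intermediate comparison with the Dini-coefficient homogeneous solution and the continuity result Theorem~\ref{dwcontinuityhomogeneousDini}, and runs the iteration with a sub-unit exponent on $\omega_a$ that Dini continuity cannot absorb when $1<p<2$.
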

	The conditions on the coefficient is sharp already in the Euclidean case, as shown by Jin et al \cite{JinMazyaVanSchaftingen_dinisharp}. Cianchi \cite{Cianchi_SharpLorentzexponent} showed the Lorentz space $L^{(n,1)}$ is also sharp in the Euclidean case. The homogeneous dimension $Q$ is the natural replacement of $n$ in the setting of Heisenberg groups. As far as we are aware, this result is new even when $f \equiv 0$ or when $a$ is a constant function. This result is nontrivial already in the Euclidean case, where it is the end product of a series of works, starting with Duzaar-Mingione \cite{DuzaarMingione_gradientcontinuityestimates} and culminating in Kuusi-Mingione \cite{KuusiMingione_perutbationparabolic}, Kuusi-Mingione \cite{KuusiMingione_Steinequationslinearpotentials}, Kuusi-Mingione \cite{KuusiMingione_nonlinearStein}. In the beautiful work \cite{KuusiMingione_nonlinearStein}, the authors derived suitable form of the comparison estimates in the nondegenerate regime. This, coupled with an exit time argument and the Euclidean excess decay, this is sufficient for the result.

	The delicate comparison estimates in \cite{KuusiMingione_nonlinearStein}, however, extends immediately to the Heisenberg group setting, as they are ultimately a consequence of the structure of the $p$-Laplace equation. The missing pieces of the puzzle that remained are the follwing:
	\begin{itemize}
		\item The excess decay estimate - which were not available, until now that is, in this setting.
		\item An intermediate continuity results for the homogeneous $p$-Laplace equation with Dini continuous coefficients ( see Theorem $2$ in \cite{KuusiMingione_nonlinearStein} ). This is a crucial intermediate step in their proof. In the Euclidean case, this result was obtained in the parabolic case in \cite{KuusiMingione_perutbationparabolic}.  
	\end{itemize} 
	
	We show, that the excess decay estimate is really the only missing key ingredient. With our excess decay estimates at hand, we prove the following. 
	\begin{theorem}\label{dwcontinuityhomogeneousDini}
		Let $1 < p < \infty$ and let $\Omega \subset \mathbb{H}_{n}$ be open. Let $w \in HW_{\text{loc}}^{1,p}\left( \Omega \right)$  be a local weak solution of 
		\begin{align}\label{p subLaplace with dini coeff}
			\operatorname{div}_{\mathbb{H}}	 ( a(x) \lvert \X w \rvert^{p-2} \X w) )  &= 0   &&\text{ in } \Omega, 
		\end{align} 
		where $a:\Omega \rightarrow [\gamma, L]$, where $ 0 < \gamma < L < \infty,$ is Dini continuous.
		Then $\X w$ is continuous in $\Omega$. 
		Moreover, if $\omega(\cdot)$ denotes the modulus of continuity associated with the function $a$, then
		\begin{itemize}
			\item[(i)] there exists a constant 
			$C_{2} \equiv C_{2}(n, p, \gamma, L, \omega( \cdot )) \geq 1$ and a positive radius $R_{1}= R_{1}(n, p, \gamma, L, \omega( \cdot )) >0$ such that if $R \leq R_{1}$, then the estimate 
			\begin{align}\label{sup estimate dini}
				\sup\limits_{B(x, R/2)} \left\lvert \X w \right\rvert \leq C_{2}  \fint_{B(x, R)} \left\lvert \X w \right\rvert,
			\end{align}
			holds whenever $B(x,R) \subset \Omega.$ If $a(\cdot )$ is a constant function, the estimate holds without any restriction on 
			$R.$
			\item[(ii)] Assume that the inequality 
			\begin{align}\label{sup bound for osc dini}
				\sup\limits_{B(x, R/2)} \left\lvert \X  w \right\rvert \leq A\lambda
			\end{align}
			hold for some $A \geq 1$ and $\lambda > 0.$ Then for any $\delta \in (0,1)$ there exists a positive constant 
			$\sigma_{1} \equiv \sigma_{1} (n, p, \gamma, L, \omega( \cdot ), A, \delta ) \in (0, \frac{1}{4})$ such that for every $\sigma \leq \sigma_{1},$ we have,  
			\begin{align}\label{osc estimate dini}
				\sup_{x,y \in B(x,\sigma R)} \left\lvert \X w(x) - \X w(y) \right\rvert \leq \delta \lambda.
			\end{align}
		\end{itemize}
	\end{theorem}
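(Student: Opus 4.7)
The plan is to follow the classical Campanato freezing plus excess decay perturbation scheme, with our new excess decay estimate (Theorem \ref{Main excess decay estimate Intro}) playing the role it plays in the Euclidean case. Fix $x_0 \in \Omega$ and $B_R = B_R(x_0) \subset \Omega$, set $\bar a := a(x_0)$, and let $v$ be the unique $HW^{1,p}$-solution of the frozen Dirichlet problem $\operatorname{div}_{\mathbb{H}} ( \bar a\,|\X v|^{p-2} \X v ) = 0$ in $B_R$ with $v - w \in HW_0^{1,p}(B_R)$. Testing the two equations against $v - w$ and exploiting the monotonicity of the $p$-Laplace nonlinearity yields the standard comparison bound
\begin{equation*}
\left( \fint_{B_R} |V(\X w) - V(\X v)|^2 \right)^{1/2} \leq c\,\omega(R)^{\kappa}\,\mathcal{E}(w;B_R),
\end{equation*}
with $\kappa = \kappa(p) > 0$ and $\mathcal{E}(w;B_R)$ an appropriate energy-type quantity; the derivation is identical to the one in \cite{KuusiMingione_nonlinearStein} since it depends only on the structure of the $p$-Laplace operator and the admissibility of the test function, and uses the Heisenberg structure only through the energy inequality for $\X w - \X v$.

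Next I would combine this comparison with the excess decay for $v$. Writing $E_q(u;r) := (\fint_{B_r} |\X u - (\X u)_r|^q)^{1/q}$, an elementary triangle-inequality split at scale $\rho = \sigma R$ gives $E_q(w;\sigma R) \leq 2 E_q(v;\sigma R) + c\,\sigma^{-Q/q}(\fint_{B_R}|\X w - \X v|^q)^{1/q}$. Applying Theorem \ref{Main excess decay estimate Intro} to $v$ in order to bound $E_q(v;\sigma R)$ by $\sigma^\alpha E_q(v;R)$, and then bounding $E_q(v;R)$ in terms of $E_q(w;R)$ and the comparison term, I obtain the master iteration
\begin{equation*}
E_q(w;\sigma R) \leq C_0 \sigma^\alpha E_q(w;R) + c\,\sigma^{-Q/q}\,\omega(R)^\kappa\,M(w;B_R)
\end{equation*}
for every $\sigma \in (0,1)$, where $M(w;B_R)$ is an energy majorant of $|\X w|$ that, via Zhong's sup-by-energy estimate \eqref{oscillation control by energy Zhong}, is controlled by $\fint_{B_R} |\X w|$.

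Choosing $\sigma$ so that $C_0 \sigma^\alpha \leq \tfrac{1}{2}\sigma^{\alpha/2}$ and iterating along $R_k = \sigma^k R_0$, the Dini condition $\int_0 \omega(r)/r\,dr < \infty$ ensures that $\sum_k \omega(R_k)^\kappa < \infty$, possibly after a standard refinement of the modulus to preserve the Dini property under composition with $t \mapsto t^\kappa$. Consequently $E_q(w;R_k) \to 0$ with a quantitative rate depending only on $\omega$, and summing the differences $|(\X w)_{R_{k+1}} - (\X w)_{R_k}|$ produces a Cauchy sequence whose limit is a continuous representative of $\X w$, with a modulus of continuity depending on $\omega$ alone. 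This proves the continuity assertion. For (i), I would apply the Lipschitz bound $\|\X v\|_{L^\infty(B_{R/2})} \leq c\,\fint_{B_R}|\X v|$ for the frozen equation (a consequence of \cite{zhong2018regularityvariationalproblemsheisenberg, Mukherjee_Zhong}) together with the comparison bound, absorbing the $\omega(R)^\kappa$-term into the left-hand side once $R \leq R_1$ is so small that $c\,\omega(R_1)^\kappa \leq \tfrac{1}{2}$. For (ii), I would normalize so that $|\X w| \lesssim \lambda$ on $B_{R/2}$; given $\delta$, pick $N$ such that $(\tfrac{1}{2}\sigma^{\alpha/2})^N A \leq \delta/4$, and then fix the outer scale so that the accumulated Dini error $c\sum_{k=0}^{N-1} \omega(\sigma^k R)^\kappa A \leq \delta/4$, which determines $\sigma_1$.

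The main technical obstacle is the first step: producing the quantitative comparison bound with the correct $\omega(R)^\kappa$ dependence uniformly across the full range $1 < p < \infty$, handling the degenerate and singular regimes through the usual splitting and $V$-function algebra, and making sure the right-hand side perturbation $(a(x) - \bar a)\,|\X w|^{p-2}\X w$ is absorbed by the monotonicity of the frozen operator. A secondary subtlety is the passage from the Dini modulus $\omega$ to the composed modulus $\omega^\kappa$ appearing in the iteration when $\kappa < 1$; this is resolved by the standard device of enlarging $\omega$ to a concave Dini majorant before iterating, so the summability of $\omega(R_k)^\kappa$ is secured without losing the hypothesis. With these two ingredients in place, the excess decay from Theorem \ref{Main excess decay estimate Intro} closes the argument exactly as it does in the Euclidean setting.
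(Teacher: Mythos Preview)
Your overall architecture (freeze, compare, apply excess decay, iterate along dyadic radii, sum via the Dini condition) is the right one and matches the paper's scheme. But there is a genuine gap in the step you flag as a ``secondary subtlety'', and your proposed fix does not work.

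The comparison estimate in $V$-form comes out with exponent $1$ on $\omega$: the paper's Lemma~\ref{secondcomparison1} gives
\[
\Big(\fint_{B_R}|V(\X w)-V(\X v)|^2\Big)^{1/2}\le c\,\omega(R)\Big(\fint_{B_R}|\X w|^p\Big)^{1/2}.
\]
The loss of exponent you are worried about only appears because you then insist on iterating the excess of $\X w$ via Theorem~\ref{Main excess decay estimate Intro}. Passing from $|V(\X w)-V(\X v)|$ to $|\X w-\X v|$ costs a power $2/p$ when $p>2$, so your master inequality carries $\omega(R)^{2/p}$. Your claim that ``enlarging $\omega$ to a concave Dini majorant'' restores summability of $\sum_k\omega(R_k)^{2/p}$ is false: take $\omega(r)=(\log(e/r))^{-2}$, which is concave and Dini, yet $\omega^{1/2}(r)=(\log(e/r))^{-1}$ satisfies $\int_0 \omega^{1/2}(r)\,dr/r=\infty$. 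So for $p>2$ your iteration does not close, and no standard massaging of the modulus repairs this.

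The paper avoids the problem entirely by iterating the excess of $V(\X w)$ using Theorem~\ref{Main excess decay estimate V Intro} instead of Theorem~\ref{Main excess decay estimate Intro}. Then the comparison term enters with $\omega(R_i)$ to the first power and the Dini condition applies directly, uniformly in $1<p<\infty$. Continuity of $\X w$ is then read off from continuity of $V(\X w)$ via \eqref{constant cv}, splitting into the cases $p\ge 2$ and $1<p<2$ at the very end (this is how the paper proves \eqref{osc estimate dini}). For part~(i), note also that a single comparison at one scale cannot give a sup bound on $\X w$: the difference $\X w-\X v$ is controlled only in an integral sense. The paper obtains \eqref{sup estimate dini} by an induction over the whole chain of shrinking balls, showing $m_i(V(\X w))+E_2(V(\X w),B_i)\le\lambda^{p/2}$ for all $i$; the telescoping sum of mean differences is controlled by $\sum_i\omega(R_i)$, which is where the Dini condition enters. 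Your one-shot absorption argument for (i) is not sufficient as stated.
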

	This now takes care of all the missing pieces and the arguments of Kuusi-Mingione \cite{KuusiMingione_nonlinearStein} can be adapted, essentially verbatim with the obvious notational change, to our setting to establish Theorem \ref{Nonlinear Stein theorem intro}.  
	
\textbf{Future directions :} We fully expect that with the present contribution at hand, a full-blown `nonlinear potential theory' or a `nonlinear Calderon-Zygmund theory', as it is usually called in the Euclidean case, for the Heisenberg group case, is just around the corner now. In most cases, the lack of an excess decay estimate was the only real missing ingredient till date. Consequently, in our opinion, our estimates would enable most, if not all, of the gradient regularity results for the $p$-Laplacian in the Euclidean case to be extended to cover the horizontal gradient in the Heisenberg group setting. This would potentially alter the entire landscape of the field and allow the Heisenberg group case to catch up to the Euclidean case, which right now has a significant lead in development. 
	
	Our excess decay estimate is also instrumental for establishing estimates in mean oscillation spaces, namely $\mathrm{BMO}$ and $\mathrm{VMO}$. This will be taken up in an upcoming work.   
	
	Finally we remark that there is nothing that is inherently scalar in our arguments. As soon as we have an oscillation estimate by energy for the $p$-Laplacian system in Heisenberg groups, our argument would work the same and produce an excess decay estimate and consequently, both Theorem \ref{campanato estimates}, Corollary \ref{campanato estimates nondivergence} and Theorem \ref{Nonlinear Stein theorem intro} for $p$-Laplacian systems as well. In view of our results, we conjecture that these results hold for systems too. 
	
	In the present contribution, we only focus on the $p$-Laplace equation with or without coefficents in the Heisenberg groups. Although we have not paid any particular attention to this at present, but we believe most, if not all,  of our techniques extend to the case of a general nonlinearity 
	\begin{align*}
		\operatorname{div}_{\mathbb{H}} a\left( x, \X u \right) = f, 
	\end{align*}
	where the nonlinearity $a: \Omega \times \mathbb{R}^{2n} \rightarrow \mathbb{R}^{2n}$ statisfies a $p$-Laplace type structural assumptions or perhaps even more general nonlinearities. Whether our estimates also extend to cases beyond the Heisenberg group, for example, to Carnot groups of Step $2$, remains to be seen and is an interesting question for the future.

	\textbf{Organization :} The rest of the article is organized as follows. In Section \ref{prelim section}, we describe our notations and record the preliminary materials. Section \ref{excess decay section} proves our main estimates for the constant coefficient homogeneous equation, namely the excess decay estimate. Section \ref{Holder continuity section} proves Theorem \ref{campanato estimates} and Corollary \ref{campanato estimates nondivergence}. Section \ref{Homogeneous Dini section}  proves Theorem \ref{dwcontinuityhomogeneousDini}. Section \ref{Nonlinear Stein section} provides a sketch of how to adapt the proof of \cite{KuusiMingione_nonlinearStein} in our setting to prove Theorem \ref{Nonlinear Stein theorem intro}. Our presentation in this section is deliberately extremely brief. The argument in \cite{KuusiMingione_nonlinearStein} is itself somewhat long. Armed with our excess decay estimates, the proof in our case is an exact replica of their argument with the obvious changes in notations. Hence, we refrain from  the verbatim repetitions that would add little else apart from additional pages. Thus, we strongly suggest the readers to read Section \ref{Nonlinear Stein section} alongside the article  \cite{KuusiMingione_nonlinearStein} for better comprehension.     
	\section{Preliminaries}\label{prelim section}
	\subsection{Heisenberg groups}\label{Heisenberg group prelim}
	\begin{definition}\label{def:H group}
		For $n\geq 1$, the \textit{Heisenberg Group} denoted by $\mathbb{H}_{n}$, is identified with the Euclidean space 
		$\mathbb{R}^{2n+1}$ with the group operation 
		\begin{equation}\label{eq:group op}
			x\cdot y\, := \Big(x_1+y_1,\ \dots,\ x_{2n}+y_{2n},\ t+s+\frac{1}{2}
			\sum_{i=1}^n (x_iy_{n+i}-x_{n+i}y_i)\Big)
		\end{equation}
		for every $x=(x_1,\ldots,x_{2n},t),\, y=(y_1,\ldots,y_{2n},s)\in \mathbb{H}_{n}$.
	\end{definition}
	$\mathbb{H}_{n}$ with the group operation \eqref{eq:group op} along with the smooth structure of $\mathbb{R}^{2n+1}$ forms a 
	non Abelian simply connected stratified nilpotent Lie group, whose left invariant vector fields corresponding to the
	canonical basis of the Lie algebra, are
	\[ X_i=  \partial_{x_i}-\frac{x_{n+i}}{2}\partial_t, \quad
	X_{n+i}=  \partial_{x_{n+i}}+\frac{x_i}{2}\partial_t,\] 
	for every $1\leq i\leq n$ and the only
	non zero commutator $T = \partial_t$. 
	We have 
	\begin{equation}\label{eq:comm}
		[X_i\,,X_{n+i}]=  T\quad 
		\text{and}\quad [X_i\,,X_{j}] = 0\ \ \forall\ j\neq n+i.
	\end{equation}
	We call $X_1,\ldots, X_{2n}$ as \textit{horizontal
		vector fields} and $T$ as the \textit{vertical vector field}. 
	For a scalar function $ f: \mathbb{H}_{n} \to \mathbb{R}$, we denote
	$\X f  = (X_1f,\ldots, X_{2n}f)$ and $
	\X \X f =  (X_i(X_j f))_{i,j} $
	as the \textit{Horizontal gradient} and \textit{Horizontal Hessian}, 
	respectively. 
	From \eqref{eq:comm}, we have the following
	trivial but nevertheless, an important inequality 
	$
	|Tf|\leq 2|\X \X f|$. 
	For a vector valued function 
	$F = (f_1,\ldots,f_{2n}) : \mathbb{H}_{n}\to \mathbb{R}^{2n}$, the 
	\textit{Horizontal divergence} is defined as 
	$$ \operatorname{div}_{\mathbb{H}} (F)  =  \sum_{i=1}^{2n} X_i f_i .$$
	The Euclidean gradient of a 
	function $g: \mathbb{R}^{k} \to \mathbb{R}$, shall be denoted by
	$\nabla g=(D_1g,\ldots,D_{k} g)$ and the Hessian matrix by $\nabla^2g$.
	
	\subsection{Function spaces on \texorpdfstring{$\mathbb{H}_{n}$}{Hn}}\label{Heisenberg group function spaces}
	The \textit{Carnot-Carath\`eodory metric} ( henceforth CC-metric), which is a left-invariant metric on $\mathbb{H}_{n},$ denoted by $d_{\text{CC}},$ is defined as the length of the shortest horizontal curves, connecting two points. This is equivalent to the \textit{Kor\`anyi metric}, denoted as  
	$$ d_{\mathbb{H}_{n}}(x,y)= \left\lVert y^{-1}\cdot x \right\rVert_{\mathbb{H}_{n}} ,$$  
	where the Kor\`anyi norm for $x=(x_1,\ldots,x_{2n}, t)\in \mathbb{H}_{n}$ is 
	\begin{equation}\label{eq:norm}
		\left\lVert x \right\rVert_{\mathbb{H}_{n}} :=  \Big(\ \sum_{i=1}^{2n} x_i^2+ |t|\ \Big)^\frac{1}{2}.
	\end{equation}
	Throughout this article we use CC-metric balls, which are defined to be 
	\begin{align*}
		B_r(x) := \left\lbrace y\in\mathbb{H}_{n}: d_{\text{CC}}(x,y)<r \right\rbrace \qquad \text{ for }r>0 \text{ and } x \in \mathbb{H}_{n}.
	\end{align*} However, by virtue of the equivalence 
	of the metrics, all assertions for CC-balls can be restated to Kor\`anyi balls. 
	
	The bi-invariant Haar measure of $\mathbb{H}_{n}$ is just the Lebesgue 
	measure of $\mathbb{R}^{2n+1}$. We would consistently use the following notations. 
	\begin{notation}\label{notation for avarage}
		For a measurable set $A \subset \mathbb{H}_{n}$, we denote the $(2n+1)$- dimensional Lebesgue measure as $\left\lvert A \right\rvert$. Let $A \subset \mathbb{H}_{n}$ be any measurable subset with positive measure and let $g:A \rightarrow \mathbb{R}^{N}$ be a measurable function, taking values in any Euclidean space $\mathbb{R}^{N},$ $N \geq 1.$ We denote its integral average by the notation
		\begin{align*}
			\left(g\right)_{A} := \fint_{A} g\left(x\right)\ \mathrm{d}x := \frac{1}{\left\lvert A \right\rvert} \int_{A} g\left(x\right)\ \mathrm{d}x . 
		\end{align*} 
		If $A=B(x_0, R)$ for $x_0\in \mathbb{H}_n$ and $R>0$, then $\left(g\right)_{x_0, R}$ and $\left(g\right)_A$ denotes the same quantity. If $x_0$ is fixed in the context, then $\left(g\right)_{x_0, R}$ and $\left(g\right)_{R}$ denotes the same quantity.
	\end{notation}
	We record an important property of the integral averages that we would use throughout the rest. 
	\begin{proposition}
		Let $A \subset \mathbb{H}_{n}$ be any measurable subset with positive and finite measure and let $g:A \rightarrow \mathbb{R}^{N}$ be a measurable function, taking values in any Euclidean space $\mathbb{R}^{N},$ $N \geq 1.$ Then for any $ \xi \in \mathbb{R}^{N} $ and any $ 1 \leq q < \infty,$ we have 
		\begin{align}\label{minimality of mean}
			\left( \fint_{A} \left\lvert g - \left(g\right)_{A}\right\rvert^{q}\ \mathrm{d}x \right)^{\frac{1}{q}} \leq 2 	\left( \fint_{A} \left\lvert g - \xi \right\rvert^{q}\ \mathrm{d}x \right)^{\frac{1}{q}}. 
		\end{align}
	\end{proposition}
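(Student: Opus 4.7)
The inequality is the standard $L^q$ near-minimality of the mean, and nothing about the Heisenberg structure is actually used; only the fact that we are integrating against a probability measure $d\mu = |A|^{-1}\,dx$ on $A$. The plan is to apply the triangle inequality in $L^q(A,d\mu)$, after writing $g - (g)_A = (g - \xi) - ((g)_A - \xi)$ and using that the constant $(g)_A - \xi$ equals the average of $g - \xi$.

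\textbf{Key steps in order.} First I would observe that since $\xi$ is constant, $(g)_A - \xi = (g-\xi)_A$. Next, by Minkowski's inequality applied to this decomposition,
\begin{equation*}
\left( \fint_{A} \lvert g - (g)_A \rvert^{q}\, dx \right)^{\frac{1}{q}}
\leq \left( \fint_{A} \lvert g - \xi \rvert^{q}\, dx \right)^{\frac{1}{q}}
+ \lvert (g-\xi)_A \rvert .
\end{equation*}
The second step is to control the constant term by Jensen's inequality: since $q \geq 1$,
\begin{equation*}
\lvert (g-\xi)_A \rvert
\leq \fint_{A} \lvert g - \xi \rvert\, dx
\leq \left( \fint_{A} \lvert g - \xi \rvert^{q}\, dx \right)^{\frac{1}{q}} .
\end{equation*}
Adding the two bounds yields the factor $2$ on the right-hand side and proves \eqref{minimality of mean}.

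\textbf{Difficulty.} There is essentially no obstacle; the proof is a one-line application of the triangle inequality combined with Jensen. The only minor point worth stating carefully is that finiteness of $|A|$ ensures $d\mu$ is a probability measure so that Jensen applies, and that the integrals are well-defined (we may assume the right-hand side is finite, otherwise the inequality is trivial). The constant $2$ here is not optimal — a sharper constant can be obtained by a convexity argument — but $2$ suffices for all subsequent applications in the paper.
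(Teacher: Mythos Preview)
Your proof is correct and is exactly the standard argument for this well-known near-minimality property; the paper itself states the proposition without proof, treating it as an elementary recorded fact. Nothing further is needed.
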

	The non-isotropic dilations are the group homorphisms given by
	
	\begin{equation}\label{dilation}
		\delta_R(x_1,\dots,x_{2n},t): = \left(R x_1,\dots, Rx_{2n}, R^2 t\right),
	\end{equation}
	where $R>0$ and we have the following relation 
	
	\begin{equation}\label{Balls under dialtion}
		B(0,R)= \delta_R B(0,1).
	\end{equation}
	The equivalence between the \textit{Kor\`anyi metric} and the CC-metric along with the natural scaling \eqref{dilation}, gives a natural candidate for the homogeneous dimension of the group $\mathbb H^n$, which is $2n+2$. Throughout this article we denote $Q:=2n+2$. The Hausdorff dimension with respect to the metric $d_{\text{CC}}$ is also $Q$. Since $d_{CC}$ is left invariant, so we have the following crucial property of metric balls. 
	\begin{proposition}
		For any CC-metric ball $B_r \subset \mathbb{H}_{n}$ with $r>0,$ there exists a constant $c=c(n)>0$ such that 
		\begin{align}\label{measure of balls}
			\left\lvert B_{r} \right\rvert = c(n)r^Q. 
		\end{align}  
	\end{proposition}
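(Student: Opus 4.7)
The plan is to reduce the computation to the unit ball via two symmetries of the Heisenberg structure: the left-invariance of Haar measure and the behavior of the CC-distance under the anisotropic dilations $\delta_R$.

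First, since the Haar measure on $\mathbb{H}_n$ coincides with the Lebesgue measure on $\mathbb{R}^{2n+1}$ and the CC-distance is left-invariant, I would note that $|B_r(x)| = |B_r(0)|$ for every $x \in \mathbb{H}_n$. Hence it suffices to evaluate $|B_r(0)|$. Next, I would invoke the identity $B_R(0) = \delta_R\bigl(B_1(0)\bigr)$ recorded in \eqref{Balls under dialtion}, which itself follows from the homogeneity $d_{\mathrm{CC}}(\delta_R x, \delta_R y) = R\, d_{\mathrm{CC}}(x,y)$ of the CC-metric under the anisotropic dilations defined in \eqref{dilation}.

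Once this identity is in place, the proof reduces to a Jacobian computation for the linear map $\delta_R$ on $\mathbb{R}^{2n+1}$. Since $\delta_R$ scales the $2n$ horizontal coordinates by $R$ and the vertical coordinate $t$ by $R^2$, its Jacobian determinant equals $R^{2n} \cdot R^2 = R^{2n+2} = R^Q$. By the change of variables formula,
\begin{equation*}
\left\lvert B_R(0)\right\rvert \;=\; \int_{\delta_R(B_1(0))}\,\mathrm{d}x \;=\; R^Q \left\lvert B_1(0)\right\rvert.
\end{equation*}
Setting $c(n) := \left\lvert B_1(0)\right\rvert$, which is a finite positive quantity depending only on $n$ (since $B_1(0)$ is a bounded open subset of $\mathbb{R}^{2n+1}$ with nonempty interior), yields the claim.

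This is essentially a bookkeeping argument rather than a substantive one; the only point requiring care is the scaling identity $B_R(0) = \delta_R(B_1(0))$, but that is already stated in \eqref{Balls under dialtion} and may be used freely. No step is a real obstacle.
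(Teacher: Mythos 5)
Your proof is correct and follows the same route the paper implicitly relies on: left-invariance of the Lebesgue/Haar measure to center at the origin, the dilation identity \eqref{Balls under dialtion}, and the Jacobian $R^{Q}$ of $\delta_R$. The paper states the proposition without a formal proof, citing exactly these ingredients, so your argument simply fills in the standard bookkeeping.
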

	Let $\Omega \subset \mathbb{H}_{n}$ be open. For $1 \leq p \leq \infty,$ the usual $L^{p}$ spaces $L^{p}\left(\Omega\right)$ is defined in the usual manner and extended componentwise to vector-valued functions. 
	\begin{definition}[Horizontal Sobolev spaces]
		For $ 1\leq p < \infty$, the \textit{Horizontal Sobolev space} $HW^{1,p}(\Omega)$ consists
		of functions $u\in L^p(\Omega)$ such that the distributional horizontal gradient $\X u$ is in $L^p(\Omega\,,\mathbb{R}^{2n})$.
		$HW^{1,p}(\Omega)$ is a Banach space with respect to the norm
		\begin{equation}\label{eq:sob norm}
			\| u\|_{HW^{1,p}(\Omega)}= \ \| u\|_{L^p(\Omega)}+\| \X u\|_{L^p(\Omega,\mathbb{R}^{2n})}.
		\end{equation}
		We define $HW^{1,p}_{\text{loc}}(\Omega)$ as its local variant and 
		$HW^{1,p}_0(\Omega)$ as the closure of $C^\infty_0(\Omega)$ in 
		$HW^{1,p}(\Omega)$ with respect to the norm in \eqref{eq:sob norm}. The second order Horizontal Sobolev space $HW^{2,p}(\Omega)$ is the Banach space 
		\begin{align*}
			HW^{2,p}(\Omega):= \left\lbrace u \in HW^{1,p}(\Omega): \X u \in HW^{1,p}(\Omega,\mathbb{R}^{2n}) \right\rbrace, 
		\end{align*} equipped with the norm  
		\begin{equation*}
			\| u\|_{HW^{2,p}(\Omega)}= \ \| u\|_{HW^{1,p}(\Omega)}+\| \X \X u\|_{L^p(\Omega,\mathbb{R}^{2n}\times\mathbb{R}^{2n} )}.
		\end{equation*}
	\end{definition}
	The definition is extended componentwise to vector valued functions as well.

	\begin{definition}[Folland-Stein classes]
		For $0 < \alpha \leq 1,$ we define 
		\begin{equation}\label{def:holderspace}
			C^{\,0,\alpha}(\Omega) =
			\left\lbrace u\in L^{\infty}(\Omega): \sup_{x, y\in \Omega, x\neq y} \frac{|u(x)-u(y)|}{d_{\text{CC}}(x,y)^\alpha}< +\infty\right\rbrace 
		\end{equation} 
		for $0<\alpha \leq 1$, 
		which are Banach spaces with the norm 
		\begin{equation}\label{eq:holder norm}
			\|u\|_{C^{\,0,\alpha}(\Omega)}
			=  \|u\|_{L^\infty(\Omega)}+ \left[ u\right]_{C^{0, \alpha}\left( \Omega\right)},
		\end{equation}
		where the Folland-Stein seminorm is defined as 
		\begin{align}\label{eq:Folland-Stein seminorm}
			\left[ u\right]_{C^{0, \alpha}\left( \Omega\right)} := \sup_{x,y\in\Omega, x\neq y} 
			\frac{|u(x)-u(y)|}{d_{\text{CC}}(x,y)^\alpha}. 
		\end{align}
	\end{definition}
	These have standard extensions to classes $C^{k,\alpha}(\Omega)$ 
	for $k\in \mathbb{N}$, which consists of functions having horizontal 
	derivatives up to order $k$ in $C^{\,0,\alpha}(\Omega)$. The local 
	counterparts are denoted as $C^{k,\alpha}_\text{loc}(\Omega)$.
	H\"{o}lder spaces with respect to homogeneous metrics are sometimes called 
	Folland-Stein classes, as they first appeared in Folland-Stein \cite{Folland-Stein_book} and denoted by $\Gamma^{\alpha}$ or 
	$\Gamma^{\,0,\alpha}$ in some literature. However, by a harmless abuse of notation, we would continue to use the classical notation. We also recall the definition of \textbf{modulus of continuity} of a uniformly continuous function in our metric setting. 
	\begin{definition}
		Let $\Omega \subset \mathbb{H}_{n}$ be an open subset and let $a:\Omega \rightarrow \mathbb{R}$ be a uniformly continuous function. The modulus of continuity of $a$, denoted $\omega_{a, \Omega}$ is a concave increasing function $\omega_{a, \Omega}: [0, \infty) \rightarrow [0, \infty),$ defined by 
		\begin{align*}
			\omega_{a, \Omega}\left( r \right):=  \sup\limits_{\substack{x,y \in \Omega,\\ d_{\text{CC}}(x,y) \leq r }} \left\lvert a\left(x\right) - a\left( y\right)\right\rvert \qquad \text{ for all  } r>0,
		\end{align*}
		and we set $\omega_{a, \Omega}\left(0\right)= 0$ by convention. 	
	\end{definition}
	We would often denote the modulus of continuity by $\omega_{a}$ when the domain is clear and also just $\omega$ when the function is also clear from the context.  It is easy to see that by definition, $a \in C^{0, \alpha} \left( \Omega\right)$ if and only if $\omega_{a, \Omega} \left(r\right) \leq C r^{\alpha}$ for some constant $C>0$ and the smallest such constant is equal to the seminorm $\left[ a\right]_{C^{0, \alpha}\left( \Omega\right)}.$ We now recall the definition of \emph{Dini continuity}. 
	\begin{definition}[Dini continuity]
		Let $\Omega \subset \mathbb{H}_{n}$ be an open subset. A function $a:\Omega \rightarrow \mathbb{R}$ is called \emph{Dini continuous} if $a$ is uniformly continuous in $\Omega$ and its modulus of continuity satisfies 
		\begin{align}\label{Dini continuity def}
			\int_{0}^{\infty} \omega_{a, \Omega} \left(\rho\right)\ \frac{\mathrm{d}\rho}{\rho} < \infty. 
		\end{align}
	\end{definition}
	Now, assume $\Omega \subset \mathbb{H}_{n}$ is any open subset such that there exists a constant $A = A\left( \Omega\right)>0$ such that for any $x_{0} \in \Omega$ and any $r < \operatorname{diam} \Omega,$ we have  
	\begin{align*}
		\left\lvert B_{r}\left(x_{0}\right) \cap \Omega \right\rvert \geq Ar^{Q}. 
	\end{align*} 
	This property is clearly satisfied if $\Omega$ itself is a metric ball. 
	\begin{definition}[Morrey and Campanato spaces]
		For $1 \leq p <  \infty$ and $\lambda \geq 0,$  $\mathrm{L}^{p,\lambda}\left(\Omega\right) $ stands for the Morrey space of all $u \in L^{p}\left(\Omega \right)$ such that 
		$$ \lVert u \rVert_{\mathrm{L}^{p,\lambda}\left(\Omega\right)}^{p} := \sup_{\substack{ x_{0} \in \Omega,\\ \rho >0 }} 
		\rho^{-\lambda} \int_{B_{\rho}(x_{0}) \cap \Omega} \lvert u \rvert^{p} < \infty, $$ endowed with the norm 
		$ \lVert u \rVert_{\mathrm{L}^{p,\lambda}}$ and $\mathcal{L}^{p,\lambda}\left(\Omega\right) $ denotes the Campanato space of all $u \in L^{p}\left(\Omega\right)$ such that 
		$$ [u ]_{\mathcal{L}^{p,\lambda}\left(\Omega\right)}^{p} := \sup_{\substack{ x_{0} \in \Omega,\\  \rho >0 }} 
		\rho^{-\lambda} \int_{B_{\rho}(x_{0}) \cap \Omega} \lvert u  - (u)_{ \rho , x_{0},\Omega}\rvert^{p} < \infty, $$ endowed with the norm 
		$ \lVert u \rVert_{\mathcal{L}^{p,\lambda}} := \lVert  u \rVert_{L^{p}} +  
		[u ]_{\mathcal{L}^{p,\lambda}}.$ Here 
		\begin{align*}
			(u)_{ \rho , x_{0},\Omega} = \frac{1}{ \left\lvert \left( B_{\rho}(x_{0}) \cap \Omega \right) \right\rvert}\int_{B_{\rho}(x_{0}) \cap \Omega} u  = \fint_{B_{\rho}(x_{0}) \cap \Omega} u .
		\end{align*}
	\end{definition}
	These definitions are extended componentwise for vector-valued functions. We record the following facts, which are proved exactly in the same manner as their Euclidean counterparts in view of \eqref{measure of balls}. 
	\begin{proposition}\label{Morrey-Campanato theorem}
		For $0 \leq \lambda < Q,$ we have 
		\begin{align*}
			\mathrm{L}^{p,\lambda}\left(\Omega\right) \simeq	\mathcal{L}^{p,\lambda}\left(\Omega\right) \qquad \text{ with equivalent norms}.\end{align*}
		For $Q< \lambda \leq Q+p, $ we have 
		\begin{align*}
			C^{0, \frac{\lambda -Q}{p}}\left(\Omega\right) \simeq	\mathcal{L}^{p,\lambda}\left(\Omega\right) \qquad \text{ with equivalent seminorms}.\end{align*}
	\end{proposition}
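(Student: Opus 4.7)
The plan is to adapt the classical Euclidean Campanato embedding verbatim, using only three ingredients that are already in our Heisenberg toolbox: the volume formula $|B_r| = c(n) r^Q$ of \eqref{measure of balls}, the minimality of the mean \eqref{minimality of mean}, and the uniform density lower bound $|B_r(x_0) \cap \Omega| \geq A r^Q$ assumed on $\Omega$.

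First I would dispose of the two easy inclusions. For $\lambda < Q$, $\mathrm{L}^{p,\lambda} \hookrightarrow \mathcal{L}^{p,\lambda}$ is immediate from \eqref{minimality of mean} applied with $\xi = 0$, which gives $\int_{B_\rho \cap \Omega} |u - (u)_{\rho,x_0,\Omega}|^p \leq 2^p \int_{B_\rho \cap \Omega} |u|^p$. For $Q < \lambda \leq Q+p$, the embedding $C^{0,\alpha} \hookrightarrow \mathcal{L}^{p,\lambda}$ with $\alpha := (\lambda - Q)/p$ follows from $|u(x) - u(y)| \leq [u]_{C^{0,\alpha}} (2\rho)^\alpha$ for $x,y \in B_\rho(x_0)$, integrating and converting the $\rho^{p\alpha}$ factor into $\rho^{\lambda - Q}$ via the volume formula.

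The core is the reverse inclusions, and both are handled by a single dyadic telescope. Fix $x_0 \in \Omega$ and a reference radius $R$, and set $\rho_k := 2^{-k} R$, $B_k := B_{\rho_k}(x_0) \cap \Omega$. Using $B_{k+1} \subset B_k$ together with the density bound, one has $|B_k|/|B_{k+1}| \leq c$, hence
\begin{align*}
|(u)_{B_{k+1}} - (u)_{B_k}|^p \leq \fint_{B_{k+1}} |u - (u)_{B_k}|^p \leq c \fint_{B_k} |u - (u)_{B_k}|^p \leq c\, [u]_{\mathcal{L}^{p,\lambda}}^p\, \rho_k^{\lambda - Q}.
\end{align*}
When $\lambda > Q$, the resulting series $\sum_k \rho_k^{(\lambda - Q)/p}$ is a convergent geometric sum, so $\{(u)_{B_k}\}$ is Cauchy with some limit $\tilde u(x_0)$ satisfying $|\tilde u(x_0) - (u)_{B_R}| \leq C [u]_{\mathcal{L}^{p,\lambda}} R^{\alpha}$. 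H\"older continuity of $\tilde u$ then follows by comparing $\tilde u(x)$ and $\tilde u(y)$ through a common ball of radius $\sim d_{\text{CC}}(x,y)$ containing both, applying the above estimate twice and the triangle inequality. When $\lambda < Q$, the exponent $(\lambda - Q)/p$ is negative, so the same telescope summed as a geometric series in the reverse direction yields $|(u)_{B_\rho} - (u)_{B_R}| \leq C [u]_{\mathcal{L}^{p,\lambda}} \rho^{(\lambda - Q)/p}$ for $\rho \leq R$; combined with \eqref{minimality of mean} and the trivial bound $|(u)_{B_R}| \leq C \|u\|_{L^p(\Omega)}$, this upgrades to $\int_{B_\rho(x_0)} |u|^p \leq C \rho^{\lambda} (\|u\|_{L^p(\Omega)}^p + [u]_{\mathcal{L}^{p,\lambda}}^p)$, giving $\mathcal{L}^{p,\lambda} \hookrightarrow \mathrm{L}^{p,\lambda}$.

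Conceptually the argument is Euclidean; the only Heisenberg-specific input is \eqref{measure of balls}, which ensures that $|B_{\rho_k}|/|B_{\rho_{k+1}}| = 2^Q$ exactly as in $\mathbb{R}^{n}$. The boundary portion of $\Omega$ is absorbed by the hypothesized density bound $|B_r(x_0) \cap \Omega| \geq A r^Q$, which keeps this ratio uniformly bounded in $x_0$ and $r$. I do not anticipate any real obstacle; the only genuine bookkeeping is tracking constants depending on $A$, $n$, $p$, $\lambda$ and executing the two-point H\"older comparison cleanly across the boundary.
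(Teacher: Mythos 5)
Your proposal is correct and is precisely the expected argument: the paper gives no detailed proof, stating only that the equivalences are ``proved exactly in the same manner as their Euclidean counterparts in view of \eqref{measure of balls}''. Your dyadic telescope, the use of the density lower bound to keep volume ratios uniform near $\partial\Omega$, and the two-ball comparison for H\"{o}lder continuity are exactly the classical Campanato argument with $Q$ in place of the Euclidean dimension, which is what the authors intend the reader to supply.
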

	When $p > Q,$ we also have the following analogue of Morrey's theorem. 
	\begin{proposition}\label{Morrey's theorem}
		Let $\Omega \subset \mathbb{H}_{n}$ be open and let $u \in HW^{1,p}_{\text{loc}}\left( \Omega\right)$ with $p > Q.$ Then $u \in C^{0, \frac{p-Q}{p}}\left( \Omega\right)$ and for any metric ball $B_{2R} \subset \Omega,$ we have the estimate 
		\begin{align*}
			\left\lVert u \right\rVert_{C^{0,\frac{p-Q}{p} }\left( \overline{B_{R/2}}\right)} \leq C(R, Q, s) \left\lVert u \right\rVert_{HW^{1,p}\left(\Omega\right)}. 
		\end{align*} 
	\end{proposition}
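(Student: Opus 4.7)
The approach is to derive the H\"older estimate from the Campanato embedding in Proposition \ref{Morrey-Campanato theorem}. With $\lambda = p$, the assumption $p > Q$ puts us in the regime $Q < \lambda \leq Q + p$, so the proposition yields $\mathcal{L}^{p,p}(B_{R/2}) \simeq C^{0,(p-Q)/p}(B_{R/2})$ with equivalent seminorms. It therefore suffices to prove the Campanato-type bound
\[
\|u\|_{\mathcal{L}^{p,p}(B_{R/2})} \leq C(Q,p,R)\,\|u\|_{HW^{1,p}(\Omega)},
\]
which, once established, plugs directly into Proposition \ref{Morrey-Campanato theorem} to yield $u \in C^{0,(p-Q)/p}(\overline{B_{R/2}})$ with the claimed estimate.

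\textbf{Main step.} The key input is the horizontal $(1,p)$-Poincar\'e inequality on Heisenberg groups: for every metric ball $B_\rho(x_0) \subset \mathbb{H}_n$ and every $v \in HW^{1,p}(B_\rho(x_0))$,
\[
\Big(\fint_{B_\rho(x_0)} |v - (v)_{x_0,\rho}|^p\Big)^{1/p} \leq C(n,p)\,\rho\,\Big(\fint_{B_\rho(x_0)} |\X v|^p\Big)^{1/p},
\]
valid on the \emph{same} ball on both sides. This is classical in the subRiemannian setting; see e.g.\ \cite{Bonfigliolietal_subLaplacian}. For $x_0 \in B_{R/2}$ and $0 < \rho \leq 3R/2$, we have $B_\rho(x_0) \subset B_{2R} \subset \Omega$, so after raising to the $p$-th power and multiplying by $|B_\rho(x_0)|$, Poincar\'e yields
\[
\rho^{-p}\int_{B_\rho(x_0)} |u - (u)_{x_0,\rho}|^p \leq C(n,p)\int_{B_{2R}} |\X u|^p.
\]
The minimality property \eqref{minimality of mean} then transfers this bound (at the cost of a universal factor) from $B_\rho(x_0)$ to the Campanato average $(u)_{\rho,x_0,B_{R/2}}$ over the intersection $B_\rho(x_0)\cap B_{R/2}$.

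\textbf{Large radii and conclusion.} For $\rho > 3R/2$ the intersection $B_\rho(x_0) \cap B_{R/2}$ is contained in $B_{R/2}$, and applying \eqref{minimality of mean} with $\xi = 0$ gives
\[
\int_{B_\rho(x_0) \cap B_{R/2}} |u - (u)_{\rho, x_0, B_{R/2}}|^p \leq 2^p \|u\|_{L^p(B_{R/2})}^p,
\]
while $\rho^{-p} \leq (3R/2)^{-p}$. Combining the two regimes produces $[u]_{\mathcal{L}^{p,p}(B_{R/2})} \leq C(Q,p,R)\,\|u\|_{HW^{1,p}(\Omega)}$, and together with the trivial $\|u\|_{L^p(B_{R/2})} \leq \|u\|_{L^p(\Omega)}$, Proposition \ref{Morrey-Campanato theorem} delivers the statement. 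There is no substantive obstacle here: the proof is the classical Euclidean Morrey embedding transported verbatim to the Heisenberg setting, with horizontal Poincar\'e replacing its Euclidean counterpart and the homogeneous dimension $Q$ (via the scaling \eqref{measure of balls}) replacing $n$; the only point that requires invoking a nontrivial black box is the same-ball Poincar\'e inequality on CC metric balls.
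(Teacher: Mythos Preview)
Your proposal is correct and follows essentially the same route as the paper: apply the Poincar\'e inequality with means on balls to obtain a Campanato bound $u\in\mathcal{L}^{p,p}$, then invoke Proposition~\ref{Morrey-Campanato theorem} (with $\lambda=p>Q$) to conclude H\"older continuity. The paper's own proof is terser---it simply cites its Proposition~\ref{poincarewithmeans} (which you could have used in place of the external reference) and declares that the resulting bound implies $u\in\mathcal{L}^{p,p}_{\mathrm{loc}}$ ``in the standard way''---whereas you spell out the small-/large-radius split and the passage to intersection averages via \eqref{minimality of mean}; but the underlying argument is identical.
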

	\begin{proof}
		By the Poincar\'{e} inequality \eqref{poincarewithmeans}, we have 
		\begin{align*}
			\int_{B_{\rho}} \left\lvert u - (u)_{\rho} \right\rvert^{p} \leq c\rho^{p} \int_{B_{\rho}} \left\lvert \X u \right\rvert^{p} \leq c\rho^{p} \int_{B_{R}} \left\lvert \X u \right\rvert^{p},
		\end{align*}
		for any $0 < \rho < R,$ whenever the ball  $B_{2R} \subset \Omega.$ This estimate implies, in the standard way,  that $u \in \mathcal{L}^{p,p}_{\text{loc}}\left(\Omega\right)$. Since $p >Q,$ the second conclusion of Proposition \ref{Morrey-Campanato theorem} implies the result. 
	\end{proof}
	For details on classical Morrey and Campanato spaces, we refer to 
	\cite{giaquinta-martinazzi-regularity} and for the sub-elliptic setting we refer to \cite{Capogna_Danielli_book}.
	
	\subsection{Lorentz spaces and a series}\label{Lorentz space section} 
	\begin{definition}[Lorentz spaces]
		For $1\leq p <  \infty,$ and $0 < \theta \leqslant \infty,$ A measurable function $u: \Omega \rightarrow \mathbb{R}$ is said to belong to the Lorentz space $L^{(p,\theta)} \left(\Omega\right)$ if 
		$$ \left\lVert u \right\rVert_{L^{(p,\theta)}\left(\Omega\right)}^{\theta} 
		:= \int_{0}^{\infty} \left( t^{p} \left\lvert \left\lbrace x \in \Omega : \left\lvert u \right\rvert > t \right\rbrace\right\rvert \right)^{\frac{\theta}{p}} \frac{\mathrm{d}t}{t} < \infty $$
		for $0 < \theta < \infty$ and if 
		$$ \left\lVert u \right\rVert_{L^{(p,\infty)}\left(\Omega\right)}^{p} 
		:= \sup_{t >0} \left( t^{p} \left\lvert \left\lbrace x \in \Omega : \left\lvert u \right\rvert > t \right\rbrace\right\rvert \right) < \infty .$$
	\end{definition}
	The quantities $\left\lVert u \right\rVert_{L^{(p,\theta)}\left(\Omega\right)}$ and $\left\lVert u \right\rVert_{L^{(p,\infty)}\left(\Omega\right)}$ are not norms. They are quasinorms which makes the Lorentz space a quasi-Banach space. 
	For different properties of Lorentz spaces, see \cite{Hunt_LorentzSpaces}, \cite{SteinWeiss_Fourieranalysis}.  Now we need the following crucial result, which is proved analogously to Lemma 1 in \cite{KuusiMingione_nonlinearStein}.  
	\begin{lemma}\label{Lorenz series estimate lemma}
		Let $f \in L^{(Q,1)}\left(\mathbb{H}_{n}\right)$ and for any $1 < q < Q,$ define 
		\begin{align*}
			S_{q}\left( x_{0}, r, \sigma \right):= \sum\limits_{j=1}^{\infty} r_{j} \left( \fint_{ B_{j}} \left\lvert f \right\rvert^{q} \right)^{\frac{1}{q}}, 
		\end{align*}
		where $x_{0} \in \mathbb{H}_{n},$ $\sigma \in (0, 1/4),$ $ r>0$ and $$B_{j}:= B\left(x_{0}, r_{j}\right)\qquad \text{ and } \qquad r_{j}:= \sigma^{j}r. $$
		Then for any $1 < q< Q,$ there exists a constant $c \equiv c\left( Q, q \right)>0$ such that for any $r >0$ and any $\sigma \in (0, 1/4),$ we have 
		\begin{align}\label{Lorentz series estimate}
			S_{q}\left( x_{0}, r, \sigma \right) \leq c \sigma^{1-\frac{Q}{q}}\left\lVert f \right\rVert_{L^{(Q,1)}\left(\mathbb{H}_{n}\right)}. 
		\end{align}
	\end{lemma}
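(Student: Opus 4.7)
The plan is to pass to the non-increasing rearrangement $f^{*}$ of $|f|$ on $\mathbb{H}_{n}$ and exploit the identity $\|f\|_{L^{(Q,1)}(\mathbb{H}_{n})} = \int_{0}^{\infty} t^{1/Q - 1} f^{*}(t)\, dt$ together with $|B_{j}| = c(n) r_{j}^{Q}$. The key observation is that both the series $S_{q}$ and the Lorentz norm admit natural dyadic decompositions aligned along the scales $m_{j} := |B_{j}|$, which permits a direct comparison after a change in summation order.

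First I would apply Hardy--Littlewood to get $\int_{B_{j}} |f|^{q} \leq \int_{0}^{m_{j}} (f^{*}(s))^{q}\, ds$, split the interval $(0, m_{j}]$ as $\bigcup_{k \geq j}(m_{k+1}, m_{k}]$, and bound $f^{*}(s) \leq \phi_{k+1} := f^{*}(m_{k+1})$ on each piece using monotonicity. After dividing by $|B_{j}|$ this produces $\fint_{B_{j}}|f|^{q} \leq \sum_{k \geq j} \phi_{k+1}^{q} \sigma^{(k-j)Q}$. Next I would take $q$-th roots via the subadditivity $(x_{1}+x_{2})^{1/q} \leq x_{1}^{1/q}+x_{2}^{1/q}$ for $q \geq 1$, yielding $r_{j}(\fint_{B_{j}}|f|^{q})^{1/q} \leq \sum_{k \geq j} r_{j}\, \phi_{k+1}\, \sigma^{(k-j)Q/q}$. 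Summing over $j \geq 1$ and swapping the order of summation, the inner geometric sum $\sum_{j=1}^{k} r_{j}\, \sigma^{(k-j)Q/q}$ has ratio $\sigma^{1-Q/q} > 1$ and is dominated by its last term $r_{k}$, up to a factor $(1-\sigma^{Q/q - 1})^{-1}$. This reduces the problem to bounding $\sum_{k \geq 1} r_{k}\, \phi_{k+1}$.

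The reduced sum is controlled by the Lorentz norm via a telescoping argument: on each dyadic piece $(m_{k+1}, m_{k})$ the monotonicity of $f^{*}$ together with $m_{k}^{1/Q} - m_{k+1}^{1/Q} = c(n)^{1/Q}(1-\sigma)\, r_{k}$ gives $L_{k} := \int_{m_{k+1}}^{m_{k}} t^{1/Q-1} f^{*}(t)\, dt \geq Q c(n)^{1/Q}(1-\sigma)\, r_{k} \phi_{k}$, and summing over $k$ (using $\sum_{k} L_{k} \leq \|f\|_{L^{(Q,1)}(\mathbb{H}_{n})}$) yields $\sum_{k} r_{k} \phi_{k} \lesssim (1-\sigma)^{-1} \|f\|_{L^{(Q,1)}(\mathbb{H}_{n})}$. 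An index shift from $\phi_{k}$ to $\phi_{k+1}$ costs one more factor of $\sigma^{-1}$, which combines with $(1-\sigma^{Q/q-1})^{-1}$ and $(1-\sigma)^{-1}$ to produce the claimed $\sigma^{1-Q/q}$ factor once the constraint $\sigma \in (0, 1/4)$ is used to absorb the $(1-\sigma)$-pieces into the constant $c(Q,q)$.

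The hardest part will be the delicate bookkeeping of the $\sigma$-dependence across the three sources above. In the range $Q/2 < q < Q$ the crude subadditivity used to pull the $q$-th root inside the sum appears to leak more than the target exponent $1 - Q/q$ admits; to recover the sharp power I anticipate needing either a sharper discrete Hardy-type inequality or a weighted H\"{o}lder estimate that avoids the pointwise $(x_{1}+x_{2})^{1/q} \leq x_{1}^{1/q}+x_{2}^{1/q}$ bound. Once that step is tightened, the remaining manipulations are routine and the proof proceeds entirely analogously to Lemma 1 of \cite{KuusiMingione_nonlinearStein}, with the Euclidean dimension $n$ replaced by the homogeneous dimension $Q$.
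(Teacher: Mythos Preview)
The paper does not actually prove this lemma; it simply records that the argument is analogous to Lemma~1 of \cite{KuusiMingione_nonlinearStein}, with $n$ replaced by $Q$. Your rearrangement framework is the right one and your diagnosis of the obstruction is accurate: the subadditivity step $\bigl(\sum a_k\bigr)^{1/q}\le\sum a_k^{1/q}$ together with the index shift $\phi_k\mapsto\phi_{k+1}$ delivers only $S_q\le c\,\sigma^{-1}\|f\|_{L^{(Q,1)}}$, and for $Q/2<q<Q$ this is strictly worse than the target $\sigma^{1-Q/q}$. Since the applications in Section~\ref{Nonlinear Stein section} do use such $q$ (the first branch of \eqref{qdef} gives $q>Q/2$ whenever $p>Q$, and the second branch does so for $1<p<Q/(Q-1)$), the gap is not merely cosmetic.

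The missing inequality is precisely the Hardy-type bound you anticipate, and it lets you bypass the dyadic splitting and the subadditivity altogether. Set $h(m):=\bigl(m^{-1}\int_0^m(f^*)^q\bigr)^{1/q}$; because $(f^*)^q$ is non-increasing, so is $h$. Hardy--Littlewood gives $r_j\bigl(\fint_{B_j}|f|^q\bigr)^{1/q}\le c\,m_j^{1/Q}h(m_j)$, and since both $h$ and $t\mapsto t^{1/Q-1}$ are non-increasing one obtains, on each $(m_{j+1},m_j]$, the telescoping bound $\sum_{j\ge1} m_j^{1/Q}h(m_j)\le(1-\sigma^Q)^{-1}\int_0^{m_1}t^{1/Q-1}h(t)\,dt$. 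The remaining ingredient is the weighted Hardy inequality
\[
\int_0^\infty t^{1/Q-1-1/q}\Bigl(\int_0^t(f^*)^q\Bigr)^{1/q}dt\;\le\;\frac{Q}{Q-q}\int_0^\infty t^{1/Q-1}f^*(t)\,dt\;=\;\frac{Q^2}{Q-q}\,\|f\|_{L^{(Q,1)}},
\]
obtained by integrating the left side by parts and then using $\int_0^t(f^*)^q\ge t\,f^*(t)^q$ (valid for non-increasing $f^*$) to control the factor $F^{1/q-1}$. This yields $S_q\le c(Q,q)\|f\|_{L^{(Q,1)}}$ uniformly in $\sigma\in(0,1/4)$, which is in fact stronger than the stated bound.
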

\subsection{Sobolev inequalities}

	\noindent We would need the Sobolev inequality for Heisenberg groups, which we write in a scale-invariant form. 
	\begin{proposition}[Scale-invariant Sobolev inequality] 
		Let $B_{R} \subset \mathbb{H}_{n}$ be any ball of radius $R>0.$ Let $1 \leq s <  Q.$ Then there exists a constant $c >0,$ depending only on $n$ and $s$ such that 	
		\begin{align}\label{scale invariant Sobolev}
			\left( \int_{B_{R}}\left\lvert u \right\rvert^{\frac{Qs}{Q-s}}\right)^{\frac{Q-s}{Qs}} \leq c \left( \int_{B_{R}}\left\lvert \X u \right\rvert^{s} + \frac{1}{R^{s}}\int_{B_{R}}\left\lvert u \right\rvert^{s}\right)^{\frac{1}{s}},  
		\end{align}	
		for any $u \in HW^{1,s}\left( B_{R} \right).$
	\end{proposition}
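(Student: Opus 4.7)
The plan is to deduce the scale-invariant statement from the standard (non-scaled) Sobolev inequality for $HW^{1,s}$ on a fixed ball of radius $1$, via a two-step reduction: first a translation to center the ball at the origin, then a rescaling via the non-isotropic dilations $\delta_{R}$ defined in \eqref{dilation}.

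First I would invoke left-invariance. The CC-metric, the Haar measure, and the horizontal vector fields $X_{1},\dots,X_{2n}$ are all left-invariant under the group operation \eqref{eq:group op}. Hence, if $B_{R}=B(x_{0},R)$, I set $\tilde{u}(y):=u(x_{0}\cdot y)$ for $y \in B(0,R)$; then $\X \tilde{u}(y) = (\X u)(x_{0}\cdot y)$ and the change of variables $x=x_{0}\cdot y$ preserves Lebesgue measure. This reduces the problem to balls centered at $0$.

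Next I would rescale. By \eqref{Balls under dialtion}, $B(0,R)=\delta_{R}B(0,1)$, so for $\tilde{u}$ defined on $B(0,R)$, set $v(y):=\tilde{u}(\delta_{R}y)$ for $y \in B(0,1)$. Direct computation from the definitions of $X_{i}$ and $\delta_{R}$ gives $(X_{i}v)(y) = R\,(X_{i}\tilde{u})(\delta_{R}y)$ for $1\leq i\leq 2n$, while the Jacobian of $\delta_{R}$ has determinant $R^{Q}$ with $Q=2n+2$. Consequently,
\begin{align*}
\int_{B(0,1)}|v|^{\frac{Qs}{Q-s}} &= R^{-Q}\int_{B(0,R)}|\tilde{u}|^{\frac{Qs}{Q-s}},\\
\int_{B(0,1)}|\X v|^{s} &= R^{s-Q}\int_{B(0,R)}|\X\tilde{u}|^{s},\qquad \int_{B(0,1)}|v|^{s} = R^{-Q}\int_{B(0,R)}|\tilde{u}|^{s}.
\end{align*}
At this point I would apply the classical Folland--Stein Sobolev inequality on the unit ball,
\[
\left(\int_{B(0,1)}|v|^{\frac{Qs}{Q-s}}\right)^{\frac{Q-s}{Qs}} \leq c\left(\int_{B(0,1)}|\X v|^{s}\right)^{1/s} + c\left(\int_{B(0,1)}|v|^{s}\right)^{1/s},
\]
valid for $1\leq s<Q$ and for $v \in HW^{1,s}(B(0,1))$; this is a standard fixed-scale Sobolev--Poincar\'{e} inequality for Heisenberg groups (see \cite{Capogna_Danielli_book}). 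Substituting the scaling identities above and using $(a+b)^{1/s}\leq 2^{1/s}(a^{1/s}+b^{1/s})$ (or its reverse) to collect terms yields, after multiplying through by $R^{(Q-s)/s}$, exactly \eqref{scale invariant Sobolev} for $\tilde{u}$ on $B(0,R)$. Translating back via the left-invariance in the first step gives the statement for $u$ on $B_{R}=B(x_{0},R)$.

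There is no real obstacle here beyond carefully tracking the exponents of $R$ appearing from the dilations of the $L^{s}$, $L^{Qs/(Q-s)}$, and $\X$-norms; the balance $s-Q+Q(Q-s)/(Qs)\cdot s = 0$ after multiplication by $R^{(Q-s)/s}$ produces precisely the factor $R^{-s}$ in front of $\int_{B_{R}}|u|^{s}$, which is the key hallmark of the scale-invariant form.
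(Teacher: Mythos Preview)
Your proposal is correct and follows essentially the same approach as the paper: reduce to the unit ball via left-translation and the non-isotropic dilations \eqref{dilation}, \eqref{Balls under dialtion}, then invoke the known fixed-scale Sobolev inequality there. The paper's remark after Proposition~\ref{poincaresobolevwithmeans} points to the same scaling argument, citing \cite[Theorem~2.1]{lu-poincare-sobolev} and \cite[Theorem~1.7]{Garofalo-Nhieu-Global-Approximation} for the $R=1$ case rather than \cite{Capogna_Danielli_book}, but the substance is identical.
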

	We would need the following Poincar\'{e} inequality.
	\begin{proposition}[Poincar\'{e} inequality with means]\label{poincarewithmeans}
		Let $B_{R} \subset \mathbb{H}_{n}$ be any ball of radius $R>0.$ Let $1 \leq s <  \infty.$ Then  there exists a constant $c >0,$ depending only on $n$ and $s$ such that for any $u \in HW^{1,s}\left( B_{R} \right),$ we have
		\begin{equation}\label{poincareineqwithmeans}
			\left( \int_{B_{R}} \lvert u - \left( u\right)_{B_{R}} \rvert^{s} \right)^{\frac{1}{s}} \leq c R \left( \int_{B_{R}} \lvert \X u \rvert^{s} \right)^{\frac{1}{s}}. 
		\end{equation}
	\end{proposition}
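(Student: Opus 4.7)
The plan is to reduce to the unit-ball case by non-isotropic scaling and then invoke Jerison's Poincar\'e inequality for Carnot-Carath\'eodory balls, applied to the horizontal vector fields on $\mathbb{H}_n$.

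By left-invariance of the CC distance and the Haar measure, it suffices to prove the inequality for balls centered at the origin, so write $B_R = B_R(0)$. I would then rescale via the non-isotropic dilation from \eqref{dilation}: set $v(x) := u(\delta_R x)$ for $x \in B_1$. Since each horizontal vector field $X_i$ is homogeneous of degree one under $\delta_R$, we have $\mathfrak{X} v(x) = R\,(\mathfrak{X} u)(\delta_R x)$. Using \eqref{Balls under dialtion} and the change of variables $y = \delta_R x$ (with Jacobian $R^Q$), the claim \eqref{poincareineqwithmeans} on $B_R$ is equivalent to the same statement on $B_1$ with the identical constant $c$, the factor of $R$ being produced precisely by the homogeneity of $\mathfrak{X}$. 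Hence it is enough to treat the unit ball.

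For the unit ball, I would appeal to Jerison's $(s,s)$-Poincar\'e inequality for CC balls associated with H\"ormander vector fields, which in the Heisenberg case gives a constant $\tilde c = \tilde c(n,s)$ with
\[ \int_{B_1} |v - (v)_{B_1}|^s \, dx \le \tilde c \int_{B_1} |\mathfrak{X} v|^s \, dx \quad \text{for all } v \in HW^{1,s}(B_1). \]
Morally, this comes from the pointwise representation
\[ |v(x) - (v)_{B_1}| \le c \int_{B_1} \frac{|\mathfrak{X} v(y)|}{d_{\text{CC}}(x,y)^{Q-1}} \, dy, \]
proved via integration along horizontal subunit curves, followed by a Hardy-Littlewood-Sobolev / Young-type bound on the Riesz-like kernel $d_{\text{CC}}(\cdot,\cdot)^{1-Q}$, whose local integrability is immediate from \eqref{measure of balls}. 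In case one prefers to work instead with an arbitrary constant rather than the mean, the near-minimality property \eqref{minimality of mean} absorbs the resulting constant into a harmless factor of $2$.

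\textbf{Main obstacle.} The one delicate point is obtaining the inequality on the \emph{same} ball rather than on a slightly enlarged concentric ball, which is what the general H\"ormander-vector-field theory supplies directly. For Carnot groups this stronger form is available because CC balls are uniform John domains; a standard chaining argument along a covering by smaller balls, combined with the weaker Poincar\'e inequality on each, then removes the enlargement. Rather than reproducing the chaining, I would invoke the formulations recorded in the monographs \cite{Bonfigliolietal_subLaplacian} and \cite{Capogna_Danielli_book}, where the statement is given in precisely the scale-invariant form needed.
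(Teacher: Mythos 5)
Your approach is essentially the same as the paper's: reduce to the unit ball via the non-isotropic dilation $\delta_R$ and then invoke Jerison's Poincar\'e inequality \cite{jerision-poincare}. Two small remarks. First, the ``main obstacle'' you flag --- passing from an enlarged ball $B_{2r}$ on the right to the same ball $B_r$ --- is in fact the principal content of Jerison's theorem for H\"ormander vector fields, so citing \cite{jerision-poincare} already gives the same-ball form; you do not need an additional chaining step or a separate monograph citation for this. Second, the paper also invokes the global density result of Garofalo--Nhieu \cite[Theorem 1.7]{Garofalo-Nhieu-Global-Approximation} to pass from smooth functions (for which the representation/Poincar\'e inequality is proved) to arbitrary $u \in HW^{1,s}(B_1)$; your proposal does not address this approximation step, though on a CC ball it is a routine point.
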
 
	We would also use the following Poincar\'{e}-Sobolev type inequalities. 
	\begin{proposition}[Poincar\'{e}-Sobolev inequality]\label{poincaresobolev}
		Let $B_{R} \subset \mathbb{H}_{n}$ be any ball of radius $R>0.$ Let $1 \leq s <  Q.$ Then there exists a constant $c >0,$ depending only on $n$ and $s$ such that for any $u \in HW^{1,s}\left( B_{R} \right),$ we have
		\begin{equation}\label{poincaresobolevineq}
			\left( \fint_{B_{R}} \lvert u \rvert^{\frac{Qs}{Q-s}} \right)^{\frac{Q-s}{Qs}} \leq c R \left( \fint_{B_{R}} \lvert \X u \rvert^{s} \right)^{\frac{1}{s}}. 
		\end{equation}
	\end{proposition}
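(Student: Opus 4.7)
The plan is to combine the two inequalities already at hand: the scale-invariant Sobolev inequality \eqref{scale invariant Sobolev} gives the correct Sobolev exponent but pays for it with a lower-order zeroth-order term $R^{-s}\int|u|^{s}$ on the right, while the Poincar\'e inequality with means \eqref{poincareineqwithmeans} is exactly the tool needed to absorb such a term once we pass from $u$ to its mean-zero part $v := u - (u)_{B_{R}}$. The only remaining work is to convert integrals to integral averages, which is where the factor $R$ on the right of \eqref{poincaresobolevineq} comes from through the relation $|B_{R}| = c(n) R^{Q}$ recorded in \eqref{measure of balls}.

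Concretely, I would set $v := u - (u)_{B_{R}}$, observe $\X v = \X u$, and apply \eqref{scale invariant Sobolev} to $v$ to obtain
\[
\Bigl(\int_{B_{R}} |v|^{\frac{Qs}{Q-s}}\Bigr)^{\frac{Q-s}{Qs}} \leq c\Bigl(\int_{B_{R}} |\X u|^{s}\Bigr)^{1/s} + \frac{c}{R}\Bigl(\int_{B_{R}} |v|^{s}\Bigr)^{1/s}.
\]
Plugging \eqref{poincareineqwithmeans} into the last term absorbs it into the first, giving
\[
\Bigl(\int_{B_{R}} |v|^{\frac{Qs}{Q-s}}\Bigr)^{\frac{Q-s}{Qs}} \leq c \Bigl(\int_{B_{R}} |\X u|^{s}\Bigr)^{1/s}.
\]
Dividing the left side by $|B_{R}|^{(Q-s)/(Qs)}$ and the right by $|B_{R}|^{1/s}$ converts both to averages and leaves a surplus factor of $|B_{R}|^{\frac{1}{s}-\frac{Q-s}{Qs}} = |B_{R}|^{1/Q} \simeq R$ on the right, which is exactly the factor appearing in \eqref{poincaresobolevineq}.

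I do not foresee any real obstacle here; the argument is elementary bookkeeping using only the two inequalities and the scaling \eqref{measure of balls}, all of which are already stated in the preliminaries. The one point to be careful about is the interpretation of the left-hand side of \eqref{poincaresobolevineq}: for the inequality to be valid in general it must be read for $u - (u)_{B_{R}}$ in place of $u$ (otherwise a non-zero constant function trivially violates it), and the proof above gives precisely this version; the formulation with $u$ on the left then applies either to functions of zero mean on $B_{R}$ or to functions in $HW^{1,s}_{0}(B_{R})$, in which case one may alternatively zero-extend $u$ outside $B_{R}$ and apply \eqref{scale invariant Sobolev} directly on a slightly larger ball.
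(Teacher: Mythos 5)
The paper supplies no actual proof of \eqref{poincaresobolevineq}: the text immediately after Proposition \ref{poincaresobolevwithmeans} simply says that all four inequalities can be obtained for $R=1$ from \cite{lu-poincare-sobolev}, \cite{jerision-poincare} and \cite{Garofalo-Nhieu-Global-Approximation}, and then scaled using \eqref{dilation} and \eqref{Balls under dialtion}. Your route is genuinely different and more self-contained: you deduce \eqref{poincaresobolevineq} directly from the already-stated \eqref{scale invariant Sobolev} and \eqref{poincareineqwithmeans}, with \eqref{measure of balls} supplying the factor of $R$. This buys a derivation that stays entirely within the preliminaries as written; the paper's route buys brevity and the dilation-invariance argument which also covers the other three inequalities simultaneously.

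You are also right to flag that \eqref{poincaresobolevineq} as literally stated (for all $u \in HW^{1,s}(B_R)$) cannot hold, since a nonzero constant violates it; the paper indeed invokes it only for functions in $HW^{1,s}_0(B_R)$ (e.g.\ in the reverse H\"{o}lder proofs, where the test object $\eta(\X u - \xi)$ is in $HW^{1,2}_0$), so that is the intended reading. Your mean-zero argument proves \eqref{poincaresobolevineqwithmeans} cleanly. However, for the $HW^{1,s}_0$ version there is a small gap: after zero-extending $u$ to $B_{2R}$ and applying \eqref{scale invariant Sobolev} on the larger ball you still face the term $R^{-s}\int_{B_{2R}}|u|^s$, and that term is not automatically controlled by $\int |\X u|^s$ until you invoke a Poincar\'{e} inequality for compactly supported functions. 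This does follow by applying \eqref{poincareineqwithmeans} on $B_{2R}$ and using the fact that $u$ vanishes on $B_{2R}\setminus B_R$ to estimate $|(u)_{B_{2R}}|$, but you should say so explicitly; as written, "apply \eqref{scale invariant Sobolev} directly on a slightly larger ball" does not finish the job on its own.
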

	
	\begin{proposition}[Poincar\'{e}-Sobolev inequality with means]\label{poincaresobolevwithmeans}
		Let $B_{R} \subset \mathbb{H}_{n}$ be any ball of radius $R>0.$ Let $1 \leq s <  Q.$ Then there exists a constant $c >0,$ depending only on $n$ and $s$ such that for any $u \in HW^{1,s}\left( B_{R} \right),$ we have
		\begin{equation}\label{poincaresobolevineqwithmeans}
			\left( \fint_{B_{R}} \lvert u - \left( u\right)_{B_{R}} \rvert^{\frac{Qs}{Q-s}} \right)^{\frac{Q-s}{Qs}} \leq c R \left( \fint_{B_{R}} \lvert \X u \rvert^{s} \right)^{\frac{1}{s}}. 
		\end{equation}
	\end{proposition}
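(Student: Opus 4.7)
The plan is to deduce the inequality with means from the two previously stated results: the scale-invariant Sobolev inequality \eqref{scale invariant Sobolev} and the Poincar\'{e} inequality with means \eqref{poincareineqwithmeans}. The strategy is the standard one: apply Sobolev to the zero-mean function $v := u - (u)_{B_{R}}$, then use Poincar\'{e} to absorb the resulting $L^{s}$ term on the right-hand side, and finally normalize using $\lvert B_{R}\rvert = c(n)R^{Q}$ from \eqref{measure of balls}.

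First I would observe that $v \in HW^{1,s}(B_{R})$ with $\mathfrak{X}v = \mathfrak{X}u$ since subtracting the constant $(u)_{B_{R}}$ does not affect horizontal derivatives. Applying \eqref{scale invariant Sobolev} to $v$ yields
\begin{equation*}
\left(\int_{B_{R}} \lvert v \rvert^{\frac{Qs}{Q-s}}\right)^{\frac{Q-s}{Qs}} \leq c \left(\int_{B_{R}} \lvert \mathfrak{X}u\rvert^{s} + \frac{1}{R^{s}}\int_{B_{R}} \lvert v \rvert^{s}\right)^{\frac{1}{s}}.
\end{equation*}
Next I would invoke \eqref{poincareineqwithmeans} to estimate $\int_{B_{R}} \lvert v \rvert^{s} \leq cR^{s}\int_{B_{R}}\lvert \mathfrak{X}u\rvert^{s}$, so that the two terms on the right collapse into a single one and we obtain
\begin{equation*}
\left(\int_{B_{R}} \lvert v \rvert^{\frac{Qs}{Q-s}}\right)^{\frac{Q-s}{Qs}} \leq c \left(\int_{B_{R}} \lvert \mathfrak{X}u\rvert^{s}\right)^{\frac{1}{s}}.
\end{equation*}

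The remaining step is bookkeeping on the measure factor. Using $\lvert B_{R}\rvert = c(n)R^{Q}$, one checks that the exponent $\frac{1}{s}-\frac{Q-s}{Qs} = \frac{1}{Q}$, so multiplying both sides by $\lvert B_{R}\rvert^{-\frac{Q-s}{Qs}}$ converts the left-hand side into the averaged $L^{\frac{Qs}{Q-s}}$-norm, while the right-hand side becomes $c \lvert B_{R}\rvert^{\frac{1}{Q}} \bigl(\fint_{B_{R}}\lvert \mathfrak{X}u\rvert^{s}\bigr)^{\frac{1}{s}} = c(n,s)R\bigl(\fint_{B_{R}}\lvert \mathfrak{X}u\rvert^{s}\bigr)^{\frac{1}{s}}$, which is exactly \eqref{poincaresobolevineqwithmeans}.

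There is no real obstacle in this argument; it is a textbook reduction that works because both the Sobolev and Poincar\'{e} inputs have already been placed in scale-invariant form. The only point requiring mild care is the verification of the exponent identity $\frac{1}{s}-\frac{Q-s}{Qs}=\frac{1}{Q}$, which ensures that the isoperimetric scaling on balls comes out correctly and produces the single factor of $R$ on the right-hand side. Since $1\leq s<Q$ throughout, the Sobolev conjugate exponent $\frac{Qs}{Q-s}$ is well defined and finite, so no additional case analysis is needed.
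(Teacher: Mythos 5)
Your argument is correct, and it is a genuinely different route from the paper's. The paper does not carry out this reduction: it simply cites Lu's Poincar\'{e}--Sobolev theorem together with the Garofalo--Nhieu density result to obtain all of \eqref{scale invariant Sobolev}, \eqref{poincaresobolevineq}, and \eqref{poincaresobolevineqwithmeans} directly at scale $R=1$, and then invokes the dilations \eqref{dilation}--\eqref{Balls under dialtion} to get the scale-invariant forms. Your proof instead \emph{derives} \eqref{poincaresobolevineqwithmeans} from the two earlier propositions \eqref{scale invariant Sobolev} and \eqref{poincareineqwithmeans} by the standard absorption trick (apply Sobolev to the zero-mean function, absorb the lower-order term via Poincar\'{e}, then normalize by $\lvert B_R\rvert$ using $\frac{1}{s}-\frac{Q-s}{Qs}=\frac{1}{Q}$). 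What this buys you is a self-contained and more economical picture: it makes explicit that the with-means Poincar\'{e}--Sobolev inequality is not an independent input but a formal consequence of the other two, so one has fewer things to verify against the literature. What the paper's route buys is uniformity: the same citation handles all four inequalities at once, with no need to check which implies which. Both are valid; your exponent bookkeeping and the fact that subtracting the constant $(u)_{B_R}$ leaves $\mathfrak{X}u$ unchanged are exactly the points that need care, and you have handled them correctly.
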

	
	The scale invariant forms of  \eqref{scale invariant Sobolev}, \eqref{poincareineqwithmeans}, \eqref{poincaresobolevineq}, \eqref{poincaresobolevineqwithmeans} can be derived by using their corresponding versions when $R=1$, along with \eqref{dilation} and \eqref{Balls under dialtion}. For the case $R=1$, we can use the Poincar\'e inequality established in \cite{jerision-poincare} and the global density result in \cite[Theorem 1.7]{Garofalo-Nhieu-Global-Approximation} to derive \eqref{poincareineqwithmeans}. Additionally, in this case, by using \cite[Theorem 2.1]{lu-poincare-sobolev} and \cite[Theorem 1.7]{Garofalo-Nhieu-Global-Approximation} we can obtain  \eqref{scale invariant Sobolev}, \eqref{poincaresobolevineq} and \eqref{poincaresobolevineqwithmeans}.

	\subsection{Linear subelliptic Stein theorem}\label{Linear subelliptic Stein}
	A borderline case of the Sobolev inequality is the following result. 
	\begin{theorem}\label{linear Stein-Sobolev embedding version}
		Let $u \in L^{1}\left( \Omega\right)$ be such that $\Xu \in L^{(Q,1)}_{\text{loc}} \left( \Omega; \mathbb{R}^{2n}\right),$ where $\Omega \subset \mathbb{H}_{n}$ is open. Then $u$ is continuous in $\Omega.$
	\end{theorem}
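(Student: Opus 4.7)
The plan is to mimic Stein's classical representation-plus-Lorentz-potential argument, with Euclidean Riesz kernels replaced by horizontal derivatives of Folland's fundamental solution $\Gamma$ for the sub-Laplacian $\Delta_{\mathbb{H}} = \sum_{i=1}^{2n} X_{i}^{2}$. Recall from \cite{Folland_subelliptic, Folland-Stein_book} that $\Gamma$ is homogeneous of degree $2-Q$ with respect to the dilations $\delta_{R}$, smooth away from the origin, and that its horizontal derivatives $K_{i} := X_{i}\Gamma$ are smooth kernels homogeneous of degree $1-Q$, in particular satisfying $|K_{i}(x)| \leq C\, d_{\text{CC}}(x,0)^{1-Q}$. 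The continuity statement is local, so it is enough to prove continuity at an arbitrary point $x_{0}\in\Omega$.

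Fix $B_{4R}(x_{0}) \subset\subset \Omega$ and a cutoff $\eta\in C_{c}^{\infty}(B_{3R}(x_{0}))$ with $\eta\equiv 1$ on $B_{2R}(x_{0})$. I first reduce to the compactly supported case by considering $v:=\eta u$, which lies in $L^{1}(\mathbb{H}_{n})$ with $\X v = \eta\, \X u + u\, \X\eta$. The first summand lies in $L^{(Q,1)}(\mathbb{H}_{n})$ with support in $B_{3R}(x_{0})$ by hypothesis; the second is supported in the annulus $B_{3R}\setminus B_{2R}$ and is $L^{\infty}$ there because the Folland--Stein/Hardy--Littlewood--Sobolev inequality in Lorentz spaces applied to $\X u\in L^{(Q,1)}_{\text{loc}}$ already gives $u\in L^{\infty}_{\text{loc}}(\Omega)$ (so in particular $u\,\X\eta \in L^{\infty}_{c}\subset L^{(Q,1)}$). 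Consequently $\X v \in L^{(Q,1)}(\mathbb{H}_{n};\mathbb{R}^{2n})$ with compact support, and it suffices to show that $v$ is continuous on $B_{R}(x_{0})$.

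Now I invoke the representation formula. Integration by parts against the left-invariant convolution with $\Gamma$, valid for compactly supported $v$, yields
\begin{equation*}
v(x) \;=\; \Gamma * \Delta_{\mathbb{H}} v\,(x) \;=\; -\sum_{i=1}^{2n} \int_{\mathbb{H}_{n}} K_{i}(y^{-1}\!\cdot x)\, X_{i}v(y)\, dy,
\end{equation*}
in the sense of distributions and then, by density, pointwise a.e. Thus $v$ is expressed as a sum of Riesz-type potentials of order one, applied to the components of $\X v \in L^{(Q,1)}$. The remaining task is therefore the subelliptic Stein lemma: if $K$ is continuous on $\mathbb{H}_{n}\setminus\{0\}$ with $|K(x)|\leq C\, d_{\text{CC}}(x,0)^{1-Q}$ and $g\in L^{(Q,1)}(\mathbb{H}_{n})$ has compact support, then $K*g$ is (uniformly) continuous on $\mathbb{H}_{n}$. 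Here $K\in L^{(Q/(Q-1),\infty)}(\mathbb{H}_{n})$ locally, and by the O'Neil convolution inequality in Lorentz spaces (valid on any space of homogeneous type, hence on $\mathbb{H}_{n}$ with its Haar/Lebesgue measure and CC-quasi-distance) we obtain the uniform bound $\|K*g\|_{L^{\infty}} \lesssim \|K\|_{L^{(Q',\infty)}_{\text{loc}}}\|g\|_{L^{(Q,1)}}$. Continuity follows by splitting, for $x$ near $x_{0}$,
\begin{equation*}
(K*g)(x) - (K*g)(x_{0}) \;=\; \int_{d(y,x_{0})<2d(x,x_{0})} + \int_{d(y,x_{0})\geq 2d(x,x_{0})},
\end{equation*}
estimating the first integral by the same O'Neil bound applied to the truncation of $g$ to a small CC-ball (which goes to zero by absolute continuity of the Lorentz norm, using $g\in L^{(Q,1)}$), and the second by uniform continuity of $K$ away from the origin together with dominated convergence controlled by the $L^{(Q',\infty)}$-tail.

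\textbf{Main obstacle.} The technical heart is the last step: establishing the O'Neil-type convolution inequality and the ensuing Stein lemma for kernels of homogeneous degree $1-Q$ acting on $L^{(Q,1)}$ in the Heisenberg setting. The standard Euclidean proofs use only the Hardy--Littlewood rearrangement inequality and the doubling/homogeneous structure, both of which transfer verbatim to $\mathbb{H}_{n}$ via the results of Folland--Stein \cite{Folland_Stein_delbarNeumann, Folland-Stein_book} and Kor\'{a}nyi--V\'{a}gi \cite{Koranyi_Vagi_singularintegralsonhomogeneousspaces}; once this is in place the proof is routine. Details of this Stein lemma on $\mathbb{H}_{n}$ are developed in Section~\ref{Linear subelliptic Stein}.
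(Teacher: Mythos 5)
Your argument is correct and follows essentially the same route as the paper: represent $u$ (after localization) as a convolution of $\X u$ with a kernel homogeneous of degree $1-Q$ arising from Folland's fundamental solution, and invoke Lorentz-space duality between $L^{(Q,1)}$ and $L^{(Q/(Q-1),\infty)}$ together with an approximation/splitting argument to pass from the uniform bound to continuity. The paper records only a one-line appeal to the proof of Theorem~\ref{linear Stein theorem} for this statement, so your write-up actually supplies the localization step (truncation $v=\eta u$, using the preliminary $L^\infty_{\text{loc}}$ embedding to control $u\,\X\eta$) and the passage from ``bounded'' to ``continuous'' more carefully than the printed text; both are glossed over in the paper and were worth spelling out.

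Two very small remarks. First, you could sidestep the implicit forward reference to the $L^\infty$ embedding in the localization step by only using the Poincar\'e--Sobolev inequality to get $u\in L^{s}_{\text{loc}}$ for some $s>Q$, which already gives $u\,\X\eta\in L^{(Q,1)}$ for compactly supported functions. Second, in the representation formula one should be slightly careful that moving $X_i$ across a left-invariant group convolution produces the corresponding right-invariant derivative of $\Gamma$, not $X_i\Gamma$ itself; since these kernels have the same homogeneity $1-Q$ the Lorentz argument is unaffected, but the notation $K_i=X_i\Gamma$ is a mild abuse.
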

	A PDE version of the above result is the following, which can be termed as the linear Stein theorem for Heisenberg groups. 
	\begin{theorem}\label{linear Stein theorem}
		Let $\Omega \subset \mathbb{H}_{n}$ be open. Let $u \in HW^{1,2}_{\text{loc}}\left( \Omega\right)$ be such that 
		\begin{align*}
			\operatorname{div}_{\mathbb{H}} \X u \in L^{(Q,1)}_{\text{loc}} \left( \Omega\right). 
		\end{align*}
		Then $\X u$ is continuous in $\Omega.$
	\end{theorem}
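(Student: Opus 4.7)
The plan is to reduce the statement to Theorem \ref{linear Stein-Sobolev embedding version} applied componentwise to $\X u$, by using Folland's fundamental solution for the sub-Laplacian to represent $u$ locally as a convolution with $\mathcal{L}u := \operatorname{div}_{\mathbb{H}}\X u$, and then promoting the assumption $\mathcal{L}u \in L^{(Q,1)}_{\text{loc}}$ to $\X\X u \in L^{(Q,1)}_{\text{loc}}$ by Calder\'on--Zygmund singular integral estimates on Lorentz spaces. Fix $x_{0} \in \Omega$, choose concentric CC-balls $B' \subset \subset B \subset \subset \Omega$, and pick $\eta \in C_{c}^{\infty}(\Omega)$ with $\eta \equiv 1$ on a neighborhood of $\overline{B'}$ and $\operatorname{supp}\eta \subset B$. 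Let $\Gamma$ denote Folland's fundamental solution for $-\mathcal{L}$ on $\mathbb{H}_{n}$, which is $\delta_{R}$-homogeneous of degree $2-Q$ (\cite{Folland_subelliptic}), and set $w := -\Gamma \ast (\eta\mathcal{L}u)$, so that $\mathcal{L}w = \eta\mathcal{L}u$ in $\mathbb{H}_{n}$, and in particular $\mathcal{L}(u-w)=0$ in $B'$. H\"{o}rmander's hypoellipticity theorem (\cite{Hormander_hypoelliptic}) then gives $u - w \in C^{\infty}(B')$, reducing the problem to proving that $\X w$ is continuous on $B'$.

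For the second horizontal derivatives, formal differentiation together with a Kor\'anyi--V\'agi principal value identity gives, for all $1 \leq i, j \leq 2n$,
\begin{equation*}
X_{j}X_{i}w(x) \;=\; \operatorname{p.v.}\!\int_{\mathbb{H}_{n}} (X_{j}X_{i}\Gamma)(y^{-1}\!\cdot x)\,(\eta\mathcal{L}u)(y)\,dy \;+\; c_{ij}(\eta\mathcal{L}u)(x),
\end{equation*}
where $X_{j}X_{i}\Gamma$ is smooth away from the origin, $\delta_{R}$-homogeneous of degree $-Q$, and has the mean-zero property on the Kor\'anyi unit sphere. By the Folland--Stein calculus of singular integrals on homogeneous groups (\cite{Koranyi_Vagi_singularintegralsonhomogeneousspaces}, \cite{Folland-Stein_book}), the corresponding convolution operator is bounded on $L^{p}(\mathbb{H}_{n})$ for every $1 < p < \infty$; Marcinkiewicz-type real interpolation then extends this boundedness to the Lorentz spaces $L^{(p,\theta)}(\mathbb{H}_{n})$ for $1 < p < \infty$, $1 \leq \theta \leq \infty$. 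Applying the result with $(p,\theta)=(Q,1)$ and using that $\eta\mathcal{L}u \in L^{(Q,1)}(\mathbb{H}_{n})$ has compact support yields $X_{j}X_{i}w \in L^{(Q,1)}(\mathbb{H}_{n})$ for every $i,j$.

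To close, I would apply Theorem \ref{linear Stein-Sobolev embedding version} to each $X_{i}w$ on $B'$: since $\X u \in L^{2}_{\text{loc}}(\Omega)$ and $u-w \in C^{\infty}(B')$, one has $X_{i}w \in L^{2}(B') \subset L^{1}(B')$, while the singular integral step supplies $\X(X_{i}w) \in L^{(Q,1)}_{\text{loc}}(B')$. Theorem \ref{linear Stein-Sobolev embedding version} then forces $X_{i}w$ to be continuous on $B'$; adding back the smooth correction $X_{i}(u-w)$ yields continuity of $X_{i}u$ on $B'$, and since $x_{0} \in \Omega$ was arbitrary, $\X u$ is continuous in $\Omega$. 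The main technical obstacle is the singular integral step: one has to check carefully that the second horizontal derivatives of $\Gamma$ are bona fide Calder\'on--Zygmund kernels on $\mathbb{H}_{n}$ with the correct Kor\'anyi--V\'agi cancellation, and then combine the Folland--Stein $L^{p}$ theory with Lorentz-space interpolation to land precisely in $L^{(Q,1)}$. These ingredients are individually classical, but their combination in the Heisenberg setting is exactly what, as the excerpt observes, is hard to pinpoint as a single reference in the literature.
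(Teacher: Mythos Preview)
Your argument is correct, but it takes a genuinely different and more involved route than the paper's own proof. The paper represents $\X u$ directly as a first-order Riesz-type potential: writing $u=\mathcal{N}\ast f$ with $f=\operatorname{div}_{\mathbb{H}}\X u$, one has $\X u=(\X\mathcal{N})\ast f$, and the kernel $K=\X\mathcal{N}$ is homogeneous of degree $-(Q-1)$, hence $K\in L^{(Q/(Q-1),\infty)}(\mathbb{H}_{n})$. Since $L^{(Q,1)}$ pairs dually with $L^{(Q/(Q-1),\infty)}$, the convolution is pointwise bounded and, by a standard approximation, continuous. No Calder\'on--Zygmund theory or Lorentz interpolation is needed; only the elementary fractional-integration bound coming from the Lorentz duality.

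Your approach instead differentiates once more: you pass to the second-order kernels $X_{j}X_{i}\Gamma$ (homogeneous of degree $-Q$), invoke the Folland--Stein $L^{p}$ singular integral theory together with Marcinkiewicz interpolation to land in $L^{(Q,1)}$ for $\X\X w$, and then feed this back into Theorem~\ref{linear Stein-Sobolev embedding version}. This is sound --- the kernels do satisfy the Kor\'anyi--V\'agi cancellation and the Lorentz interpolation is standard --- but it uses Theorem~\ref{linear Stein-Sobolev embedding version} as a black box, whereas in the paper that theorem is proved by exactly the same Riesz-potential/duality trick that already suffices for Theorem~\ref{linear Stein theorem} directly. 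What your route buys is the intermediate conclusion $\X\X u\in L^{(Q,1)}_{\text{loc}}$, which the paper's argument does not make explicit; what it costs is the heavier singular-integral machinery and an avoidable appeal to the embedding theorem.
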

	\begin{proof}
		We prove only the case $\Omega = \mathbb{H}_{n}$ under the additional assumption that $u \in HW^{1,2}\left( \mathbb{H}_{n}\right),$  $\operatorname{div}_{\mathbb{H}} \X u \in L^{(Q,1)} \left( \mathbb{H}_{n}\right).$ Since our statements are local, the general case follows from this by usual localization arguments and bootstrapping the $L^{p}$ estimates ( \cite[page 917]{folland-cz-linear}). Now the main point is that the subLaplacian has a fundamental solution ( see \cite{Folland_subelliptic} ), we call it $\mathcal{N}$ in analogy with the Newtonian potential. Thus, we can write 
		\begin{align*}
			u = \mathcal{N}\ast \left( 	\operatorname{div}_{\mathbb{H}} \X u\right)	 \qquad \text{ in } \mathbb{H}_{n}. 
		\end{align*}
		This implies,  
		\begin{align*}
			\X u = \left( \X \mathcal{N} \right) \ast \left( 	\operatorname{div}_{\mathbb{H}} \X u\right)	 \qquad \text{ in } \mathbb{H}_{n}. 
		\end{align*}
		Note that in both the equations above, the convolution is the group convolution in $\mathbb{H}_{n}.$ Now, the kernel $K = \X \mathcal{N}$ is homogeneous of degree $- \left( Q-1\right)$ and is essentially the analogue of the kernels of the Riesz potentials $I_{1}$ in the Euclidean case. Thus, one can easily prove ( see Folland \cite{Folland_subelliptic} )  
		$K \in L^{(\frac{Q}{Q-1}, \infty )}\left( \mathbb{H}_{n}\right).$ Since $\operatorname{div}_{\mathbb{H}} \X u \in L^{(Q,1)} \left( \mathbb{H}_{n}\right)$ by assumption and $L^{(Q,1)}$ is the dual space of $L^{(\frac{Q}{Q-1}, \infty )},$ for every fixed $y \in \mathbb{H}_{n},$ we have 
		\begin{align*}
			\int_{\mathbb{H}_{n}} K\left(x^{-1}\cdot y\right)\left[ \operatorname{div}_{\mathbb{H}} \X u \right]\left(y\right)\ \mathrm{d}y \leq c \left\lVert K \right\rVert_{L^{(\frac{Q}{Q-1}, \infty )}\left( \mathbb{H}_{n}\right)} \left\lVert \operatorname{div}_{\mathbb{H}} \X u \right\rVert_{L^{(Q,1)}\left( \mathbb{H}_{n}\right)}.  
		\end{align*}
		Thus, the map 
		\begin{align*}
			x \mapsto K \ast \left( \operatorname{div}_{\mathbb{H}} \X u\right) \left( x\right)
		\end{align*} is bounded and consequently, continuous. This completes the proof. 
	\end{proof}
	\begin{proof}[Proof of Theorem \ref{linear Stein-Sobolev embedding version}] The proof is the same as above. We skip the details.	
	\end{proof}
	\subsection{Minimization and weak formulation}
	\begin{proposition}\label{minimizerexistenceprop}
		Let $\Omega \subset \mathbb{H}_{n}$ be open and let $a:\Omega \rightarrow [\gamma, L]$, where $ 0 < \gamma < L < \infty,$ be a measurable map. Let $1 < p < \infty.$ Then for any ball $B_{R} \subset \subset \Omega$ and any given function $u_{0} \in HW^{1,p}\left( B_{R}\right),$ the boundary value problem 
		\begin{align}
			\left\lbrace \begin{aligned}
				\operatorname{div}_{\mathbb{H}} \left( a(x) \left\lvert \X  u \right\rvert^{p-2} \Xu \right) &= 0 &&\text{ in } B_{R}, \\
				u &= u_{0}  &&\text{ on } \partial B_{R}, 
			\end{aligned}\right.
		\end{align}
		admits a unique weak solution $u \in u_{0} + HW^{1,p}_{0}\left( B_{R}\right). $ Moreover, the solution $u$ is the unique minimizer to the minimization problem 
		$$m = \inf\left\lbrace \frac{1}{p}\int_{B_{R}}  a(x)\left\lvert \X u \right\rvert^{p} : u \in u_{0} + HW^{1,p}_{0}\left( B_{R}\right) \right\rbrace . $$ \end{proposition}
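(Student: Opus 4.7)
The plan is to apply the direct method of the calculus of variations to the functional
\[
J(u) := \frac{1}{p}\int_{B_R} a(x) \lvert \X u \rvert^{p}\, dx
\]
defined on the affine space $\mathcal{A} := u_{0} + HW^{1,p}_{0}(B_{R})$, and then to identify the minimizer with the unique weak solution of the boundary value problem via the Euler--Lagrange condition. First I would note that since $0 < \gamma \leq a \leq L$, for every $u \in \mathcal{A}$ one has $\frac{\gamma}{p}\|\X u\|_{L^p}^{p} \leq J(u) \leq \frac{L}{p}\|\X u\|_{L^p}^{p}$, so $J$ is finite and bounded below by $0$. The Poincar\'{e} inequality \eqref{poincareineqwithmeans} applied to $u - u_{0} \in HW^{1,p}_{0}(B_{R})$ gives $\|u - u_{0}\|_{L^p(B_R)} \leq cR\|\X u - \X u_{0}\|_{L^p(B_R)}$, from which coercivity of $J$ on $\mathcal{A}$ with respect to the $HW^{1,p}$ norm follows by the triangle inequality.

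Next I would take a minimizing sequence $\{u_{k}\} \subset \mathcal{A}$, i.e.\ $J(u_{k}) \to m$. By coercivity, $\{u_{k}\}$ is bounded in $HW^{1,p}(B_{R})$, which is a reflexive Banach space for $1 < p < \infty$; hence, up to a subsequence, $u_{k} \rightharpoonup u$ weakly in $HW^{1,p}(B_{R})$ for some $u$. Since $HW^{1,p}_{0}(B_{R})$ is a closed (in fact, weakly closed) subspace, we get $u \in \mathcal{A}$. The functional $J$ is convex in $\X u$ because $\xi \mapsto |\xi|^{p}$ is convex on $\mathbb{R}^{2n}$ and $a \geq 0$; together with the strong continuity of $J$ on $HW^{1,p}(B_{R})$ (dominated convergence using $a \leq L$), this standard Tonelli--Serrin argument yields weak lower semicontinuity: $J(u) \leq \liminf_{k} J(u_{k}) = m$. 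Hence $u$ attains the minimum. Strict convexity of $\xi \mapsto |\xi|^{p}$ for $1 < p < \infty$ combined with the pointwise bound $a(x) \geq \gamma > 0$ makes $J$ strictly convex along the affine set $\mathcal{A}$ (since if $u_{1} \neq u_{2}$ in $\mathcal{A}$ then $\X u_{1} \neq \X u_{2}$ on a set of positive measure, thanks again to the Poincar\'{e} inequality applied to $u_{1} - u_{2}$). Strict convexity rules out a second minimizer, so the minimizer $u$ is unique.

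To identify $u$ with the weak solution of the equation, I would consider inner variations $u + t\varphi$ for $\varphi \in HW^{1,p}_{0}(B_{R})$ and $t \in \mathbb{R}$. Because $a$ is bounded and $\xi \mapsto |\xi|^{p}$ is $C^{1}$ for $1 < p < \infty$ with derivative $p|\xi|^{p-2}\xi$, the map $t \mapsto J(u+t\varphi)$ is differentiable at $t = 0$ (differentiation under the integral is justified by the pointwise bound $||\X u + t\X\varphi|^{p-2}(\X u + t\X\varphi)| \leq C(|\X u|^{p-1} + |\X\varphi|^{p-1})$ together with Young's inequality giving the integrable majorant $L(|\X u|^{p} + |\X\varphi|^{p})$), and the minimality of $u$ forces
\[
0 = \frac{d}{dt}\bigg|_{t=0} J(u+t\varphi) = \int_{B_{R}} a(x)\lvert \X u \rvert^{p-2} \X u \cdot \X\varphi\, dx
\]
for every $\varphi \in HW^{1,p}_{0}(B_{R})$. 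This is exactly the weak formulation of $\operatorname{div}_{\mathbb{H}}(a(x)|\X u|^{p-2}\X u) = 0$ with boundary datum $u_{0}$. Conversely, by strict convexity of $J$, any weak solution $v \in \mathcal{A}$ is automatically the unique minimizer, so weak solution and minimizer coincide, giving both existence and uniqueness of the weak solution.

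I do not anticipate a serious obstacle: all the ingredients (reflexivity of $HW^{1,p}$, Poincar\'{e} inequality on balls, convexity/weak lower semicontinuity machinery, and $C^{1}$ differentiability of the integrand) are available in the Heisenberg setting exactly as in the Euclidean case. The only point that deserves a moment of care is coercivity on the affine set $\mathcal{A}$ rather than on $HW^{1,p}_{0}$, which is handled by the translation $u \mapsto u - u_{0}$ together with \eqref{poincareineqwithmeans}.
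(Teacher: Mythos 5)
Your proof is correct and takes the same route as the paper, which dispenses with the argument in a single sentence by observing that, given the Poincar\'{e}--Sobolev inequality \eqref{poincaresobolevineq}, the Euclidean direct-method argument carries over mutatis mutandis. One small reference slip: for coercivity you invoke the Poincar\'{e} inequality with means \eqref{poincareineqwithmeans}, but that estimate only controls $\left\lVert v - (v)_{B_R}\right\rVert_{L^p}$ and does not by itself give $\left\lVert v \right\rVert_{L^p} \leq cR\left\lVert \X v \right\rVert_{L^p}$ for $v = u - u_0 \in HW^{1,p}_0(B_R)$ (the mean of a zero-trace function need not vanish). The inequality you actually need is the Friedrichs-type bound that follows from \eqref{poincaresobolevineq} together with H\"{o}lder's inequality on the ball, and this is precisely the inequality the paper cites. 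The same remark applies to your use of the Poincar\'{e} inequality to show that $\X u_1 = \X u_2$ a.e.\ forces $u_1 = u_2$ in $\mathcal{A}$. With these citations adjusted, the argument --- coercivity, reflexivity, weak lower semicontinuity from convexity, strict convexity for uniqueness, and the Euler--Lagrange identification --- is exactly the standard one the paper refers to.
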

	\begin{proof}
		In view of the Poincar\'{e}-Sobolev inequality \eqref{poincaresobolevineq}, the proof in the Euclidean case works mutadis mutandis. 
	\end{proof}
	\subsection{The auxiliary mapping \texorpdfstring{$V$}{V}}
	As in the Euclidean case, we need an auxialiary mapping $V$ to deal with the structure of the $p$-Laplace equation. We define the mapping $V: \mathbb{R}^{2n} \rightarrow \mathbb{R}^{2n}$ by 
	\begin{equation}\label{definition V}
		V(z) : = \left\lvert z \right\rvert^{\frac{p-2}{2}} z , 
	\end{equation}
	which is a locally Lipschitz bijection from $\mathbb{R}^{2n}$ into itself. 
	We summarize the relevant properties of the map in the following. 
	\begin{lemma}\label{prop of V}
		For any $p >1$, there exists a constant $c_{V} \equiv c_{V}(n,p) > 0$ such that 
		\begin{equation}\label{constant cv}
			\frac{\left\lvert z_{1} - z_{2}\right\rvert}{c_{V}} \leq  \frac{\left\lvert V(z_{1}) - V(z_{2})\right\rvert}{\left( \left\lvert z_{1} \right\rvert + 
				\left\lvert z_{2}\right\rvert \right)^{\frac{p-2}{2}}} \leq c_{V}\left\lvert z_{1} - z_{2}\right\rvert,
		\end{equation}
		for any $z_{1}, z_{2} \in \mathbb{R}^{2n},$ not both zero. This implies the classical monotonicity estimate 
		\begin{equation}\label{monotonicity}
			\left( \left\lvert z_{1} \right\rvert + \left\lvert z_{2}\right\rvert \right)^{p-2} \left\lvert z_{1} - z_{2}\right\rvert^{2} \leq c_{M} 
			\left\langle  \left\lvert z_{1} \right\rvert^{p-2} z_{1} - \left\lvert z_{2} \right\rvert^{p-2} z_{2}, z_{1} - z_{2} \right\rangle ,  
		\end{equation}
		with a constant $c_{M} = c_M(n, p) > 0$ for all $p >1$ and all $z_{1}, z_{2} \in \mathbb{R}^{2n}.$
		Moreover, if $1 < p \leq 2,$ there exists a constant $c = c(n, p) > 0$ such that for any  $z_{1}, z_{2} \in \mathbb{R}^{2n},$
		\begin{equation}\label{v estimate p less 2}
			\left\lvert z_{1} - z_{2}\right\rvert \leq c \left\lvert V(z_{1}) - V(z_{2})\right\rvert^{\frac{2}{p}} + c \left\lvert V(z_{1}) - V(z_{2})\right\rvert 
			\left\lvert z_{2}\right\rvert^{\frac{2-p}{2}}.
		\end{equation}
	\end{lemma}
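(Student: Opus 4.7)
My plan is to prove all three assertions by a unified \emph{line segment} argument, computing the differential $DV(z)$ directly and integrating along the segment $\gamma(t)=z_2+t(z_1-z_2)$. A direct computation gives
\[
\partial_j V_i(z) = \lvert z\rvert^{\frac{p-2}{2}}\Bigl[\delta_{ij}+\tfrac{p-2}{2}\,\tfrac{z_iz_j}{\lvert z\rvert^{2}}\Bigr],
\]
so $DV(z)$ is symmetric with eigenvalue $\tfrac{p}{2}\lvert z\rvert^{(p-2)/2}$ in the radial direction and $\lvert z\rvert^{(p-2)/2}$ on the orthogonal hyperplane. Hence both $\lVert DV(z)\rVert$ and $\lVert DV(z)^{-1}\rVert^{-1}$ are comparable to $\lvert z\rvert^{(p-2)/2}$, with constants depending only on $p$.

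For \eqref{constant cv} I would write $V(z_1)-V(z_2)=\int_0^1 DV(\gamma(t))(z_1-z_2)\,dt$, so the task reduces to showing the elementary comparison
\[
\int_0^1 \lvert \gamma(t)\rvert^{\frac{p-2}{2}}\,dt \;\sim_{p}\; (\lvert z_1\rvert+\lvert z_2\rvert)^{\frac{p-2}{2}}.
\]
For $p\geq 2$ this is just convexity/concavity of $t\mapsto t^{(p-2)/2}$ on the segment. For $1<p<2$ the integrand can blow up where $\gamma$ hits the origin, but the singularity is integrable since $(p-2)/2>-1$, and a scaling/comparison argument handles both endpoints symmetrically (splitting the segment at its closest point to $0$). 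This delivers the two-sided bound and, by continuity, extends to all $z_1,z_2$ not both zero.

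To obtain the monotonicity \eqref{monotonicity} I set $W(z):=\lvert z\rvert^{p-2}z$ and observe $\partial_j W_i = \lvert z\rvert^{p-2}[\delta_{ij}+(p-2)z_iz_j/\lvert z\rvert^{2}]$, whose smallest eigenvalue is $\min(1,p-1)\lvert z\rvert^{p-2}$. Writing
\[
\langle W(z_1)-W(z_2),z_1-z_2\rangle = \int_0^1 \langle DW(\gamma(t))(z_1-z_2),z_1-z_2\rangle\,dt,
\]
and using the same integral comparison as above gives a lower bound by $c_p\,(\lvert z_1\rvert+\lvert z_2\rvert)^{p-2}\lvert z_1-z_2\rvert^2$, which is \eqref{monotonicity}.

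The estimate \eqref{v estimate p less 2} for $1<p\leq 2$ I would derive from \eqref{constant cv} by a dichotomy on $\lvert z_1\rvert$ versus $\lvert z_2\rvert$. If $\lvert z_1\rvert\leq 2\lvert z_2\rvert$, then $(\lvert z_1\rvert+\lvert z_2\rvert)^{(2-p)/2}\leq 3^{(2-p)/2}\lvert z_2\rvert^{(2-p)/2}$ and the upper bound in \eqref{constant cv} immediately yields the second term on the right. If instead $\lvert z_1\rvert>2\lvert z_2\rvert$, then $\lvert z_1\rvert\leq 2\lvert z_1-z_2\rvert$, so $\lvert z_1\rvert+\lvert z_2\rvert\leq 3\lvert z_1-z_2\rvert$, and \eqref{constant cv} rearranges to $\lvert z_1-z_2\rvert^{p/2}\leq c\,\lvert V(z_1)-V(z_2)\rvert$, giving the first term. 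I expect the main technical nuisance to be the integral comparison $\int_0^1 \lvert \gamma(t)\rvert^{(p-2)/2}dt\sim(\lvert z_1\rvert+\lvert z_2\rvert)^{(p-2)/2}$ when $1<p<2$ and $\gamma$ passes near the origin, but this is handled by a careful splitting of the interval at the minimum of $\lvert\gamma\rvert$.
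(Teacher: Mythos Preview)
Your approach is correct and is in fact the standard route to these inequalities. Note, however, that the paper does \emph{not} prove this lemma at all: immediately after the statement it simply writes ``The estimates \eqref{constant cv} and \eqref{monotonicity} are classical (cf.\ Lemma 2.1, \cite{hamburgerregularity}). The estimate \eqref{v estimate p less 2} follows from this (cf.\ Lemma 2, \cite{KuusiMingione_nonlinearStein}).'' Your line-segment argument with the explicit spectrum of $DV$ and $DW$ is exactly how these cited references proceed, and your dichotomy for \eqref{v estimate p less 2} is precisely the argument in Kuusi--Mingione.

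One small point worth making explicit in your write-up: for the \emph{lower} bound in \eqref{constant cv}, the integral representation $V(z_1)-V(z_2)=\int_0^1 DV(\gamma(t))(z_1-z_2)\,dt$ together with the comparison $\int_0^1|\gamma(t)|^{(p-2)/2}\,dt\sim(|z_1|+|z_2|)^{(p-2)/2}$ is not by itself enough, since taking norms inside the integral only gives an upper bound. You need the positivity of $DV$ via the inner product, exactly as you do for \eqref{monotonicity}: pair with $z_1-z_2$ to get $\langle V(z_1)-V(z_2),z_1-z_2\rangle\ge \min(1,\tfrac{p}{2})\int_0^1|\gamma(t)|^{(p-2)/2}\,dt\,|z_1-z_2|^2$, then divide by $|z_1-z_2|$. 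You clearly have this in mind (you mention $\lVert DV(z)^{-1}\rVert^{-1}$), but it should be stated. The integral comparison itself in the singular case $1<p<2$ is cleanly handled by scaling to $|z_1|+|z_2|=1$ and splitting into the cases $|z_1-z_2|\ge 1/4$ (use $|\gamma(t)|\ge|z_1-z_2|\,|t-t_0|$) and $|z_1-z_2|<1/4$ (then $|\gamma(t)|\ge 1/4$ everywhere since the larger endpoint has norm $\ge 1/2$).
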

	The estimates \eqref{constant cv} and \eqref{monotonicity} are classical (cf. Lemma 2.1, \cite{hamburgerregularity}). The estimate \eqref{v estimate p less 2} follows from this (cf. Lemma 2, \cite{KuusiMingione_nonlinearStein}). 
\begin{notation}
We use the symbol $c$ to denote  a generic positive constant $c>0$  in our estimates, unless specifically indicated otherwise,  whose value can change from a line to the next. 
\end{notation}	
	\section{Excess decay for subelliptic \texorpdfstring{$p$}{p}-harmonic functions}\label{excess decay section}
	In this section, we are concerned with regularity results for homogeneous $p$-subLaplace equation in Heisenberg groups. Namely, we shall consider 
	\begin{align}\label{equation:main}
		\operatorname{div}_{\mathbb{H}} \left( \left\lvert \X u \right\rvert^{p-2}\X u \right) &= 0 &&\text{ in } \Omega. 
	\end{align} 
	We note that the equation can also be written as 
	\begin{equation}\label{equation:main alternate form}
		\operatorname{div}_{\mathbb{H}} \big(Df(\X u)\big)=\sum_{i=1}^{2n}X_i\big(D_i f(\X u)\big)=0 \qquad \text{ in } \Omega,
	\end{equation}
	where $Df=(D_1 f, D_2f,\ldots,D_{2n}f)$ is the Euclidean gradient
	of $f$ and $f$ is given by  \begin{align*}
		f\left(z\right):= \frac{1}{p} \left\lvert z \right\rvert^{p}, \quad \forall z\in \mathbb{R}^{2n}.
	\end{align*}
	We compute 
	\begin{equation}\label{hessian of f}
		D_j D_i f(z) = \lvert z\rvert^{p-2} \delta_{ij} + (p-2)\lvert z\rvert^{p-4} z_i z_j, \text{ for } i, j = 1,\dots, 2n \text{ and } z\in \mathbb{R}^{2n},
	\end{equation}
	where $\delta_{ij}$ denotes the Kronecker delta function.
	Hence, we deduce  
	\begin{align}\label{coercivity}
		\sum\limits_{i,j=1}^{2n} 	D_{j}D_{i}f\left(z\right) \eta_{j} \eta_{i} \geq c \left\lvert z \right\rvert^{p-2}\left\lvert \eta \right\rvert^{2} \qquad \text{ for all } z, \eta \in \mathbb{R}^{2n}. 
	\end{align}
	\subsection{\texorpdfstring{H\"{o}lder}{Holder} continuity results}
	We begin by recording the known results that we would use. The following theorem is due to Zhong \cite{zhong2018regularityvariationalproblemsheisenberg}. 
	\begin{theorem}\label{thm:lip}
		Let $1<p<\infty$ and $u\in HW^{1,p}(\Omega)$ be a weak solution of
		equation \eqref{equation:main}. Then $\Xu \in
		L^\infty_{\text{loc}}(\Omega;{\mathbb R}^{2n})$. Moreover, for any
		ball $B_{2r}\left(x_{0}\right)\subset \Omega$, we have that
		\begin{equation}\label{Xu:bdd}
			\sup_{B_r\left(x_{0}\right)}\vert\X u\vert\le c_{p}\Big(\fint_{B_{2r}\left(x_{0}\right)} \left\lvert \X u \right\rvert^{p}\, dx\Big)^{\frac 1 p},
		\end{equation}
		where $c_{p}>0$ depends only on $n,p$.
	\end{theorem}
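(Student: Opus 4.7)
The plan is to prove the sup estimate by combining a regularization scheme with a Moser iteration adapted to the noncommutativity of the horizontal vector fields. Since $u$ need not be smooth a priori, the first step is to replace \eqref{equation:main} by the nondegenerate approximation
\begin{equation*}
\operatorname{div}_{\mathbb{H}} \left((\delta+|\X u_\delta|^2)^{(p-2)/2}\X u_\delta\right)=0 \quad \text{in } B_{2r}(x_0),
\end{equation*}
coupled with the boundary condition $u_\delta - u \in HW^{1,p}_0(B_{2r}(x_0))$. Existence and uniqueness of $u_\delta$ follow as in Proposition \ref{minimizerexistenceprop}, and $u_\delta$ is $C^\infty$ by Capogna's regularity theorem for nondegenerate subelliptic quasilinear equations. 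Standard monotonicity arguments give $u_\delta \to u$ in $HW^{1,p}$ as $\delta\downarrow 0$, so it suffices to establish \eqref{Xu:bdd} uniformly in $\delta$.

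I would next differentiate the regularized equation along each horizontal direction $X_s$. Because the $X_s$ do not commute with themselves, commutators of the form $[X_s,X_k]=\pm T$ appear when $k=n+s$ or $k=s-n$, so each $X_s u_\delta$ satisfies a linear subelliptic equation in divergence form whose right-hand side contains the vertical derivative $Tu_\delta$. A parallel computation produces an equation for $Tu_\delta$ itself. Testing these equations against cutoff-localized powers of the form $\eta^2(\delta+|\X u_\delta|^2)^{(q-p)/2}X_s u_\delta$ yields Caccioppoli-type estimates for $(\delta+|\X u_\delta|^2)^{q/2}$, but with right-hand sides that couple $\X\X u_\delta$ and $Tu_\delta$.

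The main obstacle, and the heart of Zhong's argument, is to decouple these two quantities. Following \cite{zhong2018regularityvariationalproblemsheisenberg} and, for the range $1<p<2$, \cite{Mukherjee_Zhong}, one derives two families of \emph{mixed} Caccioppoli inequalities: the first dominates $\int \eta^2|\X((\delta+|\X u_\delta|^2)^{q/2})|^2$ by a quantity involving $\int \eta^2 (Tu_\delta)^2(\delta+|\X u_\delta|^2)^{q-1}$ with a small prefactor, and the second controls this latter quantity, up to another small prefactor, by the former. A delicate interpolation between these two inequalities, exploiting the pointwise bound $|Tu_\delta|\leq 2|\X\X u_\delta|$ available in the smooth setting, allows the cross terms to be absorbed on the left-hand side and produces an honest Caccioppoli inequality for $(\delta+|\X u_\delta|^2)^{q/2}$ valid for every $q$ in the range required. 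For $1<p<2$ one additionally adopts the Tolksdorf-type shift of Mukherjee-Zhong to cope with the degeneracy of the weight near zero gradient.

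Once such a uniform Caccioppoli inequality is in hand, \eqref{Xu:bdd} follows by a standard Moser iteration. Combining the Caccioppoli with the Heisenberg Sobolev inequality \eqref{scale invariant Sobolev} produces a gain in integrability from exponent $q$ to $Qq/(Q-2)$, and iterating this gain over a geometric sequence of shrinking metric balls $B_{r_k}(x_0)$ bounds $\sup_{B_r(x_0)}(\delta+|\X u_\delta|^2)^{1/2}$ by a constant, independent of $\delta$, times $\left(\fint_{B_{2r}(x_0)}(\delta+|\X u_\delta|^2)^{p/2}\right)^{1/p}$. Passing $\delta\to 0$ along a subsequence for which $\X u_\delta\to \X u$ almost everywhere and using lower semicontinuity then yields \eqref{Xu:bdd}.
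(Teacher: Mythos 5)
The paper does not prove this statement; it is quoted as a known result of Zhong \cite{zhong2018regularityvariationalproblemsheisenberg}, so there is no in-paper argument to compare against. Your sketch is a faithful high-level account of Zhong's strategy: regularize to a nondegenerate approximation $u_\delta$, differentiate so the commutator $[X_i,X_{n+i}]=T$ injects $Tu_\delta$ into the right-hand sides of the linearized equations, establish the two families of mixed Caccioppoli inequalities that bound the weighted $L^2$ norm of $\X\X u_\delta$ by that of $Tu_\delta$ and vice versa with small prefactors so the cross terms can be reabsorbed, and then close with a Moser iteration against the Heisenberg Sobolev inequality, passing $\delta\to 0$ at the end via minimality and a.e. convergence.

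One attribution in your sketch is misplaced. The Tolksdorf-type modification of Mukherjee--Zhong \cite{Mukherjee_Zhong} is needed, according to this paper's own citation practice, for the \emph{H\"older continuity} of $\X u$ in the range $1<p<2$ (Theorem \ref{thm:holder}), not for the $L^\infty$ bound \eqref{Xu:bdd}: the paper attributes Theorem \ref{thm:lip} to Zhong alone over the full range $1<p<\infty$. So the ``shift'' device is not needed for the step you are proving; Zhong's mixed Caccioppoli/Moser scheme already covers $1<p<2$ for the sup bound because the weights $(\delta+|\X u_\delta|^2)^{\gamma}$ appearing in the iteration stay bounded above (by the regularization) even when $p-2<0$ — the degeneracy causing trouble for $1<p<2$ only enters when one needs decay of oscillation, not a pointwise bound.
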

By a standard interpolation technique, this last result easily implies the following. 
	\begin{theorem}\label{thm:lip Lq bound}
		Let $1<p<\infty$ and $u\in HW^{1,p}(\Omega)$ be a weak solution of
		equation \eqref{equation:main}. Then $\Xu \in
		L^\infty_{\text{loc}}(\Omega;{\mathbb R}^{2n})$. Moreover, for any $0 < q < \infty$ and any 
		ball $B_{2r}\left(x_{0}\right)\subset \Omega$, we have the estimate
		\begin{equation}\label{Xu:bdd by Lq}
			\sup_{B_r\left(x_{0}\right)}\vert\X u\vert\leq c_{q}\Big(\fint_{B_{2r}\left(x_{0}\right)} \left\lvert \X u \right\rvert^{q}\, dx\Big)^{\frac 1 q},
		\end{equation}
		where $c_{q}>0$ depends only on $n,p$ and $q.$
	\end{theorem}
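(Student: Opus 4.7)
The plan is to deduce the bound for general exponent $q \in (0,\infty)$ directly from the already-established case $q=p$ in Theorem \ref{thm:lip}. The case $q \geq p$ is immediate from H\"{o}lder's inequality applied to the right-hand side of \eqref{Xu:bdd}, so the substance of the argument lies in the range $0 < q < p$, which I would handle by the classical reverse-H\"{o}lder interpolation procedure.

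Set $M(s) := \sup_{B_s(x_0)} |\X u|$ for $s \in [r, 2r]$, which is finite by Theorem \ref{thm:lip}. For any $r \leq s < t \leq 2r$ and any $x \in B_s(x_0)$, the ball $B_{t-s}(x)$ is contained in $B_t(x_0) \subset \Omega$, so Theorem \ref{thm:lip} applied on the concentric pair $B_{(t-s)/2}(x) \subset B_{t-s}(x)$ yields
\begin{align*}
|\X u(x)| \leq c_p \left( \fint_{B_{t-s}(x)} |\X u|^p \right)^{1/p}.
\end{align*}
Splitting $|\X u|^p \leq M(t)^{p-q} |\X u|^q$ inside the integral, enlarging the domain of integration from $B_{t-s}(x)$ to $B_{2r}(x_0)$, and using \eqref{measure of balls} to compare the volumes of the two balls, I obtain after taking the supremum over $x \in B_s(x_0)$ the interpolation-type bound
\begin{align*}
M(s) \leq c_p \left( \frac{2r}{t-s} \right)^{Q/p} M(t)^{(p-q)/p} I_q^{q/p}, \qquad I_q := \left( \fint_{B_{2r}(x_0)} |\X u|^q \right)^{1/q}.
\end{align*}

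Young's inequality with conjugate exponents $p/(p-q)$ and $p/q$ now lets me separate the two unknown quantities: for any $\varepsilon > 0$,
\begin{align*}
M(s) \leq \varepsilon\, M(t) + C(\varepsilon, n, p, q) \left( \frac{2r}{t-s} \right)^{Q/q} I_q.
\end{align*}
Choosing $\varepsilon = 1/2$ and applying the standard Giaquinta--Giusti iteration lemma to the bounded non-decreasing function $s \mapsto M(s)$ on $[r, 2r]$ (see e.g. \cite{giaquinta-martinazzi-regularity}) absorbs the $\varepsilon M(t)$ term and produces $M(r) \leq c_q I_q$, which is exactly \eqref{Xu:bdd by Lq}. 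The only delicate point, not really an obstacle, is the a priori finiteness of $M(t)$ needed for the absorption step, which is guaranteed by the $L^\infty_{\text{loc}}$ conclusion of Theorem \ref{thm:lip} together with the hypothesis $B_{2r}(x_0) \subset \Omega$.
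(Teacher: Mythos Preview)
Your proof is correct and is precisely the ``standard interpolation technique'' that the paper invokes without details: the paper merely remarks that Theorem~\ref{thm:lip Lq bound} follows from Theorem~\ref{thm:lip} by standard interpolation, and your argument---splitting $|\X u|^p \leq M(t)^{p-q}|\X u|^q$, applying Young's inequality, and absorbing via the Giaquinta--Giusti iteration lemma---is exactly that procedure. Your remark on the finiteness of $M(t)$ for $t<2r$ via the $L^\infty_{\text{loc}}$ conclusion is the right justification for the absorption step.
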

	The following is due to Zhong \cite{zhong2018regularityvariationalproblemsheisenberg} (for $p\geq 2$) and Mukherjee-Zhong \cite{Mukherjee_Zhong} (for $1< p < 2$).
\begin{theorem}\label{thm:holder}
	Let  $1 < p <\infty$ and $u\in HW^{1,p}(\Omega)$ be a weak
	solution of equation \eqref{equation:main}. Then the horizontal gradient $\X u$ is locally H\"older continuous. Moreover, there is a positive exponent $\alpha_{1}=\alpha_{1}(n,p) < 1$ such that for any
	ball $B_{r_0}\subset \subset \Omega$ and any $0<r\le r_0$, we have
	\begin{equation}\label{Xu:holder} \left\lvert \Xu \left(x\right) - \Xu \left(y\right)\right\rvert \leq
		c_{h}\Big(\frac{r}{r_0}\Big)^{\alpha_{1}} \left\lVert \X u \right \rVert _{\infty, r_0}, \quad \text{ for all } x, y \in B_{r},\end{equation} where $c_{h}>0$ depends only on $n,p$, where $\left\lVert \X u \right \rVert _{\infty, r_0} := \sup_{B_{r_0}}\vert\X u\vert.$
\end{theorem}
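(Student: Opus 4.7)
The plan is to establish an oscillation decay estimate $\mathrm{osc}_{B_r}\Xu \lesssim (r/r_0)^{\alpha_1}\lVert \Xu\rVert_{\infty,r_0}$ on a nested family of concentric metric balls and then upgrade to the claimed pointwise Hölder bound in the standard Campanato fashion. Because the Lipschitz estimate \eqref{Xu:bdd} of Theorem \ref{thm:lip} is already in hand, by left-translation and the non-isotropic scaling \eqref{dilation} I may normalize to $x_0=0$, $r_0=1$ and $\lVert\Xu\rVert_{L^\infty(B_1)}=1$. Following the strategy that Uhlenbeck and DiBenedetto use in the Euclidean case, I then differentiate the equation horizontally and study the second-order equation satisfied by each component $v=X_k u$, treating a \emph{nondegenerate} regime (in which $|\Xu|$ is bounded below on a definite portion of the ball) and a \emph{degenerate} regime (in which $|\Xu|$ is small on average) by separate iterations, and at each dyadic step verifying that at least one alternative produces a quantified decay.

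Applying $X_k$ to \eqref{equation:main alternate form} gives, formally,
\begin{equation*}
\sum_{i,j=1}^{2n} X_i\bigl(a_{ij}(\Xu)\,X_j X_k u\bigr) \;=\; -\sum_{i,j=1}^{2n} X_i\bigl(a_{ij}(\Xu)\,[X_k,X_j]u\bigr),
\end{equation*}
with $a_{ij}(z)=D_jD_if(z)$ from \eqref{hessian of f}. Since $[X_k,X_j]$ is either zero or $\pm T$, the right-hand side injects the vertical derivative $Tu$, which by \eqref{expression of Tu intro} is morally second order in $u$ and therefore must be treated on the same footing as $\X\X u$. The technical heart of the proof is Zhong's \emph{mixed Caccioppoli inequality}: using appropriately regularized test functions of the form $\zeta^{2}\bigl(|\Xu|^{2}-k^{2}\bigr)_{+}\,X_k u$ and also $\zeta^{2}\,Tu\,\phi(\Xu)$, one derives, on the one hand, a bound on $\int \zeta^{2}(\delta+|\Xu|^{2})^{(p-2)/2}|\X\X u|^{2}$ by a small constant times $\int \zeta^{2}(\delta+|\Xu|^{2})^{(p-2)/2}|Tu|^{2}$ plus lower-order terms, and on the other hand a reverse bound in which $|Tu|^{2}$ is controlled by $|\X\X u|^{2}$. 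Combining the two and absorbing yields a clean, linear-looking Caccioppoli estimate once $|\Xu|$ is bounded away from zero.

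In the nondegenerate regime the resulting equation for $X_k u - c$ is effectively uniformly elliptic, so this clean Caccioppoli feeds into a Moser / De Giorgi iteration on the level sets of $X_k u$, producing a quantified oscillation decay of each horizontal component. In the degenerate regime, one instead applies the iteration scheme to $\bigl(|\Xu|^{2}-k^{2}\bigr)_{+}$, which (thanks to the convexity of $f$ encoded in \eqref{coercivity}) is a subsolution of a good equation, and obtains a sup-decay $\sup_{B_{\sigma}}|\Xu|\le \eta\sup_{B_{1}}|\Xu|$ for some $\eta,\sigma\in(0,1)$ depending only on $n,p$. A dichotomy argument shows that on every dyadic ball either the nondegenerate alternative improves the oscillation of $\Xu$ or the degenerate alternative improves $\sup|\Xu|$, and iterating yields the geometric decay claimed, from which \eqref{Xu:holder} follows.

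The main obstacle will unquestionably be the mixed Caccioppoli step. The vertical term $Tu$ sitting on the right-hand side of the differentiated equation is genuinely second order, and to absorb it one has to perform the integration-by-parts maneuver that converts $Tu$-factors in test functions into commutators $[X_i,X_{n+i}]$ acting on the cutoff, then carefully track the resulting error terms so that they can be reabsorbed either into the left-hand side or into the induction hypothesis. The case $1<p<2$ poses an additional complication because $|\Xu|^{p-2}$ is singular where $\Xu$ is small, which is overcome, à la Tolksdorf and Mukherjee–Zhong, by working with the regularized energy $(\delta+|\Xu|^{2})^{(p-2)/2}$ and sending $\delta\to 0^{+}$ at the end. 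All constants depend only on $n$ and $p$, which matches the claimed structural dependence of $\alpha_1$ and $c_h$.
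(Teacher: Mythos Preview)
The paper does not give its own proof of this theorem: it is quoted as a known input, attributed to Zhong \cite{zhong2018regularityvariationalproblemsheisenberg} for $p\ge 2$ and to Mukherjee--Zhong \cite{Mukherjee_Zhong} for $1<p<2$. So there is no in-paper argument to compare against; the theorem is used as a black box on which the paper's new excess decay results are built.

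Your outline is, at the level of strategy, an accurate summary of what those cited references do: horizontal differentiation of \eqref{equation:main alternate form}, Zhong's mixed Caccioppoli inequalities coupling $\X\X u$ and $Tu$, a degenerate/nondegenerate alternative, and a De~Giorgi-type iteration, with the $\delta$-regularization of Tolksdorf/Mukherjee--Zhong handling the singular case $1<p<2$. Two points are worth flagging. First, the phrase ``clean, linear-looking Caccioppoli once $|\Xu|$ is bounded away from zero'' is precisely what the present paper isolates later (Lemma~\ref{Cac in the nondeg regime}) as a \emph{consequence} of Zhong's machinery in the nondegenerate regime; in Zhong's original proof of Theorem~\ref{thm:holder} one does \emph{not} get to assume a lower bound on $|\Xu|$ a priori, and the Caccioppoli inequalities actually used there carry weights $(\delta+|\Xu|^2)^{(p-2)/2}$ and genuinely mixed $Tu$/$\X\X u$ terms throughout the iteration --- the absorption is more delicate than your sketch suggests. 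Second, the integrability and a~priori regularity needed to justify the test functions (in particular that $Tu\in L^2_{\mathrm{loc}}$ and $\X\X u\in L^2_{\mathrm{loc}}$) is itself nontrivial in this setting and in the cited works is obtained via a separate approximation argument; your plan implicitly assumes this step. With those caveats, the route you describe is the correct one and matches the cited literature.
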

Theorem \ref{thm:holder} and Theorem \ref{thm:lip} easily implies the following result. 
\begin{theorem}\label{thm:holder V}
	Let  $1 <  p <\infty$ and $u\in HW^{1,p}(\Omega)$ be a weak
	solution of equation \eqref{equation:main}
	Then for any ball $B_{r_0}\subset \subset \Omega$ and any $0<r\le r_0$, we have
	\begin{equation}\label{Xu:holder V} \sup\limits_{x, y \in B_{r}}\left\lvert V \left( \Xu  \right)\left(x\right) - V \left( \Xu \right) \left(y\right)\right\rvert \leq
		c_{h,V}\Big(\frac{r}{r_0}\Big)^{\beta_{1}} \left\lVert \X u \right \rVert _{\infty, r_0}^{\frac{p}{2}}, 
	\end{equation} where the constants $c_{h,V}>0$ and $\beta_{1} \in (0,1),$ both depends only on $n$ and $p$. 
\end{theorem}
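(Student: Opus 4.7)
The plan is to reduce the H\"older bound for $V(\X u)$ to the already-established H\"older bound for $\X u$ (Theorem \ref{thm:holder}) by exploiting the regularity of the auxiliary map $V$ recorded in Lemma \ref{prop of V}. The natural split is between the degenerate regime $p \geq 2$, in which $V$ is locally Lipschitz with a prefactor we can control via the sup bound, and the singular regime $1 < p < 2$, in which $V$ is only globally H\"older of exponent $p/2$.

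For $p \geq 2$ I would start from the upper bound in \eqref{constant cv},
\[
|V(z_1) - V(z_2)| \leq c_V \bigl(|z_1| + |z_2|\bigr)^{(p-2)/2} |z_1 - z_2|,
\]
specialize $z_i = \X u$ at two points $x, y \in B_{r}$, and use Theorem \ref{thm:lip} to dominate $|z_i|$ by $\|\X u\|_{\infty, r_0}$. Since $(p-2)/2 \geq 0$, the prefactor is controlled by $c\, \|\X u\|_{\infty, r_0}^{(p-2)/2}$. Inserting \eqref{Xu:holder} then yields the claim with $\beta_1 = \alpha_1$.

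For $1 < p < 2$ the same prefactor is singular near the origin and cannot be absorbed. The alternative I would use is that $V$ is globally H\"older continuous of exponent $p/2 \in (1/2, 1)$ on $\mathbb{R}^{2n}$, i.e.\
\[
|V(z_1) - V(z_2)| \leq c(p)\, |z_1 - z_2|^{p/2} \qquad \text{for all } z_1, z_2 \in \mathbb{R}^{2n}.
\]
This follows either directly from $|V(z)| = |z|^{p/2}$ together with the subadditivity of $t \mapsto t^{p/2}$, or from \eqref{v estimate p less 2}. Raising \eqref{Xu:holder} to the power $p/2$ then gives the claim with $\beta_1 = \alpha_1 p/2 \in (0,1)$; note that the power of $\|\X u\|_{\infty, r_0}$ on the right-hand side comes out automatically to $p/2$, matching the required normalization.

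Setting $\beta_1 := \min\{\alpha_1, \alpha_1 p/2\}$ and collecting the constants into $c_{h,V}$ yields Theorem \ref{thm:holder V}. The only mildly subtle point is the singular range $1 < p < 2$, where the loss from Lipschitz to H\"older regularity of $V$ forces one to compose exponents, producing the smaller $\alpha_1 p/2$; no other obstacle arises, and no Heisenberg-specific ingredient beyond Theorems \ref{thm:lip} and \ref{thm:holder} is needed.
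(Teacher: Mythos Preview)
Your proof is correct but proceeds differently from the paper. The paper gives a single argument valid for all $1<p<\infty$: starting from the squared difference and chaining \eqref{constant cv} with the monotonicity inequality \eqref{monotonicity}, Cauchy--Schwarz, and the crude bound $\bigl||z_1|^{p-2}z_1-|z_2|^{p-2}z_2\bigr|\le|z_1|^{p-1}+|z_2|^{p-1}$, it arrives at $|V(\X u)(x)-V(\X u)(y)|^2\le c(r/r_0)^{\alpha_1}\|\X u\|_{\infty,r_0}^{p}$ and hence $\beta_1=\alpha_1/2$ uniformly in $p$. Your case split is more elementary and in fact yields sharper exponents: $\beta_1=\alpha_1$ for $p\ge2$ and $\beta_1=\alpha_1 p/2$ for $1<p<2$, both strictly larger than $\alpha_1/2$. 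The trade-off is only aesthetic --- the paper avoids splitting cases, while you get a better H\"older exponent with less machinery.

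One minor remark: your parenthetical that the global $p/2$-H\"older continuity of $V$ for $1<p<2$ follows ``from \eqref{v estimate p less 2}'' is not quite right, since that estimate goes in the opposite direction. The clean derivation is directly from \eqref{constant cv}: since $|z_1|+|z_2|\ge|z_1-z_2|$ and $(p-2)/2<0$, one has $(|z_1|+|z_2|)^{(p-2)/2}\le|z_1-z_2|^{(p-2)/2}$, whence $|V(z_1)-V(z_2)|\le c_V|z_1-z_2|^{p/2}$. With that adjustment your argument is complete.
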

\begin{proof}
	We have 
	\begin{align*}
		&\sup\limits_{x, y \in B_{r}}\left\lvert V \left( \Xu  \right)\left(x\right) - V \left( \Xu \right) \left(y\right)\right\rvert^{2} \\&\stackrel{\eqref{constant cv}}{\leq} 	c_{V}\sup\limits_{x, y \in B_{r}} \left[ \left( \left\lvert \X u \left(x\right)\right\rvert  + \left\lvert \X u \left(y\right)\right\rvert \right)^{p-2}\left\lvert  \Xu  \left(x\right) - \Xu \left(y\right)\right\rvert^{2}\right]  \\
		&\stackrel{\eqref{monotonicity}}{\leq} c_{M}c_{V} \sup\limits_{x, y \in B_{r}} \left[ \left\langle  \left\lvert \X u \left(x\right)\right\rvert^{p-2}\X u \left(x\right)  - \left\lvert \X u \left(y\right)\right\rvert^{p-2}\X u \left(y\right), \X u \left(x\right)  - \X u \left(y\right)  \right\rangle  \right] \\
		&\leq c_{M}c_{V} \sup\limits_{x, y \in B_{r}} \left[ \left\lvert \left\lvert \X u \left(x\right)\right\rvert^{p-2}\X u \left(x\right)  - \left\lvert \X u \left(y\right)\right\rvert^{p-2}\X u \left(y\right)\right\rvert \left\lvert \X u \left(x\right)  - \X u \left(y\right)  \right\rvert  \right] \\
		&\leq c_{M}c_{V} \sup\limits_{x, y \in B_{r}} \left[ \left( \left\lvert \X u \left(x\right)\right\rvert^{p-1} + \left\lvert \X u \left(y\right)\right\rvert^{p-1} \right)\left\lvert \X u \left(x\right)  - \X u \left(y\right)  \right\rvert  \right] \\
		&\leq 2^{p-1}c_{M}c_{V}\left\lVert  \X u \right\rVert_{\infty, r_{0}}^{p-1} \left( \sup\limits_{x, y \in B_{r}} \left\lvert  \Xu  \left(x\right) - \Xu \left(y\right)\right\rvert\right) \\
		&\stackrel{\eqref{Xu:holder}}{\leq} 2^{p-1}c_{M}c_{V}c_{h}\left( \frac{r}{r_{0}}\right)^{\alpha_{1}} \left\lVert \X u \right \rVert _{\infty, r_0}^{p}.
	\end{align*}
	Thus, setting $\beta_{1} := \alpha_{1}/2,$ we have 
	\begin{align*}
		\sup\limits_{x, y \in B_{r}}\left\lvert V \left( \Xu  \right)\left(x\right) - V \left( \Xu \right) \left(y\right)\right\rvert &\leq c \left( \frac{r}{r_{0}}\right)^{\beta_{1}}\left\lVert \X u \right \rVert _{\infty, r_0}^{\frac{p}{2}}. 
	\end{align*}
	This completes the proof. 
\end{proof}
\subsection{Caccioppoli inequalities in the nondegenerate regime}
We begin by deriving some Caccioppoli type inequalities under the assumption that $\left\lvert \X u \right\rvert $ is both bounded above and below in a fixed scale $\lambda >0.$
\begin{lemma}\label{Cac in the nondeg regime}
	Let $1 < p < \infty$ and let $\Omega \subset \mathbb{H}_{n}$ be open. Let $u \in HW^{1,p}_{\text{loc}}\left(\Omega\right)$ be a weak solution to 
	\begin{align*}
		\operatorname{div}_{\mathbb{H}} \left( \left\lvert \X u\right\rvert^{p-2}\X u\right) &= 0 &&\text{ in } \Omega. 
	\end{align*}
	Let $\tilde{\Omega} \subset \subset \Omega$ be any open subset. Suppose we have \begin{align}\label{two sided bound}
		\frac{\lambda}{4B} \leq |\Xu| \leq A\lambda \qquad \text{ in } \tilde{\Omega}, 
	\end{align}
	for some constants $A, B \geq 1$ and $\lambda >0.$ Let $B_{\rho} \subset B_{R} \subset \tilde{\Omega}$ be concentric balls of radius $0 < \rho < R <1 .$ Let $\eta \in C_{c}^{\infty}\left(B_{R}\right)$ satisfy $0 \leq \eta \leq 1$ in $B_{R}$ with $\eta \equiv 1$ in $B_{\rho}$ and $\left\lvert \X \eta \right\rvert \leq c/\left( R - \rho\right)$  for some absolute constant $c>1.$ Then there exists a constant $C_{0} \equiv C_{0}\left( A, B, n, p \right) \geq 1,$ but independent of $\lambda$, $R$, $\rho$ or $u,$ such that for any $\zeta \in \mathbb{R}$ and any $\xi \in \mathbb{R}^{2n},$ we have the following inequalitites. 
	\begin{align}
		\int_{B_{R}} \eta^{4} \left\lvert \X T u \right\rvert^{2}\ \mathrm{d}x &\leq \frac{C}{\left( R - \rho\right)^{2}} 	\int_{B_{R}} \eta^{2} \left\lvert Tu -\zeta \right\rvert^{2}, \label{Cac for XTu by Tu}
	\end{align}
	\begin{align}
		\int_{B_{R}}\eta^{4}\left\lvert \X Tu \right\rvert^{2}\ \mathrm{d}x &\leq \frac{C}{\left( R - \rho\right)^4}\int_{B_{R}}\left\lvert  \X u- \xi\right\rvert^2\ \mathrm{d}x , \label{XTu estimate nondegenerate} \\
		\int_{B_{R}}\eta^{2}\left\lvert Tu \right\rvert^{2}\ \mathrm{d}x &\leq \frac{C}{\left( R - \rho\right)^{2}} \int_{B_{R}}\left\lvert  \X u- \xi\right\rvert^2\ \mathrm{d}x,  \label{Tu estimate nondegenerate}\\
		\int_{B_{R}}\eta^{2}\lvert\X \X u\rvert^2\ \mathrm{d}x &\leq \frac{C}{\left( R - \rho\right)^{2}}\int_{B_{R}}\left\lvert  \X u- \xi\right\rvert^2\ \mathrm{d}x.  \label{estimate of XXu in nondegenerate case}
	\end{align}
\end{lemma}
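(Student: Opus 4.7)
The decisive fact is that in the nondegenerate regime the matrix $A(\X u) := D^2 f(\X u)$ of \eqref{hessian of f} is uniformly elliptic: combining the coercivity \eqref{coercivity}, the upper bound $|D^2 f(z)| \leq c(p)|z|^{p-2}$, and the two-sided bound \eqref{two sided bound} yields $c_1(A,B,p)\lambda^{p-2}|\xi|^2 \leq \langle A(\X u)\xi, \xi\rangle \leq c_2(A,B,p)\lambda^{p-2}|\xi|^2$. The factor $\lambda^{p-2}$ appears symmetrically on both sides of every estimate below and cancels. For \eqref{Cac for XTu by Tu}, exploit $[T,X_i] = 0$: applying $T$ to \eqref{equation:main alternate form} (rigorously via difference quotients in the $T$-direction, which are left-translations commuting with the horizontal fields) yields the linear equation $\sum_i X_i(A_{ij}(\X u) X_j Tu) = 0$. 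Testing weakly against $\phi = \eta^4(Tu - \zeta)$ gives
\begin{align*}
\int_{B_R}\eta^4 A_{ij}(\X u)\, X_j Tu\, X_i Tu = -4\int_{B_R} \eta^3(Tu - \zeta) A_{ij}(\X u)\, X_j Tu\, X_i\eta,
\end{align*}
and ellipticity on the left, the coefficient bound $|A_{ij}(\X u)| \leq c(A,B,p)\lambda^{p-2}$ on the right, Young's inequality with small parameter (absorbing $\int\eta^4|\X Tu|^2$ into the left), together with $|\X\eta|\leq c/(R-\rho)$, deliver \eqref{Cac for XTu by Tu}.

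For \eqref{Tu estimate nondegenerate}, insert the commutator identity $Tu = X_1 X_{n+1}u - X_{n+1}X_1 u$ and integrate by parts (each $X_i$ has zero Euclidean divergence):
\begin{align*}
\int_{B_R}\eta^2 (Tu)^2 = -\int_{B_R} X_1(\eta^2 Tu)(X_{n+1}u - \xi_{n+1}) + \int_{B_R} X_{n+1}(\eta^2 Tu)(X_1 u - \xi_1).
\end{align*}
After expanding $X_j(\eta^2 Tu) = 2\eta X_j\eta\,Tu + \eta^2 X_j Tu$, the $\X\eta$-terms are controlled by Young's inequality as $\delta\int_{B_R}\eta^2(Tu)^2 + C_\delta(R-\rho)^{-2}\int_{B_R} |\X u - \xi|^2$; the $X_j Tu$-terms are estimated by Cauchy--Schwarz as $\bigl(\int_{B_R}\eta^4|\X Tu|^2\bigr)^{1/2}\bigl(\int_{B_R}|\X u - \xi|^2\bigr)^{1/2}$, into which we feed \eqref{Cac for XTu by Tu} with $\zeta = 0$ to obtain $(C/(R-\rho))\bigl(\int_{B_R}\eta^2(Tu)^2\bigr)^{1/2}\bigl(\int_{B_R}|\X u - \xi|^2\bigr)^{1/2}$. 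One further application of Young's inequality and absorption of the resulting $\delta\int\eta^2(Tu)^2$ term into the left side produces \eqref{Tu estimate nondegenerate}. Chaining \eqref{Cac for XTu by Tu} (with $\zeta = 0$) into \eqref{Tu estimate nondegenerate} immediately yields \eqref{XTu estimate nondegenerate}.

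For \eqref{estimate of XXu in nondegenerate case}, differentiate \eqref{equation:main alternate form} in a horizontal direction $X_k$. Since $[X_k, X_i] = \pm T$ or $0$, the function $v_k := X_k u$ satisfies
\begin{align*}
\sum_i X_i\bigl(A_{ij}(\X u) X_j v_k\bigr) = -\sum_i [X_k, X_i]\bigl(|\X u|^{p-2}X_i u\bigr) + \sum_{i,j}X_i\bigl(C^{ij}_k(\X u) Tu\bigr),
\end{align*}
where the coefficients $C^{ij}_k$ collect the commutator corrections arising when $X_k$ is moved past $|\X u|^{p-2}$; this is made rigorous by horizontal difference quotients. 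Testing against $\eta^2(v_k - \xi_k)$, invoking ellipticity on the left, and handling the $T$-factors on the right by integrating against $\phi$ by parts via the same commutator identity as in the previous step, we arrive (after summing over $k$) at $\int_{B_R}\eta^2|\X\X u|^2 \leq C(R-\rho)^{-2}\int_{B_R}|\X u - \xi|^2 + C\int_{B_R}\eta^2(Tu)^2$; \eqref{Tu estimate nondegenerate} applied to the last term closes \eqref{estimate of XXu in nondegenerate case}. The main obstacle is precisely this last step: horizontal differentiation produces a proliferation of commutator terms, and the rigorous justification via horizontal difference quotients (rather than the simpler vertical ones used for \eqref{Cac for XTu by Tu}) generates additional error terms that must individually be absorbed into the left-hand side or bounded using \eqref{Tu estimate nondegenerate}.
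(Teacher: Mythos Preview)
Your proposal is correct and follows the same route as the paper. You test the equation for $Tu$ with $\eta^4(Tu-\zeta)$ to get \eqref{Cac for XTu by Tu}, then use the commutator identity $Tu = X_lX_{n+l}u - X_{n+l}X_lu$ and integration by parts to relate $\int\eta^2(Tu)^2$ to $\int|\X u-\xi|^2$ and $\int\eta^4|\X Tu|^2$, and close via \eqref{Cac for XTu by Tu}. The only cosmetic difference is the order: you obtain \eqref{Tu estimate nondegenerate} first (by feeding \eqref{Cac for XTu by Tu} with $\zeta=0$ into the commutator estimate and absorbing) and then chain to get \eqref{XTu estimate nondegenerate}; the paper does the reverse, first proving \eqref{XTu estimate nondegenerate} via an intermediate inequality and then deducing \eqref{Tu estimate nondegenerate}. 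Either ordering works.

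One point needs sharpening in your last paragraph. After testing the equation for $v_k=X_ku$ against $\eta^2(v_k-\xi_k)$, the commutator contribution $[X_k,X_i]\bigl(|\X u|^{p-2}X_iu\bigr) = \pm T\bigl(|\X u|^{p-2}X_{i}u\bigr)$ should \emph{not} be handled by moving $T$ onto the test function: the resulting $T\eta^2$ term carries a factor $|\X u|^{p-1}|\X u-\xi|$ rather than $|\X u|^{p-2}|\X u-\xi|^2$, so the $\lambda^{p-2}$ no longer cancels for general $\xi$. The paper instead expands $T\bigl(|\X u|^{p-2}X_iu\bigr)$ by the chain rule, obtaining a pointwise bound $c|\X u|^{p-2}|\X Tu|$; this produces an additional term $C\int\eta^2|\X Tu|\,|\X u-\xi|$ in your intermediate estimate (beyond the $C\int\eta^2(Tu)^2$ you wrote). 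That extra term is then dispatched by Cauchy--Schwarz together with \eqref{XTu estimate nondegenerate}, which you already have. With this correction your argument is complete.
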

\begin{proof}
	Note that by \eqref{two sided bound} and Capogna's regularity result in \cite{Capogna_regularity}, we have 
	\begin{align*}
		\X u \in HW^{1,2}_{\text{loc}}\left( B_{R}; \mathbb{R}^{2n}\right) \text{ and } Tu \in  HW^{1,2}_{\text{loc}}\left( B_{R}\right) \cap L^{\infty}_{\text{loc}}\left( B_{R}\right). 
	\end{align*}
	One can easily check that these regularity assertions are enough to justify our calculations in this lemma. Observe also that it is easy to verify using integration by parts and the definition of weak derivative that 
	\begin{align*}
		Tu \in  HW^{1,2}_{\text{loc}}\left( B_{R}\right) &\Rightarrow \X Tu \in L^{2}_{\text{loc}}\left( B_{R}; \mathbb{R}^{2n}\right) \\&\Rightarrow T \X u \in L^{2}_{\text{loc}}\left( B_{R}; \mathbb{R}^{2n}\right) \text{ as well and } \X Tu = T\X u. 
	\end{align*} We would not comment on this further. 
	\begin{claim}
		For every $l=1,2,\ldots,n,$ $\psi_{l}:= X_{l}u$ is a weak solution of 	\begin{equation}\label{equation:horizontal}
			\sum_{i,j=1}^{2n}X_i\big(D_{j}D_if(\mathfrak X u)
			X_j\psi_l\big)+\sum_{i=1}^{2n}X_i\big(D_{n+l}D_i f(\mathfrak X u)Tu\big)+T\big(D_{n+l}f(\mathfrak X
			u)\big)=0, 
		\end{equation}
		and  $\psi_{n+l}=X_{n+l}u$ is a weak solution of
		\begin{equation}\label{equation:horizontal2}
			\sum_{i,j=1}^{2n}X_i\big(D_{j}D_if(\mathfrak X u)
			X_j\psi_{n+l}\big)-\sum_{i=1}^{2n}X_i\big(D_{l}D_i f(\mathfrak X u)Tu\big)-T\big(D_{l}f(\mathfrak X
			u)\big)=0. 
		\end{equation}
		Furthermore, $Tu$ is a weak solution of 
		\begin{equation}\label{equation:T}
			\sum_{i,j=1}^{2n} X_i\big(D_jD_i f(\mathfrak X u)X_j (Tu)\big)=0.
		\end{equation}
	\end{claim}
	This can be verified by a direct calculation, see Lemma 3.1 and Lemma 3.2 in \cite{zhong2018regularityvariationalproblemsheisenberg}. Now we plugg $\varphi=\eta^4\left( Tu- \zeta \right)$ with $\zeta \in \mathbb{R}$  as a test function in the weak formulation of \eqref{equation:T} to deduce 
	\begin{multline*}
		0 =\int_{\Omega}	\eta^{4}\sum_{i,j=1}^{2n} D_jD_i f(\mathfrak X u)X_j (Tu)X_{i}\left(Tu\right)\, dx \\+ 4\int_{\Omega}	\eta^{3} \sum_{i,j=1}^{2n} D_jD_i f(\mathfrak X u)X_j (Tu)X_{i}\eta\left( Tu- \zeta \right)\, dx.
	\end{multline*}
	Hence, using \eqref{coercivity} and Young's inequality with $\varepsilon>0,$ we have 
	\begin{align*}
		\int_{\Omega}&\eta^{4}|\Xu|^{p-2}\left\lvert \X Tu \right\rvert^{2}\, dx \\ & \stackrel{\eqref{hessian of f}}{\leq} c\int_{\Omega}\eta^{3}|\Xu|^{p-2}\left\lvert \X \eta \right\rvert \left\lvert \X Tu \right\rvert \left\lvert Tu- \zeta \right\rvert\, dx \\
		&\leq c \varepsilon 	\int_{\Omega}\eta^{4}|\Xu|^{p-2}\left\lvert \X Tu \right\rvert^{2}\, dx + 	C_{\varepsilon}\int_{\Omega}\eta^{2}|\Xu|^{p-2} \left\lvert \X \eta \right\rvert^{2}\left\lvert Tu- \zeta  \right\rvert^{2}\, dx. 
	\end{align*}
	Choosing $\varepsilon>0$ small enough, we have 
	\begin{align*}
		\int_{\Omega}\eta^{4}|\Xu|^{p-2}\left\lvert \X Tu \right\rvert^{2}\, dx \leq C \int_{\Omega}\eta^{2}\left\lvert \X \eta \right\rvert^{2}|\Xu|^{p-2} \left\lvert Tu- \zeta  \right\rvert^{2}\ \mathrm{d}x.
	\end{align*}
	In view of \eqref{two sided bound}, this immediately implies \eqref{Cac for XTu by Tu}. Now to derive \eqref{XTu estimate nondegenerate}, we begin by deducing a crucial estimate for $Tu.$ For any $\xi \in \mathbb{R}^{2n},$ integrating by parts and using Young's inequality with $\varepsilon>0,$ we have 
	\begin{align*}
		\int_{\Omega} &\eta^{2} \left\lvert Tu \right\rvert^{2}\, dx \\&= 	\int_{\Omega} \eta^{2}  Tu \left( X_{l}X_{n+l}u - X_{n+l}X_{l}u\right)\, dx \\
		&= 	\int_{\Omega} \eta^{2}  Tu \left[  X_{l}\left( X_{n+l}u - \xi_{n+l}\right) - X_{n+l}\left( X_{l}u - \xi_{l} \right) \right]\, dx \\
		&=  -\int_{\Omega} X_{l}\left( \eta^{2}  Tu \right)\left( X_{n+l}u - \xi_{n+l}\right) \, dx  +\int_{\Omega} X_{n+l}\left( \eta^{2}  Tu \right)\left( X_{l}u - \xi_{l}\right) \, dx \\
		&\begin{multlined}[t]
			= -2\int_{\Omega} \eta X_{l} \eta \left(  Tu \right)\left( X_{n+l}u - \xi_{n+l}\right) \, dx - \int_{\Omega}\eta^{2} X_{l} Tu \left( X_{n+l}u - \xi_{n+l}\right) \, dx \\ +2\int_{\Omega} \eta X_{n+l} \eta \left(  Tu \right)\left( X_{l}u - \xi_{l}\right) \, dx + \int_{\Omega}\eta^{2} X_{n+l} Tu \left( X_{l}u - \xi_{l}\right) \, dx
		\end{multlined} \\
		&\leq 4\int_{\Omega} \eta \left\lvert \X \eta \right\rvert \left\lvert Tu\right\rvert \left\lvert \X u - \xi \right\rvert + 2 \int_{\Omega} \eta^{2} \left\lvert \X Tu \right\rvert \left\lvert \X u - \xi \right\rvert \\ 
		&\leq \varepsilon 	\int_{\Omega} \eta^{2} \left\lvert Tu \right\rvert^{2}\, dx + C_{\varepsilon}\int_{\Omega}\left\lvert \X \eta \right\rvert^{2}\left\lvert  \X u- \xi\right\rvert^2\, dx + 2 \int_{\Omega}\eta^{2} \left\lvert \X Tu\right\rvert \left\lvert  \X u- \xi\right\rvert. 
	\end{align*}
	Choosing $\varepsilon =1/2$, this implies 
	\begin{align}\label{Estimate for Tu by XTu and Xu}
		\int_{\Omega} \eta^{2} \left\lvert Tu \right\rvert^{2}\, dx &\leq C \left( \int_{\Omega}\left\lvert \X \eta \right\rvert^{2}\left\lvert  \X u- \xi\right\rvert^2\, dx + \int_{\Omega}\eta^{2} \left\lvert \X Tu\right\rvert \left\lvert  \X u- \xi\right\rvert \right). 
	\end{align}
	Combining this with \eqref{Cac for XTu by Tu} with the choice $\zeta =0$ and using Young's inequality with $\delta >0,$ we arrive at 
	\begin{align*}
		\int_{B_{R}}&\eta^{4}\left\lvert \X Tu \right\rvert^{2}\, dx \\&\leq \frac{C}{\left( R - \rho \right)^{2}} \left( \int_{B_{R}}\left\lvert \X \eta \right\rvert^{2}\left\lvert  \X u- \xi\right\rvert^2\, dx + \int_{B_{R}}\eta^{2} \left\lvert \X Tu\right\rvert \left\lvert  \X u- \xi\right\rvert\, dx \right) \\
		&\begin{multlined}[t]
			\leq\frac{C}{\left( R - \rho \right)^{2}} \int_{B_{R}}\left\lvert \X \eta \right\rvert^{2}\left\lvert  \X u- \xi\right\rvert^2\, dx + C\delta^2\int_{B_{R}}\eta^{4}\left\lvert \X Tu \right\rvert^{2}\, dx \\+ \frac{C}{\left( R - \rho \right)^{4}\delta^2}\int_{B_{R}} \left\lvert  \X u- \xi\right\rvert^2\, dx  . 
		\end{multlined}
	\end{align*}
	Choose $\delta^2 = \frac{1}{2C}$  we have 
	\begin{align*}
		\int_{B_{R}}\eta^{4}\left\lvert \X Tu \right\rvert^{2}\, dx \leq \frac{C}{\left( R - \rho \right)^{2}} \int_{B_{R}}\left\lvert \X \eta \right\rvert^{2}\left\lvert  \X u- \xi\right\rvert^2\, dx + \frac{C}{\left( R - \rho \right)^{4}}\int_{B_{R}} \left\lvert  \X u- \xi\right\rvert^2\, dx  . 
	\end{align*}
	Thus we have proved \eqref{XTu estimate nondegenerate}. On the other hand, using this back in \eqref{Estimate for Tu by XTu and Xu}, we deduce 
	\begin{align*}
		&\int_{B_{R}}\eta^{2} \left\lvert Tu \right\rvert^{2}\, dx \notag\\&\leq C \left( \int_{B_{R}}\left\lvert \X \eta \right\rvert^{2}\left\lvert  \X u- \xi\right\rvert^2\, dx + \int_{B_{R}}\eta^{2} \left\lvert \X Tu\right\rvert \left\lvert  \X u- \xi\right\rvert \right) \notag \\
		&\leq \frac{C}{\left( R - \rho \right)^{2}}\int_{B_{R}}\left\lvert  \X u- \xi\right\rvert^2\, dx + \left( \int_{B_{R}}\eta^{4}\left\lvert \X Tu \right\rvert^{2}\, dx \right)^{\frac{1}{2}}\left( \int_{B_{R}} \left\lvert  \X u- \xi\right\rvert^2\, dx\right)^{\frac{1}{2}} \notag \\
		&\stackrel{\eqref{XTu estimate nondegenerate}}{\leq} \frac{C}{\left( R - \rho \right)^{2}}\int_{B_{R}}\left\lvert  \X u- \xi\right\rvert^2\, dx + \frac{C}{\left( R - \rho \right)^{2}} \int_{B_{R}}\left\lvert  \X u- \xi\right\rvert^2\, dx.
	\end{align*}
	This proves \eqref{Tu estimate nondegenerate}. \smallskip 
	
	Now we finally turn our attention to \eqref{estimate of XXu in nondegenerate case}. Fix $l\in \{ 1, 2,\ldots,n\}$  and use $\varphi=\eta^2\left( X_l u- \xi_{l} \right)$ as a test function in \eqref{equation:horizontal} to deduce 
	\begin{equation}\label{weak5}
		\begin{aligned}[b]
			\int_{B_{R}} \sum_{i,j=1}^{2n}&\eta^2D_jD_i f(\Xu)X_j X_l u X_i\left(X_l u- \xi_{l}\right)\, dx\\
			=&-2\int_{B_{R}}\sum_{i,j=1}^{2n}\eta D_jD_if(\Xu)X_jX_l u X_i\eta\left(X_l u- \xi_{l}\right)\, dx\\
			&-\int_{B_{R}} \sum_{i=1}^{2n} D_{n+l}D_i f(\Xu)TuX_i\big(\eta^2\left(X_l u- \xi_{l}\right)\big)\, dx\\
			&+\int_{B_{R}} T\big(D_{n+l}f(\Xu)\big)\big(\eta^2\left( X_l u- \xi_{l} \right)\big)\, dx:= I_1^l+I_2^l+I_3^l.
		\end{aligned}
	\end{equation}
	Similarly, we can deduce from \eqref{equation:horizontal2} that for all $l\in \{ n+1,\ldots,2n\}$,
	\begin{equation}\label{weak6}
		\begin{aligned}[b]
			\int_{B_{R}} \sum_{i,j=1}^{2n}&\eta^2D_jD_i f(\Xu)X_jX_l uX_i\left( X_l u- \xi_{l} \right)\, dx\\
			=&-2\int_{B_{R}}\sum_{i,j=1}^{2n}\eta D_jD_if(\Xu)X_jX_l uX_i\eta\left( X_l u- \xi_{l} \right)\, dx\\
			&+\int_{B_{R}} \sum_{i=1}^{2n} D_{l-n}D_i f(\Xu)TuX_i\big(\eta^2\left( X_l u- \xi_{l} \right)\big)\, dx\\
			&-\int_{B_{R}} T\big(D_{l-n}f(\Xu)\big)\big(\eta^2\left( X_l u- \xi_{l} \right)\big)\, dx:= I_1^l+I_2^l+I_3^l.
		\end{aligned}
	\end{equation}
	Summing the above equations for all $l$ from $1$ to $2n$, we arrive at
	\begin{equation}\label{weak7}
		\int_{B_{R}} \sum_{i,j, l} \eta^2D_jD_i f(\Xu)X_jX_l uX_i
		\left( X_l u- \xi_{l} \right)\, dx=\sum_{l} \left(I_1^l+I_2^l+I_3^l\right),
	\end{equation}
	where we sum for  $i,j,l$ from 1 to $2n$. By \eqref{coercivity}, we deduce 
	\begin{align}\label{coercivity estimate for Xu eq}
		\int_{{B_{R}}}\eta^{2}|\Xu|^{p-2}\vert\X \X u\vert^2\, dx \leq C \sum_{l} \left(I_1^l+I_2^l+I_3^l\right)
	\end{align}
	Now, we have 
	\begin{align}
		\left\lvert I^{l}_{1}\right\rvert &\leq C \int_{B_{R}} \eta |\Xu|^{p-2}\left\lvert \X \X u\right\rvert \left\lvert \X \eta \right\rvert \left\lvert  \X u- \xi \right\rvert\, dx \notag\\
		&\leq \varepsilon \int_{B_{R}}\eta^{2}|\Xu|^{p-2}\vert\X \X u\vert^2\, dx + C_{\varepsilon}\int_{B_{R}}\left\lvert \X \eta \right\rvert^{2}|\Xu|^{p-2}\left\lvert  \X u- \xi\right\rvert^2\, dx. \label{estimate for Il1}
	\end{align}
	Now for $I^{l}_{2},$ using the fact that $\left\lvert Tu \right\rvert \leq C \left\lvert \X \X u \right\rvert,$ we have 
	\begin{align}
		\left\lvert I^{l}_{2}\right\rvert &\leq C \int_{B_{R}} \eta |\Xu|^{p-2}\left\lvert Tu\right\rvert \left\lvert \X \eta \right\rvert \left\lvert  \X u- \xi \right\rvert\, dx  +  C\int_{B_{R}} \eta^{2} |\Xu|^{p-2}\left\lvert Tu\right\rvert \left\lvert  \X \X u\right\rvert\, dx \notag\\
		&\begin{multlined}[t]
			\leq C \int_{B_{R}} \eta |\Xu|^{p-2}\left\lvert \X \X u\right\rvert \left\lvert \X \eta \right\rvert \left\lvert  \X u- \xi \right\rvert\, dx + \varepsilon \int_{B_{R}}\eta^{2}|\Xu|^{p-2}\vert\X \X u\vert^2\, dx \\ + C_{\varepsilon}\int_{B_{R}}\eta^{2}|\Xu|^{p-2}\left\lvert  Tu\right\rvert^2\, dx
		\end{multlined} \notag \\
		&\begin{multlined}[t]
			\leq \varepsilon \int_{\Omega}\eta^{2}|\Xu|^{p-2}\vert\X \X u\vert^2\, dx + C_{\varepsilon}\int_{\Omega}\eta^{2}|\Xu|^{p-2}\left\lvert  Tu\right\rvert^2\, dx \\ + C_{\varepsilon}\int_{\Omega}\left\lvert \X \eta \right\rvert^{2}|\Xu|^{p-2}\left\lvert  \X u- \xi\right\rvert^2\, dx. 
		\end{multlined}\label{estimate for Il2}
	\end{align}
	Now, for $I^{l}_{3}$, when $1 \leq l \leq n, $ we compute 
	\begin{align*}
		T \left( D_{n+l}f(\Xu)\right) &= T \left( \left\lvert \X u \right\rvert^{p-2}\right) X_{n+l}u + \left\lvert \X u \right\rvert^{p-2} T X_{n+l}u \\&=\left( p-2\right)\left\lvert \X u \right\rvert^{p-4}\left\langle \X u, T\X u \right\rangle X_{n+l}u +  \left\lvert \X u \right\rvert^{p-2} T X_{n+l}u.
	\end{align*}
	Similarly, when $n+1 \leq l \leq 2n,$ we have 
	\begin{align*}
		T \left( D_{l-n}f(\Xu)\right) &= T \left( \left\lvert \X u \right\rvert^{p-2}\right) X_{l-n}u + \left\lvert \X u \right\rvert^{p-2} T X_{l-n}u \\&=\left( p-2\right)\left\lvert \X u \right\rvert^{p-4}\left\langle \X u, T\X u \right\rangle X_{l-n}u+  \left\lvert \X u \right\rvert^{p-2} T X_{l-n}u.
	\end{align*}
	Hence, in either case we have,  
	\begin{align}\label{estimate for Il3}
		\left\lvert I^{l}_{3}\right\rvert &\leq C \int_\Omega \eta^{2}|\Xu|^{p-2}\left\lvert \X T u \right\rvert\left\lvert \X u- \xi \right\rvert\, dx.  
	\end{align}
	Combining \eqref{coercivity estimate for Xu eq}, \eqref{estimate for Il1}, \eqref{estimate for Il2} and \eqref{estimate for Il3} and choosing $\varepsilon>0$ sufficiently small, we deduce 
	\begin{align}
		\int_{\Omega}&\eta^{2}|\Xu|^{p-2}\vert\X \X u\vert^2\, dx \notag \\&\begin{multlined}[b]
			\leq C \int_{\Omega}\eta^{2}|\Xu|^{p-2}\left\lvert  Tu\right\rvert^2\, dx  + C\int_{\Omega}\left\lvert \X \eta \right\rvert^{2}|\Xu|^{p-2}\left\lvert  \X u- \xi\right\rvert^2\, dx  \\ +C\int_\Omega \eta^{2}|\Xu|^{p-2}\left\lvert \X T u \right\rvert\left\lvert \X u- \xi \right\rvert\, dx.   
		\end{multlined} \label{estimate for XXu}
	\end{align} 
	Now, combining \eqref{estimate for XXu}, \eqref{XTu estimate nondegenerate},  \eqref{Tu estimate nondegenerate} and \eqref{two sided bound}, we deduce 
	\begin{align*}
		\int_{B_{R}}&\eta^{2}\lvert\X \X u\rvert^2\, dx \notag \\
		&\leq \frac{C}{\left( R - \rho \right)^{2}}\int_{B_{R}}\left\lvert  \X u- \xi\right\rvert^2\, dx + C\int_{B_{R}}\eta^{2}\left\lvert \X T u \right\rvert\left\lvert \X u- \xi \right\rvert\, dx \notag \\
		&\leq \frac{C}{\left( R - \rho \right)^{2}}\int_{B_{R}}\left\lvert  \X u- \xi\right\rvert^2\, dx +  C\left( \int_{B_{R}}\eta^{4}\left\lvert \X T u \right\rvert^{2}\, dx \right)^{\frac{1}{2}}\left( \int_{B_{R}}\left\lvert  \X u- \xi\right\rvert^2\, dx\right)^{\frac{1}{2}} \notag \\
		&\leq \frac{C}{\left( R - \rho \right)^{2}}\int_{B_{R}}\left\lvert  \X u- \xi\right\rvert^2\, dx.  
	\end{align*}
	This proves  \eqref{estimate of XXu in nondegenerate case} and completes the proof of the Lemma. 
\end{proof}
\subsection{Reverse H\texorpdfstring{\"{o}}{o}lder inequalities}
Our Caccioppoli inequalities in Lemma \ref{Cac in the nondeg regime} leads us to a few reverse H\"{o}lder inequalities in the nondegenerate regime. 
\begin{lemma}\label{reverse holder inequality lemma}
	Let $1 < p < \infty$ and let $\Omega \subset \mathbb{H}_{n}$ be open. Let $u \in HW^{1,p}_{\text{loc}}\left(\Omega\right)$ be a weak solution to 
	\begin{align*}
		\operatorname{div}_{\mathbb{H}} \left( \left\lvert \X u\right\rvert^{p-2}\X u\right) &= 0 &&\text{ in } \Omega. 
	\end{align*}
	Let $\tilde{\Omega} \subset \subset \Omega$ be any open subset. Suppose we have 
	\begin{align}\label{two sided bound rev holder}
		\frac{\lambda}{4B} \leq |\Xu| \leq A\lambda \qquad \text{ in } \tilde{\Omega}, 
	\end{align}
	for some constants $A, B \geq 1$ and $\lambda >0.$ Then there exists a constant $C_{RH} \equiv C_{RH}\left( A, B, n, p \right) \geq 1,$ such that for any $\xi \in \mathbb{R}^{2n},$ and for any ball $B_{R} \subset \tilde{\Omega}$ with $R>0,$ we have the following reverse H\"{o}lder inequality 
	\begin{align}\label{rev holder bound 1 to 2 star}
		\left( \fint_{B_{R/2}}\left\lvert  \X u- \xi\right\rvert^\frac{2Q}{Q-2}\ \mathrm{d}x \right)^{\frac{Q-2}{2Q}} &\leq C_{RH}\fint_{B_{R}}\left\lvert  \X u- \xi\right\rvert\ \mathrm{d}x. 
	\end{align}
	In particular, for any exponent $1 \leq q \leq 2Q/(Q-2),$ we have the following estimate 
	\begin{align}\label{rev holder bound}
		\left( \fint_{B_{R/2}}\left\lvert  \X u- \xi\right\rvert^{q}\ \mathrm{d}x \right)^{\frac{1}{q}} &\leq C_{RH}\left( \fint_{B_{R}}\left\lvert  \X u- \xi\right\rvert^{q}\ \mathrm{d}x \right)^{\frac{1}{q}}. 
	\end{align}
\end{lemma}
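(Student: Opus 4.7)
The plan is to combine the nondegenerate Caccioppoli inequality \eqref{estimate of XXu in nondegenerate case} with the scale-invariant Sobolev inequality \eqref{scale invariant Sobolev} to obtain an $L^{2}$-to-$L^{2^{*}}$ reverse H\"older inequality for $\X u - \xi$, where $2^{*} := 2Q/(Q-2)$, and then to push the exponent on the right-hand side down to $L^{1}$ via a standard Giaquinta-Modica style self-improvement. Fix $\xi \in \mathbb{R}^{2n}$, set $v := \X u - \xi$, so that $\X v = \X\X u$ componentwise. For concentric balls $B_\rho \subset B_s$ with $R/2 \leq \rho < s \leq R$ contained in $\tilde\Omega$, applying Sobolev \eqref{scale invariant Sobolev} componentwise to $v$ with exponent $2$, followed by Lemma \ref{Cac in the nondeg regime} (with cutoff $\eta$ adapted to $B_\rho \subset B_s$), yields
\begin{align*}
\Big(\int_{B_\rho}|v|^{2^{*}}\Big)^{1/2^{*}} &\leq c\Big(\int_{B_\rho}|\X\X u|^{2}+\rho^{-2}\int_{B_\rho}|v|^{2}\Big)^{1/2}\leq \frac{C}{s-\rho}\Big(\int_{B_s}|v|^{2}\Big)^{1/2},
\end{align*}
where we absorb the $\rho^{-1}$ term into $(s-\rho)^{-1}$ using $\rho \geq R/2 \geq s - \rho$.

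To drop the exponent from $2$ to $1$ on the right, I interpolate with H\"older: setting $\theta := Q/(Q+2) \in (0,1)$, we have $\|v\|_{L^{2}(B_s)} \leq \|v\|_{L^{2^{*}}(B_s)}^{\theta}\|v\|_{L^{1}(B_s)}^{1-\theta}$. Writing $\Phi(r) := \|v\|_{L^{2^{*}}(B_r)}$ and plugging this into the previous display gives
\begin{align*}
\Phi(\rho)\leq \frac{C}{s-\rho}\,\Phi(s)^{\theta}\,\|v\|_{L^{1}(B_R)}^{1-\theta}.
\end{align*}
A Young's inequality with conjugate exponents $1/\theta$ and $1/(1-\theta) = (Q+2)/2$ separates the two factors and, after absorbing a small multiple of $\Phi(s)$, produces
\begin{align*}
\Phi(\rho)\leq \tfrac{1}{2}\Phi(s) + C(s-\rho)^{-(Q+2)/2}\|v\|_{L^{1}(B_R)},\qquad R/2\leq \rho < s \leq R.
\end{align*}
The classical hole-filling iteration lemma (e.g.\ Lemma 6.1 in \cite{giaquinta-martinazzi-regularity}) then removes the $\Phi(s)$ term and yields $\Phi(R/2)\leq CR^{-(Q+2)/2}\|v\|_{L^{1}(B_R)}$. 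Converting to averages via $|B_r|\simeq r^{Q}$ gives exactly the desired \eqref{rev holder bound 1 to 2 star}.

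The intermediate statement \eqref{rev holder bound} for $1\leq q\leq 2^{*}$ is then immediate from H\"older on both sides:
\begin{align*}
\Big(\fint_{B_{R/2}}|v|^{q}\Big)^{1/q}\leq \Big(\fint_{B_{R/2}}|v|^{2^{*}}\Big)^{1/2^{*}}\leq C\fint_{B_R}|v|\leq C\Big(\fint_{B_R}|v|^{q}\Big)^{1/q}.
\end{align*}
There is no genuinely new analytic difficulty at this step; the hard part --- extracting a linear-looking Caccioppoli inequality in the nondegenerate regime despite the noncommutativity of the horizontal vector fields --- has already been settled in Lemma \ref{Cac in the nondeg regime}. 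Once that bound is in hand, the present reverse H\"older inequality is derived by exactly the same Sobolev-plus-interpolation machinery as in the Euclidean case, with the only care being that all constants must depend solely on $A$, $B$, $n$ and $p$, as dictated by the nondegeneracy hypothesis \eqref{two sided bound rev holder}.
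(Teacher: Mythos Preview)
Your proof is correct and follows essentially the same route as the paper: both combine the nondegenerate Caccioppoli inequality \eqref{estimate of XXu in nondegenerate case} with a Sobolev inequality to obtain an $L^2 \to L^{2^*}$ bound, then interpolate $L^2$ between $L^{2^*}$ and $L^1$, apply Young's inequality, and close via the standard iteration lemma. The only cosmetic difference is that the paper applies the Poincar\'e--Sobolev inequality \eqref{poincaresobolevineq} to the compactly supported function $\eta(\X u - \xi)$, thereby avoiding the lower-order term $\rho^{-2}\int_{B_\rho}|v|^2$ that you instead absorb via $\rho \geq s - \rho$; the paper also cites the iteration lemma as Lemma~8.18 rather than Lemma~6.1 in \cite{giaquinta-martinazzi-regularity}.
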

\begin{proof}
	In view of \eqref{two sided bound rev holder}, the Caccippoli inequality \eqref{estimate of XXu in nondegenerate case} holds for any ball $B \subset \tilde{\Omega}$. Now fix concentric balls $B_{\rho} \subset B_{r} \subset \tilde{\Omega}.$ Let  $\eta \in C_{c}^{\infty}\left(B_{R}\right)$, $0 \leq \eta \leq 1$ in $B_{R}$ with $\eta \equiv 1$ in $B_{\rho}$ and $\left\lvert \X \eta \right\rvert \leq c/\left( R - \rho\right)$  for some constant $c>1.$ Then  $\eta \left( \X u -\xi \right) \in HW^{1,2}_{0}\left(B_{R}; \mathbb{R}^{2n}\right)$ for any $\xi \in \mathbb{R}^{2n}.$ Thus we have
	\begin{align}
		\int_{B_{\rho}} \left\lvert   \X u  - \xi \right\rvert^{\frac{2Q}{Q-2}} 
		&\leq \int_{B_{R}} \left\lvert  \eta\left( \X u  - \xi \right) \right\rvert^{\frac{2Q}{Q-2}} \notag\\
		& \stackrel{\eqref{poincaresobolevineq} }{\leq} C\left( \int_{B_{R}} \left\lvert \X  \left[ \eta\left( \X u  - \xi \right) \right]\right\rvert^{2} \right)^{\frac{Q}{Q-2}} \notag\\&\leq C\left(	\int_{B_{R}} \eta^{2}\left\lvert \X \X u \right\rvert^{2}  + \int_{B_{R}} \left\lvert \left\langle \X \eta , \X u  - \xi \right\rangle  \right\rvert^{2} \right)^{\frac{Q}{Q-2}} \notag \\
		&\stackrel{\eqref{estimate of XXu in nondegenerate case}}{\leq} C\left( R-\rho\right)^{-\frac{2Q}{Q-2}} \left( 	\int_{B_{R}} \left\lvert \X u - \xi  \right\rvert^{2} \right)^{\frac{Q}{Q-2}}. \label{rev holder at int for Xu}
	\end{align} 
	This implies, using the H\"{o}lder inequality and the Young's inequality, 
	\begin{align*}
		\left( \int_{B_{\rho}} \left\lvert   \X u  - \xi \right\rvert^{\frac{2Q}{Q-2}} \right)^{\frac{Q-2}{2Q}} &\leq \frac{C}{\left(R-\rho\right)}\left( 	\int_{B_{R}} \left\lvert \X u - \xi  \right\rvert^{2} \right)^{\frac{1}{2}} \\
		&\leq  \frac{C}{\left(R-\rho\right)}\left( 	\int_{B_{R}} \left\lvert \X u - \xi  \right\rvert^{\frac{2Q}{Q+2}} \left\lvert \X u - \xi  \right\rvert^{\frac{4}{Q+2}}\right)^{\frac{1}{2}} \\
		&\leq\frac{C}{\left(R-\rho\right)}\left( \int_{B_{R}} \left\lvert   \X u  - \xi \right\rvert^{\frac{2Q}{Q-2}}\right)^{\frac{Q-2}{2\left(Q+2\right)}}\left( \int_{B_{R}} \left\lvert   \X u  - \xi \right\rvert\right)^{\frac{2}{Q+2}} \\
		&\leq \frac{1}{2}\left( \int_{B_{R}} \left\lvert   \X u  - \xi \right\rvert^{\frac{2Q}{Q-2}}\right)^{\frac{Q-2}{2Q}} + \frac{C}{\left( R-\rho\right)^{\frac{Q+2}{2}}}\int_{B_{R}} \left\lvert   \X u  - \xi \right\rvert.
	\end{align*}
	Thus, setting 
	\begin{align*}
		\phi \left(r\right):= \left( \int_{B_{r}} \left\lvert   \X u  - \xi \right\rvert^{\frac{2Q}{Q-2}} \right)^{\frac{Q-2}{2Q}}
	\end{align*} and appying Lemma $8.18$ in \cite{giaquinta-martinazzi-regularity}, we deduce 
	\begin{align*}
		\left( \int_{B_{\rho}} \left\lvert   \X u  - \xi \right\rvert^{\frac{2Q}{Q-2}} \right)^{\frac{Q-2}{2Q}} \leq C\left( R-\rho\right)^{-\frac{Q+2}{2}}\int_{B_{R}} \left\lvert   \X u  - \xi \right\rvert
	\end{align*}
	for every $0 < \rho <R.$ Setting $\rho = R/2$, we arrive at 
	\begin{align*}
		\left( \fint_{B_{R/2}} \left\lvert   \X u  - \xi \right\rvert^{\frac{2Q}{Q-2}} \right)^{\frac{Q-2}{2Q}} \leq C\fint_{B_{R}} \left\lvert   \X u  - \xi \right\rvert. 
	\end{align*}
	This proves \eqref{rev holder bound 1 to 2 star} and \eqref{rev holder bound} follows from this using H\"{o}lder inequality on both sides. This completes the proof. 
\end{proof}
As a consequence, we have the following result for $V \left( \X u\right),$ which is not quite as sharp when $1<p<2,$ but this would suffice for our purposes.   
\begin{lemma}\label{reverse holder inequality V lemma}
	Let $1 < p < \infty$ and let $\Omega \subset \mathbb{H}_{n}$ be open. Let $u \in HW^{1,p}_{\text{loc}}\left(\Omega\right)$ be a weak solution to 
	\begin{align*}
		\operatorname{div}_{\mathbb{H}} \left( \left\lvert \X u\right\rvert^{p-2}\X u\right) &= 0 &&\text{ in } \Omega. 
	\end{align*}
	Let $\tilde{\Omega} \subset \subset \Omega$ be any open subset. Suppose we have 
	\begin{align}\label{two sided bound rev holder V}
		\frac{\lambda}{4B} \leq |\Xu| \leq A\lambda \qquad \text{ in } \tilde{\Omega}, 
	\end{align}
	for some constants $A, B \geq 1$ and $\lambda >0.$ Let $\xi \in \mathbb{R}^{2n}$ be a constant vector if $2\le p <\infty$ and   
	\begin{align}\label{upper bound for xi for 1<p<2}
		\left\lvert \xi \right\rvert \leq A\lambda \qquad \text{ if } 1 < p < 2.
	\end{align}
	Then there exists a constant $C_{RH} = C_{RH, V}\left( A, B, n, p \right) \geq 1,$ such that for any ball $B_{R} \subset \tilde{\Omega}$ with $R>0,$ we have 
	\begin{align}
		\left( \fint_{B_{R/2}}\left\lvert  V \left( \X u \right) - V \left( \xi\right) \right\rvert^{\frac{2Q}{Q-2}}\ \mathrm{d}x \right)^{\frac{Q-2}{2Q}} &\leq C_{RH, V}\fint_{B_{R}}\left\lvert V \left(  \X u \right) - V \left(\xi\right)\right\rvert\ \mathrm{d}x \label{rev holder bound V 1 to 2 star}. 
	\end{align}
	In particular, for any exponent $1 \leq q \leq 2Q/(Q-2),$ we have the following estimate 
	\begin{align}
		\left( \fint_{B_{R/2}}\left\lvert  V \left( \X u \right) - V \left( \xi\right) \right\rvert^{q}\ \mathrm{d}x \right)^{\frac{1}{q}} &\leq C_{RH, V}\left( \fint_{B_{R}}\left\lvert V \left(  \X u \right) - V \left(\xi\right)\right\rvert^{q}\ \mathrm{d}x  \right)^{\frac{1}{q}}\label{rev holder bound V}, 
	\end{align}
	for any $\xi \in \mathbb{R}^{n}$ which satisfies \eqref{upper bound for xi for 1<p<2}.
	\end{lemma}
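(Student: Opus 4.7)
The plan is to reduce \eqref{rev holder bound V 1 to 2 star} to the already-established reverse H\"older inequality \eqref{rev holder bound 1 to 2 star} of Lemma \ref{reverse holder inequality lemma} by showing that, under the two-sided bound \eqref{two sided bound rev holder V}, the quantity $|V(\X u(x)) - V(\xi)|$ is pointwise comparable to a spatially constant multiple of $|\X u(x) - \xi|$. The basic input is the sharp two-sided bound \eqref{constant cv}, which gives
\begin{equation*}
c_V^{-1}\bigl(|\X u(x)| + |\xi|\bigr)^{(p-2)/2}\,|\X u(x) - \xi|
\le |V(\X u(x)) - V(\xi)|
\le c_V\bigl(|\X u(x)| + |\xi|\bigr)^{(p-2)/2}\,|\X u(x) - \xi|.
\end{equation*}
Thus it suffices to show that, under the standing hypotheses, the factor $(|\X u(x)| + |\xi|)^{(p-2)/2}$ is pointwise comparable, with constants depending only on $n,p,A,B$, to a single $x$-independent quantity.

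First consider the case $1<p<2$, where by assumption $|\xi| \le A\lambda$. Then by \eqref{two sided bound rev holder V},
\begin{equation*}
\frac{\lambda}{4B} \le |\X u(x)| + |\xi| \le 2A\lambda \qquad \text{for all } x \in \tilde\Omega,
\end{equation*}
and since $(p-2)/2 < 0$ this places $(|\X u(x)| + |\xi|)^{(p-2)/2}$ between two positive multiples of $\lambda^{(p-2)/2}$. For $p\ge 2$ with $|\xi|\le 2A\lambda$, the same chain of inequalities holds with $2A$ replaced by $3A$, and since $(p-2)/2 \ge 0$, again $(|\X u(x)| + |\xi|)^{(p-2)/2} \asymp \lambda^{(p-2)/2}$. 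In either of these regimes we get the pointwise bound $|V(\X u(x)) - V(\xi)| \asymp \lambda^{(p-2)/2}|\X u(x) - \xi|$ with constants $C(n,p,A,B)$, and \eqref{rev holder bound V 1 to 2 star} follows directly by applying \eqref{rev holder bound 1 to 2 star} to $\X u - \xi$, the scalar factor $\lambda^{(p-2)/2}$ cancelling from both sides.

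The remaining case is $p\ge 2$ with $|\xi| > 2A\lambda$. Here the hypothesis $|\X u(x)| \le A\lambda$ forces
\begin{equation*}
\tfrac{1}{2}|\xi| \le |\xi| - |\X u(x)| \le |\X u(x) - \xi| \le |\xi| + |\X u(x)| \le \tfrac{3}{2}|\xi|,
\end{equation*}
and similarly $|\X u(x)| + |\xi|$ is pointwise comparable to $|\xi|$. Combining with \eqref{constant cv} gives $|V(\X u(x)) - V(\xi)| \asymp |\xi|^{p-1}$ pointwise, whence both sides of \eqref{rev holder bound V 1 to 2 star} are comparable to $|\xi|^{p-1}$ and the inequality holds trivially with a purely dimensional constant. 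The full range of $q$ in \eqref{rev holder bound V} then follows from \eqref{rev holder bound V 1 to 2 star} by H\"older's inequality on the right and the trivial monotonicity on the left.

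The only mildly delicate point is the separate handling of the ``far $\xi$'' regime in the case $p\ge 2$; the restriction $|\xi|\le A\lambda$ for $1<p<2$ is exactly what allows us to avoid splitting cases there (since otherwise the negative power $(p-2)/2$ would produce a blowing factor $|\X u|^{(p-2)/2}$ that cannot be controlled by $\lambda^{(p-2)/2}$ from the two-sided bound alone, and we would not be able to reduce to Lemma \ref{reverse holder inequality lemma}).
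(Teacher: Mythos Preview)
Your argument is correct and actually more elementary than the paper's own proof. One small slip: in the ``far $\xi$'' case ($p\ge 2$, $|\xi|>2A\lambda$) the pointwise value of $|V(\X u)-V(\xi)|$ is comparable to $|\xi|^{p/2}$, not $|\xi|^{p-1}$, since \eqref{constant cv} gives $|V(\X u)-V(\xi)|\asymp (|\X u|+|\xi|)^{(p-2)/2}|\X u-\xi|\asymp |\xi|^{(p-2)/2}\cdot|\xi|$. This does not affect the logic, since both sides of \eqref{rev holder bound V 1 to 2 star} are then comparable to the same $x$-independent constant.

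The paper takes a different route: rather than reducing to Lemma~\ref{reverse holder inequality lemma}, it reruns the Sobolev--Caccioppoli argument directly on $V(\X u)$. It computes $\left\lvert \X[V(\X u)]\right\rvert \le c\,|\X u|^{(p-2)/2}|\X\X u|$ via the chain rule (legitimate because $|\X u|$ is bounded away from zero), applies Poincar\'e--Sobolev to $\eta\bigl(V(\X u)-V(\xi)\bigr)$, and then invokes the Caccioppoli inequality \eqref{estimate of XXu in nondegenerate case}; only at the very end does it use \eqref{constant cv} once to convert the resulting $\lambda^{p-2}|\X u-\xi|^2$ term into $|V(\X u)-V(\xi)|^2$. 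Your approach, by contrast, uses the full two-sided bound \eqref{constant cv} at the outset to obtain the pointwise equivalence $|V(\X u)-V(\xi)|\asymp \lambda^{(p-2)/2}|\X u-\xi|$ (in the bounded-$\xi$ regime), so that \eqref{rev holder bound V 1 to 2 star} becomes an immediate corollary of \eqref{rev holder bound 1 to 2 star}. This is shorter and avoids repeating the iteration lemma, at the cost of needing the separate ``far $\xi$'' case for $p\ge 2$; the paper's argument handles all $\xi$ uniformly when $p\ge 2$ because the single use of \eqref{constant cv} there only needs the lower bound $|\X u|+|\xi|\ge \lambda/(4B)$.
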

\begin{proof}
	We first tackle the case $p \geq 2,$ where no bound on $\xi$ is needed. To derive the inequalities for $V\left(\X u\right),$ observe that $V\left(\cdot\right)$ is smooth away from the origin and in view of the pointwise bounds \eqref{two sided bound rev holder V}, we can justify the computation of derivatives and deduce the inequality
	\begin{align*}
		\left\lvert \X \left[ V \left(\X u\right)\right] \right\rvert &\leq  c  \left\lvert \X u \right\rvert^{\frac{p-2}{2}}\left\lvert \X \X u \right\rvert 
	\end{align*}
	for some constant $c>0,$ which depends only on $n$ and $p$.  This, combined with the Caccippoli inequality \eqref{estimate of XXu in nondegenerate case} implies,  again in view of the pointwise bounds, that $V \left(\X u\right) \in HW^{1,2}_{\text{loc}}\left(\tilde{\Omega}\right).$ Thus, for $B_{\rho}, B_{R}$ and $\eta$ as defined earlier, we have $\eta\left( V \left(\X u\right) - V\left(\xi\right) \right)\in HW^{1,2}_{0}\left(B_{R}; \mathbb{R}^{2n}\right)$ for any $\xi \in \mathbb{R}^{2n}.$ Thus we have
	\begin{align*}
		\int_{B_{\rho}} &\left\lvert V \left(\X u\right) -V\left(\xi\right)\right\rvert^{\frac{2Q}{Q-2}} \\
		&\leq  	\int_{B_{R}} \eta^{\frac{2Q}{Q-2}}\left\lvert V \left(\X u\right) -V\left(\xi\right)\right\rvert^{\frac{2Q}{Q-2}} \\
		&\stackrel{\eqref{poincaresobolevineq}}{\le}  	C\left( \int_{B_{R}} \left\lvert \X \left[ \eta \left( V \left(\X u\right) -V\left(\xi\right)\right)\right] \right\rvert^{2}\right) ^{\frac{Q}{Q-2}} \\
		&= C\left( \int_{B_{R}} \eta^{2} \left\lvert \X \left[ V \left(\X u\right)\right] \right\rvert^{2}  + \int_{B_{R}} \left\lvert \left\langle \X \eta,  V \left(\X u\right) -V\left(\xi\right)\right\rangle \right\rvert^{2}\right)^{\frac{Q}{Q-2}} \\
		&\leq C \left( \int_{B_{R}} \eta^{2} \left\lvert \X u \right\rvert^{p-2}\left\lvert \X \X u \right\rvert^{2}  + \int_{B_{R}} \left\lvert \left\langle \X \eta,  V \left(\X u\right) -V\left(\xi\right)\right\rangle \right\rvert^{2}\right)^{\frac{Q}{Q-2}} \\
		&\stackrel{\eqref{two sided bound rev holder}}{\leq} C \left( \lambda^{p-2}\int_{B_{R}} \eta^{2} \left\lvert \X \X u \right\rvert^{2}  + \frac{1}{\left( R - \rho\right)^{2}}\int_{B_{R}} \left\lvert V \left(\X u\right) -V\left(\xi\right) \right\rvert^{2}\right)^{\frac{Q}{Q-2}} \\
		&\stackrel{\eqref{estimate of XXu in nondegenerate case}}{\leq} \frac{C}{\left( R - \rho\right)^{\frac{2Q}{Q-2}}}\left( \int_{B_{R}}\lambda^{p-2} \left\lvert \X u - \xi \right\rvert^{2}  + \int_{B_{R}} \left\lvert V \left(\X u\right) -V\left(\xi\right) \right\rvert^{2}\right)^{\frac{Q}{Q-2}}.
	\end{align*}
	Now in view of \eqref{two sided bound rev holder V} and \eqref{upper bound for xi for 1<p<2}, we deduce 
	\begin{align*}
		\lambda^{p-2}\left\lvert \X u - \xi \right\rvert^{2}
		&\leq \max \left\{(4B)^{p-2}, (2A)^{2-p}\right\} \left( \left\lvert \X u \right\rvert + \left\lvert \xi\right\rvert\right)^{p-2}\left\lvert \X u - \xi \right\rvert^{2}\\
		&\stackrel{\eqref{constant cv}}{\leq} c^2_{V}\max \left\{(4B)^{p-2}, (2A)^{2-p}\right\}\left\lvert V \left(\X u\right) -V\left(\xi\right) \right\rvert^{2}. 
	\end{align*}
	Using this in the last estimate, we arrive at 
	\begin{align*}
		\int_{B_{\rho}} \left\lvert V \left(\X u\right) -V\left(\xi\right)\right\rvert^{\frac{2Q}{Q-2}} \leq C \left( R - \rho\right)^{-\frac{2Q}{Q-2}}\left( \int_{B_{R}} \left\lvert V \left(\X u\right) -V\left(\xi\right) \right\rvert^{2}\right)^{\frac{Q}{Q-2}}. 
	\end{align*}
	The desired estimates now follow from this in the same way as in Lemma \ref{reverse holder inequality lemma}.
\end{proof}

\subsection{Estimates in the nondegenerate regime}
Some of the Lemmas we would prove in this subsection are similar in spirit to some Lemmas in \cite{DiBenedettoFriedman1} ( compare Lemma \ref{comparison with linearized lemma}, \ref{if xi0 then x1 exists} and \ref{iteration of close to constant lemma} with Lemma 4.3, 4.4 and 4.5 in \cite{DiBenedettoFriedman1} respectively ). However, our results are valid only the nondegenerate regime. 
\begin{lemma}\label{comparison with linearized lemma}
	Let $1<p<\infty$. Suppose $u \in HW^{1,p}_{\text{loc}}\left( B_{2R}\right)$ be a weak solution of
		\begin{align*}
		\operatorname{div}_{\mathbb{H}} \left( \left\lvert \X u\right\rvert^{p-2}\X u\right) &= 0 &&\text{ in } B_{2R}. 
	\end{align*}
	Let $A, B \geq 1$ and $\lambda >0$ be constants and suppose  
	\begin{align}\label{two sided bound in comparison with linearized}
		\frac{\lambda}{4B} \leq  \left\lvert \X u \right\rvert \leq A\lambda \qquad \text{ in } B_{R}.  
	\end{align} 
	Let $\xi \in \mathbb{R}^{2n}$ be any vector satisfying 
	\begin{align*}
		\frac{\lambda}{4B} \leq |\xi| \leq A \lambda. 
	\end{align*}
	Let $v \in  HW^{1,p}(B_{R/2})$ be the unique weak solution of the Dirichlet boundary value problem for the linear equation 
	\begin{align*}
		\left\lbrace \begin{aligned}
			\operatorname{div}_{\mathbb{H}} \left[ \left\lvert \xi \right\rvert^{p-2} \X v + \left( p-2\right) \left\lvert \xi \right\rvert^{p-4} \left\langle \xi, \X v \right\rangle \xi \right] &=0 &&\text{ in } B_{R/2}, \\
			v&=u &&\text{ on } \partial B_{R/2}. 
		\end{aligned}\right. 
	\end{align*}
	Then there exists a constant $C\equiv C \left( Q, p, A, B  \right) \geq 1$ such that we have the estimate 
	\begin{align}\label{comparison with linearized}
		\int_{B_{R/2}} \left\lvert \X u - \X v\right\rvert^{2} \leq C\left( \frac{1}{A^{2}\lambda^{2}} \fint_{B_{R}}\left\lvert \X u -\xi \right\rvert^{2}\right)^{\frac{2}{Q-2}}\int_{B_{R}}\left\lvert \X u -\xi \right\rvert^{2}. 
	\end{align}
\end{lemma}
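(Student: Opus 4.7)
The plan is to compare $u$ and $v$ directly via their weak formulations, using $u-v$ as a common test function and controlling the defect by Taylor expansion at $\xi$. Both $u$ and $v$ lie in $HW^{1,2}(B_{R/2})$ (for $u$ this is the gradient bound of Theorem \ref{thm:lip}; for $v$ it follows from standard linear theory for the constant coefficient equation), and they agree on $\partial B_{R/2}$, so $u-v\in HW^{1,2}_0(B_{R/2})$ is admissible in both equations. Writing $f(z)=|z|^p/p$, so that $D^2 f(\xi)$ coincides with the coefficient matrix of the linearized equation, subtraction yields
\begin{align*}
\int_{B_{R/2}} D^2 f(\xi)(\X u-\X v)\cdot \X(u-v)\,\mathrm{d}x = \int_{B_{R/2}}\bigl[D^2 f(\xi)\X u - Df(\X u)\bigr]\cdot \X(u-v)\,\mathrm{d}x.
\end{align*}
The algebraic identity $D^2 f(\xi)\X u - Df(\X u) = (p-2)Df(\xi) - R(\X u,\xi)$, with the Taylor remainder $R(\X u,\xi):= Df(\X u)-Df(\xi)-D^2 f(\xi)(\X u-\xi)$, combined with the fact that the constant vector $(p-2)Df(\xi)$ pairs to zero against $\X(u-v)$ by the divergence theorem, reduces the problem to
\begin{align*}
\int_{B_{R/2}} D^2 f(\xi)(\X u-\X v)\cdot \X(u-v)\,\mathrm{d}x = -\int_{B_{R/2}} R(\X u,\xi)\cdot \X(u-v)\,\mathrm{d}x.
\end{align*}

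Coercivity of $D^2 f(\xi)$ in the nondegenerate regime gives $D^2 f(\xi)w\cdot w\ge \min(1,p-1)|\xi|^{p-2}|w|^2 \ge c(A,B,p)\lambda^{p-2}|w|^2$. Cauchy--Schwarz with Young's inequality and absorption then produces
\begin{align*}
\int_{B_{R/2}} |\X u-\X v|^2\,\mathrm{d}x \le \frac{C}{\lambda^{2(p-2)}} \int_{B_{R/2}} |R(\X u,\xi)|^2\,\mathrm{d}x.
\end{align*}
The key pointwise estimate I will prove is
\begin{align*}
|R(\X u,\xi)|\le C(A,B,p)\,\lambda^{p-3}\,|\X u-\xi|^2,
\end{align*}
via a dichotomy. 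If $|\X u-\xi|\le|\xi|/2$, the segment from $\xi$ to $\X u$ stays in $\{|z|\ge|\xi|/2\ge \lambda/(8B)\}$, and Taylor's remainder formula combined with $|D^3 f(z)|\le C|z|^{p-3}$ delivers the bound. If instead $|\X u-\xi|>|\xi|/2\ge \lambda/(8B)$, the classical monotonicity estimate $|R|\le C\lambda^{p-2}|\X u-\xi|$ is upgraded by multiplying by $1\le 8B|\X u-\xi|/\lambda$, trading one factor of $|\X u-\xi|$ for $\lambda^{-1}|\X u-\xi|^2$. Substituting gives
\begin{align*}
\int_{B_{R/2}}|\X u-\X v|^2\,\mathrm{d}x \le \frac{C}{\lambda^2}\int_{B_{R/2}}|\X u-\xi|^4\,\mathrm{d}x.
\end{align*}

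To recover the required factor $\bigl(\fint_{B_R}|\X u-\xi|^2/(A\lambda)^2\bigr)^{2/(Q-2)}$, I interpolate $L^4$ between $L^{2Q/(Q-2)}$ and $L^\infty$, which is legal because $Q=2n+2\ge 4$. The reverse H\"older inequality \eqref{rev holder bound 1 to 2 star}, applicable since the two-sided bound on $|\X u|$ is in force, provides
\begin{align*}
\int_{B_{R/2}}|\X u-\xi|^{2Q/(Q-2)}\,\mathrm{d}x \le C\,M^{Q/(Q-2)}|B_R|^{-2/(Q-2)}, \qquad M:=\int_{B_R}|\X u-\xi|^2,
\end{align*}
while $\|\X u-\xi\|_{L^\infty(B_{R/2})}\le 2A\lambda$ handles the exponent $2(Q-4)/(Q-2)$ that remains, giving
\begin{align*}
\int_{B_{R/2}}|\X u-\xi|^4\,\mathrm{d}x \le C(A)\,\lambda^{2(Q-4)/(Q-2)}\,M^{Q/(Q-2)}\,|B_R|^{-2/(Q-2)}.
\end{align*}
Assembling the last two displays and rearranging yields the claim, the extraneous power of $A$ being absorbed into the constant $C=C(Q,p,A,B)$.

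The main obstacle is the Taylor-remainder estimate when $1<p<3$: there $|D^3 f(z)|\sim|z|^{p-3}$ blows up as $z\to 0$, and the segment from $\xi$ to $\X u$ may approach the origin when $\X u$ and $\xi$ point in nearly opposite directions, defeating the naive Taylor expansion. The dichotomy above bypasses this obstruction by deferring to the global monotonicity estimate in the "far" regime, where the separation $|\X u-\xi|$ is itself comparable to $\lambda$ and supplies the missing power.
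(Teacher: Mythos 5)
Your proposal is correct and follows essentially the same route as the paper: subtract the weak formulations using $u-v$ as the common test function, identify the error with the quadratic Taylor remainder of $Df$ at $\xi$ (the paper's $F$ is exactly $-R(\X u,\xi)$), invoke coercivity of $D^2f(\xi)$ in the nondegenerate regime, bound $|R|\lesssim\lambda^{p-3}|\X u-\xi|^2$ pointwise, and close with the same $L^4\rightsquigarrow L^{2Q/(Q-2)}$ interpolation plus the reverse H\"{o}lder inequality \eqref{rev holder bound 1 to 2 star}. The only difference is cosmetic: where the paper cites DiBenedetto--Friedman for the pointwise bound on $F$, you supply a short self-contained proof via the near/far dichotomy in $|\X u-\xi|$ versus $|\xi|/2$, which is a correct and welcome addition.
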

\begin{proof}
	From the equation satisfied by $u$, we  can write 
	\begin{align*}
		&\operatorname{div}_{\mathbb{H}} \left[ \left\lvert \xi \right\rvert^{p-2} \X u + \left( p-2\right) \left\lvert \xi \right\rvert^{p-4} \left\langle \xi, \X u \right\rangle \xi \right] \\&= \operatorname{div}_{\mathbb{H}} \left[ \left\lvert \xi \right\rvert^{p-2} \X u + \left( p-2\right) \left\lvert \xi \right\rvert^{p-4} \left\langle \xi, \X u \right\rangle \xi - \left\lvert \X u \right\rvert^{p-2}\X u \right] \\
		&\begin{aligned}
			=&\operatorname{div}_{\mathbb{H}}\left[ \left\lvert \xi \right\rvert^{p-2}\xi-\left\lvert \X u \right\rvert^{p-2}\X u  + \left\lvert \xi \right\rvert^{p-2}\left( \X u -\xi \right) + \left(p-2\right)\left\lvert \xi \right\rvert^{p-4} \left\langle \xi, \X u - \xi \right\rangle \xi\right] \\&\qquad \qquad\qquad\qquad \qquad\qquad\qquad\qquad\qquad\qquad\qquad+\left(p-2\right)\operatorname{div}_{\mathbb{H}} \left( \left\lvert \xi \right\rvert^{p-2}\xi \right)
		\end{aligned}\\
		&=\operatorname{div}_{\mathbb{H}}\left[ \left\lvert \xi \right\rvert^{p-2}\xi-\left\lvert \X u \right\rvert^{p-2}\X u  + \left\lvert \xi \right\rvert^{p-2}\left( \X u -\xi \right) + \left(p-2\right)\left\lvert \xi \right\rvert^{p-4} \left\langle \xi, \X u - \xi \right\rangle \xi\right]\\
		&:= \operatorname{div}_{\mathbb{H}} F.
	\end{align*}
	Setting $w:= u-v \in HW_{0}^{1,p}(B_{R/2}),$ we see that this implies 
	\begin{align*}
		\operatorname{div}_{\mathbb{H}} \left[ \left\lvert \xi \right\rvert^{p-2} \X w + \left( p-2\right) \left\lvert \xi \right\rvert^{p-4} \left\langle \xi, \X w \right\rangle \xi \right]  &= \operatorname{div}_{\mathbb{H}} F &&\text{ in } B_{R/2}. 
	\end{align*}
	Plugging $w$ as a test function, we deduce 
	\begin{align*}
		\left\lvert \xi \right\rvert^{p-2}\int_{B_{R/2}} \left\lvert \X w \right\rvert^{2} +  \left( p-2\right)\left\lvert \xi \right\rvert^{p-4} \int_{B_{R/2}}\left\lvert \left\langle \xi, \X w \right\rangle \right\rvert^{2} = \int_{B_{R/2}}\left\langle F, \X w \right\rangle .
	\end{align*}
	Now, if $p \geq 2,$ clearly we have 
	\begin{align*}
		\left\lvert \xi \right\rvert^{p-2}\int_{B_{R/2}} \left\lvert \X w \right\rvert^{2} &\leq \left\lvert \xi \right\rvert^{p-2}\int_{B_{R/2}} \left\lvert \X w \right\rvert^{2} +  \left( p-2\right)\left\lvert \xi \right\rvert^{p-4} \int_{B_{R/2}}\left\lvert \left\langle \xi, \X w \right\rangle \right\rvert^{2}. 
	\end{align*}
	On the other hand, if $1 < p <2,$ then we have 
	\begin{align*}
		\left(p-2\right)\left\lvert \xi \right\rvert^{p-4} \int_{B_{R/2}}\left\lvert \left\langle \xi, \X w \right\rangle \right\rvert^{2} \geq \left( p-2\right)	\left\lvert \xi \right\rvert^{p-2}\int_{B_{R/2}} \left\lvert \X w \right\rvert^{2}.
	\end{align*}
	Hence in this case, we have 
	\begin{align*}
		\left( p-1\right)	\left\lvert \xi \right\rvert^{p-2}\int_{B_{R/2}} \left\lvert \X w \right\rvert^{2} \leq \left\lvert \xi \right\rvert^{p-2}\int_{B_{R/2}} \left\lvert \X w \right\rvert^{2} +  \left( p-2\right)\left\lvert \xi \right\rvert^{p-4} \int_{B_{R/2}}\left\lvert \left\langle \xi, \X w \right\rangle \right\rvert^{2}.
	\end{align*}
	Thus, in either case, we have,  
	\begin{align*}
		\int_{B_{R/2}} \left\lvert \X w \right\rvert^{2}
		&\leq \frac{C}{\lambda^{p-2}}	\min\left\lbrace 1, p-1  \right\rbrace \left\lvert \xi \right\rvert^{p-2}\int_{B_{R/2}} \left\lvert \X w \right\rvert^{2} \\
		&\leq \frac{C}{\lambda^{p-2}}\left[ \left\lvert \xi \right\rvert^{p-2}\int_{B_{R/2}} \left\lvert \X w \right\rvert^{2} +  \left( p-2\right)\left\lvert \xi \right\rvert^{p-4} \int_{B_{R/2}}\left\lvert \left\langle \xi, \X w \right\rangle \right\rvert^{2}\right] \\
		&\leq  \frac{C}{\lambda^{p-2}}\int_{B_{R/2}}\left\langle F, \X w \right\rangle \\
		&\leq  \frac{C}{\lambda^{p-2}}\int_{B_{R/2}}\left\lvert F \right\rvert \left\lvert \X w \right\rvert \leq \delta^{2}\frac{C}{\lambda^{p-2}}\int_{B_{R/2}} \left\lvert \X w \right\rvert^{2} + \frac{1}{4\delta^{2}}\frac{C}{\lambda^{p-2}}\int_{B_{R/2}} \left\lvert F \right\rvert^{2},
	\end{align*}
	where $C$ depends on $A, B$, $ p.$ Choosing $\delta >0$ such that $C\delta^{2}\lambda^{2-p} = 1/2,$ we deduce 
	\begin{align}\label{Xw bound}
		\int_{B_{R/2}} \left\lvert \X w \right\rvert^{2} \leq \frac{C}{\lambda^{2p-4}}\int_{B_{R/2}} \left\lvert F \right\rvert^{2}.
	\end{align}
	Now, from the definition of $F$, we have ( see Lemma 4.3. in \cite{DiBenedettoFriedman1} ), 
	\begin{align}\label{pointwise bound for F}
		\left\lvert F \right\rvert &\leq \frac{c \left(A, B\right)}{\lambda}\left( \left\lvert \X u \right\rvert + \left\lvert \xi \right\rvert \right)^{p-2}\left\lvert \X u -\xi \right\rvert^{2}. 
	\end{align}
	Thus, we have, with constants $c\equiv c\left(A, B\right) >0,$ 
	\begin{align}
		\int_{B_{R/2}} \left\lvert F \right\rvert^{2} &\leq c \left(A, B\right)\lambda^{2p-6}\int_{B_{R/2}}\left\lvert \X u -\xi \right\rvert^{4} \notag\\
		&\leq c \lambda^{2p-2 - \frac{2Q}{Q-2}}\int_{B_{R/2}}\left\lvert \X u -\xi \right\rvert^{\frac{2Q}{Q-2}}.\label{estimate:L2 of F}
	\end{align}
	Now, applying \eqref{rev holder bound 1 to 2 star}, we have   
	\begin{align*}
		\fint_{B_{R/2}}\left\lvert \X u -\xi \right\rvert^{\frac{2Q}{Q-2}} \leq C_{RH}^\frac{2Q}{Q-2}\left( \fint_{B_{R}}\left\lvert \X u -\xi \right\rvert^{2}\right)^{\frac{Q}{Q-2}}. 
	\end{align*}
	Using this in \eqref{estimate:L2 of F}, we conclude that there exists a constant $C\equiv C(A,B, n, p)>0$ such that 
	\begin{align*}
		\int_{B_{R/2}} \left\lvert F \right\rvert^{2} &\leq C\lambda^{2p-2 - \frac{2Q}{Q-2}}\left( \fint_{B_{R}}\left\lvert \X u -\xi \right\rvert^{2}\right)^{\frac{2}{Q-2}}\int_{B_{R}}\left\lvert \X u -\xi \right\rvert^{2}.
	\end{align*}
	Combining this with \eqref{Xw bound}, we have 
	\begin{align*}
		\int_{B_{R/2}} \left\lvert \X w \right\rvert^{2} &\leq C\lambda^{\left[ \left( 2p-2 - \frac{2Q}{Q-2}\right) + \left( 4 -2p\right)\right]} \left(  \fint_{B_{R}}\left\lvert \X u -\xi \right\rvert^{2}\right)^{\frac{2}{Q-2}}\int_{B_{R}}\left\lvert \X u -\xi \right\rvert^{2} \\
		&\leq C\lambda^{ - \frac{4}{Q-2}} \left(  \fint_{B_{R}}\left\lvert \X u -\xi \right\rvert^{2}\right)^{\frac{2}{Q-2}}\int_{B_{R}}\left\lvert \X u -\xi \right\rvert^{2}.
	\end{align*}
	This implies \eqref{comparison with linearized} and completes the proof. 
\end{proof}
\begin{lemma}\label{osc decay for linearized eq lemma}
	Let  $1<p<\infty$, $A, B \geq 1$, and $\lambda >0$ be constants and let $\xi \in \mathbb{R}^{2n}$ be any vector satisfying 
	\begin{align*}
		\frac{\lambda}{4B} \leq |\xi| \leq A\lambda. 
	\end{align*}
	Let $v \in  HW^{1,p}(B_{R/2})$ be  a weak solution of the linear equation 
	\begin{align*}
		\operatorname{div}_{\mathbb{H}} \left[ \left\lvert \xi \right\rvert^{p-2} \X v + \left( p-2\right) \left\lvert \xi \right\rvert^{p-4} \left\langle \xi, \X v \right\rangle \xi \right] &=0 &&\text{ in } B_{R/2}.
	\end{align*}
	Then there exists a constant $C \equiv C \left(Q, p, A, B \right)>0 $ such that for any $0<\rho \leq R/2$, we have the estimate
	\begin{align}\label{linear excess decay}
		\int_{B_{\rho}} \left\lvert \X v - \left( \X v\right)_{\rho} \right\rvert^{2} \leq C \left(\frac{\rho}{R}\right)^{Q+2}\int_{B_{R/2}} \left\lvert \X v - \left( \X v\right)_{R/2} \right\rvert^{2}. 
	\end{align}
\end{lemma}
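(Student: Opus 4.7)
The plan is to combine three ingredients: (i) a translation trick exploiting that horizontal-affine functions are solutions, reducing to the case $(\X v)_{R/2} = 0$; (ii) interior regularity for the constant coefficient linear subelliptic operator, obtained from H\"{o}rmander-type hypoellipticity; and (iii) the fact that a bounded horizontal Hessian yields a horizontal gradient that is Lipschitz in the Carnot--Carath\'{e}odory metric. Since the equation is linear with constant coefficients, this is a classical Campanato-style excess decay argument adapted to the Heisenberg group.

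First, I would set $a_{ij} := |\xi|^{p-2}\delta_{ij} + (p-2)|\xi|^{p-4}\xi_i\xi_j$ and observe that the symmetric matrix $(a_{ij})$ has eigenvalues comparable to $\lambda^{p-2}$ (with constants depending only on $p, A, B$), so the equation $\sum_{i,j} a_{ij} X_i X_j v = 0$ is uniformly subelliptic. For any constant vector $c \in \mathbb{R}^{2n}$, the horizontal-linear function $\ell(x) := \sum_{k=1}^{2n} c_k x_k$ satisfies $X_i \ell = c_i$, hence $X_j X_i \ell = 0$ and $\ell$ is a solution. Replacing $v$ by $v - \ell$ with $c := (\X v)_{R/2}$ leaves both the equation and both sides of \eqref{linear excess decay} invariant, so I may henceforth assume $(\X v)_{R/2} = 0$.

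Next, since $X_1, \ldots, X_{2n}$ together with $T$ generate the Lie algebra and the matrix $(a_{ij})$ is positive definite, the operator $L := \sum a_{ij} X_i X_j$ is hypoelliptic by H\"{o}rmander's theorem (cf. Folland \cite{Folland_subelliptic}). A crucial feature for the Heisenberg setting is that $[T, X_i] = 0$ for every $i$, so $Tv$ also solves $L(Tv) = 0$. A Caccioppoli-type argument, which becomes substantially simpler than Lemma \ref{Cac in the nondeg regime} in the linear constant-coefficient setting, combined with bootstrapping and iteration on shrinking concentric balls, produces the interior estimate
\begin{align*}
\sup_{B_{R/4}} \bigl( R^2 |\X\X v|^2 + R^2 |Tv|^2 \bigr) \leq C \fint_{B_{R/2}} |\X v - (\X v)_{R/2}|^{2},
\end{align*}
with $C \equiv C(Q,p,A,B)$.

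Finally, for $x, y \in B_{R/4}$ joined by a minimizing horizontal curve of length $d_{CC}(x,y)$, integrating $\frac{d}{ds}(X_i v \circ \gamma)$ along the curve and applying Cauchy--Schwarz gives the pointwise Lipschitz bound $|\X v(x) - \X v(y)| \leq \|\X\X v\|_{L^\infty(B_{R/4})}\, d_{CC}(x,y)$. For $\rho \leq R/4$, choosing $x_0$ as the center of $B_\rho$ and combining the elementary inequality $\int_{B_\rho} |\X v - (\X v)_\rho|^2 \leq 4 \int_{B_\rho} |\X v - \X v(x_0)|^2$ with $|B_\rho| = c(n)\rho^Q$ yields
\begin{align*}
\int_{B_\rho} |\X v - (\X v)_\rho|^2 \leq C \rho^{Q+2} \|\X\X v\|_{L^\infty(B_{R/4})}^2 \leq C \Bigl(\frac{\rho}{R}\Bigr)^{Q+2} \int_{B_{R/2}} |\X v - (\X v)_{R/2}|^2,
\end{align*}
while the range $R/4 < \rho \leq R/2$ is trivial after absorbing constants. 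The main technical obstacle is the interior estimate above; once the H\"{o}rmander framework and the commutation $[T, X_i] = 0$ are invoked, the rest is a routine linear Caccioppoli iteration.
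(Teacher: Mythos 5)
Your proposal is correct but takes a genuinely different route from the paper. The paper provides no proof: it simply cites Theorem 3.2 of Xu--Zuily \cite{Xu_Zuily_subellipticdecayestimate} and remarks that the constant depends only on the ellipticity ratio of the matrix $\bigl(|\xi|^{p-2}\delta_{ij}+(p-2)|\xi|^{p-4}\xi_i\xi_j\bigr)$. You instead sketch a self-contained derivation: normalize by subtracting a horizontal-affine function (correctly observing that $X_i\bigl(\sum_k c_k x_k\bigr)=c_i$, so such functions solve the equation and the substitution $v\mapsto v-\ell$ leaves both sides of \eqref{linear excess decay} unchanged), obtain an interior sup bound on $|\X\X v|$ and $|Tv|$ from $[T,L]=0$ and the bracket structure, and then convert the Hessian sup bound into Campanato decay by integrating along CC-geodesics, which for the concentric center point of $B_\rho$ indeed remain inside $B_\rho$. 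This is, conceptually, the proof of the cited result specialized to the constant-coefficient case. What it buys is transparency --- in particular it makes the $\lambda$-independence of $C$ manifest, since $(a_{ij})\sim\lambda^{p-2}\tilde a_{ij}$ with $\tilde a_{ij}$ depending only on $\xi/|\xi|$ and $p$, so the equation can be divided through by $\lambda^{p-2}$. What it does not buy is economy: the step you call a ``routine linear Caccioppoli iteration'' is where essentially all of the content of Xu--Zuily's theorem lives. Making it rigorous requires noting that $Tv$ solves $L(Tv)=0$ while each $X_l v$ solves an inhomogeneous equation with a divergence-form right-hand side linear in $Tv$, and then bootstrapping with iterated Caccioppoli and Sobolev estimates to promote $L^2$ control to pointwise control with the precise $R^{-1}$ scaling. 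This is standard for constant-coefficient H\"ormander operators but is a substantial computation that your sketch compresses into one sentence. The paper's choice to cite rather than reprove is the more efficient one; your approach is pedagogically clearer, not shorter.
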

\begin{proof}
	The proof  can be found in Theorem 3.2 in Xu-Zuily \cite{Xu_Zuily_subellipticdecayestimate}. As is well known, the constant in the estimate, depends only upon the \emph{ellipticity ratio}, which in this case is $A/B,$ i.e. independent of $\lambda.$
\end{proof}
\begin{lemma}\label{if xi0 then x1 exists}
	Let $1<p<\infty$. Suppose $u \in HW^{1,p}_{\text{loc}}\left( B_{2R}\right)$ be a weak solution of
	\begin{align*}
		\operatorname{div}_{\mathbb{H}} \left( \left\lvert \X u\right\rvert^{p-2}\X u\right) &= 0 &&\text{ in } B_{2R}. 
	\end{align*} 
	Let $A, B \geq 1$ be constants and suppose  
	\begin{align*}
		\frac{\lambda}{4B} \leq  \left\lvert \X u \right\rvert \leq A\lambda \qquad \text{ in } B_{R}.  
	\end{align*} Then for any $\delta_{0} >0$ with $64A^{2}B^{2}\delta_{0} < 1,$ there exist $\varepsilon, \tau \in \left(0,1\right),$ depending only on $Q$, $p$, $A,$ $B,$ and $\delta_{0},$ satisfying 
	\begin{align}\label{log bound single}
		0 < \frac{\log \delta_{0}}{2 \log \tau} < 1, 
	\end{align} such that if $\xi^{0} \in \mathbb{R}^{2n}$ is a vector satisfying 
	\begin{enumerate}[(i)]
		\item $ \displaystyle \frac{\lambda}{4B} \leq \left\lvert \xi^{0} \right\rvert \leq A\lambda,$
		\item $ \displaystyle \fint_{B_{R}} \left\lvert \X u - \xi^{0} \right\rvert^{2} \leq \varepsilon A^{2}\lambda^{2},$
	\end{enumerate}
	then there exists a vector $ \xi^{1} \in \mathbb{R}^{2n}$ satisfying  
	\begin{align}\label{lower and upper bound}
		\frac{\lambda}{8B}  \leq \left\lvert \xi^{1} \right\rvert \leq 2A\lambda,  
	\end{align}
	\begin{align}
		\fint_{B_{\tau R}}  \left\lvert \X u - \xi^{1} \right\rvert^{2} &\leq \varepsilon A^{2}\lambda^{2}, \label{smallness at  next level}\\ \intertext{and}
		\int_{B_{\tau R}}  \left\lvert \X u - \xi^{1} \right\rvert^{2} &\leq \delta_{0} \tau^{Q} 	\int_{B_{R}}  \left\lvert \X u - \xi^{0} \right\rvert^{2}. \label{excess decay estimate}
	\end{align}
\end{lemma}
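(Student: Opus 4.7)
The strategy is classical: compare $u$ with the solution of the linearized constant-coefficient problem, use the sharp $C^{1,\alpha}$-type decay for that linear equation, and transfer the decay back to $\X u$ via the comparison estimate. I would let $v$ be the unique solution on $B_{R/2}$ of the linearized Dirichlet problem of Lemma \ref{comparison with linearized lemma} with frozen coefficient $\xi^0$ and boundary datum $u$, and define $\xi^1:=(\X v)_{\tau R}$ for a parameter $\tau\in(0,1/2)$ to be chosen.

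By the triangle inequality and the containment $B_{\tau R}\subset B_{R/2}$,
\begin{align*}
\int_{B_{\tau R}}|\X u-\xi^1|^2 \leq 2\int_{B_{R/2}}|\X u-\X v|^2+2\int_{B_{\tau R}}|\X v-(\X v)_{\tau R}|^2.
\end{align*}
Lemma \ref{comparison with linearized lemma} together with hypothesis (ii) bounds the first summand by $C\varepsilon^{2/(Q-2)}\int_{B_R}|\X u-\xi^0|^2$. For the second, Lemma \ref{osc decay for linearized eq lemma} produces a decay factor $(\tau R/(R/2))^{Q+2}=(2\tau)^{Q+2}$, while the oscillation of $\X v$ over $B_{R/2}$ is estimated by writing $\X v-(\X v)_{R/2}=(\X v-\xi^0)-((\X v)_{R/2}-\xi^0)$, using \eqref{minimality of mean}, and again invoking Lemma \ref{comparison with linearized lemma}; this yields $\int_{B_{\tau R}}|\X v-(\X v)_{\tau R}|^2\leq C\tau^{Q+2}\int_{B_R}|\X u-\xi^0|^2$. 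Combining,
\begin{align*}
\int_{B_{\tau R}}|\X u-\xi^1|^2\leq \bigl(C\varepsilon^{2/(Q-2)}+C\tau^{Q+2}\bigr)\int_{B_R}|\X u-\xi^0|^2.
\end{align*}

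The parameters $\tau$ and $\varepsilon$ are now fixed in this order. First I would pick $\tau\in(0,1/2)$ with $C\tau^2=\delta_0/2$, which also forces $\tau^2<\delta_0$ and hence \eqref{log bound single}. Then I would choose $\varepsilon\in(0,1)$, depending on $Q,p,A,B,\delta_0$, small enough that $C\varepsilon^{2/(Q-2)}\leq \delta_0\tau^Q/2$: the combined bound becomes exactly \eqref{excess decay estimate}, and dividing by $|B_{\tau R}|$ together with $\delta_0<1$ yields \eqref{smallness at next level}.

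For the two-sided bound \eqref{lower and upper bound}, I would estimate
\begin{align*}
|\xi^1-\xi^0|^2\leq \fint_{B_{\tau R}}|\X v-\xi^0|^2\leq 2\fint_{B_{\tau R}}|\X u-\xi^0|^2+2\fint_{B_{\tau R}}|\X u-\X v|^2\leq C\tau^{-Q}\varepsilon A^2\lambda^2,
\end{align*}
and shrink $\varepsilon$ further, still depending only on the allowed parameters, so that the right-hand side is at most $\lambda^2/(64B^2)$; the triangle inequality applied with (i) then delivers $\lambda/(8B)\leq|\xi^1|\leq 2A\lambda$. The main delicacy I expect is the bookkeeping of the two-step choice of $(\tau,\varepsilon)$: one has to ensure that the comparison error $C\varepsilon^{2/(Q-2)}$ can be made subordinate to $\delta_0\tau^Q$ \emph{after} $\tau$ has already been forced small, which is possible precisely because Lemma \ref{osc decay for linearized eq lemma} furnishes the improved exponent $Q+2$ rather than merely $Q$, leaving a genuine $\tau^2$ gain after normalizing by the measure $|B_{\tau R}|\sim\tau^Q$.
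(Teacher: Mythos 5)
Your proposal is correct and follows essentially the same route as the paper's own proof: compare $u$ on $B_{R/2}$ with the linearized constant-coefficient solution $v$ of Lemma~\ref{comparison with linearized lemma}, transfer the $C^{1,\alpha}$ decay of Lemma~\ref{osc decay for linearized eq lemma} back to $\X u$, set $\xi^1$ equal to an average of $\X v$ on the shrunken ball, and then fix $(\tau,\varepsilon)$. The only genuine differences are cosmetic bookkeeping: (a) the paper ties $\varepsilon$ to $\tau$ \emph{a priori} via $\varepsilon^{2/(Q-2)}=\tau^{Q+2}$ and then shrinks $\tau$, whereas you first fix $\tau$ from $C\tau^2 = \delta_0/2$ and then choose $\varepsilon$ subordinate to it (both work, and the key observation in each is the extra $\tau^2$ gain from the exponent $Q+2$ in Lemma~\ref{osc decay for linearized eq lemma}); (b) for \eqref{lower and upper bound} the paper exploits the hypothesis $64A^2B^2\delta_0<1$ to bound $|\xi^1-\xi^0|\le\sqrt{\delta_0}\,A\lambda$, whereas you instead shrink $\varepsilon$ a second time to force $|\xi^1-\xi^0|\le \lambda/(8B)$ directly, bypassing that hypothesis in this step — this is slightly cleaner but logically equivalent; (c) minor radius normalization ($\xi^1=(\X v)_{\tau R}$ versus the paper's $(\X v)_{\tau R/2}$), which only affects constants. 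One small slip: Lemma~\ref{osc decay for linearized eq lemma} as stated produces the factor $(\rho/R)^{Q+2}$, not $(\rho/(R/2))^{Q+2}$, so your $(2\tau)^{Q+2}$ should read $\tau^{Q+2}$; this changes nothing substantive.
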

\begin{proof}
	We apply Lemma \ref{comparison with linearized lemma} with the choice $\xi = \xi^{0}$ to deduce 
	
	\begin{align}\label{xu-xv in iteration}
		\int_{B_{R/2}} \left \lvert \Xu- \X v \right \rvert^2 \leq C_1  \varepsilon ^{\frac{2}{Q-2}} \int_{B_{R}} \left \lvert \Xu- \xi_0 \right \rvert^2 
	\end{align}
	for some constant $C_1\equiv C_1(Q, p, A, B)>1.$
	Now we estimate 
	\begin{align}\label{xu-xv in iteration1}
		\int_{B_{R/2}}\left\lvert \X v - \xi^{0} \right\rvert^{2} &\leq  2	\int_{B_{R/2}}\left\lvert \X u - \xi^{0} \right\rvert^{2} + 2	\int_{B_{R/2}}\left\lvert \X u - \X v \right\rvert^{2} \notag \\
		&\stackrel{\eqref{xu-xv in iteration}}{\leq} C_2\left( 1 +\varepsilon^{\frac{2}{Q-2}}\right) \int_{B_{R}}\left\lvert \X u - \xi^{0}\right\rvert^{2},
	\end{align}
	for some constant $C_2\equiv C_2(Q, p, A, B)>1.$ Now, by Lemma \ref{osc decay for linearized eq lemma}, for any $0 < \rho < R/2,$ we have 
	\begin{align}\label{linear excess decay1}
		\int_{B_{\rho}} \left\lvert \X v - \left( \X v\right)_{\rho} \right\rvert^{2} &\leq C_3 \left(\frac{\rho}{R}\right)^{Q+2}\int_{B_{R/2}} \left\lvert \X v - \left( \X v\right)_{R/2} \right\rvert^{2} \notag \\
		&\leq C_3 \left(\frac{\rho}{R}\right)^{Q+2}\int_{B_{R/2}} \left\lvert \X v - \xi^{0} \right\rvert^{2},
	\end{align}
	for some constant $C_3\equiv C_3(Q, p, A, B)>1.$ Hence, setting $\rho = \tau R/2$ and 
	\begin{align*}
		\xi^{1} :=  \left( \X v\right)_{\rho} =  \left( \X v\right)_{\tau R/2},
	\end{align*}
	we have 
	\begin{align*}
		\int_{B_{\tau R/2}} \left\lvert \X u - \xi^{1} \right\rvert^{2} &\leq 	2\int_{B_{\tau R/2}} \left\lvert \X v - \xi^{1} \right\rvert^{2} + 	2\int_{B_{\tau R/2}} \left\lvert \X u - \X v \right\rvert^{2} \\
		&\stackrel{\eqref{linear excess decay1}}{\leq} 2C_3 \tau^{Q+2}\int_{B_{R/2}} \left\lvert \X v - \xi^{0} \right\rvert^{2} + 	2\int_{B_{R/2}} \left\lvert \X u - \X v \right\rvert^{2} \\
		&\stackrel{\eqref{xu-xv in iteration}, \eqref{xu-xv in iteration1}}{\leq} C_4 \left[ \tau^{Q+2} \left( 1 + \varepsilon^{\frac{2}{Q-2}}\right)  + \varepsilon^{\frac{2}{Q-2}} \right] \int_{B_{R}} \left\lvert \X u - \xi^{0} \right\rvert^{2} \\
		&\leq C_4 \left[ \tau^{Q+2}   + \varepsilon^{\frac{2}{Q-2}} \right] \int_{B_{R}} \left\lvert \X u - \xi^{0} \right\rvert^{2}.
	\end{align*}
	Here, again $C_4\equiv C_4(Q, p, A, B)>1$ is a constant.   Now we choose $\varepsilon >0$ sufficiently small ( depending on $\tau$  ) such that we have 
	\begin{align}\label{choice of epsilon}
		\varepsilon^{\frac{2}{Q-2}} = \tau^{Q+2}. 
	\end{align}
	This choice implies that
	\begin{align}\label{choice of tau}
		\int_{B_{\tau R/2}} \left\lvert \X u - \xi^{1} \right\rvert^{2} &\leq C_M \tau^{Q+2} \int_{B_{R}} \left\lvert \X u - \xi^{0} \right\rvert^{2},
	\end{align}
	where 
	\begin{align}\label{max constant in xi0 to xi1 lem}
	C_M: = \max\{ C_1, C_2, C_3, 2C_4\}.
	\end{align}
	Now we choose $\tau >0$ small enough to have 
	\begin{align}\label{choice of tau in iteration}
		2^{Q+2}C_M \tau^{2} \leq \delta_{0}.
	\end{align}
	Note that this implies \eqref{log bound single}. 
	This proves \eqref{excess decay estimate}.  Assumption $(ii)$ and \eqref{excess decay estimate} implies \eqref{smallness at  next level}. Now observe that since  $Q \geq 4,$ we have 
	\begin{align*}
		\frac{(Q-2)(Q+2)}{2} - Q = \frac{Q^{2} -2Q -4}{2} = \frac{\left( Q -1 \right)^{2} -5}{2} \geq 2.
	\end{align*}
	Thus, we deduce 
	\begin{align}\label{bound of tau in terms of delta}
		2^{Q+2}\tau^{\frac{(Q-2)(Q+2)}{2} - Q} \leq 2^{Q+2}\tau^{2} \leq 2^{Q+2}C_M\tau^{2} \leq \delta_{0}.
	\end{align}
	Using this, we deduce 
	\begin{align}\label{xi1-xi0}
		\left\lvert \xi^{1} - \xi^{0} \right\rvert^{2} &\leq \fint_{B_{\tau R/2}} \left\lvert \X v - \xi^{0} \right\rvert^{2} \notag \\
		&\leq  2\fint_{B_{\tau R/2}} \left\lvert \X u - \xi^{0} \right\rvert^{2} + 2\fint_{B_{\tau R/2}} \left\lvert \X u - \X v \right\rvert^{2} \notag\\
		&\leq  2^{Q+1}\tau^{-Q}\fint_{B_{ R}} \left\lvert \X u - \xi^{0} \right\rvert^{2} +  2\tau^{-Q}\fint_{B_{ R/2}} \left\lvert \X u - \X v \right\rvert^{2} \notag\\
		&\stackrel{\eqref{xu-xv in iteration}, \eqref{choice of epsilon}, \eqref{choice of tau in iteration}}{\leq}  \left( 2^{Q+1}\tau^{-Q} + \delta_{0} \right) \fint_{B_{R}} \left\lvert \X u - \xi^{0} \right\rvert^{2} \\
		&\leq 2^{Q+2}\tau^{-Q} \varepsilon A^{2}\lambda^{2} \stackrel{\eqref{choice of epsilon}}{\le} 2^{Q+2}\tau^{\frac{(Q-2)(Q+2)}{2} - Q}A^{2}\lambda^{2} \stackrel{\eqref{bound of tau in terms of delta}}{\le}\delta_{0}A^{2}\lambda^{2}. \notag
	\end{align}
Hence, we have 
	\begin{align*}
		\left\lvert \xi^{1} - \xi^{0} \right\rvert^{2} \leq \delta_{0}A^{2}\lambda^{2}
	\end{align*}
	Then we have 
	\begin{align*}
		\left( \frac{1}{4B}- \sqrt{\delta_{0}} A \right)\lambda 	\leq \left\lvert \xi^{0} \right\rvert - \left\lvert \xi^{1} - \xi^{0} \right\rvert  \leq 	\left\lvert \xi^{1} \right\rvert \leq 	\left\lvert \xi^{0} \right\rvert + \left\lvert \xi^{1} - \xi^{0} \right\rvert \leq \left( 1 + \sqrt{\delta_{0}} \right)A\lambda. 
	\end{align*}
	Since $64A^{2}B^{2}\delta_{0} < 1,$ we have 
	\begin{gather*}
		\left( \frac{1}{4B}- \sqrt{\delta_{0}} A \right)\lambda > \frac{1}{B}\left( \frac{1}{4} - \frac{1}{8} \right)\lambda > \frac{\lambda}{8B} \qquad \text{ and } \\ \left( 1 + \sqrt{\delta_{0}} \right)A\lambda \leq \left( 1 + \frac{1}{8AB} \right)A\lambda \leq \left( 1 + \frac{1}{8} \right)A\lambda \leq 2A\lambda. 
	\end{gather*}
	This proves \eqref{lower and upper bound} and completes the proof. 
\end{proof}
\begin{lemma}\label{iteration of close to constant lemma}
	Let $1<p<\infty$. Suppose $u \in HW^{1,p}_{\text{loc}}\left( B_{2R}\right)$ be a weak solution of
		\begin{align*}
		\operatorname{div}_{\mathbb{H}} \left( \left\lvert \X u\right\rvert^{p-2}\X u\right) &= 0 &&\text{ in } B_{2R}. 
	\end{align*} 
	Let $A, B \geq 1$ be constants and suppose  
	\begin{align*}
		\frac{\lambda}{4B} \leq  \left\lvert \X u \right\rvert \leq A\lambda \qquad \text{ in } B_{R}.  
	\end{align*} Then there exist constants $\delta_{0}, \varepsilon, \tau  \in \left(0,1\right)$, depending only on $Q$, $p$, $A,$ $B,$ satisfying 
	\begin{align}\label{log bound sequence}
		0 < \frac{\log \delta_{0}}{2 \log \tau} < 1, 
	\end{align} such that if $\xi^{0} \in \mathbb{R}^{2n}$ is a vector satisfying 
	\begin{enumerate}[(i)]
		\item $ \displaystyle \frac{\lambda}{4B} \leq \left\lvert \xi^{0} \right\rvert \leq A\lambda,$
		\item $ \displaystyle \fint_{B_{R}} \left\lvert \X u - \xi^{0} \right\rvert^{2} \leq \varepsilon A^{2}\lambda^{2},$
	\end{enumerate}
	then there exists a sequence of vectors $\left\lbrace \xi^{s} \right\rbrace_{s \in \mathbb{N}}$ in $\mathbb{R}^{2n}$ such that for all integers $s \geq 0,$ we have 
	\begin{align}\label{lower and upper bound sequence}
		\frac{\lambda}{8B} \leq \left\lvert \xi^{s} \right\rvert \leq 2A\lambda,  
	\end{align}
	\begin{align}
		\fint_{B_{\tau^{s} R}}  \left\lvert \X u - \xi^{s} \right\rvert^{2} &\leq \varepsilon A^{2}\lambda^{2}, \label{smallness at  next level sequence}\\ \intertext{and}
		\int_{B_{\tau^{s+1} R}}  \left\lvert \X u - \xi^{s+1} \right\rvert^{2} &\leq \delta_{0} \tau^{Q} 	\int_{B_{\tau^{s} R}}  \left\lvert \X u - \xi^{s} \right\rvert^{2}. \label{excess decay estimate sequence}
	\end{align}
\end{lemma}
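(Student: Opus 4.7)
The natural approach is to iterate Lemma~\ref{if xi0 then x1 exists} across dyadic scales. The subtle point is that a single application of Lemma~\ref{if xi0 then x1 exists} starts from a vector with bounds $\lambda/(4B)\le|\xi^0|\le A\lambda$ but only produces a vector with the weaker bounds $\lambda/(8B)\le|\xi^1|\le 2A\lambda$; so we cannot naively plug $\xi^1$ back into Lemma~\ref{if xi0 then x1 exists} without degrading the constants at every step. The remedy will be to apply Lemma~\ref{if xi0 then x1 exists} \emph{once and for all with the slightly enlarged constants} $(A',B'):=(2A,2B)$ at every iteration, and in parallel to control the total drift $\sum_{s\ge 0}|\xi^{s+1}-\xi^s|$ so that all the $\xi^s$ remain within the hypothesis range of this enlarged application.

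\textbf{Parameter choice.} First invoke Lemma~\ref{if xi0 then x1 exists} with constants $(2A,2B)$ in place of $(A,B)$; this yields candidate parameters $\tau,\delta_0,\varepsilon^*$ depending only on $n,p,A,B$ and satisfying \eqref{log bound sequence}. I would then shrink $\varepsilon\le\varepsilon^*$ further so that the drift estimate below closes, namely such that the constant $C_\ast(Q,p,A,B)$ coming from \eqref{xi1-xi0} (applied with parameters $(2A,2B)$) satisfies
\begin{equation*}
    \frac{\sqrt{C_\ast\varepsilon}\,A}{1-\sqrt{\delta_0}}\;\le\;\frac{1}{8B}.
\end{equation*}

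\textbf{Induction.} I would prove by induction on $s\ge 0$ the following stronger statement: there exist $\xi^0,\dots,\xi^s\in\mathbb{R}^{2n}$ satisfying \eqref{smallness at  next level sequence}, \eqref{excess decay estimate sequence} together with the extra drift control $|\xi^k-\xi^0|\le\lambda/(8B)$ for every $0\le k\le s$. The base case $s=0$ is immediate from hypotheses (i)--(ii). For the inductive step, the drift bound shows that $\xi^s\in[\lambda/(8B),2A\lambda]=[\lambda/(4\cdot 2B),(2A)\lambda]$, while the smallness of the excess on $B_{\tau^sR}$ combined with $\delta_0^s\varepsilon A^2\le 4\varepsilon^*(2A)^2/4=\varepsilon^*(2A)^2$ provides the required smallness for the enlarged constants. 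Hence Lemma~\ref{if xi0 then x1 exists}, applied with constants $(2A,2B)$ on the ball $B_{\tau^sR}$ to the starting vector $\xi^s$, produces $\xi^{s+1}$ satisfying \eqref{smallness at  next level sequence} and \eqref{excess decay estimate sequence} at level $s+1$, with the \emph{same} $\tau,\delta_0$.

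\textbf{Closing the drift.} To maintain the induction hypothesis $|\xi^{s+1}-\xi^0|\le\lambda/(8B)$, I would use the single-step drift bound from \eqref{xi1-xi0} (applied with $(2A,2B)$) together with the geometric decay of the excess:
\begin{equation*}
    |\xi^{s+1}-\xi^s|^2\;\le\;C_\ast \fint_{B_{\tau^s R}}|\X u-\xi^s|^2\;\le\;C_\ast\,\delta_0^s\,\varepsilon A^2\lambda^2.
\end{equation*}
Summing the geometric series and using the choice of $\varepsilon$ gives $|\xi^{s+1}-\xi^0|\le\sqrt{C_\ast\varepsilon}\,A\lambda/(1-\sqrt{\delta_0})\le\lambda/(8B)$, which closes the induction and also proves \eqref{lower and upper bound sequence}, since $A\lambda+\lambda/(8B)\le 2A\lambda$ whenever $A,B\ge 1$.

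\textbf{Main obstacle.} The only real difficulty is the mismatch between the tight hypothesis bounds $[\lambda/(4B),A\lambda]$ of Lemma~\ref{if xi0 then x1 exists} and its weaker output bounds $[\lambda/(8B),2A\lambda]$, which would cause constants to double at every iteration step if handled naively. The trick above---running every iteration with the fixed enlarged constants $(2A,2B)$ while separately enforcing a summable drift through the extra smallness of $\varepsilon$---stabilizes the iteration and keeps all constants depending only on $n,p,A,B$, as required.
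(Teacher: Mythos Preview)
Your proposal is correct and follows essentially the same strategy as the paper: iterate Lemma~\ref{if xi0 then x1 exists} (applied with the enlarged constants $(2A,2B)$, which the paper leaves implicit) while controlling the cumulative drift $|\xi^{s}-\xi^{0}|$ by summing a geometric series, so that all $\xi^{s}$ stay in the admissible range for the single-step lemma.

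The only difference is in how the drift is made small. The paper uses the built-in estimate $|\xi^{s+1}-\xi^{s}|\le\delta_0^{(s+1)/2}A\lambda$, which already follows from the relations $\varepsilon^{2/(Q-2)}=\tau^{Q+2}$ and $2^{Q+2}\tau^{2}\le\delta_0$ fixed inside the proof of Lemma~\ref{if xi0 then x1 exists}; it then simply chooses $\delta_0=1/(1+8AB)^2$ so that $\sum_{i\ge1}\delta_0^{i/2}A\lambda\le\lambda/(8B)$. You instead fix an admissible $\delta_0$ first and then separately shrink $\varepsilon$ to force the drift constraint. Both work, but your extra shrinking of $\varepsilon$ is unnecessary: the paper shows that once $\delta_0$ is chosen small enough the drift bound is automatic, with the $\varepsilon,\tau$ determined by Lemma~\ref{if xi0 then x1 exists} unchanged.
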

\begin{proof}
	We plan to construct the sequence inductively using Lemma \ref{if xi0 then x1 exists}. To do this, we only need to show that it is possible to choose $\delta_{0} >0$ such that given $\xi^{s},$ the vector $\xi_{s+1}$ constructed by Lemma \ref{if xi0 then x1 exists} also satisfies the weaker bounds 
	\begin{align*}
		\frac{\lambda}{8B} \leq \left\lvert \xi^{s+1} \right\rvert \leq 2A\lambda,   
	\end{align*}
	instead of the significantly worse bounds 
	\begin{align*}
		\frac{\lambda}{8^{s}B} \leq \left\lvert \xi^{s+1} \right\rvert \leq 2^{s}A\lambda.    
	\end{align*}
	This is easy to achieve. Indeed, recalling the estimates in Lemma \ref{if xi0 then x1 exists}, particularly \eqref{xi1-xi0}, we deduce 
	\begin{align*}
		\left\lvert \xi^{s+1} - \xi^{s} \right\rvert^{2} &\leq 2^{Q+2}\tau^{-Q} \fint_{B_{\tau^{s}R}} \left\lvert \X u - \xi^{s} \right\rvert^{2} \\
		&\leq  2^{Q+2}\tau^{-Q}  \delta_{0}^{s}\fint_{B_{R}} \left\lvert \X u - \xi^{0} \right\rvert^{2} \\
		&\leq 2^{Q+2}\tau^{-Q}  \delta_{0}^{s}\varepsilon A^{2}\lambda^{2} \stackrel{\eqref{choice of epsilon}}{\leq} 2^{Q+2}\tau^{\left( \frac{(Q+2)(Q-2)}{2} -Q\right)}\delta_{0}^{s} A^{2}\lambda^{2} \stackrel{\eqref{bound of tau in terms of delta}}{\leq} \delta_{0}^{s+1} A^{2}\lambda^{2}.
	\end{align*}
	Taking square root and summing over $s,$ we deduce 
	\begin{align*}
		\left\lvert \xi^{s+1} - \xi^{0} \right\rvert \leq \left(\sum_{i=1}^{\infty} \delta_{0}^{\frac{i}{2}}\right) A\lambda.
	\end{align*}
 Now, \eqref{lower and upper bound sequence} is satisfied for all $s \geq 0$ with the choice  $\delta_{0}=1/{\left(1+8AB\right)^2}$. 
\end{proof}
Now we are ready to prove our main lemma. 
\begin{lemma}\label{decay estimate in the nondegenerate regime}
	Let $1<p<\infty$. Suppose $u \in HW^{1,p}_{\text{loc}}\left( B_{2R}\right)$ be a weak solution of 
	\begin{align*}
		\operatorname{div}_{\mathbb{H}} \left( \left\lvert \X u\right\rvert^{p-2}\X u\right) &= 0 &&\text{ in } B_{2R}. 
	\end{align*}
	Let $A, B \geq 1$ be constants and suppose  
	\begin{align}\label{lower upper bound full ball main lemma}
		\frac{\lambda}{4B} \leq  \left\lvert \X u \right\rvert \leq A\lambda \qquad \text{ in } B_{R}.  
	\end{align}
	Then there exist constants $\varepsilon_{0}, \alpha_{0} \in (0,1)$ and $C_{0} \geq 1,$ all depending only on $Q,$ $p$, $A$ and $B$ such that if  
	\begin{align}\label{lower upper bound main lemma}
		\frac{\lambda}{8B}	\leq \left\lvert \left( \X u\right)_{\sigma R}\right\rvert \leq A\lambda
	\end{align}
	and 
	\begin{align}\label{oscillation by sup main lemma}
		\fint_{B_{\sigma R}} \left\lvert \X u - \left( \X u\right)_{\sigma R}\right\rvert^{2} \leq \varepsilon_{0} A^{2}\lambda^{2}
	\end{align}
	hold for some constant $0<\sigma\leq 1$, then for any $0 < \rho < \sigma R,$ we have 
	\begin{align}
		\fint_{B_{\rho}} \left\lvert \X u - \left( \X u\right)_{\rho}\right\rvert^{2} \leq C_{0} \sigma^{-Q-2\alpha_{0}} \left(\frac{\rho}{R}\right)^{2\alpha_{0}} \fint_{B_{R}} \left\lvert \X u - \left( \X u\right)_{R}\right\rvert^{2}. 
	\end{align}
\end{lemma}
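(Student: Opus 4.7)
The plan is to feed the hypotheses of the lemma directly into the iteration Lemma \ref{iteration of close to constant lemma} with initial vector $\xi^{0} := (\X u)_{\sigma R}$ and initial radius $\sigma R$. Our hypothesis \eqref{lower upper bound main lemma} gives only $\lambda/(8B) \leq |\xi^{0}| \leq A\lambda$, whereas the iteration lemma requires the lower bound $\lambda/(4B)$; to reconcile this I will invoke Lemma \ref{iteration of close to constant lemma} with the constant $B$ replaced by $2B$. The pointwise bound \eqref{lower upper bound full ball main lemma}, restricted to $B_{\sigma R} \subseteq B_{R}$, remains consistent with this weakened assumption, and since $\sigma \leq 1$ we have $B_{2\sigma R} \subseteq B_{2R}$ so $u$ still solves the equation on the relevant ball. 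Choosing $\varepsilon_{0}$ equal to the $\varepsilon$ produced by the iteration lemma (which then depends only on $Q, p, A, B$), hypothesis (ii) of that lemma is exactly \eqref{oscillation by sup main lemma}.

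This yields a sequence $\{\xi^{s}\}_{s \geq 0}$ satisfying the pointwise bound \eqref{lower and upper bound sequence} and the decay \eqref{excess decay estimate sequence}. Dividing both sides of \eqref{excess decay estimate sequence} by $|B_{\tau^{s+1}\sigma R}| = \tau^{Q}|B_{\tau^{s}\sigma R}|$ and iterating gives
\begin{align*}
\fint_{B_{\tau^{s}\sigma R}} |\X u - \xi^{s}|^{2} \leq \delta_{0}^{s}\, \fint_{B_{\sigma R}} |\X u - \xi^{0}|^{2}.
\end{align*}
I then set $\alpha_{0} := -\log \delta_{0}/(2\log \tau)$, which lies in $(0,1)$ by \eqref{log bound sequence}, so that $\delta_{0}^{s} = \tau^{2\alpha_{0}s}$.

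Given an arbitrary $\rho \in (0, \sigma R)$, I pick the unique integer $s \geq 0$ with $\tau^{s+1}\sigma R < \rho \leq \tau^{s}\sigma R$, so that $\tau^{s} \leq \tau^{-1}\rho/(\sigma R)$. Applying \eqref{minimality of mean} to replace $(\X u)_{\rho}$ by $\xi^{s}$, the trivial enlargement $B_{\rho} \subseteq B_{\tau^{s}\sigma R}$ (at cost $\tau^{-Q}$), the iterated bound above, and finally \eqref{minimality of mean} combined with $B_{\sigma R} \subseteq B_{R}$ (at cost $\sigma^{-Q}$) to pass $(\X u)_{\sigma R}$ over to $(\X u)_{R}$, I obtain
\begin{align*}
\fint_{B_{\rho}} |\X u - (\X u)_{\rho}|^{2} &\leq 4\tau^{-Q}\delta_{0}^{s}\, \fint_{B_{\sigma R}} |\X u - (\X u)_{\sigma R}|^{2} \\
&\leq 16\, \tau^{-Q-2\alpha_{0}}\,\sigma^{-Q}\Big(\tfrac{\rho}{\sigma R}\Big)^{2\alpha_{0}} \fint_{B_{R}} |\X u - (\X u)_{R}|^{2},
\end{align*}
which is exactly the desired bound with $C_{0} := 16\tau^{-Q-2\alpha_{0}}$. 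The only genuine subtlety is the mismatch between the weaker lower bound $\lambda/(8B)$ in \eqref{lower upper bound main lemma} and the $\lambda/(4B)$ hypothesis of Lemma \ref{iteration of close to constant lemma}, which is handled once by enlarging $B \leadsto 2B$; everything else is routine bookkeeping, and the dependence of $\varepsilon_{0}, \alpha_{0}, C_{0}$ on $Q, p, A, B$ alone is inherited from the iteration lemma.
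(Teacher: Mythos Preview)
Your proof is correct and follows essentially the same approach as the paper's: set $\xi^{0}=(\X u)_{\sigma R}$, invoke Lemma~\ref{iteration of close to constant lemma} on $B_{\sigma R}$ with $B$ replaced by $2B$, and interpolate the resulting geometric decay to an arbitrary radius $\rho$. The paper splits into the cases $s_{0}\ge 1$ and $s_{0}=0$, but your uniform treatment via $\delta_{0}^{s}=\tau^{2\alpha_{0}s}$ handles both at once.

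One small slip: your formula $\alpha_{0}:=-\log\delta_{0}/(2\log\tau)$ has a spurious minus sign. With $\delta_{0},\tau\in(0,1)$ both logarithms are negative, so $\log\delta_{0}/(2\log\tau)$ is already the positive quantity in $(0,1)$ guaranteed by \eqref{log bound sequence}, and it is this value that satisfies $\delta_{0}^{s}=\tau^{2\alpha_{0}s}$; your sign would give $\tau^{2\alpha_{0}s}=\delta_{0}^{-s}$ instead. Since you correctly state both the identity $\delta_{0}^{s}=\tau^{2\alpha_{0}s}$ and the membership $\alpha_{0}\in(0,1)$, this is clearly just a typo and does not affect the argument.
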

\begin{proof}
	Set 
	\begin{align*}
		\xi^{0} := \left( \X u\right)_{\sigma R}. 
	\end{align*}
	Now note that \eqref{lower upper bound full ball main lemma} clearly implies 
	\begin{align}\label{lower upper bound sigma ball main lemma}
		\frac{\lambda}{8B} \leq \frac{\lambda}{4B} \leq \left\lvert  \X u \right\rvert \leq A\lambda \qquad \text{ in } B_{\sigma R}. 
	\end{align}
	By \eqref{lower upper bound main lemma} we also have 
	\begin{align}\label{lower upper bound average sigma ball main lemma}
		\frac{\lambda}{8B}\leq   \left\lvert \left( \X u\right)_{\sigma R}\right\rvert \leq A\lambda. 
	\end{align}
	Hence, \eqref{lower upper bound sigma ball main lemma}, \eqref{lower upper bound average sigma ball main lemma} and \eqref{oscillation by sup main lemma} implies that the hypotheses of Lemma \ref{iteration of close to constant lemma} are satisfied for our choice of $\xi^{0}$ in $B_{\sigma R}$ with the same $A$ and $B$ being replaced by $2B,$ if $\varepsilon_{0} =\varepsilon,$ where $\varepsilon \in (0,1)$ is the constant given by Lemma  \ref{iteration of close to constant lemma}. Thus, there exists $\delta_{0}, \tau \in (0,1), $ both depending only on $Q$, $p$, $A$ and $B$ such that there exists a sequence of vectors $\left\lbrace \xi^{s} \right\rbrace_{s \in \mathbb{N}}$ such that for all integers $s \geq 0,$ we have $\frac{\lambda}{16B} \leq \left\lvert \xi^s \right\rvert \leq 4A\lambda$ and
	\begin{align*}
		\fint_{B_{\tau^{s+1} \sigma R}}  \left\lvert \X u - \xi^{s+1} \right\rvert^{2} &\leq \left( \delta_{0}\right)^{s+1} 	\fint_{B_{\sigma R}}  \left\lvert \X u -  \left( \X u\right)_{\sigma R} \right\rvert^{2}. 
	\end{align*}
	Now, let $0 < \rho < \sigma R.$ Then there exists an integer $s_{0} \geq 0$ such that 
	\begin{align*}
		\tau^{s_{0}+1}\sigma R < \rho \leq \tau^{s_{0}}\sigma R. 
	\end{align*}
	Suppose $s_{0} \geq 1.$ Then we have 
	\begin{align*}
		\fint_{B_{\rho}} \left\lvert \X u - \left( \X u\right)_{\rho} \right\rvert^{2} 
		&\leq \fint_{B_{\rho}} \left\lvert \X u - \xi^{s_{0}} \right\rvert^{2} \\
		&\leq \left( \frac{\tau^{s_{0}} \sigma R}{\rho}\right)^{Q} \fint_{B_{\tau^{s_{0}}\sigma R}} \left\lvert \X u -  \xi^{s_{0}} \right\rvert^{2} \\
		&\leq \left( \frac{\tau^{s_{0}} \sigma R}{\rho}\right)^{Q} \left( \delta_{0} \right)^{s_{0}} \fint_{B_{\sigma R}} \left\lvert \X u -  \left( \X u\right)_{\sigma R} \right\rvert^{2} \\
		&\leq \tau^{-Q}\left( \delta_{0} \right)^{s_{0}} \fint_{B_{\sigma R}} \left\lvert \X u -  \left( \X u\right)_{\sigma R} \right\rvert^{2}.
	\end{align*}
	Now set 
	\begin{align*}
		\alpha_{0} := \frac{\log \delta_{0}}{2\log \tau } \in \left( 0, 1\right), \text{ in view of \eqref{log bound sequence}. } 
	\end{align*}
	Now, we deduce 
	\begin{align*}
		\fint_{B_{\rho}} \left\lvert \X u - \left( \X u\right)_{\rho} \right\rvert^{2} 
		&\leq  \tau^{-Q}\left( \delta_{0} \right)^{s_{0}} \left(\frac{\sigma R}{\rho}\right)^{2\alpha_{0}} \left(\frac{\rho}{\sigma R}\right)^{2\alpha_{0}}\fint_{B_{\sigma R}} \left\lvert \X u -  \left( \X u\right)_{\sigma R} \right\rvert^{2} \\
		&\leq  \tau^{-Q}\left( \delta_{0} \right)^{s_{0}} \left(\delta_{0}\right)^{-s_{0}-1} \left(\frac{\rho}{\sigma R}\right)^{2\alpha_{0}}\fint_{B_{\sigma R}} \left\lvert \X u -  \left( \X u\right)_{\sigma R} \right\rvert^{2} \\
		&\leq\tau^{-Q}\delta_{0}^{-1}\left(\frac{\rho}{\sigma R}\right)^{2\alpha_{0}}\fint_{B_{\sigma R}} \left\lvert \X u -  \left( \X u\right)_{R} \right\rvert^{2} \\
		&\leq\tau^{-Q}\delta_{0}^{-1} \sigma^{-Q-2\alpha_{0}} \left(\frac{\rho}{R}\right)^{2\alpha_{0}}\fint_{B_{R}} \left\lvert \X u -  \left( \X u\right)_{R} \right\rvert^{2}.
	\end{align*}
	Thus, we conclude by setting $C = C \left(Q, p, A, B\right):= \tau^{-Q}\delta_{0}^{-1}. $
	If $s_{0} =0,$ then  for the same choice of $\alpha_{0}$ as above,  we clearly have,
	\begin{align*}
		\fint_{B_{\rho}} \left\lvert \X u - \left( \X u\right)_{\rho} \right\rvert^{2} &\leq \fint_{B_{\rho}} \left\lvert \X u -  \left( \X u\right)_{\sigma R} \right\rvert^{2} \\&\leq \left( \frac{\sigma R}{\rho}\right)^{Q -2\alpha_{0}} \left( \frac{\rho}{\sigma R}\right)^{2\alpha_{0}}\fint_{B_{\sigma R}} \left\lvert \X u -  \left( \X u\right)_{\sigma R} \right\rvert^{2} \\
		&\leq \tau^{2\alpha_{0} -Q}\sigma^{-Q-2\alpha_{0}} \left(\frac{\rho}{R}\right)^{2\alpha_{0}}\fint_{B_{R}} \left\lvert \X u -  \left( \X u\right)_{R} \right\rvert^{2}.
	\end{align*}
	Thus, we have the estimate in this case as well for the same $$C = C \left(Q, p, A, B\right):= \tau^{-Q}\delta_{0}^{-1} = \tau^{2\alpha_{0} -Q}.$$ This completes the proof. 
\end{proof}
We also have the following result. 
\begin{lemma}\label{decay estimate in the nondegenerate regime for V}
	Let $1<p<\infty$. Suppose $u \in HW^{1,p}_{\text{loc}}\left( B_{2R}\right)$ be a weak solution of 
	\begin{align*}
		\operatorname{div}_{\mathbb{H}} \left( \left\lvert \X u\right\rvert^{p-2}\X u\right) &= 0 &&\text{ in } B_{2R}. 
	\end{align*}
	Let $A, B \geq 1$ be constants and suppose  
	\begin{align}\label{lower upper bound full ball main lemma V}
		\frac{\lambda}{4B} \leq  \left\lvert V \left( \X u \right) \right\rvert^{\frac{2}{p}} \leq A\lambda \qquad \text{ in } B_{R}.  
	\end{align}
	Then there exist constants $\theta_{0}, \alpha_{0} \in (0,1)$ and $C_{1} \geq 1,$ all depending only on $Q$, $p$, $A$ and $B$, such that if	
	\begin{align}\label{lower upper bound main lemma V}
		\frac{\lambda}{8B}	\leq \left\lvert \left( V \left(\X u\right)\right)_{\sigma R}\right\rvert^{\frac{2}{p}} \leq A\lambda
	\end{align}
	and 
	\begin{align}\label{oscillation by sup main lemma V}
		\fint_{B_{\sigma R}} \left\lvert V \left( \X u \right)  - \left( V \left( \X u\right) \right)_{\sigma R}\right\rvert^{2} \leq \theta_{0} A^{p}\lambda^{p},
	\end{align}
	hold for some constant $0<\sigma \leq 1$, then for any $0 < \rho < \sigma R,$ we have 
	\begin{align}
		\fint_{B_{\rho}} \left\lvert V \left( \X u \right) - \left( V \left( \X u \right)\right)_{\rho}\right\rvert^{2} \leq C_{1} \sigma^{-Q-2\alpha_{0}} \left(\frac{\rho}{R}\right)^{2\alpha_{0}} \fint_{B_{R}} \left\lvert V \left( \X u \right) - \left( V \left( \X u\right) \right)_{R}\right\rvert^{2}. 
	\end{align}
\end{lemma}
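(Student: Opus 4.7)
The plan is to reduce the claim to Lemma \ref{decay estimate in the nondegenerate regime} applied to $\X u$ and then transfer the excess decay to $V(\X u)$ using the fact that $V$ is bi-Lipschitz in the nondegenerate regime. Since $|V(z)|^{2/p}=|z|$, the pointwise hypothesis \eqref{lower upper bound full ball main lemma V} coincides with \eqref{lower upper bound full ball main lemma}, so the standing nondegeneracy on $B_R$ transfers verbatim. The main bookkeeping is to verify the remaining two hypotheses of Lemma \ref{decay estimate in the nondegenerate regime} from their $V$-analogues.

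Set $\eta := V^{-1}\bigl((V(\X u))_{\sigma R}\bigr)$, so that $|\eta|=|(V(\X u))_{\sigma R}|^{2/p}$ and therefore $\lambda/(8B)\leq|\eta|\leq A\lambda$ by \eqref{lower upper bound main lemma V}. By \eqref{minimality of mean} applied to $V(\X u)$ with reference vector $V(\eta)$ we get $\fint_{B_{\sigma R}}|V(\X u)-V(\eta)|^2\leq 4\theta_0 A^p\lambda^p$. The lower inequality in \eqref{constant cv}, together with the pointwise bound $|\X u|\geq \lambda/(4B)$ (used for the case $p\geq 2$) and the bounds $|\X u|,|\eta|\leq A\lambda$ (used for $1<p<2$), gives $(|\X u|+|\eta|)^{2-p}\leq c(A,B,p)\lambda^{2-p}$, whence
\[
\fint_{B_{\sigma R}}|\X u-\eta|^2\leq c(A,B,p)\,\theta_0 A^p\lambda^2.
\]
Choosing $\theta_0$ small enough (in terms of $Q,p,A,B$), Jensen forces $|(\X u)_{\sigma R}-\eta|\leq |\eta|/2$, so $|(\X u)_{\sigma R}|\geq \lambda/(16B)$; and by a second application of \eqref{minimality of mean} the last display upgrades to $\fint_{B_{\sigma R}}|\X u-(\X u)_{\sigma R}|^2\leq \varepsilon_0 A^2\lambda^2$, where $\varepsilon_0$ is the smallness constant furnished by Lemma \ref{decay estimate in the nondegenerate regime} applied with $(A,2B)$ in place of $(A,B)$. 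That lemma then delivers the excess decay for $\X u$.

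It remains to transfer this to $V(\X u)$. On the left of the target inequality, using \eqref{minimality of mean} with reference $V((\X u)_\rho)$ and the upper inequality in \eqref{constant cv} (again splitting into $p\geq 2$ and $1<p<2$ via the pointwise control on $|\X u|$) yields
\[
\fint_{B_\rho}|V(\X u)-(V(\X u))_\rho|^2\leq c(A,B,p)\,\lambda^{p-2}\fint_{B_\rho}|\X u-(\X u)_\rho|^2.
\]
On the right, set $\xi:=V^{-1}\bigl((V(\X u))_R\bigr)$; the pointwise bound $|V(\X u)|\leq(A\lambda)^{p/2}$ gives $|\xi|\leq A\lambda$, and \eqref{minimality of mean} together with the lower inequality in \eqref{constant cv} produces
\[
\fint_{B_R}|\X u-(\X u)_R|^2\leq 4\fint_{B_R}|\X u-\xi|^2\leq c(A,B,p)\,\lambda^{2-p}\fint_{B_R}|V(\X u)-(V(\X u))_R|^2.
\]
Chaining these with the excess decay for $\X u$ cancels the powers of $\lambda$ and yields the claim with $C_1$ a multiple of $C_0$ and the same exponent $\alpha_0$, all depending only on $Q,p,A,B$.

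The delicate point is the first step: ensuring that smallness of the $L^2$-oscillation of $V(\X u)$ actually forces $|(\X u)_{\sigma R}|$ to remain comparable to $\lambda$, so that the iteration hypothesis of Lemma \ref{decay estimate in the nondegenerate regime} (used with the enlarged constant $2B$) is honestly verified rather than defeated by vectorial cancellation in the $\X u$-average. The uniform pointwise lower bound $|\X u|\geq \lambda/(4B)$ supplies exactly the rigidity required for the $V$-to-$\X u$ transfer, but the two smallness constants ($\theta_0$ in the present lemma and $\varepsilon_0$ in Lemma \ref{decay estimate in the nondegenerate regime}) must be fixed in the right order: $\varepsilon_0$ first, as a function of $(A,2B,Q,p)$, and then $\theta_0$ small enough compared to $\varepsilon_0$ to close the bootstrap.
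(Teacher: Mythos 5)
Your proof is correct, and it takes a genuinely different route from the one in the paper. The paper sets $\xi^{0} := V^{-1}\bigl((V(\X u))_{\sigma R}\bigr)$ and applies Lemma \ref{iteration of close to constant lemma} \emph{directly} to $B_{\sigma R}$ (with $A$, $2B$); since that lemma accepts an arbitrary vector $\xi^{0}$ satisfying the two-sided bound, no information about the average $(\X u)_{\sigma R}$ is needed. It then uses \eqref{constant cv} to rewrite the resulting $\X u$-decay against $\xi^{s}$ in terms of $V(\X u)$ and repeats the interpolation argument inline. You instead invoke the packaged Lemma \ref{decay estimate in the nondegenerate regime}, which requires its reference vector to be $(\X u)_{\sigma R}$ itself; this is precisely why you need the extra Jensen/rigidity step showing $|(\X u)_{\sigma R}|\geq\lambda/(16B)$, a step the paper can skip. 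In exchange you avoid redoing the iteration and interpolation, replacing them by a clean transfer at both endpoints using \eqref{minimality of mean} and \eqref{constant cv}, with the $\lambda^{p-2}$ and $\lambda^{2-p}$ factors cancelling. Both approaches pay the same constant-tracking price (replace $B$ by $2B$ in the auxiliary lemma, then shrink the $V$-smallness threshold accordingly), and you correctly flag that $\varepsilon_{0}$ must be fixed before $\theta_{0}$; so the argument closes. In short: your proof is a modular repackaging that reuses Lemma \ref{decay estimate in the nondegenerate regime} as a black box, at the cost of one additional (but elementary) rigidity estimate; the paper's proof manipulates the underlying iteration sequence directly and hence never has to control the mean of $\X u$.
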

\begin{proof}
	Set 
	\begin{align}\label{choice of xi0 for V}
		\xi^{0} := V^{-1} \left( \left( V \left(\X u\right)\right)_{\sigma R} \right). 
	\end{align}
	Observe that this implies 
	\begin{align*}
		\left\lvert \xi^{0} \right\rvert^{\frac{p}{2}} = \left\lvert \left( V \left(\X u\right)\right)_{\sigma R}\right\rvert. 
	\end{align*}
	Thus, clearly, we have the bounds 
	\begin{align*}
		\frac{\lambda}{8B}	\leq \left\lvert \xi^{0}\right\rvert \leq A\lambda. 
	\end{align*}
	As $\left\lvert V \left( \X u \right) \right\rvert^{\frac{2}{p}}  = \left\lvert \X u \right\rvert,$ we also have the obvious bounds 
	\begin{align*}
		\frac{\lambda}{8B} \leq	\frac{\lambda}{4B} \leq  \left\lvert  \X u  \right\rvert \leq A\lambda \qquad \text{ in } B_{\sigma R}. 
	\end{align*}
	Now, using \eqref{constant cv}, we have 
	\begin{align*}
		\fint_{B_{\sigma R}} \left\lvert \X u - \xi^{0} \right\rvert^{2} \leq c_{0}\fint_{B_{\sigma R}} \left\lvert V \left(\X u \right) - V \left(\xi^{0} \right) \right\rvert^{2} \leq c_{0}\theta_{0} A^{2}\lambda^{2},
	\end{align*} 
	for a constant $c_{0}= c_{0}\left(Q, p, A, B \right)>0. $ We set 
	\begin{align*}
		\theta_{0} = \frac{\varepsilon}{c_{0}}, 
	\end{align*}
	where $\varepsilon$ is the constant given by Lemma \ref{iteration of close to constant lemma} for our choice of $\xi^{0}$ in $B_{\sigma R}$ with the same $A$ and $B$ being replaced by $2B.$ Thus, there exists $\delta_{0}, \tau \in (0,1), $ both depending only on $Q$, $p$, $A$ and $B$ such that there exists a sequence of vectors $\left\lbrace \xi^{s} \right\rbrace_{s \in \mathbb{N}}$ such that for all integers $s \geq 0,$ we have 
	\begin{align}\label{lower and upper bound sequence V}
		\frac{\lambda}{16B} \leq \left\lvert \xi^{s} \right\rvert \leq 2A\lambda  
	\end{align} and 
	\begin{align*}
		\fint_{B_{\tau^{s+1} \sigma R}}  \left\lvert \X u - \xi^{s+1} \right\rvert^{2} &\leq \left( \delta_{0}\right)^{s+1} 	\fint_{B_{\sigma R}}  \left\lvert \X u -  \xi^{0} \right\rvert^{2}. 
	\end{align*}
	This implies, again by \eqref{constant cv}, for some constant $C\equiv C(Q,p, A, B)>0$ we have 
	\begin{align*}
		\fint_{B_{\tau^{s+1} \sigma R}}  \left\lvert V \left( \X u \right) - V \left( \xi^{s+1} \right)  \right\rvert^{2}  &\leq C \left( \lambda\right)^{2-p}	\fint_{B_{\tau^{s+1} \sigma R}}  \left\lvert \X u - \xi^{s+1} \right\rvert^{2} \\ &\leq C \left( \delta_{0}\right)^{s+1} 	\fint_{B_{\sigma R}} \lambda^{2-p} \left\lvert \X u -  \xi^{0} \right\rvert^{2} \\
		&\leq C \left( \delta_{0}\right)^{s+1} 	\fint_{B_{\sigma R}}  \left\lvert V \left( \X u \right) -  V \left( \xi^{0} \right) \right\rvert^{2}.
	\end{align*}
	Since $V \left(\xi^0\right) = \left( V \left(\X u\right)\right)_{\sigma R},$ we arrive at 
	\begin{align*}
		\fint_{B_{\tau^{s+1} \sigma R}}  \left\lvert V \left( \X u \right) - V \left( \xi^{s+1} \right)  \right\rvert^{2} \leq C \left( \delta_{0}\right)^{s+1} 	\fint_{B_{\sigma R}}  \left\lvert V \left( \X u \right) - \left( V \left(\X u\right)\right)_{\sigma R}  \right\rvert^{2}. 
	\end{align*}
	From this, via an interpolation argument similar to the one used above in Lemma \ref{decay estimate in the nondegenerate regime} , we have 
	\begin{align*}
		\fint_{B_{\rho}}  \left\lvert V \left( \X u \right) - V \left( \xi^{s+1} \right)  \right\rvert^{2} \leq C \sigma^{-Q-2\alpha_{0}}\left( \frac{\rho}{R}\right)^{2\alpha_{0}} 	\fint_{B_{R}}  \left\lvert V \left( \X u \right) - \left( V \left(\X u\right)\right)_{R}  \right\rvert^{2}. 
	\end{align*}
	This completes the proof. \end{proof}
\subsection{Proofs of Theorem \ref{Main excess decay estimate Intro} and Theorem \ref{Main excess decay estimate V Intro}}
We now prove our crucial excess decay estimates. 
\begin{proof}{\textbf{( of Theorem \ref{Main excess decay estimate Intro}} ) }
	We can and henceforth assume $x_{0} =0.$ Note that, it is enough to prove \eqref{Main excess decay estimate Intro estimate}, for $0<\rho<\kappa R$, for some constant $\kappa\in (0,1)$ depending only $n$ and $p$, as the estimate is trivial otherwise.  First, we fix $C_{RH}>0$ which is given by Lemma \ref{reverse holder inequality lemma} with the choice $A=B=1$. So, clearly $C_{RH}$ depends only on $n$ and $p$. Next, let $B=B\left(n, p\right)\geq 1$ be defined by $B:= 2c_{1},$ where $c_{1}$ is the constant given by Theorem \ref{thm:lip Lq bound} with the choice $q=1$. Let $\varepsilon_{0}, \alpha_{0} \in (0,1)$ be the constants given by Lemma  \ref{decay estimate in the nondegenerate regime} for the choices of $A=2C_{RH}$  and $B=2c_{1}.$ Let $\alpha_{1} \in (0,1)$ be the exponent in Theorem \ref{thm:holder}. Set 
	\begin{align*}
		\alpha := \min \left\lbrace \alpha_{0}, \alpha_{1} \right\rbrace. 
	\end{align*}  Now we claim the following. 
	\begin{claim}\label{sup controls oscillation claim}
		Without loss of generality, we can assume that the we have 
		\begin{align}\label{lower upper for average}
			\frac{1}{B} \left\lVert \X u \right\rVert_{\infty, R/2} \leq \left\lvert \left(\X u\right)_{R}\right\rvert \leq 4 \left\lVert \X u \right\rVert_{\infty, R/2}
		\end{align}
		and 
		\begin{align}\label{oscillation is smaller than sup}
			\left( \fint_{B_{R/2}} \left\lvert \X u - \left(\X u\right)_{R} \right\rvert^{q} \right)^{\frac{1}{q}}\leq \varepsilon_{0} \left\lVert \X u \right\rVert_{\infty, R/2}. 
		\end{align}
	\end{claim}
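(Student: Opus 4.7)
The plan is to show that if either \eqref{lower upper for average} or \eqref{oscillation is smaller than sup} fails, then the desired excess decay \eqref{Main excess decay estimate Intro estimate} follows at once from the H\"older continuity result (Theorem \ref{thm:holder}) combined with the $L^q$-based $L^\infty$ bound (Theorem \ref{thm:lip Lq bound}), without invoking the nondegenerate regime Lemma \ref{decay estimate in the nondegenerate regime}. Hence in any failing case the theorem is already proven, and we may assume both conditions without loss of generality. Write $M := \lVert \X u \rVert_{\infty, R/2}$ and $E(R) := \bigl(\fint_{B_R} \lvert \X u - (\X u)_R \rvert^q\bigr)^{1/q}$. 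The crux is to establish in each failing case a uniform comparison $M \leq C(n,p)\, E(R)$.

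I distinguish three subcases. First, if $\lvert (\X u)_R \rvert < M/B = M/(2c_1)$, then Theorem \ref{thm:lip Lq bound} with $q=1$ gives $M \leq c_1 \fint_{B_R} \lvert \X u \rvert \leq c_1 \lvert (\X u)_R \rvert + c_1 \fint_{B_R} \lvert \X u - (\X u)_R \rvert$, and the assumption absorbs $c_1 \lvert (\X u)_R \rvert \leq M/2$ into the left side, yielding $M/2 \leq c_1 E(R)$. Second, if $\lvert (\X u)_R \rvert > 4M$, then on $B_{R/2}$ we have the pointwise bound $\lvert \X u - (\X u)_R \rvert \geq \lvert (\X u)_R \rvert - M \geq 3M$, so integrating and using $\lvert B_{R/2} \rvert / \lvert B_R \rvert = 2^{-Q}$ from \eqref{measure of balls} gives $E(R)^q \geq 2^{-Q}(3M)^q$. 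Third, if both bounds in \eqref{lower upper for average} hold but \eqref{oscillation is smaller than sup} fails, then $\fint_{B_{R/2}} \lvert \X u - (\X u)_R \rvert^q > \varepsilon_0^q M^q$, and enlarging the domain of integration from $B_{R/2}$ to $B_R$ yields $E(R)^q \geq 2^{-Q}\varepsilon_0^q M^q$.

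Once $M \leq C(n,p)\, E(R)$ is in hand, apply Theorem \ref{thm:holder} on the ball $B_{R/2}$: for any $0 < \rho \leq R/2$ and all $x, y \in B_\rho$,
\begin{equation*}
\lvert \X u(x) - \X u(y) \rvert \leq c_h (2\rho/R)^{\alpha_1} M \leq C\, (\rho/R)^{\alpha}\, E(R),
\end{equation*}
where $\alpha = \min\{\alpha_0, \alpha_1\}$. Since $\bigl(\fint_{B_\rho} \lvert \X u - (\X u)_\rho \rvert^q\bigr)^{1/q} \leq \sup_{x,y \in B_\rho} \lvert \X u(x) - \X u(y) \rvert$, this proves \eqref{Main excess decay estimate Intro estimate} for $\rho \leq R/2$; for $R/2 < \rho < R$ the estimate is trivial by absorbing a constant into $C_0$. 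There is no serious obstacle here---the argument is a straightforward case analysis; the only point to watch is that $c_1$, $c_h$, $\alpha_1$, $\varepsilon_0$ and $\alpha_0$ all depend only on $n$ and $p$, so the resulting $C_0$ and $\alpha$ in \eqref{Main excess decay estimate Intro estimate} inherit the advertised dependence.
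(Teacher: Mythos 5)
Your proposal is correct and follows essentially the same strategy as the paper: in every failing case one establishes $\lVert \X u\rVert_{\infty,R/2}\leq C(n,p)\,E(R)$ and then invokes the oscillation-to-sup bound of Theorem \ref{thm:holder} to conclude. The only organizational difference is in the treatment of the upper bound $\lvert(\X u)_R\rvert\leq 4\lVert\X u\rVert_{\infty,R/2}$: the paper shows this bound is automatically implied once \eqref{oscillation is smaller than sup} is assumed (via a Cauchy--Schwarz/Young argument), so the only way \eqref{lower upper for average} can fail is from below; you instead dispatch a hypothetical violation of the upper bound directly via the pointwise lower bound $\lvert\X u-(\X u)_R\rvert\geq 3\lVert\X u\rVert_{\infty,R/2}$ on $B_{R/2}$. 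Both routes are equally short; yours has the small cosmetic advantage of treating the two bounds symmetrically, while the paper's observation that the upper bound is never actually violated (under the oscillation hypothesis) is slightly more informative but not used elsewhere.
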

	\emph{Proof of Claim \ref{sup controls oscillation claim}: } Indeed, if \eqref{oscillation is smaller than sup} is violated, then by Theorem \ref{thm:holder}, for any $0 < \rho < R/2,$ we have 
	\begin{align*}
		\left( \fint_{B_{\rho}} \left\lvert \X u - \left( \X u\right)_{\rho}\right\rvert^{q}\, dx \right)^{\frac{1}{q}}
		&\leq \sup\limits_{x, y \in B_{\rho}} \left\lvert \X u \left(x\right) - \X u \left(y\right) \right\rvert \\
		&\leq c_h\left(\frac{\rho}{R}\right)^{\alpha_{1}}\left\lVert \X u \right\rVert_{\infty, R/2} \\&< \frac{c_h}{\varepsilon_{0}}\left(\frac{\rho}{R}\right)^{\alpha} \left( \fint_{B_{R/2}} \left\lvert \X u - \left(\X u\right)_{R} \right\rvert^{q} \right)^{\frac{1}{q}}\\
		&\leq \frac{2^{Q}c_h}{\varepsilon_{0}}\left(\frac{\rho}{R}\right)^{\alpha} \left( \fint_{B_{R}} \left\lvert \X u - \left(\X u\right)_{R} \right\rvert^{q} \right)^{\frac{1}{q}},
	\end{align*}
	which implies our desired estimate. So we can assume \eqref{oscillation is smaller than sup} holds. Now note that 
	\begin{align*}
		\left\lvert \left(\X u\right)_{R}\right\rvert^{2} &= \left\langle \left(\X u\right)_{R}, \left(\X u\right)_{R}\right\rangle =\left\langle \left(\X u\right)_{R}, \left(\X u\right)_{R} -\X u \left(x\right)\right\rangle + \left\langle \left(\X u\right)_{R}, \X u \left(x\right)\right\rangle ,  
	\end{align*}
	for a.e. $x \in B_{R/2}.$ Using Cauchy-Schwarz, integrating and finally using Young's inequality, we deduce 
	\begin{align*}
		\left\lvert \left(\X u\right)_{R}\right\rvert^{2} &\leq \left\lvert \left(\X u\right)_{R}\right\rvert  \left( \fint_{B_{R/2}} \left\lvert \X u - \left(\X u\right)_{R} \right\rvert + \fint_{B_{R/2}} \left\lvert \X u \right\rvert \right) \\
		&\stackrel{\eqref{oscillation is smaller than sup}}{\leq} \left(\varepsilon_{0} +1 \right)\left\lvert \left(\X u\right)_{R}\right\rvert \left\lVert \X u \right\rVert_{\infty, R/2} \\
		&\leq \frac{1}{2}\left\lvert \left(\X u\right)_{R}\right\rvert^{2} + 2\left(\varepsilon_{0} +1 \right)^{2}\left\lVert \X u \right\rVert_{\infty, R/2}^{2}. 
	\end{align*}
	Hence, we deduce 
	\begin{align*}
		\left\lvert \left(\X u\right)_{R}\right\rvert \leq 2 \left(\varepsilon_{0} + 1\right) \left\lVert \X u \right\rVert_{\infty, R/2} \leq 4\left\lVert \X u \right\rVert_{\infty, R/2}. 
	\end{align*}
	Thus, the only way \eqref{lower upper for average} can fail is if we have 
	\begin{align*}
		\left\lvert \left(\X u\right)_{R}\right\rvert  <	\frac{1}{B} \left\lVert \X u \right\rVert_{\infty, R/2}. 
	\end{align*}
	But then  by Theorem \ref{thm:lip Lq bound} (with `$q=1$'), H\"{o}lder's inequality and the triangle inequality for the $L^{q}$ norm, we find,   
	\begin{align*}
		\left\lVert \X u \right\rVert_{\infty, R/2} \leq c_{1} \left(\fint_{B_{R}} \left\lvert \X u \right\rvert^{q}\right)^{\frac{1}{q}} &\leq c_{1} \left(\fint_{B_{R}} \left\lvert \X u  -  \left(\X u\right)_{R}\right\rvert^{q}\right)^{\frac{1}{q}} + c_{1} \left\lvert \left(\X u\right)_{R}\right\rvert \\ &< c_{1} \left(\fint_{B_{R}} \left\lvert \X u  -  \left(\X u\right)_{R}\right\rvert^{q}\right)^{\frac{1}{q}} + \frac{c_{1}}{B} 	\left\lVert \X u \right\rVert_{\infty, R/2}. 
	\end{align*}
	Thus, recalling $B=2c_1$, we have 
	\begin{align*}
		\left\lVert \X u \right\rVert_{\infty, R/2} \leq B \left( \fint_{B_{R}} \left\lvert \X u - \left(\X u\right)_{R} \right\rvert^{q}\right)^{\frac{1}{q}}. 
	\end{align*}
	Hence, as before, for any $0<\rho<R/2$, we have 
	\begin{align*}
		\left( \fint_{B_{\rho}} \left\lvert \X u - \left( \X u\right)_{\rho}\right\rvert^{q}\, dx \right)^{\frac{1}{q}}&\leq c\left(\frac{\rho}{R}\right)^{\alpha_{1}}\left\lVert \X u \right\rVert_{\infty, R/2}\\&\leq cB\left(\frac{\rho}{R}\right)^{\alpha} \left( \fint_{B_{R}} \left\lvert \X u - \left(\X u\right)_{R} \right\rvert^{q}\right)^{\frac{1}{q}},
	\end{align*}
	which again yields the desired estimate. This completes the proof of Claim \ref{sup controls oscillation claim}.  
	
	From now on, we shall assume that \eqref{lower upper for average} and \eqref{oscillation is smaller than sup} holds. Now we claim
	\begin{claim}\label{the alternatives claim}
		There exists a constant $\tau_{1} \in (0,1),$ depending only on $n$ and $p$ such that at least one of the following alternatives occur. 
		\begin{enumerate}[(i)]
			\item \textbf{The nondegenerate alternative: } We have 
			\begin{align}\label{the nondegenerate alt}
				\left\lvert \X u \right\rvert \geq \frac{1}{4}\left\lVert \X u \right\rVert_{\infty, R/2} \qquad \text{ in  } B_{\tau_{1} R/2}. 
			\end{align}
			\item \textbf{The degenerate alternative: } We have 
			\begin{align}\label{the degenerate alt}
				\left\lVert \X u \right\rVert_{\infty, \tau_{1} R/2} \leq \frac{1}{2}\left\lVert \X u \right\rVert_{\infty, R/2}. 
			\end{align}
		\end{enumerate}
	\end{claim}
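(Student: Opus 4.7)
The plan is to establish the dichotomy by a contrapositive argument: if the degenerate alternative (ii) fails, I show that the nondegenerate alternative (i) must hold, provided $\tau_1$ is chosen appropriately depending only on $n$ and $p$. Set $M := \left\lVert \X u \right\rVert_{\infty, R/2}$ to simplify notation. Suppose alternative (ii) fails, i.e.,
\begin{align*}
    \left\lVert \X u \right\rVert_{\infty, \tau_1 R/2} > \tfrac{1}{2} M.
\end{align*}
Since $\X u$ is continuous by Theorem \ref{thm:holder}, there exists a point $x^{*} \in \overline{B_{\tau_1 R/2}}$ with $\lvert \X u(x^{*}) \rvert > M/2$.

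Next, I invoke the Hölder estimate \eqref{Xu:holder} of Theorem \ref{thm:holder} with the choice $r_{0} = R/2$ and $r = \tau_{1} R/2$. Since both $x^{*}$ and any given $x \in B_{\tau_1 R/2}$ lie in the smaller ball $B_{r}$, this yields
\begin{align*}
    \lvert \X u(x) - \X u(x^{*}) \rvert \leq c_{h} \tau_{1}^{\alpha_{1}} M \qquad \text{ for all } x \in B_{\tau_{1} R/2}.
\end{align*}
Combining with the reverse triangle inequality,
\begin{align*}
    \lvert \X u(x) \rvert \geq \lvert \X u(x^{*}) \rvert - \lvert \X u(x) - \X u(x^{*}) \rvert > \tfrac{1}{2} M - c_{h} \tau_{1}^{\alpha_{1}} M.
\end{align*}

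The last step is to choose $\tau_{1} \in (0,1)$ small enough so that $c_{h} \tau_{1}^{\alpha_{1}} \leq \tfrac{1}{4}$; since $c_{h}$ and $\alpha_{1}$ depend only on $n$ and $p$ (by Theorem \ref{thm:holder}), such a $\tau_{1}$ exists and depends only on $n$ and $p$. With this choice, $\lvert \X u(x) \rvert \geq M/4$ throughout $B_{\tau_{1} R/2}$, which is precisely the nondegenerate alternative (i). There is essentially no obstacle here — the argument is a one-shot application of the Hölder continuity of $\X u$ — and the only mild subtlety is to confirm that the two hypotheses needed (namely $B_{R/2} \subset\subset \Omega$ for Theorem \ref{thm:holder} to apply, and $\tau_{1} R/2 \leq R/2$) are automatic in the setting of Theorem \ref{Main excess decay estimate Intro}.
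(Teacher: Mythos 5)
Your proof is correct and is essentially the same as the paper's: you prove $\neg(\mathrm{ii}) \Rightarrow (\mathrm{i})$ while the paper proves $\neg(\mathrm{i}) \Rightarrow (\mathrm{ii})$, but both rest on exactly the same ingredient — the interior H\"{o}lder oscillation estimate \eqref{Xu:holder} of Theorem \ref{thm:holder} — and the same choice $c_h \tau_1^{\alpha_1} \leq \tfrac{1}{4}$. The only cosmetic remark is that no continuity is needed to locate $x^{*}$: the definition of the supremum already gives a point of the open ball $B_{\tau_1 R/2}$ with $\lvert \X u(x^{*}) \rvert > M/2$, so you may take $x^{*}$ in the open ball directly.
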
 
	\emph{Proof of Claim \ref{the alternatives claim}: } By Theorem \ref{thm:holder}, for any $0 < \tau < 1,$ we have 
	\begin{align*}
		\sup\limits_{x, y \in B_{\tau R/2}} \left\lvert \X u \left(x\right) - \X u \left(y\right)\right\rvert \leq c_{h}\tau^{\alpha_{1}}\left\lVert \X u \right\rVert_{\infty, R/2}. 
	\end{align*}
	Now, choose $\tau_{1} \in (0,1)$ sufficiently small such that 
	\begin{align*}
		c_{h}\tau_{1}^{\alpha_{1}} = \frac{1}{4}. 
	\end{align*}
	Now, if \eqref{the nondegenerate alt} is violated, then there exists a point $ \bar{x} \in B_{\tau_{1} R/2}$ such that 
	\begin{align*}
		\left\lvert \X u \left( \bar{x} \right) \right\rvert < \frac{1}{4}\left\lVert \X u \right\rVert_{\infty, R/2} .  
	\end{align*}
	But then, for any $x \in B_{\tau_{1} R/2},$ by Theorem \ref{thm:holder}, we deduce 
	\begin{align*}
		\left\lvert \X u \left( x \right) \right\rvert &\leq 	\left\lvert \X u \left( \bar{x} \right) \right\rvert + 	\left\lvert \X u \left( x \right) - \X u \left( \bar{x} \right) \right\rvert \\
		&\leq \frac{1}{4}\left\lVert \X u \right\rVert_{\infty, R/2} + 	c_{h}\tau_{1}^{\alpha_{1}} \left\lVert \X u \right\rVert_{\infty, R/2} = \frac{1}{2}\left\lVert \X u \right\rVert_{\infty, R/2}. 
	\end{align*}
	This completes the proof of Claim \ref{the alternatives claim}.  
	
	Now we define 
	\begin{align*}
		R_{i}:= \tau_{1}^{i}\frac{R}{2} \qquad \text{ for all integers } i \geq 0. 
	\end{align*}
	Thus, by Claim \ref{the alternatives claim}, at each level $i$, at least one of the following alternatives must occur. 
	\begin{enumerate}[(a)]
		\item \textbf{The nondegenerate alternative: } We have 
		\begin{align}\label{the nondegenerate alt at level i}
			\left\lvert \X u \right\rvert \geq \frac{1}{4}\left\lVert \X u \right\rVert_{\infty, R_{i}} \qquad \text{ in  } B_{R_{i+1}}. 
		\end{align}
		\item \textbf{The degenerate alternative: } We have 
		\begin{align}\label{the degenerate alt at level i}
			\left\lVert \X u \right\rVert_{\infty, R_{i+1}} \leq \frac{1}{2}\left\lVert \X u \right\rVert_{\infty, R_{i}}. 
		\end{align}
	\end{enumerate}
	Next, we choose $i_{a} \geq 1$ to be first integer for which we have 
	\begin{align}\label{choice of ia}
		2^{i_{a}-1} \geq B. 
	\end{align}
	Clearly, $i_a$ depends only on $B$ and thus ultimately, only on $n$ and $p$.
	\begin{claim}\label{nondegenerate case must occur claim}
		Without loss of generality, we can assume that \eqref{the nondegenerate alt at level i} occurs for the first time for some $0 \leq i < i_{a}.$ 
	\end{claim}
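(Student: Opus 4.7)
The plan is to establish Claim \ref{nondegenerate case must occur claim} by showing that in the complementary scenario---when the nondegenerate alternative \eqref{the nondegenerate alt at level i} fails for every $i \in \{0, 1, \ldots, i_a - 1\}$---the desired excess decay estimate \eqref{Main excess decay estimate Intro estimate} follows directly from Theorem \ref{thm:holder}. Once this is verified the claim is justified, since in that branch of the argument there is nothing left to prove.

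First, I would iterate the degenerate alternative \eqref{the degenerate alt at level i} across all levels $i = 0, 1, \ldots, i_a - 1$, which must hold at each such level by Claim \ref{the alternatives claim} under our standing assumption, to conclude $\left\lVert \X u \right\rVert_{\infty, R_{i_a}} \leq 2^{-i_a}\left\lVert \X u\right\rVert_{\infty, R/2}$. The defining inequality $2^{i_a - 1} \geq B$ from \eqref{choice of ia} then yields $\left\lVert \X u\right\rVert_{\infty, R_{i_a}} \leq (2B)^{-1}\left\lVert \X u\right\rVert_{\infty, R/2}$. Combined with the lower bound $|(\X u)_R| \geq B^{-1}\left\lVert \X u\right\rVert_{\infty, R/2}$ supplied by \eqref{lower upper for average} (available thanks to Claim \ref{sup controls oscillation claim}), the reverse triangle inequality gives $|\X u(x) - (\X u)_R| \geq (2B)^{-1}\left\lVert \X u\right\rVert_{\infty, R/2}$ for a.e.\ $x \in B_{R_{i_a}}$. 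Integrating this pointwise lower bound over $B_{R_{i_a}} \subset B_R$ and using the volume ratio $|B_{R_{i_a}}|/|B_R| = (\tau_1^{i_a}/2)^Q$ from \eqref{measure of balls} produces an estimate of the form $\left\lVert \X u\right\rVert_{\infty, R/2} \leq c_{n,p}\left( \fint_{B_R}|\X u - (\X u)_R|^q \right)^{1/q}$ with a constant depending only on $n$ and $p$.

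Second, for any $0 < \rho < R/2$, Theorem \ref{thm:holder} applied with $r_0 = R/2$ furnishes the pointwise bound $\sup_{x,y \in B_\rho}|\X u(x) - \X u(y)| \leq c_h(2\rho/R)^{\alpha_1}\left\lVert \X u\right\rVert_{\infty, R/2}$, which in particular controls the excess $\left(\fint_{B_\rho}|\X u - (\X u)_\rho|^q\right)^{1/q}$. Chaining this upper bound with the preceding lower bound on $\left\lVert \X u\right\rVert_{\infty, R/2}$ yields \eqref{Main excess decay estimate Intro estimate} with exponent $\alpha \leq \alpha_1$; the leftover range $\rho \geq R/2$ is handled trivially by enlarging $C_0$.

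I do not anticipate a serious obstacle: the argument is an algebraic balancing exploiting that $i_a$ was chosen precisely so that $2^{i_a}$ dominates $2B$, forcing the decaying supremum of $\X u$ on $B_{R_{i_a}}$ to become negligible compared to the nondegenerate mean $(\X u)_R$ guaranteed by Claim \ref{sup controls oscillation claim}. The one detail deserving care is bookkeeping: one must track that $B$, $\tau_1$, and $i_a$ all depend only on $n$ and $p$, so that every implicit constant feeding into the final $C_0$ is admissible.
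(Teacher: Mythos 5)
Your proposal is correct and takes a genuinely different (and arguably cleaner) route than the paper. The paper proceeds by first establishing \eqref{osc control energy in very small ball} via a proof by contradiction (a triangle-inequality argument comparing $|(\X u)_R|$ against $\fint_{B_{R_{i_a}}}|\X u|^q$), then feeding that into Theorem \ref{thm:lip Lq bound} on the tiny ball to bound $\lVert \X u \rVert_{\infty, R_{i_a}/2}$ by the excess over $B_R$, and finally applying Theorem \ref{thm:holder} at scale $R_{i_a}/2$. You instead observe that the reverse triangle inequality gives a \emph{pointwise} lower bound $|\X u(x) - (\X u)_R| \geq (2B)^{-1}\lVert \X u \rVert_{\infty, R/2}$ for a.e.\ $x \in B_{R_{i_a}}$, which upon integration against the volume factor $(\tau_1^{i_a}/2)^Q$ immediately yields $\lVert \X u\rVert_{\infty, R/2} \leq c(n,p)\left(\fint_{B_R}|\X u - (\X u)_R|^q\right)^{1/q}$; Theorem \ref{thm:holder} at the larger scale $R/2$ then finishes. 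Your version avoids the contradiction argument and the detour through Theorem \ref{thm:lip Lq bound} entirely, and it bounds the sup over the fixed ball $B_{R/2}$ rather than the shrunken $B_{R_{i_a}/2}$, which makes the final H\"older application notationally lighter. The one bookkeeping item to note (which you handle correctly) is that the volume factor $(2/\tau_1^{i_a})^{Q/q}$ must be bounded uniformly in $q \in [1,2]$, which follows since $2/\tau_1^{i_a} \geq 1$ and hence the factor is majorized by $(2/\tau_1^{i_a})^Q$, a constant depending only on $n$ and $p$.
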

	\emph{Proof of Claim \ref{nondegenerate case must occur claim}: }  If Claim \ref{nondegenerate case must occur claim} is false, then \eqref{the degenerate alt at level i} must have occured for all $ 0 \leq i \leq i_{a}-1.$ But this implies,  we have,  
	\begin{align*}
		\left\lVert \X u \right\rVert_{\infty, R_{i_{a}}} &\leq \left(\frac{1}{2}\right)^{i_{a}}\left\lVert \X u \right\rVert_{\infty, R/2}.  
	\end{align*}
	This now implies 
	\begin{align}\label{osc control energy in very small ball}
		\left( \fint_{B_{R_{i_{a}}}} \left\lvert \X u - \left( \X u \right)_{R}\right\rvert^{q}  \right)^{\frac{1}{q}} \geq \left( \fint_{B_{R_{i_{a}}}} \left\lvert \X u \right\rvert^{q} \right)^{\frac{1}{q}}.
	\end{align}
	Indeed, if \eqref{osc control energy in very small ball} is false, then recalling \eqref{lower upper for average}, we have 
	\begin{align*}
		\frac{1}{B} \left\lVert \X u \right\rVert_{\infty, R/2} &\leq \left\lvert \left(\X u\right)_{R}\right\rvert \\
		&=\fint_{B_{R_{i_{a}}}}\left\lvert \left(\X u\right)_{R}\right\rvert\ \mathrm{d}x \\
		&\stackrel{\text{H\"{o}lder}}{\leq} \left( \fint_{B_{R_{i_{a}}}}\left\lvert \X u - \left(\X u\right)_{R}\right\rvert^{q}\ \mathrm{d}x \right)^{\frac{1}{q}}+ \left( \fint_{B_{R_{i_{a}}}} \left\lvert \X u \right\rvert^{q} \right)^{\frac{1}{q}} \\
		&< 2 \left( \fint_{B_{R_{i_{a}}}} \left\lvert \X u \right\rvert^{q} \right)^{\frac{1}{q}}\leq 2\left\lVert \X u \right\rVert_{\infty, R_{i_{a}}} \leq 2\left(\frac{1}{2}\right)^{i_{a}}\left\lVert \X u \right\rVert_{\infty, R/2}, 
	\end{align*}
	which contradicts \eqref{choice of ia}. Hence \eqref{osc control energy in very small ball} holds. Now using Theorem \ref{thm:lip Lq bound} (with `$q=1$') together with \eqref{osc control energy in very small ball} and the fact $1\leq q\leq 2$, we find  
	\begin{align*}
		\left\lVert \X u \right\rVert_{\infty, R_{i_{a}}/2} \leq c_{1} \fint_{B_{R_{i_{a}}}} \left\lvert \X u \right\rvert &\leq c_{1} \left( \fint_{B_{R_{i_{a}}}} \left\lvert \X u \right\rvert^{q}\right)^{\frac{1}{q}} \\
		&\leq c_{1} \left( \fint_{B_{R_{i_{a}}}} \left\lvert \X u - \left( \X u \right)_{R}\right\rvert^{q} \right)^{\frac{1}{q}} \\
		&\leq c_{1} \left( \frac{R}{R_{i_{a}}}\right)^{\frac{Q}{q}}\left( \fint_{B_{R}} \left\lvert \X u - \left( \X u \right)_{R}\right\rvert^{q} \right)^{\frac{1}{q}} \\
		&\leq c_{1}2^Q\tau_{1}^{-i_{a}Q}\left( \fint_{B_{R}} \left\lvert \X u - \left( \X u \right)_{R}\right\rvert^{q} \right)^{\frac{1}{q}}.  
	\end{align*}
	Combining this with Theorem \ref{thm:holder}, for any $0 < \rho < R_{i_{a}}/2,$ we have 
	\begin{align*}
		\left( \fint_{B_{\rho}} \left\lvert \X u - \left( \X u\right)_{\rho}\right\rvert^{q} \, dx \right)^{\frac{1}{q}} 
		&\leq \sup\limits_{x, y \in B_{\rho}} \left\lvert \X u \left(x\right) - \X u \left(y\right) \right\rvert \\
		&\leq c_h\left(\frac{\rho}{R_{i_{a}}}\right)^{\alpha_{1}}  \left\lVert \X u \right\rVert_{\infty, R_{i_{a}}/2} \\
		&\leq c_h2^{\alpha_1}\tau_1^{-\alpha_{1}i_{a}}\left(\frac{\rho}{R}\right)^{\alpha_{1}}\left\lVert \X u \right\rVert_{\infty, R_{i_{a}}/2} \\
		&\leq c_hc_{1}2^{2Q}\tau_1^{-i_{a}\left(Q + \alpha_{1}\right)}\left(\frac{\rho}{R}\right)^{\alpha_{1}}\left( \fint_{B_{R}} \left\lvert \X u - \left( \X u \right)_{R}\right\rvert^{q} \right)^{\frac{1}{q}}. 
	\end{align*}
	This together with the fact that if $\frac{R_{i_a}}{2} \leq \rho \leq R$, then $\frac{\tau_1^{i_a}}{2}\leq \frac{\rho}{R} \leq 1$, yields our desired estimate. This proves Claim \ref{nondegenerate case must occur claim}.  

	Thus, we can assume that there exists a smallest integer $0 \leq i_{0} < i_{a}$ such that \eqref{the nondegenerate alt at level i} occurs for  $i=i_{0}.$ Thus, we have 
	\begin{align}\label{lower bound in any smaller ball}
		\frac{1}{4}\left\lVert \X u \right\rVert_{\infty, R_{i_{0}}}  \leq \left\lvert \X u \right\rvert \leq \left\lVert \X u \right\rVert_{\infty, R_{i_{0}}} \qquad \text{ in } B_{R_{i_{0}+1}}.
	\end{align}
	Now we claim 
	\begin{claim}\label{existence of sigma claim}
		There exists a constant $\sigma \in (0,1),$ depending only on $n$, $p$ and $\varepsilon_{0},$ and thus ultimately only on $n$ and $p$,  such that we have 
		\begin{align}\label{oscillation by sup in sigma ball}
			\left( \fint_{B_{\sigma R_{i_{0}+1}}} \left\lvert \X u - \left( \X u\right)_{\sigma R_{i_{0}+1}}\right\rvert^{q} \right)^{\frac{1}{q}} \leq \frac{\varepsilon_{0}}{16} \left\lVert \X u \right\rVert_{\infty, R_{i_{0}}}.
		\end{align}
	\end{claim}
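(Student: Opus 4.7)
The plan is to bound the $L^q$-oscillation by the $L^\infty$-oscillation and then use the H\"{o}lder continuity estimate of Theorem \ref{thm:holder} to make the oscillation as small as desired on a sufficiently small concentric ball. The key observation is that the target bound is a \emph{sup-type} bound involving $\|\X u\|_{\infty, R_{i_{0}}}$, which matches precisely the right-hand side of the H\"{o}lder estimate, so no further delicate energy bounds are needed here.

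First, note that by the minimality of the mean \eqref{minimality of mean}, for any ball $B$ we have
\begin{align*}
\left( \fint_{B} |\X u - (\X u)_{B}|^{q} \right)^{1/q} \leq 2 \sup_{x,y \in B} |\X u(x) - \X u(y)|.
\end{align*}
So it suffices to control the pointwise oscillation of $\X u$ on $B_{\sigma R_{i_{0}+1}}$.

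Second, since $B_{R_{i_{0}+1}} \subset B_{R_{i_{0}}} \subset B_{R/2} \subset\subset \Omega$, we may apply Theorem \ref{thm:holder} with the reference ball $B_{R_{i_{0}+1}}$ and the smaller ball $B_{\sigma R_{i_{0}+1}}$ to obtain, for every $x, y \in B_{\sigma R_{i_{0}+1}}$,
\begin{align*}
|\X u(x) - \X u(y)| \leq c_{h}\, \sigma^{\alpha_{1}} \|\X u\|_{\infty, R_{i_{0}+1}} \leq c_{h}\, \sigma^{\alpha_{1}} \|\X u\|_{\infty, R_{i_{0}}},
\end{align*}
where we used the trivial monotonicity $\|\X u\|_{\infty, R_{i_{0}+1}} \leq \|\X u\|_{\infty, R_{i_{0}}}$ in the last step. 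Combining with the preceding display yields
\begin{align*}
\left( \fint_{B_{\sigma R_{i_{0}+1}}} |\X u - (\X u)_{\sigma R_{i_{0}+1}}|^{q} \right)^{1/q} \leq 2 c_{h}\, \sigma^{\alpha_{1}} \|\X u\|_{\infty, R_{i_{0}}}.
\end{align*}

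Finally, choose $\sigma \in (0,1)$ small enough so that $2 c_{h}\, \sigma^{\alpha_{1}} \leq \varepsilon_{0}/16$, i.e.\ $\sigma = \min\{1/2,\, (\varepsilon_{0}/(32 c_{h}))^{1/\alpha_{1}}\}$. Since $c_{h}$, $\alpha_{1}$, and $\varepsilon_{0}$ all depend only on $n$ and $p$, so does $\sigma$. This gives \eqref{oscillation by sup in sigma ball}. There is no real obstacle here: the claim is essentially a direct consequence of H\"{o}lder continuity once one recognizes that the permissible right-hand side is a sup-norm of $\X u$ rather than its $L^{q}$-oscillation. The conceptual work lies earlier (in isolating the nondegenerate scale $R_{i_{0}+1}$), not in this step.
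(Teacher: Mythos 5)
Your proof is correct and follows essentially the same route as the paper: bound the $L^q$-oscillation by the sup-oscillation, apply Theorem \ref{thm:holder} with reference ball $B_{R_{i_0+1}}$ and inner ball $B_{\sigma R_{i_0+1}}$, use $\|\X u\|_{\infty, R_{i_0+1}} \leq \|\X u\|_{\infty, R_{i_0}}$, and choose $\sigma$ small. The only cosmetic difference is your extra factor of $2$ from invoking \eqref{minimality of mean}; the paper instead bounds $\left(\fint_B |\X u - (\X u)_B|^q\right)^{1/q} \leq \sup_{x,y\in B} |\X u(x) - \X u(y)|$ directly (no factor of $2$ is needed since $\X u(x) - (\X u)_B = \fint_B (\X u(x) - \X u(y))\,dy$), but this is absorbed into the choice of $\sigma$ and changes nothing.
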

	\emph{Proof of Claim \ref{existence of sigma claim}:} Let $c_h>0$ be the constant given by Theorem \ref{thm:holder}. Choose $\sigma \in (0,1)$ sufficiently small such that 
	\begin{align*}
		c_{h}\sigma^{\alpha_{1}} = \frac{\varepsilon_{0}}{16}. 
	\end{align*}
	Then, by Theorem \ref{thm:holder}, we deduce 
	\begin{align*}
		\left( \fint_{B_{\sigma R_{i_{0}+1}}} \left\lvert \X u - \left( \X u\right)_{\sigma R_{i_{0}+1}}\right\rvert^{q} \right)^{\frac{1}{q}} 
		&\leq \sup\limits_{x, y \in B_{\sigma R_{i_{0}+1}}} \left\lvert \X u \left( x\right) - \X u \left( y\right)\right\rvert \\
		&\leq  	c_{h}\sigma^{\alpha_{1}} \left\lVert \X u \right\rVert_{\infty, R_{i_{0}+1}} \\
		&\leq \frac{\varepsilon_{0}}{16}\left\lVert \X u \right\rVert_{\infty, R_{i_{0}+1}}  \leq \frac{\varepsilon_{0}}{16}\left\lVert \X u \right\rVert_{\infty, R_{i_{0}}}.
	\end{align*}
	This proves Claim \ref{existence of sigma claim}.  
	
	Now we are in a position to complete the proof of the Theorem. Observe that exactly one of the following can occur. Either we have  
	\begin{align}\label{lower bound for sigma average}
		\left\lvert \left( \X u\right)_{\frac{1}{2}\sigma R_{i_{0}+1}}\right\rvert  \geq 	\frac{1}{8B}\left\lVert \X u \right\rVert_{\infty, R_{i_{0}}},
	\end{align}
	or we must have 
	\begin{align}\label{no lower bound for sigma average}
		\left\lvert \left( \X u\right)_{\frac{1}{2}\sigma R_{i_{0}+1}}\right\rvert  < 	\frac{1}{8B}\left\lVert \X u \right\rVert_{\infty, R_{i_{0}}}.
	\end{align}
	If \eqref{no lower bound for sigma average} holds, then by Theorem \ref{thm:lip Lq bound} (with `$q=1$'), we have 
	\begin{align}
		\left\lVert \X u \right\rVert_{\infty, \frac{1}{4}\sigma R_{i_{0}+1}} &\leq c_{1}\fint_{B_{\frac{1}{2}\sigma R_{i_{0}+1}}} \left\lvert \X u \right\rvert \notag\\
		&\leq c_{1}\left( \fint_{B_{\frac{1}{2}\sigma R_{i_{0}+1}}} \left\lvert \X u \right\rvert^{q}\right)^{\frac{1}{q}} \notag\\
		&\leq c_{1}\left( \fint_{B_{\frac{1}{2}\sigma R_{i_{0}+1}}} \left\lvert \X u  - \left( \X u\right)_{\frac{1}{2}\sigma R_{i_{0}+1}}\right\rvert^{q}\right)^{\frac{1}{q}} + c_{1}\left\lvert \left( \X u\right)_{\frac{1}{2}\sigma R_{i_{0}+1}} \right\rvert \notag\\
		&\leq c_{1}\left( \fint_{B_{\frac{1}{2}\sigma R_{i_{0}+1}}} \left\lvert \X u  - \left( \X u\right)_{\frac{1}{2}\sigma R_{i_{0}+1}}\right\rvert^{q}\right)^{\frac{1}{q}} + \frac{c_{1}}{8B}\left\lVert \X u \right\rVert_{\infty, R_{i_{0}}} \notag\\
		&\leq c_{1}\left( \fint_{B_{\frac{1}{2}\sigma R_{i_{0}+1}}} \left\lvert \X u  - \left( \X u\right)_{\frac{1}{2}\sigma R_{i_{0}+1}}\right\rvert^{q}\right)^{\frac{1}{q}} + \frac{c_{1}}{2B}\left\lVert \X u \right\rVert_{\infty, \frac{1}{4}\sigma R_{i_{0}+1}} , \label{prelim estimate for sup in 1/2sigma}
	\end{align}
	where in the last line, we have used the bound 
	\begin{align}\label{lower sup controls upper sup}
		\frac{1}{4}\left\lVert \X u \right\rVert_{\infty, R_{i_{0}}} \leq \left\lVert \X u \right\rVert_{\infty, \frac{1}{4}\sigma R_{i_{0}+1}},
	\end{align}
	which is implied by \eqref{lower bound in any smaller ball}. Now by our choice of $B=2c_1$, \eqref{prelim estimate for sup in 1/2sigma} implies 
	\begin{align*}
		\left\lVert \X u \right\rVert_{\infty, \frac{1}{4}\sigma R_{i_{0}+1}/2} \leq \frac{4c_{1}}{3} \left( \fint_{B_{\frac{1}{2}\sigma R_{i_{0}+1}}} \left\lvert \X u  - \left( \X u\right)_{\frac{1}{2}\sigma R_{i_{0}+1}}\right\rvert^{q}\right)^{\frac{1}{q}}.  
	\end{align*}
	Once again, combined with Theorem \ref{thm:holder}, this implies, for any $0 < \rho \le\sigma R_{i_{a}}/4\le \sigma R_{i_{0}+1}/4,$ we have  
	\begin{align}
		&\left( \fint_{B_{\rho}} \left\lvert \X u - \left( \X u\right)_{\rho}\right\rvert^{q}\, dx\right)^{\frac{1}{q}} \notag\\&\qquad \qquad \leq c_h\left(\frac{\rho}{\sigma R_{i_{0}+1}}\right)^{\alpha_{1}}\left\lVert \X u \right\rVert_{\infty, \frac{1}{4}\sigma R_{i_{0}+1}} \notag\\&\qquad \qquad\leq \frac{4c_hc_{1}}{3}\left(\frac{\rho}{\sigma R_{i_{0}+1}}\right)^{\alpha_{1}} \left( \fint_{B_{\frac{1}{2}\sigma R_{i_{0}+1}}} \left\lvert \X u  - \left( \X u\right)_{\frac{1}{2}\sigma R_{i_{0}+1}}\right\rvert^{q} \right)^{\frac{1}{q}} \notag
		\\&\qquad \qquad\stackrel{\eqref{minimality of mean}}{\leq} \frac{8c_hc_{1}}{3}\left(\frac{\rho}{\sigma R_{i_{0}+1}}\right)^{\alpha_{1}} \left( \fint_{B_{\frac{1}{2}\sigma R_{i_{0}+1}}} \left\lvert \X u  - \left( \X u\right)_{R}\right\rvert^{q} \right)^{\frac{1}{q}} \notag
		\\		&\qquad \qquad\leq c\left(\frac{R}{\sigma R_{i_{0}+1}}\right)^{\frac{Q}{q} +\alpha_{1}} \left(\frac{\rho}{R}\right)^{\alpha_{1}} \left( \fint_{B_{R}} \left\lvert \X u  - \left( \X u\right)_{R}\right\rvert^{q}\right)^{\frac{1}{q}}. \label{prefinal estimate for with no lower bound on sigma average}
	\end{align}
	Now, since $\sigma,$ $\tau_{1}, \alpha_{1}$ are fixed constants and $i_{a}$ is a fixed integer, all depending only on $n$ and $p,$ with $\tau_{1} \in (0,1)$ and $i_{0} < i_{a},$ we note that 
	\begin{align*}
		\left(\frac{R}{\sigma R_{i_{0}+1}}\right)^{\frac{Q}{q} +\alpha_{1}} = 	\left(\frac{1}{\sigma \tau_{1}^{i_{0}+1}}\right)^{\frac{Q}{q} +\alpha_{1}} \leq \left(\frac{1}{\sigma \tau_{1}^{i_{a}}}\right)^{\frac{Q}{q} +\alpha_{1}} \leq \left(\frac{1}{\sigma \tau_{1}^{i_{a}}}\right)^{Q +\alpha_{1}} ,
	\end{align*}
	where the last term on the right is again a fixed constant, depending only on $n$ and $p.$ Hence, \eqref{prefinal estimate for with no lower bound on sigma average} now easily yields our desired estimate. Hence it only remains to settle the case when \eqref{lower bound for sigma average} holds. Observe that \eqref{oscillation by sup in sigma ball} implies 
	\begin{align*}
		\left( \fint_{B_{\sigma R_{i_{0}+1}}} \left\lvert \X u - \left( \X u\right)_{\sigma R_{i_{0}+1}}\right\rvert^{q} \right)^{\frac{1}{q}} \leq \frac{\varepsilon_{0}}{16} \left\lVert \X u \right\rVert_{\infty, R_{i_{0}}} \leq \varepsilon_{0} \left\lVert \X u \right\rVert_{\infty, R_{i_{0}+1}},
	\end{align*}
	where once again we have used the bound \eqref{lower sup controls upper sup}. Combining this with the H\"{o}lder and reverse H\"{o}lder inequality, we deduce  
	\begin{align*}
		&\left( \fint_{B_{\frac{1}{2}\sigma R_{i_{0}+1}}} \left\lvert \X u - \left( \X u\right)_{\frac{1}{2}\sigma R_{i_{0}+1}}\right\rvert^{2} \right)^{\frac{1}{2}} \\&\qquad \qquad  \stackrel{\eqref{minimality of mean}}{\leq} 2\left( \fint_{B_{\frac{1}{2}\sigma R_{i_{0}+1}}} \left\lvert \X u - \left( \X u\right)_{\sigma R_{i_{0}+1}}\right\rvert^{2} \right)^{\frac{1}{2}}\notag 
		\\&\qquad \qquad \stackrel{\text{H\"{o}lder}}{\leq} 2\left( \fint_{B_{\frac{1}{2}\sigma R_{i_{0}+1}}} \left\lvert \X u - \left( \X u\right)_{\sigma R_{i_{0}+1}}\right\rvert^{\frac{2Q}{Q-2}} \right)^{\frac{Q-2}{2Q}}\\&\qquad \qquad \stackrel{\eqref{lower bound in any smaller ball},\eqref{rev holder bound 1 to 2 star}}{\leq} 2C_{RH}\fint_{B_{\sigma R_{i_{0}+1}}} \left\lvert \X u - \left( \X u\right)_{\sigma R_{i_{0}+1}}\right\rvert \\&\qquad \qquad \stackrel{\text{H\"{o}lder}}{\leq} 2C_{RH}\left( \fint_{B_{\sigma R_{i_{0}+1}}} \left\lvert \X u - \left( \X u\right)_{\sigma R_{i_{0}+1}}\right\rvert^{q} \right)^{\frac{1}{q}} \leq 2C_{RH}\varepsilon_{0} \left\lVert \X u \right\rVert_{\infty, R_{i_{0}+1}}.
	\end{align*}
		Thus, recalling $\varepsilon_{0} \in (0,1),$ we have 
	\begin{align}\label{oscillation control for using main lemma}
		\fint_{B_{\frac{1}{2}\sigma R_{i_{0}+1}}} \left\lvert \X u - \left( \X u\right)_{\frac{1}{2}\sigma R_{i_{0}+1}}\right\rvert^{2} &\leq \varepsilon_{0}^{2} (2C_{RH})^2\left\lVert \X u \right\rVert_{\infty,  R_{i_{0}+1}}^{2} \notag\\
		&\leq \varepsilon_{0} (2C_{RH})^2\left\lVert \X u \right\rVert_{\infty,  R_{i_{0}+1}}^{2}. 
	\end{align}
	By virtue of  \eqref{lower bound in any smaller ball}, we also trivially have 
	\begin{align}\label{upper and lower bound for Xu}
		\frac{1}{4B}\left\lVert \X u \right\rVert_{\infty, R_{i_{0}+1}}	\leq \frac{1}{4}\left\lVert \X u \right\rVert_{\infty, R_{i_{0}}}	\leq \left\lvert \X u \right\rvert \leq 2C_{RH}\left\lVert \X u \right\rVert_{\infty, R_{i_{0}+1}} \quad \text{ in } B_{R_{i_{0}+1}}. 
	\end{align}
	Now note that \eqref{lower bound for sigma average} immediately implies 
	\begin{align}\label{lower upper bound sigma average}
		\frac{1}{8B}\left\lVert \X u \right\rVert_{\infty, R_{i_{0}+1}}	\leq \frac{1}{8B}\left\lVert \X u \right\rVert_{\infty, R_{i_{0}}}	\leq \left\lvert \left( \X u\right)_{\frac{1}{2}\sigma R_{i_{0}+1}}\right\rvert \leq 2C_{RH}\left\lVert \X u \right\rVert_{\infty, R_{i_{0}+1}}. 
	\end{align} Now, \eqref{upper and lower bound for Xu}, \eqref{lower upper bound sigma average} and \eqref{oscillation control for using main lemma} implies that the hypotheses of Lemma \ref{decay estimate in the nondegenerate regime} are satisfied with $R$ replaced by $R_{i_{0}+1}/2,$ $A=2C_{RH}$, $B=2c_1$ and $\lambda = \left\lVert \X u \right\rVert_{\infty, R_{i_{0}+1}},$ which we can always assume to be positive, as otherwise there is nothing to prove. Hence, Lemma \ref{decay estimate in the nondegenerate regime} implies that for any $0 < \rho \le\sigma R_{i_{a}}/2\le \sigma R_{i_{0}+1}/2,$ we have 
	\begin{align}\label{holder rev holder depending on q}
		\fint_{B_{\rho}} \left\lvert \X u - \left( \X u\right)_{\rho}\right\rvert^{2} \leq C_0 \sigma^{-Q-2\alpha_{0}} \left(\frac{\rho}{R_{i_{0}+1}}\right)^{2\alpha_{0}} \fint_{B_{\frac{1}{2}R_{i_{0}+1}}} \left\lvert \X u - \left( \X u\right)_{\frac{1}{2}R_{i_{0}+1}}\right\rvert^{2}.
			\end{align}
	From here, to derive our desired estimate, we use H\"{o}lder inequality on one side and reverse H\"{o}lder inequality on the other. More precisely, by \eqref{holder rev holder depending on q}, for any $0 < \rho \le\sigma R_{i_{a}}/2\le \sigma R_{i_{0}+1}/2,$ we have  
	\begin{align*}
		&\left( 	\fint_{B_{\rho}} \left\lvert \X u - \left( \X u\right)_{\rho}\right\rvert^{q}  \right)^{\frac{1}{q}}\\
		&\stackrel{\text{H\"{o}lder}}{\leq}	\left( 	\fint_{B_{\rho}} \left\lvert \X u - \left( \X u\right)_{\rho}\right\rvert^{2}  \right)^{\frac{1}{2}} \\
		&\stackrel{\eqref{holder rev holder depending on q}}{\leq} C_0 \sigma^{-\frac{Q}{2}-\alpha_{0}} \left(\frac{\rho}{R_{i_{0}+1}}\right)^{\alpha_{0}} \left(\fint_{B_{\frac{1}{2}R_{i_{0}+1}}} \left\lvert \X u - \left( \X u\right)_{\frac{1}{2}R_{i_{0}+1}}\right\rvert^{2}\right)^\frac{1}{2}\notag\\
		&\stackrel{\text{H\"{o}lder}}{\leq} C_0\sigma^{-\frac{Q}{2}-\alpha_{0}} \left(\frac{\rho}{R_{i_{0}+1}}\right)^{\alpha_{0}} \left(\fint_{\frac{1}{2}B_{R_{i_0+1}}} \left\lvert \X u - \left( \X u\right)_{\frac{1}{2}R_{i_0+1}}\right\rvert^{\frac{2Q}{Q-2}}\right)^{\frac{Q-2}{2Q}} \\
		&\stackrel{\eqref{minimality of mean}}{\leq} 2 C_0\sigma^{-\frac{Q}{2}-\alpha_{0}} \left(\frac{\rho}{R_{i_{0}+1}}\right)^{\alpha_{0}} \left(\fint_{B_{\frac{1}{2}R_{i_0+1}}} \left\lvert \X u - \left( \X u\right)_{R_{i_0+1}}\right\rvert^{\frac{2Q}{Q-2}}\right)^{\frac{Q-2}{2Q}}\\
		&\stackrel{\eqref{lower bound in any smaller ball},\eqref{rev holder bound}}{\leq} 2C_{RH} C_0\sigma^{-\frac{Q}{2}-\alpha_{0}} \left(\frac{\rho}{R_{i_{0}+1}}\right)^{\alpha_{0}} \fint_{B_{R_{i_0+1}}} \left\lvert \X u - \left( \X u\right)_{R_{i_0+1}}\right\rvert \\
		&\stackrel{\text{H\"{o}lder}}{\leq} 2C_{RH} C_0\sigma^{-\frac{Q}{2}-\alpha_{0}} \left(\frac{\rho}{R_{i_{0}+1}}\right)^{\alpha_{0}} \left(\fint_{B_{R_{i_0+1}}} \left\lvert \X u - \left( \X u\right)_{R_{i_0+1}}\right\rvert^{q}\right)^{\frac{1}{q}}\\
		&\stackrel{\eqref{minimality of mean}}{\leq} 4C_{RH}C_0\sigma^{-\frac{Q}{2}-\alpha_{0}} \left(\frac{\rho}{R_{i_{0}+1}}\right)^{\alpha_{0}} \left(\fint_{B_{R_{i_0+1}}} \left\lvert \X u - \left( \X u\right)_{R}\right\rvert^{q}\right)^{\frac{1}{q}} \\
		& \leq 4C_{RH}C_0\left(\frac{R}{R_{i_0+1}}\right)^\frac{Q}{q}\sigma^{-\frac{Q}{2}-\alpha_{0}} \left(\frac{\rho}{R_{i_{0}+1}}\right)^{\alpha_{0}} \left(\fint_{B_R}\left\lvert \X u - \left( \X u\right)_{R}\right\rvert^{q}\right)^{\frac{1}{q}}\\
		& \leq 4C_{RH}C_0\left(\frac{R}{R_{i_0+1}}\right)^{\frac{Q}{q}+\alpha_0}\sigma^{-\frac{Q}{2}-\alpha_{0}} \left(\frac{\rho}{R}\right)^{\alpha_{0}} \left(\fint_{B_R}\left\lvert \X u - \left( \X u\right)_{R}\right\rvert^{q}\right)^{\frac{1}{q}}\\
		& \leq 4C_{RH}C_0\left(\frac{2}{\tau_1^{i_0+1}}\right)^{\frac{Q}{q}+\alpha_0}\sigma^{-\frac{Q}{2}-\alpha_{0}} \left(\frac{\rho}{R}\right)^{\alpha_{0}} \left(\fint_{B_R}\left\lvert \X u - \left( \X u\right)_{R}\right\rvert^{q}\right)^{\frac{1}{q}}\\
		&\leq 4C_{RH}C_0\left(\frac{2}{\tau_1^{i_a+1}}\right)^{\frac{Q}{q}+\alpha_0}\sigma^{-\frac{Q}{2}-\alpha_{0}} \left(\frac{\rho}{R}\right)^{\alpha_{0}} \left(\fint_{B_R}\left\lvert \X u - \left( \X u\right)_{R}\right\rvert^{q}\right)^{\frac{1}{q}}.
	\end{align*}
Since $C_{RH}, C_0, \sigma, \alpha_{0}, \tau_{1}, i_{a}$ are all fixed, which ultimately depend only on $n$ and $p,$ we derive our desired estimate. This completes the proof.  
\end{proof}
Now we prove the corresponding result for $V(\X u).$  The proof is completely analogous to the proof of Theorem \ref{Main excess decay estimate Intro}. One only has to use Theorem \ref{thm:holder V} instead of Theorem \ref{thm:holder} and Lemma \ref{decay estimate in the nondegenerate regime for V} instead of Lemma \ref{decay estimate in the nondegenerate regime}.  For the convenience of readers we provide complete details.
\begin{proof}{\textbf{( of Theorem \ref{Main excess decay estimate V Intro}} ) }
	We can and henceforth assume $x_{0} =0.$ As remarked in the proof of Theorem \ref{Main excess decay estimate Intro}, it is enough to prove \eqref{Main excess decay estimate Intro estimate V}, for $0<\rho<\kappa R$, for some constant $\kappa\in (0,1)$ depending only $n$ and $p$, as the estimate is trivial otherwise. First, we fix $C_{RH, V}>0$ which is given by Lemma \ref{reverse holder inequality V lemma} with the choice $A=B=1$. So, clearly $C_{RH, V}$ depends only on $n$ and $p$. Next, let $B=B\left(n, p\right)\geq 1$ be defined by $B:= 2^{1+\frac{1}{p}}c_{p/2},$ where $c_{p/2}$ is the constant given by Theorem \ref{thm:lip Lq bound} with the choice $q=p/2$. Let $\theta_{0}, \alpha_{0} \in (0,1)$ be the constants given by Lemma  \ref{decay estimate in the nondegenerate regime for V} for the choices of $A=2C_{RH, V}$  and $B=2^{1+\frac{1}{p}}c_{p/2}.$ Let $\beta_{1} \in (0,1)$ be the exponent in Theorem \ref{thm:holder V}. Set 
	\begin{align*}
		\beta := \min \left\lbrace \alpha_{0}, \beta_{1} \right\rbrace. 
	\end{align*} 
	\begin{claim}\label{sup controls oscillation claim V}
		Without loss of generality, we can assume that the we have 
		\begin{align}\label{lower upper for average V}
			\frac{1}{B} \left\lVert \X u \right\rVert_{\infty, R/2} \leq \left\lvert \left (V\left (\X u\right)\right)_{R}\right\rvert^{\frac{2}{p}} \leq 16 \left\lVert \X u \right\rVert_{\infty, R/2}
		\end{align}
		and 
		\begin{align}\label{oscillation is smaller than sup V}
			\left( \fint_{B_{R/2}} \left\lvert V\left(\X u \right) - \left(V\left(\X u\right)\right)_{R} \right\rvert^{q} \right)^{\frac{1}{q}}\leq \theta_{0} \left\lVert \X u \right\rVert_{\infty, R/2}^\frac{p}{2}. 
		\end{align}
	\end{claim}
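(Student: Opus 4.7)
The plan is to mimic the proof of Claim \ref{sup controls oscillation claim}, replacing $\X u$ by $V(\X u)$ throughout, using Theorem \ref{thm:holder V} in place of Theorem \ref{thm:holder} and Theorem \ref{thm:lip Lq bound} with the exponent $q = p/2$ in place of $q = 1$. The phrase ``without loss of generality'' really means: if either \eqref{lower upper for average V} or \eqref{oscillation is smaller than sup V} fails, one establishes \eqref{Main excess decay estimate Intro estimate V} directly from the known H\"older continuity of $V(\X u)$, bypassing the nondegenerate-regime machinery altogether.

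First I would dispose of the case where \eqref{oscillation is smaller than sup V} fails. Applied on any sub-ball $B_\rho \subset B_{R/2}$, Theorem \ref{thm:holder V} gives
$\bigl( \fint_{B_\rho} \lvert V(\X u) - (V(\X u))_\rho \rvert^q \bigr)^{1/q} \leq c_{h,V}(\rho/R)^{\beta_1}\lVert \X u \rVert_{\infty, R/2}^{p/2}$.
Using the negation of \eqref{oscillation is smaller than sup V} to absorb the sup-norm factor into the $L^q$ excess on $B_{R/2}$, and then the trivial passage from $B_{R/2}$ to $B_R$ costing only a factor $2^{Q/q}$, produces the desired estimate \eqref{Main excess decay estimate Intro estimate V}. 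So I may assume \eqref{oscillation is smaller than sup V} holds.

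Assuming therefore \eqref{oscillation is smaller than sup V}, I would next show that the upper bound in \eqref{lower upper for average V} is automatic. Writing $|(V(\X u))_R|^2 = \fint_{B_{R/2}}\langle (V(\X u))_R, (V(\X u))_R\rangle\,dx$ and splitting one copy as $\bigl((V(\X u))_R - V(\X u)(x)\bigr) + V(\X u)(x)$, Cauchy--Schwarz plus H\"older applied to \eqref{oscillation is smaller than sup V} on the first piece and the trivial bound $|V(\X u)|\leq \lVert\X u\rVert_{\infty, R/2}^{p/2}$ on the second yield $|(V(\X u))_R| \leq (\theta_0 + 1)\lVert\X u\rVert_{\infty, R/2}^{p/2}$, and since $\theta_0 \in (0,1)$ and $p > 1$, this gives $|(V(\X u))_R|^{2/p} \leq 4^{2/p}\lVert\X u\rVert_{\infty, R/2} \leq 16\lVert\X u\rVert_{\infty, R/2}$.

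The only remaining alternative is failure of the lower bound in \eqref{lower upper for average V}, namely $|(V(\X u))_R|^{2/p} < B^{-1}\lVert\X u\rVert_{\infty, R/2}$. Theorem \ref{thm:lip Lq bound} with $q=p/2$ gives $\lVert\X u\rVert_{\infty, R/2}^{p/2} \leq c_{p/2}^{p/2}\fint_{B_R}|V(\X u)|$; splitting via the triangle inequality and applying H\"older, I would absorb the resulting term $c_{p/2}^{p/2}|(V(\X u))_R|$ into the left side using the very choice $B = 2^{1+1/p}c_{p/2}$, which is designed so that $c_{p/2}^{p/2}B^{-p/2} \leq 1/2$. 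The resulting bound $\lVert\X u\rVert_{\infty, R/2}^{p/2} \leq 2c_{p/2}^{p/2}\bigl(\fint_{B_R}\lvert V(\X u) - (V(\X u))_R\rvert^q\bigr)^{1/q}$, combined once more with Theorem \ref{thm:holder V} on any $B_\rho \subset B_R$, yields \eqref{Main excess decay estimate Intro estimate V}. The main (minor) obstacle is verifying that this absorbing constant is uniform as $p$ ranges over $(1,\infty)$, which the stated choice of $B$ handles directly; everything else is a literal transcription of the argument in Claim \ref{sup controls oscillation claim}.
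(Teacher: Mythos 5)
Your proposal is correct and follows essentially the same route as the paper: dispose of the case where the oscillation bound fails via Theorem \ref{thm:holder V}, show the upper bound on $|(V(\X u))_R|^{2/p}$ is automatic, and handle the failure of the lower bound via Theorem \ref{thm:lip Lq bound} with $q = p/2$ together with absorption exploiting the choice $B = 2^{1+1/p}c_{p/2}$. Your argument is marginally leaner in two spots — dividing through by $|(V(\X u))_R|$ directly rather than invoking Young's inequality as the paper does, and absorbing at the level of $\lVert \X u \rVert_{\infty, R/2}^{p/2}$ instead of first raising $(a+b)^{2/p}$ to $a^{2/p}+b^{2/p}$ — but these are cosmetic; the constant $4^{2/p}$ you quote is a harmless over-estimate of the true $(\theta_0+1)^{2/p} \le 2^{2/p}$, and both land inside the required bound $16$.
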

	\emph{Proof of Claim \ref{sup controls oscillation claim V}: } Indeed, if \eqref{oscillation is smaller than sup V} is violated, then by Theorem \ref{thm:holder V}, for any $0 < \rho < R/2,$ we have 
	\begin{align*}
		&\left( \fint_{B_{\rho}} \left\lvert V\left(\X u\right) - \left(V\left( \X u\right)\right)_{\rho}\right\rvert^{q}\, dx \right)^{\frac{1}{q}}\\ 		
		&\qquad \qquad \leq \sup\limits_{x, y \in B_{\rho}} \left\lvert V\left(\X u\right) \left(x\right) - V\left(\X u \right)\left(y\right) \right\rvert \\
		&\qquad \qquad \leq c_{h,V}\left(\frac{\rho}{R}\right)^{\beta_{1}}\left\lVert \X u \right\rVert_{\infty, R/2}^{\frac{p}{2}} \\
		& \qquad \qquad \leq \frac{c_{h,V}}{\theta_{0}}\left(\frac{\rho}{R}\right)^{\beta} \left( \fint_{B_{R/2}} \left\lvert V\left(\X u\right) - \left(V\left(\X u\right)\right)_{R} \right\rvert^{q} \right)^{\frac{1}{q}}\\
		&\qquad \qquad \leq \frac{2^{Q}c_{h,V}}{\theta_{0}}\left(\frac{\rho}{R}\right)^{\beta} \left( \fint_{B_{R}} \left\lvert V\left(\X u\right) - \left(V\left(\X u\right)\right)_{R} \right\rvert^{q} \right)^{\frac{1}{q}},
	\end{align*}
	which implies our desired estimate. So we can assume \eqref{oscillation is smaller than sup V} holds. Now note that 
	\begin{align*}
		\left\lvert \left(V\left(\X u\right)\right)_{R}\right\rvert^{2} &= \left\langle \left(V\left(\X u\right)\right)_{R}, \left(V\left(\X u\right)\right)_{R}\right\rangle \\&=\left\langle \left(V\left(\X u\right)\right)_{R}, \left(V\left(\X u\right)\right)_{R} -V\left(\X u\right) \left(x\right)\right\rangle  + \left\langle \left(V\left(\X u\right)\right)_{R}, V\left(\X u\right) \left(x\right)\right\rangle,
	\end{align*}
	for a.e. $x \in B_{R/2}.$ Using Cauchy-Schwarz, integrating and H\"older,  we deduce 
	\begin{align*}
		\left\lvert \left(V\left(\X u\right)\right)_{R}\right\rvert^{2} 
		&\leq \left\lvert \left(V\left(\X u\right)\right)_{R}\right\rvert  \left( \fint_{B_{R/2}} \left\lvert V\left(\X u\right) - \left(V\left(\X u\right)\right)_{R} \right\rvert + \fint_{B_{R/2}} \left\lvert V\left(\X u \right)\right\rvert \right) \\
		&\stackrel{\eqref{oscillation is smaller than sup V}}{\leq} \left(\theta_{0} +1 \right)\left\lvert \left(V\left(\X u\right)\right)_{R}\right\rvert \left\lVert \X u \right\rVert_{\infty, R/2}^\frac{p}{2}. 
	\end{align*} Since, $0<\theta_0<1$ so,  using Young's inequality in the last estimate we derive 
	\begin{align*}
		\left\lvert \left(V\left(\X u\right)\right)_{R}\right\rvert^{\frac{2}{p}} 
		&\leq  \left(\theta_{0} +1 \right)^\frac{1}{p}\left\lvert \left(V\left(\X u\right)\right)_{R}\right\rvert^\frac{1}{p} \left\lVert \X u \right\rVert_{\infty, R/2}^\frac{1}{2}\\
		& \leq 2^\frac{1}{p}\left\lvert \left(V\left(\X u\right)\right)_{R}\right\rvert^\frac{1}{p} \left\lVert \X u \right\rVert_{\infty, R/2}^\frac{1}{2} \\ 
		&\leq \frac{1}{2}\left\lvert \left(V\left(\X u\right)\right)_{R}\right\rvert^{\frac{2}{p}} +  2^{\frac{2}{p}+1}\left\lVert \X u \right\rVert_{\infty, R/2}.
	\end{align*}
Hence, we deduce 
	\begin{align*}
		\left\lvert \left(V\left(\X u\right)\right)_{R}\right\rvert^\frac{2}{p} \leq  2^{\frac{2}{p}+2} \left\lVert \X u \right\rVert_{\infty, R/2} \leq 16\left\lVert \X u \right\rVert_{\infty, R/2}. 
	\end{align*}
	Thus, the only way \eqref{lower upper for average V} can fail is if we have 
	\begin{align*}
		\left\lvert \left( V\left(\X u\right)\right)_{R}\right\rvert^\frac{2}{p}  < 	\frac{1}{B} \left\lVert \X u \right\rVert_{\infty, R/2}. 
	\end{align*}
	But then  by Theorem \ref{thm:lip Lq bound} (with `$q=p/2$'), H\"{o}lder's inequality and the Minkowski inequality, we find, 
	\begin{align*}
		\left\lVert \X u \right\rVert_{\infty, R/2} 
		&\leq c_{p/2} \left(\fint_{B_{R}} \left\lvert \X u \right\rvert ^{\frac{p}{2}}\right)^\frac{2}{p}  \\ 
		&\leq c_{p/2} \left(\fint_{B_{R}} \left\lvert V\left(\X u\right) \right\rvert^{q}\right)^{\frac{2}{pq}} \\
		&\leq c_{p/2} \left[\left(\fint_{B_{R}} \left\lvert V\left(\X u\right)  -  \left(V\left(\X u\right)\right)_{R}\right\rvert^{q}\right)^{\frac{1}{q}} + \left\lvert \left(V\left(\X u\right)\right)_{R}\right\rvert \right]^\frac{2}{p}\\ 
		& \leq 2^\frac{1}{p}c_{p/2} \left[\left(\fint_{B_{R}} \left\lvert V\left(\X u\right)  -  \left(V\left(\X u\right)\right)_{R}\right\rvert^{q}\right)^{\frac{2}{q}} + \left\lvert \left(V\left(\X u\right)\right)_{R}\right\rvert^2 \right]^\frac{1}{p}\\
		&\leq 2^\frac{1}{p}c_{p/2} \left(\fint_{B_{R}} \left\lvert V\left(\X u\right)  -  \left(V\left(\X u\right)\right)_{R}\right\rvert^{q}\right)^{\frac{2}{pq}} + 2^\frac{1}{p}c_{p/2}\left\lvert \left(V\left(\X u\right)\right)_{R}\right\rvert^{\frac{2}{p}} \\
		&< 2^\frac{1}{p}c_{p/2}  \left(\fint_{B_{R}} \left\lvert V\left(\X u\right)  -  \left(V\left(\X u\right)\right)_{R}\right\rvert^{q}\right)^{\frac{2}{pq}} + \frac{2^\frac{1}{p}c_{p/2}}{B} 	\left\lVert \X u \right\rVert_{\infty, R/2}. 
	\end{align*}
	Thus, using the choice of $B=2^{1+\frac{1}{p}}c_{p/2}$, we have 
	\begin{align*}
		\left\lVert \X u \right\rVert_{\infty, R/2} \leq B \left(\fint_{B_{R}} \left\lvert V\left(\X u\right)  -  \left(V\left(\X u\right)\right)_{R}\right\rvert^{q}\right)^{\frac{2}{pq}}. 
	\end{align*}
	Hence, as before, for any $0 < \rho < R/2,$ we have 
	\begin{align*}
		&\left( \fint_{B_{\rho}} \left\lvert V\left(\X u\right) - \left(V\left( \X u\right)\right)_{\rho}\right\rvert^{q}\, dx \right)^{\frac{1}{q}}\\
		&\qquad \leq c_{h, V}\left(\frac{\rho}{R}\right)^{\beta_{1}}\left\lVert \X u \right\rVert_{\infty, R/2}^{\frac{p}{2}}\leq c_{h,V}B^{\frac{p}{2}}\left(\frac{\rho}{R}\right)^{\beta_{1}}\left(\fint_{B_{R}} \left\lvert V\left(\X u\right)  -  \left(V\left(\X u\right)\right)_{R}\right\rvert^{q}\right)^{\frac{1}{q}},
	\end{align*}
	which again yields the desired estimate. This completes the proof of Claim \ref{sup controls oscillation claim V}.  
	
	From now on, we shall assume that \eqref{lower upper for average V} and \eqref{oscillation is smaller than sup V} holds. Now we  by Claim \ref{the alternatives claim} we assert the existence of a constant $\tau_{1} \in (0,1),$ depending only on $n$ and $p$ such that at least one of the following alternatives occur. 
	\begin{enumerate}[(i)]
		\item \textbf{The nondegenerate alternative: } We have 
		\begin{align}\label{the nondegenerate alt V}
			\left\lvert \X u \right\rvert \geq \frac{1}{4}\left\lVert \X u \right\rVert_{\infty, R/2} \qquad \text{ in  } B_{\tau_{1} R/2}. 
		\end{align}
		\item \textbf{The degenerate alternative: } We have 
		\begin{align}\label{the degenerate alt V}
			\left\lVert \X u \right\rVert_{\infty, \tau_{1} R/2} \leq \frac{1}{2}\left\lVert \X u \right\rVert_{\infty, R/2}. 
		\end{align}
	\end{enumerate}
Now we define 
	\begin{align*}
		R_{i}:= \tau_{1}^{i}\frac{R}{2} \qquad \text{ for all integers } i \geq 0. 
	\end{align*}
	Thus, by Claim \ref{the alternatives claim}, at each level $i$, at least one of the following alternatives must occur. 
	\begin{enumerate}[(a)]
		\item \textbf{The nondegenerate alternative: } We have 
		\begin{align}\label{the nondegenerate alt at level i V}
			\left\lvert \X u \right\rvert \geq \frac{1}{4}\left\lVert \X u \right\rVert_{\infty, R_{i}} \qquad \text{ in  } B_{R_{i+1}}. 
		\end{align}
		\item \textbf{The degenerate alternative: } We have 
		\begin{align}\label{the degenerate alt at level i V}
			\left\lVert \X u \right\rVert_{\infty, R_{i+1}} \leq \frac{1}{2}\left\lVert \X u \right\rVert_{\infty, R_{i}}. 
		\end{align}
	\end{enumerate}
Next, we choose $i_{a} \geq 1$ to be first integer for which we have 
	\begin{align}\label{choice of ia V}
		2^{i_{a}-1} \geq 2^{\frac{2}{p}}B. 
	\end{align}
	Clearly, $i_a$ depends only on $B$ and thus ultimately, only on $n$ and $p$.
	Now we claim the following. 
	\begin{claim}\label{nondegenerate case must occur claim V}
		Without loss of generality, we can assume that \eqref{the nondegenerate alt at level i V} occurs for the first time for some $0 \leq i < i_{a}.$ 
	\end{claim}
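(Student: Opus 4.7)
The plan is to mimic the proof of Claim \ref{nondegenerate case must occur claim} with $\Xu$ replaced by $V(\Xu)$, using Theorem \ref{thm:holder V} and Theorem \ref{thm:lip Lq bound} with $q=p/2$ in place of Theorem \ref{thm:holder} and Theorem \ref{thm:lip Lq bound} with $q=1$. If the claim fails, then the degenerate alternative \eqref{the degenerate alt at level i V} holds at every scale $0\leq i \leq i_a -1$ and iteration yields the key bound
\begin{align*}
\lVert \Xu \rVert_{\infty, R_{i_a}} \leq \left(\tfrac{1}{2}\right)^{i_a} \lVert \Xu \rVert_{\infty, R/2}.
\end{align*}

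The first key step is to establish the analog of \eqref{osc control energy in very small ball}, namely
\begin{align*}
\left(\fint_{B_{R_{i_a}}} \lvert V(\Xu) - (V(\Xu))_{R} \rvert^{q}\right)^{\frac{1}{q}} \geq \left(\fint_{B_{R_{i_a}}} \lvert V(\Xu) \rvert^{q}\right)^{\frac{1}{q}}.
\end{align*}
If this inequality were to fail, then by Minkowski, the left lower bound in \eqref{lower upper for average V}, and the identity $\lVert V(\Xu) \rVert_{\infty, R_{i_a}} = \lVert \Xu \rVert_{\infty, R_{i_a}}^{p/2}$, one would get
\begin{align*}
\tfrac{1}{B}\lVert \Xu \rVert_{\infty, R/2} \leq \lvert (V(\Xu))_{R} \rvert^{\frac{2}{p}} < 2^{\frac{2}{p}} \left(\fint_{B_{R_{i_a}}} \lvert V(\Xu) \rvert^{q}\right)^{\frac{2}{pq}} \leq 2^{\frac{2}{p}}\lVert \Xu \rVert_{\infty, R_{i_a}} \leq 2^{\frac{2}{p}-i_a} \lVert \Xu \rVert_{\infty, R/2},
\end{align*}
which forces $2^{i_a-1} < 2^{\frac{2}{p}-1}B$, contradicting the choice \eqref{choice of ia V}. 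The factor $2^{2/p}$ in \eqref{choice of ia V}, which does not appear in \eqref{choice of ia}, is dictated precisely by this chain of exponents.

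Given the analog above, I would then apply Theorem \ref{thm:lip Lq bound} with $q=p/2$ to obtain
\begin{align*}
\lVert \Xu \rVert_{\infty, R_{i_a}/2} \leq c_{p/2} \left(\fint_{B_{R_{i_a}}} \lvert \Xu \rvert^{p/2}\right)^{\frac{2}{p}} = c_{p/2} \left(\fint_{B_{R_{i_a}}} \lvert V(\Xu) \rvert\right)^{\frac{2}{p}},
\end{align*}
use H\"older's inequality to increase the exponent from $1$ to $q$, invoke the analog of \eqref{osc control energy in very small ball} to pass to $V(\Xu) - (V(\Xu))_R$, and finally enlarge the ball from $B_{R_{i_a}}$ to $B_R$ at the cost of a factor $\tau_1^{-i_a Q/q}$ that depends only on $n,p$. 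Raising to the power $p/2$, I then plug the resulting bound on $\lVert \Xu \rVert_{\infty, R_{i_a}/2}^{p/2}$ into Theorem \ref{thm:holder V} to deduce
\begin{align*}
\left(\fint_{B_\rho} \lvert V(\Xu) - (V(\Xu))_\rho \rvert^{q}\right)^{\frac{1}{q}} \leq c \left(\frac{\rho}{R}\right)^{\beta_1} \left(\fint_{B_R} \lvert V(\Xu) - (V(\Xu))_R \rvert^{q}\right)^{\frac{1}{q}}
\end{align*}
for every $0 < \rho \leq R_{i_a}/2$, while the range $R_{i_a}/2 \leq \rho \leq R$ gives the estimate trivially with constant $\tau_1^{-i_a(Q/q+\beta_1)}$.

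The argument is essentially a transcription of Claim \ref{nondegenerate case must occur claim}, and I do not expect any genuine obstacle beyond careful bookkeeping: the map $V$ rescales everything by the exponent $p/2$, so each of the absorbing inequalities — in particular the contradiction step hinging on \eqref{choice of ia V} and the H\"older interpolation used to raise the $L^1$ bound in Theorem \ref{thm:lip Lq bound} to an $L^q$ bound — must track the right power of $2$. Once \eqref{choice of ia V} is in place, the rest of the proof is essentially automatic.
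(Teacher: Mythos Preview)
Your proposal is correct and follows essentially the same argument as the paper: both assume the degenerate alternative holds for all $0\le i\le i_a-1$, derive the analogue of \eqref{osc control energy in very small ball V} by contradiction via \eqref{lower upper for average V} and the choice \eqref{choice of ia V}, and then combine Theorem \ref{thm:lip Lq bound} (with $q=p/2$) and Theorem \ref{thm:holder V} to obtain the desired decay. The only cosmetic difference is that you take $2/p$ powers in the contradiction step whereas the paper works at the $p/2$ level, which are equivalent computations.
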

	\emph{Proof of Claim \ref{nondegenerate case must occur claim V}: }  If Claim \ref{nondegenerate case must occur claim V} is false, then \eqref{the degenerate alt at level i V} must have occured for all $ 0 \leq i \leq i_{a}-1.$ But this implies,  we have,  
	\begin{align*}
		\left\lVert \X u \right\rVert_{\infty, R_{i_{a}}} &\leq \left(\frac{1}{2}\right)^{i_{a}}\left\lVert \X u \right\rVert_{\infty, R/2}.  
	\end{align*}
	This now implies 
	\begin{align}\label{osc control energy in very small ball V}
		\left( \fint_{B_{R_{i_{a}}}} \left\lvert V\left(\X u\right) - \left(V\left( \X u \right)\right)_{R}\right\rvert^{q}  \right)^{\frac{1}{q}} \geq \left( \fint_{B_{R_{i_{a}}}} \left\lvert V\left(\X u\right) \right\rvert^{q} \right)^{\frac{1}{q}}.
	\end{align}
	Indeed, if \eqref{osc control energy in very small ball V} is false, then recalling \eqref{lower upper for average V}, we have 
	\begin{align*}
		\frac{1}{B^\frac{p}{2}} \left\lVert \X u \right\rVert_{\infty, R/2}^{\frac{p}{2}} 
		&\leq \left\lvert \left(V\left(\X u\right)\right)_{R}\right\rvert \\
		&=\fint_{B_{R_{i_{a}}}}\left\lvert \left(V\left(\X u\right)\right)_{R}\right\rvert\ \mathrm{d}x \\
		&\stackrel{\text{H\"{o}lder}}{\leq} \left( \fint_{B_{R_{i_{a}}}}\left\lvert V\left(\X u\right) - \left(V\left(\X u\right)\right)_{R}\right\rvert^{q}\ \mathrm{d}x \right)^{\frac{1}{q}} + \left( \fint_{B_{R_{i_{a}}}} \left\lvert V\left(\X u\right) \right\rvert^{q} \right)^{\frac{1}{q}} \\
		&< 2 \left( \fint_{B_{R_{i_{a}}}} \left\lvert V\left(\X u\right) \right\rvert^{q} \right)^{\frac{1}{q}}\leq 2\left\lVert \X u \right\rVert_{\infty, R_{i_{a}}}^{\frac{p}{2}} \leq 2\left(\frac{1}{2}\right)^{\frac{p}{2}i_{a}}\left\lVert \X u \right\rVert_{\infty, R/2}^\frac{p}{2}, 
	\end{align*}
	which is a contradiction, by \eqref{choice of ia V}. Hence \eqref{osc control energy in very small ball V} holds. Now using Theorem \ref{thm:lip Lq bound} (with `q=p/2') together with \eqref{osc control energy in very small ball V} and the fact $1\leq q\leq 2$, we find  
	\begin{align*}
		\left\lVert \X u \right\rVert_{\infty, R_{i_{a}}/2} 
		&\leq c_{p/2} \left(\fint_{B_{R_{i_{a}}}} \left\lvert \X u \right\rvert^{\frac{p}{2}} \right)^\frac{2}{p}\\ 
		&\leq c_{p/2} \left( \fint_{B_{R_{i_{a}}}} \left\lvert V\left(\X u \right)\right\rvert^{q}\right)^{\frac{2}{pq}} \\
		&\leq c_{p/2} \left( \fint_{B_{R_{i_{a}}}} \left\lvert V\left(\X u\right) - \left(V\left( \X u \right)\right)_{R}\right\rvert^{q} \right)^{\frac{2}{pq}} \\
		&\leq c_{p/2} \left( \frac{R}{R_{i_{a}}}\right)^{\frac{2Q}{pq}}\left( \fint_{B_{R}} \left\lvert V\left(\X u\right) - \left(V\left( \X u \right)\right)_{R}\right\rvert^{q} \right)^{\frac{2}{pq}} \\
		&\leq c_{p/2}2^{\frac{2Q}{p}}\tau_{1}^{\frac{-2i_{a}Q}{p}}\left( \fint_{B_{R}} \left\lvert V\left(\X u\right) - \left(V\left( \X u \right)\right)_{R}\right\rvert^{q} \right)^{\frac{2}{pq}}.  
	\end{align*}
	Combining this with Theorem \ref{thm:holder V}, for any $0 < \rho < R_{i_{a}}/2,$ we have 
	\begin{align*}
		&\left( \fint_{B_{\rho}} \left\lvert V\left(\X u\right) - \left(V\left( \X u\right)\right)_{\rho}\right\rvert^{q} \, dx \right)^{\frac{1}{q}}\\ 
		&\qquad \qquad \leq \sup\limits_{x, y \in B_{\rho}} \left\lvert V\left(\X u\right) \left(x\right) - V\left(\X u\right) \left(y\right) \right\rvert \\
		& \qquad \qquad \leq c\left(\frac{\rho}{R_{i_{a}}}\right)^{\beta_{1}} \left\lVert \X u \right\rVert_{\infty, R_{i_{a}}/2}^{\frac{p}{2}}  \\
		& \qquad \qquad \leq c\tau_1^{-\beta_{1}i_{a}}\left(\frac{\rho}{R}\right)^{\beta_{1}}\left\lVert \X u \right\rVert_{\infty, R_{i_{a}}/2}^{\frac{p}{2}} \\
		& \qquad \qquad \leq cc_{p/2}^{\frac{p}{2}}\tau_1^{-i_{a}\left(Q + \beta_{1}\right)}\left(\frac{\rho}{R}\right)^{\beta_{1}}\left( \fint_{B_{R}} \left\lvert V\left(\X u\right) - \left(V\left( \X u \right)\right)_{R}\right\rvert^{q} \right)^{\frac{1}{q}}. 
	\end{align*}
	This together with the fact that if $\frac{R_{i_a}}{2} \leq \rho \leq R$, then $\frac{\tau_1^{i_a}}{2}\leq \frac{\rho}{R} \leq 1$, yields our desired estimate. This proves Claim \ref{nondegenerate case must occur claim V}.

	Thus, we can assume that there exists a smallest integer $0 \leq i_{0} < i_{a}$ such that \eqref{the nondegenerate alt at level i V} occurs for  $i=i_{0}.$ Thus, we have 
	\begin{align}\label{lower bound in any smaller ball V}
		\frac{1}{4}\left\lVert \X u \right\rVert_{\infty, R_{i_{0}}}  \leq \left\lvert \X u \right\rvert \leq \left\lVert \X u \right\rVert_{\infty, R_{i_{0}}} \qquad \text{ in } B_{R_{i_{0}+1}}.
	\end{align}
	Now we claim 
	\begin{claim}\label{existence of sigma claim V}
		There exists a constant $\sigma \in (0,1),$ depending only on $n$, $p$ and $\theta_{0},$ and thus ultimately only on $n$ and $p$,  such that 
		we have 
		\begin{align}\label{oscillation by sup in sigma ball V}
			\left( \fint_{B_{\sigma R_{i_{0}+1}}} \left\lvert V\left(\X u\right) - \left(V\left( \X u\right)\right)_{\sigma R_{i_{0}+1}}\right\rvert^{q} \right)^{\frac{1}{q}} \leq \frac{\theta_{0}}{16} \left\lVert \X u \right\rVert_{\infty, R_{i_{0}}}^{\frac{p}{2}}.
		\end{align}
	\end{claim}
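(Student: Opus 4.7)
The plan is to mimic exactly the strategy used for Claim \ref{existence of sigma claim}, but replace the H\"older continuity of $\X u$ by the H\"older continuity of $V(\X u)$ provided by Theorem \ref{thm:holder V}. First, I note that for any measurable $g$ on a ball $B$, the $L^{q}$-average of $g - (g)_{B}$ is dominated by the pointwise oscillation $\sup_{x,y\in B}|g(x)-g(y)|$. Applying this to $g=V(\X u)$ on the ball $B_{\sigma R_{i_{0}+1}}$, Theorem \ref{thm:holder V} (used with the pair of radii $r=\sigma R_{i_{0}+1}$ and $r_{0}=R_{i_{0}+1}$) yields
\begin{align*}
\left(\fint_{B_{\sigma R_{i_{0}+1}}}\bigl|V(\X u)-(V(\X u))_{\sigma R_{i_{0}+1}}\bigr|^{q}\right)^{\!1/q}
&\le \sup_{x,y\in B_{\sigma R_{i_{0}+1}}}\bigl|V(\X u)(x)-V(\X u)(y)\bigr| \\
&\le c_{h,V}\,\sigma^{\beta_{1}}\,\bigl\lVert \X u\bigr\rVert_{\infty,\,R_{i_{0}+1}}^{\,p/2}.
\end{align*}

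Second, I fix $\sigma\in(0,1)$ (which then depends only on $c_{h,V}$, $\beta_{1}$, and $\theta_{0}$, hence only on $n$ and $p$, since $\theta_{0}$ itself has already been fixed in terms of $n,p$) by solving
\begin{equation*}
c_{h,V}\,\sigma^{\beta_{1}} \;=\; \frac{\theta_{0}}{16}.
\end{equation*}
With this choice, the previous display reduces to $(\theta_{0}/16)\lVert \X u\rVert_{\infty, R_{i_{0}+1}}^{p/2}$. Finally, since $R_{i_{0}+1}=\tau_{1}R_{i_{0}}<R_{i_{0}}$, the monotonicity $\lVert \X u\rVert_{\infty,R_{i_{0}+1}}\le \lVert \X u\rVert_{\infty,R_{i_{0}}}$ (and the fact that the exponent $p/2>0$ preserves this inequality) gives exactly \eqref{oscillation by sup in sigma ball V}.

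There is no serious obstacle here: the only verification is that the constant $\sigma$ can be taken to depend only on $n,p$, which is immediate because both $c_{h,V}$ and $\beta_{1}$ in Theorem \ref{thm:holder V} are universal (depending only on $n,p$) and $\theta_{0}$ was already fixed at the outset of the proof of Theorem \ref{Main excess decay estimate V Intro} through Lemma \ref{decay estimate in the nondegenerate regime for V} with universal parameters. So the claim follows by one application of Theorem \ref{thm:holder V} plus a bookkeeping choice of $\sigma$, in complete parallel with Claim \ref{existence of sigma claim}.
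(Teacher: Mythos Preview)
Your proof is correct and follows essentially the same approach as the paper: both bound the $L^{q}$-average oscillation by the pointwise oscillation, apply Theorem \ref{thm:holder V} with radii $r=\sigma R_{i_{0}+1}$ and $r_{0}=R_{i_{0}+1}$, choose $\sigma$ via $c_{h,V}\sigma^{\beta_{1}}=\theta_{0}/16$, and then use the monotonicity $\lVert \X u\rVert_{\infty,R_{i_{0}+1}}\le \lVert \X u\rVert_{\infty,R_{i_{0}}}$.
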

	\emph{Proof of Claim \ref{existence of sigma claim V}:} Let $c_{h, V}>0$ be the constant given by Theorem \ref{thm:holder V}. Choose $\sigma \in (0,1)$ sufficiently small such that 
	\begin{align*}
		c_{h, V}\sigma^{\beta_{1}} = \frac{\theta_{0}}{16}. 
	\end{align*}
	Clearly, $\sigma$ ultimately depends only on $n$ and $p$. Then, by Theorem \ref{thm:holder V}, we deduce 
	\begin{align*}
		&\left( \fint_{B_{\sigma R_{i_{0}+1}}} \left\lvert V\left(\X u\right) - \left(V\left( \X u\right)\right)_{\sigma R_{i_{0}+1}}\right\rvert^{q} \right)^{\frac{1}{q}}\\ 
		&\qquad \qquad\leq \sup\limits_{x, y \in B_{\sigma R_{i_{0}+1}}} \left\lvert V\left(\X u\right) \left( x\right) - V\left(\X u\right) \left( y\right)\right\rvert \\
		& \qquad \qquad \leq  	c_{h,V}\sigma^{\beta_{1}}\left\lVert \X u \right\rVert_{\infty, R_{i_{0}+1}}^{\frac{p}{2}} \leq \frac{\theta_{0}}{16}\left\lVert \X u \right\rVert_{\infty, R_{i_{0}+1}}^{\frac{p}{2}}  \leq \frac{\theta_{0}}{16}\left\lVert \X u \right\rVert_{\infty, R_{i_{0}}}^{\frac{p}{2}}.
	\end{align*}
	This proves Claim \ref{existence of sigma claim V}.  
	Now we are in a position to complete the proof of the Theorem. Note that, we trivially have for any $0<\kappa\leq 1$
	\begin{align}\label{bound for xi}
		\left \lvert V^{-1}\left[\left( V\left( \X u \right)\right)_{\kappa R_{i_0+1}}\right]\right\rvert= \left \lvert\left( V\left( \X u \right)\right)_{\kappa R_{i_0+1}}\right\rvert^{\frac{2}{p}} \leq \left\lVert \X u \right\rVert_{\infty, R_{i_0}}.
	\end{align}
	Now, observe that exactly one of the following can occur. Either we have  
	\begin{align}\label{lower bound for sigma average V}
		\left\lvert \left(V\left( \X u\right)\right)_{\frac{1}{2}\sigma R_{i_{0}+1}}\right\rvert ^{\frac{2}{p}} \geq 	\frac{1}{8B}\left\lVert \X u \right\rVert_{\infty, R_{i_{0}}},
	\end{align}
	or we must have 
	\begin{align}\label{no lower bound for sigma average V}
		\left\lvert \left(V\left( \X u\right)\right)_{\frac{1}{2}\sigma R_{i_{0}+1}}\right\rvert^{\frac{2}{p}}  < 	\frac{1}{8B}\left\lVert \X u \right\rVert_{\infty, R_{i_{0}}}.
	\end{align}
		If \eqref{no lower bound for sigma average V} holds, then by Theorem \ref{thm:lip Lq bound}, we have 
		\begin{align*}
			\left\lVert \X u \right\rVert_{\infty, \frac{1}{4}\sigma R_{i_{0}+1}} \leq c_{p/2}\left(\fint_{B_{\frac{1}{2}\sigma R_{i_{0}+1}}} \left\lvert \X u \right\rvert^{\frac{p}{2}} \right)^{\frac{2}{p}} \leq c_{p/2}\left( \fint_{B_{\frac{1}{2}\sigma R_{i_{0}+1}}} \left\lvert V\left(\X u\right) \right\rvert^{q}\right)^{\frac{2}{pq}}. 
		\end{align*}
Now, denoting $	\varUpsilon := \left(V\left( \X u\right)\right)_{\frac{1}{2}\sigma R_{i_{0}+1}} $ as a temporary shorthand and using the bound \begin{align}\label{lower sup controls upper sup V}
	\frac{1}{4}\left\lVert \X u \right\rVert_{\infty, R_{i_{0}}} \leq \left\lVert \X u \right\rVert_{\infty, \frac{1}{4}\sigma R_{i_{0}+1}},
\end{align}
which is implied by \eqref{lower bound in any smaller ball V},  in the last line of the following,  we deduce  
	\begin{align}
		\left\lVert \X u \right\rVert_{\infty, \frac{1}{4}\sigma R_{i_{0}+1}} &\leq c_{p/2}\left[\left( \fint_{B_{\frac{1}{2}\sigma R_{i_{0}+1}}} \left\lvert V\left(\X u\right)  - \varUpsilon\right\rvert^{q}\right)^{\frac{1}{q}} + \left\lvert \varUpsilon \right\rvert\right]^{\frac{2}{p}}  \notag\\
		&\stackrel{\eqref{no lower bound for sigma average V}}{\leq}2^{\frac{1}{p}}c_{p/2}\left( \fint_{B_{\frac{1}{2}\sigma R_{i_{0}+1}}} \left\lvert V\left(\X u\right)  - \varUpsilon\right\rvert^{q}\right)^{\frac{2}{pq}} + \frac{2^{\frac{1}{p}}c_{p/2}}{8B}\left\lVert \X u \right\rVert_{\infty, R_{i_{0}}} \notag\\
		&\leq 2^\frac{1}{p}c_{p}\left( \fint\limits_{B_{\frac{1}{2}\sigma R_{i_{0}+1}}} \left\lvert V\left(\X u\right)  - \varUpsilon\right\rvert^{q}\right)^{\frac{2}{pq}}+ \frac{2^\frac{1}{p}c_{p/2}}{2B}\left\lVert \X u \right\rVert_{\infty, \frac{1}{4}\sigma R_{i_{0}+1}} \label{prelim estimate for sup in 1/2sigma V}. 
	\end{align}
Now by our choice of $B=2^{1+\frac{1}{p}}c_{p/2}$ together with \eqref{prelim estimate for sup in 1/2sigma V} implies 
	\begin{align*}
		\left\lVert \X u \right\rVert_{\infty, \frac{1}{4}\sigma R_{i_{0}+1}/2} \leq \frac{2^{2+\frac{1}{p}}c_{p/2}}{3} \left( \fint_{B_{\frac{1}{2}\sigma R_{i_{0}+1}}} \left\lvert V\left(\X u\right)  - \left(V\left( \X u\right)\right)_{\frac{1}{2}\sigma R_{i_{0}+1}}\right\rvert^{q}\right)^{\frac{2}{pq}}.  
	\end{align*}
	Once again, combined with Theorem \ref{thm:holder V}, this implies, for any $0 < \rho \leq \sigma R_{i_a}/4\le \sigma R_{i_{0}+1}/4,$ we have  
	\begin{align}
		&\left( \fint_{B_{\rho}} \left\lvert V\left(\X u\right) - \left(V\left( \X u\right)\right)_{\rho}\right\rvert^{q}\, dx\right)^{\frac{1}{q}} \notag\\
		&\qquad\qquad\quad\leq c\left(\frac{\rho}{\sigma R_{i_{0}+1}}\right)^{\beta_{1}}\left\lVert \X u \right\rVert_{\infty, \frac{1}{4}\sigma R_{i_{0}+1}}^{\frac{p}{2}} \notag\\
		&\qquad\qquad\quad\leq \frac{2^{p+2}c_{p/2}^{\frac{p}{2}}c}{3^{\frac{p}{2}}}\left(\frac{\rho}{\sigma R_{i_{0}+1}}\right)^{\beta_{1}} \left( \fint_{B_{\frac{1}{2}\sigma R_{i_{0}+1}}} \left\lvert V\left(\X u\right)  - \left(V\left( \X u\right)\right)_{\frac{1}{2}\sigma R_{i_{0}+1}}\right\rvert^{q} \right)^{\frac{1}{q}} \notag
		\\&\qquad\qquad\quad\stackrel{\eqref{minimality of mean}}{\leq} \frac{2^{p+3}c_{p/2}^{\frac{p}{2}}c}{3^{\frac{p}{2}}}\left(\frac{\rho}{\sigma R_{i_{0}+1}}\right)^{\beta_{1}} \left( \fint_{B_{\frac{1}{2}\sigma R_{i_{0}+1}}} \left\lvert V\left(\X u\right)  - \left(V\left( \X u\right)\right)_{R}\right\rvert^{q} \right)^{\frac{1}{q}} \notag\\		
		&\qquad\qquad\quad
			 \leq c\left(\frac{R}{\sigma R_{i_{0}+1}}\right)^{\frac{Q}{q} +\beta_{1}} \left(\frac{\rho}{R}\right)^{\beta_{1}} \left( \fint_{B_{R}} \left\lvert V\left(\X u\right)  - \left(V\left( \X u\right)\right)_{R}\right\rvert^{q}\right)^{\frac{1}{q}}. 
		\label{prefinal estimate for with no lower bound on sigma average V}\raisetag{-23pt}
	\end{align}
	Now, since $\sigma,$ $\tau_{1}, \beta_{1}$ are fixed constants and $i_{a}$ is a fixed integer, all depending only on $n$ and $p,$ with $\sigma,\tau_{1} \in (0,1)$ and $i_{0} < i_{a},$ we note that 
	\begin{align*}
		\left(\frac{R}{\sigma R_{i_{0}+1}}\right)^{\frac{Q}{q} +\alpha_{1}} = 	\left(\frac{1}{\sigma \tau_{1}^{i_{0}+1}}\right)^{\frac{Q}{q} +\alpha_{1}} \leq \left(\frac{1}{\sigma \tau_{1}^{i_{a}}}\right)^{\frac{Q}{q} +\alpha_{1}} \leq \left(\frac{1}{\sigma \tau_{1}^{i_{a}}}\right)^{Q +\alpha_{1}} ,
	\end{align*}
	where the last term on the right is again a fixed constant, depending only on $n$ and $p.$ Hence, \eqref{prefinal estimate for with no lower bound on sigma average V} now easily yields our desired estimate. Hence it only remains to settle the case when \eqref{lower bound for sigma average V} holds. Observe that \eqref{oscillation by sup in sigma ball V} implies 
	\begin{align*}
		\left( \fint_{B_{\sigma R_{i_{0}+1}}} \left\lvert V\left(\X u\right) - \left(V\left( \X u\right)\right)_{\sigma R_{i_{0}+1}}\right\rvert^{q} \right)^{\frac{1}{q}} \leq \frac{\theta_{0}}{16} \left\lVert \X u \right\rVert^{\frac{p}{2}}_{\infty, R_{i_{0}}} \leq \theta_{0} \left\lVert \X u \right\rVert^{\frac{p}{2}}_{\infty, R_{i_{0}+1}},
	\end{align*}
where once again we have used the bound \eqref{lower sup controls upper sup V}. Combining this with the H\"{o}lder and reverse H\"{o}lder inequality, we deduce  
		\begin{align*}
		&\left( \fint_{B_{\frac{1}{2}\sigma R_{i_{0}+1}}} \left\lvert V\left(\X u\right) - \left(V\left( \X u\right)\right)_{\frac{1}{2}\sigma R_{i_{0}+1}}\right\rvert^{2} \right)^{\frac{1}{2}} \\
		&\quad  \stackrel{\eqref{minimality of mean}}{\leq} 2\left( \fint_{B_{\frac{1}{2}\sigma R_{i_{0}+1}}} \left\lvert V\left(\X u\right) - \left(V\left( \X u\right)\right)_{\sigma R_{i_{0}+1}}\right\rvert^{2} \right)^{\frac{1}{2}}\notag 
		\\&\quad \stackrel{\text{H\"{o}lder}}{\leq} 2\left( \fint_{B_{\frac{1}{2}\sigma R_{i_{0}+1}}} \left\lvert V\left(\X u\right) - \left(V\left( \X u\right)\right)_{\sigma R_{i_{0}+1}}\right\rvert^{\frac{2Q}{Q-2}} \right)^{\frac{Q-2}{2Q}}\\
		&\quad \stackrel{\eqref{lower bound in any smaller ball V}, \eqref{bound for xi}, \eqref{rev holder bound V 1 to 2 star}}{\leq} 2C_{RH, V}\fint_{B_{\sigma R_{i_{0}+1}}} \left\lvert V\left(\X u\right) - \left(V\left( \X u\right)\right)_{\sigma R_{i_{0}+1}}\right\rvert \\
		&\quad \leq 2C_{RH, V}\left( \fint_{B_{\sigma R_{i_{0}+1}}} \left\lvert V\left(\X u\right) - \left(V\left( \X u\right)\right)_{\sigma R_{i_{0}+1}}\right\rvert^{q} \right)^{\frac{1}{q}} \leq 2C_{RH, V}\theta_{0} \left\lVert \X u \right\rVert^{\frac{p}{2}}_{\infty, R_{i_{0}+1}}.
	\end{align*}
		Thus, recalling $\theta_{0} \in (0,1),$ we have 
	\begin{align}\label{oscillation control for using main lemma V}
		\fint_{B_{\frac{1}{2}\sigma R_{i_{0}+1}}} \left\lvert V\left(\X u\right) - \left(V\left( \X u\right)\right)_{\frac{1}{2}\sigma R_{i_{0}+1}}\right\rvert^{2} &\leq \theta_{0}^{2} (2C_{RH, V})^2\left\lVert \X u \right\rVert_{\infty,  R_{i_{0}+1}}^{p} \notag\\
		&\leq \theta_{0} (2C_{RH,V})^p\left\lVert \X u \right\rVert_{\infty,  R_{i_{0}+1}}^{p}. 
	\end{align}
	By virtue of  \eqref{lower bound in any smaller ball V}, we also trivially have 
	\begin{align}\label{upper and lower bound for Xu V}
		\frac{1}{4B}\left\lVert \X u \right\rVert_{\infty, R_{i_{0}+1}}	\leq \frac{1}{4}\left\lVert \X u \right\rVert_{\infty, R_{i_{0}}}	\leq \left\lvert \X u \right\rvert &=  \left\lvert V\left( \X u \right) \right\rvert^{\frac{2}{p}} \notag \\ 
		&\leq 2C_{RH, V}\left\lVert \X u \right\rVert_{\infty, R_{i_{0}+1}}   \text{ in } B_{R_{i_{0}+1}}.						     
	\end{align}
	Now note that \eqref{lower bound for sigma average V} immediately implies 
	\begin{align}\label{lower upper bound sigma average V}
		\frac{1}{8B}\left\lVert \X u \right\rVert_{\infty, R_{i_{0}+1}}	\leq \frac{1}{8B}\left\lVert \X u \right\rVert_{\infty, R_{i_{0}}}	&\leq \left\lvert \left(V\left( \X u\right)\right)_{\frac{1}{2}\sigma R_{i_{0}+1}}\right\rvert^{\frac{2}{p}} \notag \\ &\leq 2C_{RH, V}\left\lVert \X u \right\rVert_{\infty, R_{i_{0}+1}}. 
	\end{align} 
	Now, \eqref{upper and lower bound for Xu V}, \eqref{lower upper bound sigma average V} and \eqref{oscillation control for using main lemma V} implies that the hypotheses of Lemma \ref{decay estimate in the nondegenerate regime for V} are satisfied with $A=2 C_{RH, V}$ and $B= 2^{1+\frac{1}{p}}c_{p/2}$, $R$ replaced by $R_{i_{0}+1}/2,$ and $\lambda = \left\lVert \X u \right\rVert_{\infty, R_{i_{0}+1}},$ which we can always assume to be positive, as otherwise there is nothing to prove. Hence, Lemma \ref{decay estimate in the nondegenerate regime for V} implies that for any $0 < \rho \leq \sigma R_{i_{a}}/2 \leq \sigma R_{i_{0}+1}/2,$ we have 
	\begin{align}\label{holder rev holder depending on q V}
		\fint_{B_{\rho}} &\left\lvert V\left(\X u\right) - \left(V\left( \X u\right)\right)_{\rho}\right\rvert^{2} \notag\\&\leq C \sigma^{-Q-2\alpha_{0}} \left(\frac{\rho}{R_{i_{0}+1}}\right)^{2\alpha_{0}} \fint_{B_{\frac{1}{2}R_{i_{0}+1}}} \left\lvert V\left(\X u\right) - \left(V\left( \X u\right)\right)_{\frac{1}{2}R_{i_{0}+1}}\right\rvert^{2}.
	\end{align}
	From here, to derive our desired estimate, we use H\"{o}lder inequality on one side and reverse H\"{o}lder inequality on the other. More precisely, by \eqref{holder rev holder depending on q V}, for any $0 < \rho \leq \sigma R_{i_{a}}/2 \leq \sigma R_{i_{0}+1}/2,$ we have  
	\begin{align*}
		&\left( 	\fint_{B_{\rho}} \left\lvert V\left(\X u\right) - \left(V\left( \X u\right)\right)_{\rho}\right\rvert^{q}  \right)^{\frac{1}{q}}\\
		&\stackrel{\text{H\"{o}lder}}{\leq}	\left( 	\fint_{B_{\rho}} \left\lvert V\left(\X u\right) - \left(V\left( \X u\right)\right)_{\rho}\right\rvert^{2}  \right)^{\frac{1}{2}} \\
		&\stackrel{\eqref{holder rev holder depending on q V}}{\leq} C \sigma^{-\frac{Q}{2}-\alpha_{0}} \left(\frac{\rho}{R_{i_{0}+1}}\right)^{\alpha_{0}} \left(\fint_{B_{\frac{1}{2}R_{i_{0}+1}}} \left\lvert V\left(\X u\right) - \left(V\left( \X u\right)\right)_{\frac{1}{2}R_{i_{0}+1}}\right\rvert^{2}\right)^\frac{1}{2}\notag\\
		&\stackrel{\text{H\"{o}lder}}{\leq} C\sigma^{-\frac{Q}{2}-\alpha_{0}} \left(\frac{\rho}{R_{i_{0}+1}}\right)^{\alpha_{0}} \left(\fint_{\frac{1}{2}B_{R_{i_0+1}}} \left\lvert V\left(\X u\right) - \left(V\left( \X u\right)\right)_{\frac{1}{2}R_{i_0+1}}\right\rvert^{\frac{2Q}{Q-2}}\right)^{\frac{Q-2}{2Q}} \\
		&\stackrel{\eqref{minimality of mean}}{\leq} 2 C\sigma^{-\frac{Q}{2}-\alpha_{0}} \left(\frac{\rho}{R_{i_{0}+1}}\right)^{\alpha_{0}} \left(\fint_{B_{\frac{1}{2}R_{i_0+1}}} \left\lvert V\left(\X u\right) - \left(V\left( \X u\right)\right)_{R_{i_0+1}}\right\rvert^{\frac{2Q}{Q-2}}\right)^{\frac{Q-2}{2Q}}.
	\end{align*}
	Now by using \eqref{lower bound in any smaller ball V}, \eqref{bound for xi} and \eqref{rev holder bound V 1 to 2 star}, we continue the estimation
	
	\begin{align*}
		&\left( 	\fint_{B_{\rho}} \left\lvert V\left(\X u\right) - \left(V\left( \X u\right)\right)_{\rho}\right\rvert^{q}  \right)^{\frac{1}{q}}\\
		&\leq 2C_{RH, V} C\sigma^{-\frac{Q}{2}-\alpha_{0}} \left(\frac{\rho}{R_{i_{0}+1}}\right)^{\alpha_{0}} \fint_{B_{R_{i_0+1}}} \left\lvert V\left(\X u\right) - \left(V\left( \X u\right)\right)_{R_{i_0+1}}\right\rvert \\
		&\stackrel{\text{H\"{o}lder}}{\leq} 2C_{RH,V} C\sigma^{-\frac{Q}{2}-\alpha_{0}} \left(\frac{\rho}{R_{i_{0}+1}}\right)^{\alpha_{0}} \left(\fint_{B_{R_{i_0+1}}} \left\lvert V\left(\X u\right) - \left(V\left( \X u\right)\right)_{R_{i_0+1}}\right\rvert^{q}\right)^{\frac{1}{q}} \\
		&\leq 4C_{RH, V}C\sigma^{-\frac{Q}{2}-\alpha_{0}} \left(\frac{\rho}{R_{i_{0}+1}}\right)^{\alpha_{0}} \left(\fint_{B_{R_{i_0+1}}} \left\lvert V\left(\X u\right) - \left(V\left( \X u\right)\right)_{R}\right\rvert^{q}\right)^{\frac{1}{q}}\\
		& \leq 4C_{RH, V}C\left(\frac{R}{R_{i_0+1}}\right)^\frac{Q}{q}\sigma^{-\frac{Q}{2}-\alpha_{0}} \left(\frac{\rho}{R_{i_{0}+1}}\right)^{\alpha_{0}} \left(\fint_{B_R}\left\lvert V\left( \X u \right) - \left(V\left( \X u\right)\right)_{R}\right\rvert^{q}\right)^{\frac{1}{q}}\\
		& \leq 4C_{RH,V}C\left(\frac{R}{R_{i_0+1}}\right)^{\frac{Q}{q}+\alpha_0}\sigma^{-\frac{Q}{2}-\alpha_{0}} \left(\frac{\rho}{R}\right)^{\alpha_{0}} \left(\fint_{B_R}\left\lvert V\left(\X u\right) - \left(V\left( \X u\right)\right)_{R}\right\rvert^{q}\right)^{\frac{1}{q}}\\
		& \leq 4C_{RH, V}C\left(\frac{2}{\tau_1^{i_0+1}}\right)^{\frac{Q}{q}+\alpha_0}\sigma^{-\frac{Q}{2}-\alpha_{0}} \left(\frac{\rho}{R}\right)^{\alpha_{0}} \left(\fint_{B_R}\left\lvert V\left(\X u\right) - \left(V\left( \X u\right)\right)_{R}\right\rvert^{q}\right)^{\frac{1}{q}}\\
		&\leq 4C_{RH, V}C\left(\frac{2}{\tau_1^{i_a+1}}\right)^{\frac{Q}{q}+\alpha_0}\sigma^{-\frac{Q}{2}-\alpha_{0}} \left(\frac{\rho}{R}\right)^{\alpha_{0}} \left(\fint_{B_R}\left\lvert V\left(\X u\right) - \left(V\left( \X u\right)\right)_{R}\right\rvert^{q}\right)^{\frac{1}{q}}.
	\end{align*}
	Since $C_{RH, V}, \sigma, \alpha_{0}, \tau_{1}, i_{a}$ are all fixed, which ultimately depend only on $n$ and $p,$ we derive our desired estimate. This completes the proof.
\end{proof}
\section{H\texorpdfstring{\"{o}}{o}lder continuity}\label{Holder continuity section}
\subsection{Preliminary estimates}
Let $x_{0} \in \Omega$ and $0 < R <1 $ be such that $B(x_{0}, 2R) \subset \Omega.$ Clearly, if $u \in HW_{\text{loc}}^{1,p} \left(\Omega \right)$ is a weak solution to \eqref{p subLaplace divergenceform}, then $u \in HW^{1,p}\left(B_{R}\right)$ is a weak solution to 
\begin{align}\label{p subLaplace divergenceform Ball}
	\operatorname{div}_{\mathbb{H}} \left( a(x) \lvert \X u \rvert^{p-2} \X u \right)  &= \operatorname{div}_{\mathbb{H}}F   &&\text{ in } B_{R}.
\end{align}
Now we define $w \in u + HW_{0}^{1,p}\left( B_{R} \right)$ to be the unique solution of 
\begin{align}\label{p subLaplace homogeneous general campanato}
	\left\lbrace \begin{aligned}
		\operatorname{div}_{\mathbb{H}} \left( a(x) \lvert \X w \rvert^{p-2} \X w \right)  &= 0   &&\text{ in } B_{R},\\
		w &=u &&\text{  on } \partial B_{R},
	\end{aligned} 
	\right. 
\end{align}
and $v \in w + HW_{0}^{1,p}\left( B_{R} \right)$ to be the unique solution of 
\begin{align}\label{p subLaplace homogeneous frozen general campanato}
	\left\lbrace \begin{aligned}
		\operatorname{div}_{\mathbb{H}} \left( a(x_{0}) \lvert \X v \rvert^{p-2} \X v \right) &= 0   &&\text{ in }  B_{R},\\
		v &= w &&\text{  on } \partial B_{R}.
	\end{aligned} 
	\right. 
\end{align}
Now we record an easy comparison estimate, whose proof follows from the weak formulation of \eqref{p subLaplace divergenceform Ball} and \eqref{p subLaplace homogeneous general campanato}, the estimates \eqref{constant cv}, \eqref{monotonicity} and Young's inequality.  
\begin{lemma}\label{firstcomparisoncampanato}
	Let $1<p<\infty$, $u$ be as in \eqref{p subLaplace divergenceform Ball} and $w$ be as in \eqref{p subLaplace homogeneous general campanato}.  Then for any $\xi \in \mathbb{R}^{2n},$ we have the following inequality
	\begin{align}\label{firstcomparisonestimate campanato}
		\int_{B_{R}} \left\lvert V( \X u) - V( \X w) \right\rvert^{2}  \leq c \int_{B_{R}} \left\lvert F - \xi \right\rvert \left\lvert \Xu - \X w \right\rvert,
	\end{align}
where $c>0$ is a constant depending on $n,p,\nu$ and $L$.
\end{lemma}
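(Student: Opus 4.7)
The plan is to test the difference of the two weak formulations against $u-w$ and then convert the resulting monotonicity expression into $|V(\X u)-V(\X w)|^{2}$ via the standard estimates from Lemma \ref{prop of V}. The identity $\int_{B_R}\langle \xi,\X(u-w)\rangle=0$ for any constant vector $\xi\in\mathbb{R}^{2n}$, valid since $u-w\in HW^{1,p}_{0}(B_R)$, is what allows the constant $\xi$ to appear in the final bound.

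More precisely, first I would subtract the weak formulations of \eqref{p subLaplace divergenceform Ball} and \eqref{p subLaplace homogeneous general campanato} to obtain, for every $\varphi\in HW^{1,p}_{0}(B_R)$,
\begin{equation*}
\int_{B_R} a(x)\,\bigl\langle |\X u|^{p-2}\X u - |\X w|^{p-2}\X w,\ \X\varphi\bigr\rangle\,dx \;=\; \int_{B_R}\langle F,\X\varphi\rangle\,dx.
\end{equation*}
Next, I would choose the admissible test function $\varphi:=u-w\in HW^{1,p}_{0}(B_R)$, and insert the constant $\xi$ on the right-hand side using the null-trace identity noted above to rewrite it as $\int_{B_R}\langle F-\xi,\X u-\X w\rangle\,dx$.

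At this point the lower bound $a(x)\geq\nu$ and the Cauchy--Schwarz inequality on the right give
\begin{equation*}
\nu\int_{B_R}\bigl\langle |\X u|^{p-2}\X u - |\X w|^{p-2}\X w,\ \X u-\X w\bigr\rangle\,dx \;\leq\; \int_{B_R}|F-\xi|\,|\X u-\X w|\,dx.
\end{equation*}
The integrand on the left is the classical monotonicity expression, and combining \eqref{monotonicity} with \eqref{constant cv} yields the pointwise bound $|V(\X u)-V(\X w)|^{2}\leq c_{V}^{2}c_{M}\,\langle |\X u|^{p-2}\X u-|\X w|^{p-2}\X w,\X u-\X w\rangle$. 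Plugging this in and absorbing the constants into $c\equiv c(n,p,\nu,L)$ gives \eqref{firstcomparisonestimate campanato}.

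I do not expect any genuine obstacle: the only mild care is to note that \eqref{constant cv} is only stated for $(z_1,z_2)$ not both zero, but the bound $|V(z_1)-V(z_2)|^2\leq c_V^2 c_M\langle |z_1|^{p-2}z_1-|z_2|^{p-2}z_2,z_1-z_2\rangle$ extends trivially to $z_1=z_2=0$ (both sides vanish), so the pointwise inequality holds a.e.\ in $B_R$ and can be integrated without further justification.
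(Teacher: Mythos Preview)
Your proof is correct and follows essentially the same approach the paper indicates: test the difference of the weak formulations with $u-w$, use $\operatorname{div}_{\mathbb{H}}\xi=0$ (equivalently $\int_{B_R}\langle\xi,\X(u-w)\rangle=0$) to insert the constant $\xi$, then apply the lower bound $a\geq\nu$ together with \eqref{monotonicity} and \eqref{constant cv}. The paper's sketch additionally mentions Young's inequality, but your direct use of the pointwise Cauchy--Schwarz inequality $\langle F-\xi,\X u-\X w\rangle\leq|F-\xi|\,|\X u-\X w|$ already yields \eqref{firstcomparisonestimate campanato} without any absorption step.
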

The following is proved analogously to Lemma 3 in \cite{Sil_nonlinearStein}. 
\begin{lemma}\label{secondcomparison1campanato}
	Let $1<p<\infty$, $w$ be as in \eqref{p subLaplace homogeneous general campanato} and $v$ be as in \eqref{p subLaplace homogeneous frozen general campanato}.  Then there exists a constant $c \equiv c\left( n, p, \gamma, L \right)$ such that we have the inequality
	\begin{align}\label{l1omega campanato}
		\fint_{B_{R}} \lvert V(\X v) - V(\X w) \rvert^{2}  &\leq c\left[ \omega\left(R\right) \right]^{2}  
		\fint_{B} \lvert \X w \rvert^{p}.
	\end{align}
\end{lemma}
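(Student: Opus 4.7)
The plan is to use $\varphi = v - w \in HW_{0}^{1,p}(B_R)$ as a test function in the weak formulations of \eqref{p subLaplace homogeneous general campanato} and \eqref{p subLaplace homogeneous frozen general campanato} and then subtract. The resulting identity is
\[
\int_{B_R} a(x_0)\bigl[|\X v|^{p-2}\X v - |\X w|^{p-2}\X w\bigr]\cdot \X(v-w)\,dx = \int_{B_R}\bigl[a(x) - a(x_0)\bigr]|\X w|^{p-2}\X w\cdot \X(v-w)\,dx.
\]
On the left, the monotonicity estimate \eqref{monotonicity}, the lower bound $a(x_0)\geq \gamma$, and then \eqref{constant cv} combine to produce a lower bound of the form $c^{-1}\int_{B_R}|V(\X v) - V(\X w)|^{2}\,dx$ with $c=c(n,p,\gamma)$. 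On the right, the pointwise bound $|a(x)-a(x_0)|\leq \omega(R)$ for $x\in B_R$ yields the master inequality
\[
I := \int_{B_R}|V(\X v) - V(\X w)|^2\, dx \leq c\,\omega(R)\int_{B_R}|\X w|^{p-1}|\X v - \X w|\,dx.
\]

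The key step is to estimate the right-hand integral in a form that allows reabsorption of $I$. The idea I would employ, which handles $1 < p < 2$ and $p\geq 2$ in a single stroke, is the symmetric factorization
\[
|\X w|^{p-1}|\X v - \X w| = \bigl[(|\X v|+|\X w|)^{(2-p)/2}|\X w|^{p-1}\bigr]\cdot\bigl[(|\X v|+|\X w|)^{(p-2)/2}|\X v - \X w|\bigr],
\]
followed by the Cauchy-Schwarz inequality. The integrated square of the second factor is bounded by $c\,I$ via \eqref{constant cv}. The integrated square of the first factor admits the pointwise bound $(|\X v|+|\X w|)^{2-p}|\X w|^{2(p-1)} \leq (|\X v|+|\X w|)^{p} \leq c(|\X v|^p + |\X w|^p)$, which follows from $|\X w| \leq |\X v|+|\X w|$ (since $p \geq 1$) together with convexity.

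To close the estimate I would invoke minimality of $v$ via Proposition \ref{minimizerexistenceprop}, which yields $\int_{B_R}|\X v|^p \leq \int_{B_R}|\X w|^p$ and hence $\int_{B_R}(|\X v|^p + |\X w|^p) \leq c\int_{B_R}|\X w|^p$. Combining these bounds produces
\[
I \leq c\,\omega(R)\left(\int_{B_R}|\X w|^p\right)^{1/2} I^{1/2},
\]
and a Young's inequality absorbs $I^{1/2}$ into the left-hand side, giving $I \leq c\,\omega(R)^2 \int_{B_R}|\X w|^p$ and hence \eqref{l1omega campanato} after dividing by $|B_R|$. The main obstacle I anticipate is uniformity in $p$; the factorization above bypasses the need to split into subranges and avoids the less sharp estimate \eqref{v estimate p less 2}, exploiting instead the natural metric induced by the map $V$.
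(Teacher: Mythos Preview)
Your proposal is correct and follows essentially the same route the paper intends: the paper does not write out a proof but refers to Lemma~3 in \cite{Sil_nonlinearStein}, whose argument is precisely the test-function identity you wrote, monotonicity \eqref{monotonicity} plus \eqref{constant cv} on the left, the oscillation bound $|a(x)-a(x_0)|\le\omega(R)$ on the right, and a reabsorption via Young/Cauchy--Schwarz. Your symmetric factorization of $|\X w|^{p-1}|\X v-\X w|$ is a clean way to treat all $1<p<\infty$ at once (the referenced argument typically splits into $p\ge 2$ and $1<p<2$), but this is a cosmetic improvement rather than a different strategy; the use of minimality of $v$ to control $\int|\X v|^p$ by $\int|\X w|^p$ is likewise standard and exactly what is needed to close the estimate.
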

\subsection{Proofs of Theorem \ref{campanato estimates} and Corollary \ref{campanato estimates nondivergence}}
Now we are in a position to prove the H\"{o}lder continuity results. 
\begin{proof}{\textbf{( of Theorem \ref{campanato estimates}} ) } Since all our estimates are local, we can assume $a \in C^{0, \mu_{1}} \left(\Omega\right)$, $F \in C^{0, \mu_{2}}\left(\Omega; \mathbb{R}^{2n}\right)$ and $u \in HW^{1,p} \left(\Omega\right).$ We first focus on the case $1< p \le 2,$ the other case being much easier.  We divide the proof in two steps.  \smallskip 
	
	\noindent \emph{Step 1:}
	First, we are going to show $\X u \in \mathrm{L}_{\text{loc}}^{p,Q-\delta} \left(\Omega, \mathbb{R}^{2n}\right)$ for every $\delta >0$,  under the assumptions of the theorem.  We first choose $x_{0} \in \Omega$ and radii $\rho, R >0$ such that $0 < 4\rho < R$ and $B_{R}(x_{0}) \subset \subset \Omega. $ Now we define the functions $w$ and $v$ the same way as before in $B_{R}$ by using \eqref{p subLaplace homogeneous general campanato} and \eqref{p subLaplace homogeneous frozen general campanato} respectively.  Now using \eqref{Xu:bdd}, by standard calculations, for any $1<p<\infty$, we have 
	\begin{align}
		\int_{B_{\rho}}\left\lvert \X u \right\rvert^{p} \leq 	c\left( \frac{\rho}{R}\right)^{Q}\int_{B_{R}}\left\lvert \X u \right\rvert^{p}  + 	c\int_{B_{R}}\left\lvert \X u - \X v\right\rvert^{p}. \label{Morrey bound p less 2 case}
	\end{align}
	Now, since $1 < p \le 2,$ using \eqref{v estimate p less 2}, we deduce 
	\begin{multline*}
		\int_{B_{R}}\left\lvert \X w - \X v\right\rvert^{p} \leq  c\int_{B_{R}}\left\lvert V\left( \X w \right) - V \left(\X v \right) \right\rvert^{2} \\+c \int_{B_{R}} \left\lvert V\left( \X w \right) - V \left(\X v \right) \right\rvert^{p}\left\lvert \X w\right\rvert^{\frac{p(2-p)}{2}}.  
	\end{multline*} 
	Using Young's inequality with $\varepsilon>0,$ together with \eqref{l1omega campanato}, for $1<p< 2$,  we deduce 
	\begin{align*}
		\int_{B_{R}}\left\lvert \X w - \X v\right\rvert^{p} &\leq \varepsilon\int_{B_{R}} \left\lvert \X w\right\rvert^{p} + c \left( 1+ \varepsilon^{\frac{p-2}{p}} \right)\int_{B_{R}}\left\lvert V\left( \X w \right) - V \left(\X v \right) \right\rvert^{2} \\
		&\stackrel{\eqref{l1omega campanato}}{\leq} \varepsilon\int_{B_{R}} \left\lvert \X w\right\rvert^{p} + c \left( 1+ \varepsilon^{\frac{p-2}{p}} \right)\left[ \omega\left(R\right) \right]^{2}  
		\int_{B_{R}} \lvert \X w \rvert^{p}. 
	\end{align*}
The inequality above trivially holds for $p=2$. Note that $w$ minimizes the functional 
	\begin{align*}
		w \mapsto \frac{1}{p} \int_{{B_{R}}} a\left(x\right)\left\lvert \X w \right\rvert^{p} \quad \text{ in  } u + HW_{0}^{1,p}\left(B_{R}\right).
	\end{align*}
	Thus, by minimality and the bounds on $a$, we have 
	\begin{align}\label{w minimizes}
		\frac{\gamma}{p} \int_{{B_{R}}} \left\lvert \X w \right\rvert^{p} \leq \frac{1}{p} \int_{{B_{R}}} a\left(x\right)\left\lvert \X w \right\rvert^{p} \leq \frac{1}{p} \int_{{B_{R}}} a\left(x\right)\left\lvert \X u \right\rvert^{p} \leq \frac{L}{p} \int_{{B_{R}}} \left\lvert \X u \right\rvert^{p}.  
	\end{align} 
	Using this in the last estimate, we arrive at 
	\begin{align}\label{comparison of w and v in terms of u}
		\int_{B_{R}}\left\lvert \X w - \X v\right\rvert^{p} \leq \frac{cL}{\gamma} \left(\varepsilon + \varepsilon^{\frac{p-2}{p}}\left[ \omega\left(R\right) \right]^{2}\right)\int_{{B_{R}}} \left\lvert \X u \right\rvert^{p}.
	\end{align}
	For the other term, we have, again using \eqref{v estimate p less 2}
	\begin{align*}
		\int_{B_{R}}\left\lvert \X u - \X  w\right\rvert^{p} &\leq c \int_{B_{R}}\left\lvert V\left( \X u \right) - V \left(\X w \right) \right\rvert^{2} + \int_{B_{R}} \left\lvert V\left( \X u \right) - V \left(\X w \right) \right\rvert^{p}\left\lvert \X u\right\rvert^{\frac{p(2-p)}{2}} \\
		&\leq \varepsilon\int_{B_{R}} \left\lvert \X u\right\rvert^{p} + c \left( 1+ \varepsilon^{\frac{p-2}{p}} \right)\int_{B_{R}}\left\lvert V\left( \X u \right) - V \left(\X w \right) \right\rvert^{2} \\
		&\stackrel{\eqref{firstcomparisonestimate campanato}}{\leq} \varepsilon\int_{B_{R}} \left\lvert \X u\right\rvert^{p} + c\varepsilon^{\frac{p-2}{p}} \int_{B_{R}} \left\lvert F -\xi\right\rvert \left\lvert \X u - \X w\right\rvert \\
		&\leq \frac{1}{2}\int_{B_{R}}\left\lvert \X u - \X  w\right\rvert^{p} +  \varepsilon\int_{B_{R}} \left\lvert \X u\right\rvert^{p} + c\varepsilon^{\frac{p-2}{p-1}}\int_{B_{R}} \left\lvert F -\xi\right\rvert^{\frac{p}{p-1}}. 
	\end{align*}
	Thus, we deduce 
	\begin{align}\label{comparison of u and w in terms of u}
		\int_{B_{R}}\left\lvert \X u - \X  w\right\rvert^{p} \leq 2\varepsilon\int_{B_{R}} \left\lvert \X u\right\rvert^{p} + c\varepsilon^{\frac{p-2}{p-1}}\int_{B_{R}} \left\lvert F -\xi\right\rvert^{\frac{p}{p-1}}. 
	\end{align}
	Combining \eqref{comparison of u and w in terms of u} and \eqref{comparison of w and v in terms of u},  with the shorthand 
	\begin{align}\label{def of C_epsilon_R}
		C_{\varepsilon, R}:=2^{p-1}\left[ \left( 2+ \frac{cL}{\gamma}\right) \varepsilon + \frac{cL}{\gamma \varepsilon^{\frac{2-p}{p}}}\left[ \omega\left(R\right) \right]^{2}\right], 
	\end{align} for some $c\equiv c(n,p, \gamma, L)>0,$ coming from \eqref{comparison of u and w in terms of u} and \eqref{comparison of w and v in terms of u}, we have,
	\begin{align}\label{comaprison of u and v in terms of u}
		\int_{B_{R}}\left\lvert \X u - \X v\right\rvert^{p} &\leq 2^{p-1} \left( \int_{B_{R}}\left\lvert \X u - \X w \right\rvert^{p} + \int_{B_{R}}\left\lvert \X w - \X v\right\rvert^{p}\right) \notag \\
		&\leq C_{\varepsilon, R}\int_{B_{R}} \left\lvert \X u\right\rvert^{p}  + c\varepsilon^{\frac{p-2}{p-1}}\int_{B_{R}} \left\lvert F -\xi\right\rvert^{\frac{p}{p-1}}. 
		\end{align}
	Now, plugging this with the choice $\xi =0,$ in \eqref{Morrey bound p less 2 case}  we have 
	\begin{align*}
		\int_{B_{\rho}}\left\lvert \X u \right\rvert^{p} &\leq  c\left( \frac{\rho}{R}\right)^{Q}\int_{B_{R}}\left\lvert \X u \right\rvert^{p}    + 	C_{\varepsilon, R} \int_{B_{R}} \left\lvert \X u\right\rvert^{p}  + c\varepsilon^{\frac{p-2}{p-1}}\int_{B_{R}} \left\lvert F\right\rvert^{\frac{p}{p-1}} \\
		&\leq  c\left( \frac{\rho}{R}\right)^{Q}\int_{B_{R}}\left\lvert \X u \right\rvert^{p}    + 	C_{\varepsilon, R} \int_{B_{R}} \left\lvert \X u\right\rvert^{p}  + c\varepsilon^{\frac{p-2}{p-1}} R^{Q} \left\lVert F\right\rVert_{L^{\infty}\left(\Omega; \mathbb{R}^{2n}\right)}^{\frac{p}{p-1}}. 
	\end{align*}
	Now we use the standard iteration lemma ( see Lemma 5.13 in \cite{giaquinta-martinazzi-regularity} ). We fix $\delta >0$ and use Lemma 5.13 in \cite{giaquinta-martinazzi-regularity} with $\alpha = Q,$ $\beta = Q-\delta$ and 
	\begin{align*}
		\phi \left(r\right):= \int_{B_{r}}\left\lvert \X u \right\rvert^{p} . 
	\end{align*} 
	Now, in view of \eqref{def of C_epsilon_R},  we first choose $\varepsilon>0$ small enough and then choose $R$ small enough ( depending on $\varepsilon$ ) such that $C_{\varepsilon, R}$ above is less then the threshold $\varepsilon_{0},$ given by the interation Lemma. This yields, 
	\begin{align*}
		\frac{1}{\rho^{Q-\delta}}\int_{B_{\rho}}\left\lvert \X u \right\rvert^{p} &\leq c \left( \left\lVert \X u\right\rVert_{L^{p}\left(\Omega; \mathbb{R}^{2n}\right)}^{p}  +  \left\lVert F\right\rVert_{L^{\infty}\left(\Omega; \mathbb{R}^{2n}\right)}^{\frac{p}{p-1}} \right).
	\end{align*}
	Note that, here $c>0$ depends on $n,p,\nu,L$ and $\delta$.
	By the standard covering argument, this implies
	\begin{align*}
		\left\lVert \X u \right\rVert_{\mathrm{L}^{p, Q-\delta}\left(\Omega_{2}; \mathbb{R}^{2n}\right)} \leq c \left( \left\lVert \X u\right\rVert_{L^{p}\left(\Omega_{1}; \mathbb{R}^{2n}\right)}  +  \left\lVert F\right\rVert_{L^{\infty}\left(\Omega_{1}; \mathbb{R}^{2n}\right)}^{\frac{1}{p-1}} \right),
	\end{align*}
	for any $\Omega_{2} \subset \subset \Omega_{1} \subset \subset \Omega,$ where the constant $c$ now depends also on $\Omega_{2}$ and $\Omega_{1}.$ \smallskip 
	
	\noindent \emph{Step 2:} Now we finish the proof of H\"{o}lder continuity of $\X u.$ With the Morrey bound at our disposal, we return to the comparison estimates and estimate them differently this time. We have the following estimate for some constant $c\equiv c(n,p, \nu,L)>0$, 
	\begin{align*}
		\int_{B_{R}}&\left\lvert \X w - \X v\right\rvert^{p} \notag\\&\leq c \int_{B_{R}}\left\lvert V\left( \X w \right) - V \left(\X v \right) \right\rvert^{2} + c\int_{B_{R}} \left\lvert V\left( \X w \right) - V \left(\X v \right) \right\rvert^{p}\left\lvert \X w\right\rvert^{\frac{p(2-p)}{2}} \notag\\
		&\leq c \int_{B_{R}}\left\lvert V\left( \X w \right) - V \left(\X v \right) \right\rvert^{2}  + c\left(\int_{B_{R}}\left\lvert V\left( \X w \right) - V \left(\X v \right) \right\rvert^{2} \right)^{\frac{p}{2}}\left( \int_{B_{R}} \left\lvert \X w\right\rvert^{p}\right)^{\frac{2-p}{2}}
		\notag\\
		&\stackrel{\eqref{l1omega campanato}}{\leq} c \left[ \omega\left(R\right) \right]^{2}  
		\int_{B_{R}} \lvert \X w \rvert^{p}  + \left(\left[ \omega\left(R\right) \right]^{2}  
		\int_{B_{R}} \lvert \X w \rvert^{p} \right)^{\frac{p}{2}}\left( \int_{B_{R}} \left\lvert \X w\right\rvert^{p}\right)^{\frac{2-p}{2}}
		\notag\\ 
		&\leq c\left( \left[ \omega\left(R\right) \right]^{2} + \left[ \omega\left(R\right) \right]^{p}\right)\int_{B_{R}} \lvert \X w \rvert^{p}.   
		\end{align*}
	In view of \eqref{w minimizes}, this implies 
	\begin{align}\label{decay of w -v term1}
		\int_{B_{R}}\left\lvert \X w - \X v\right\rvert^{p} \leq c\left( \left[ \omega\left(R\right) \right]^{2} + \left[ \omega\left(R\right) \right]^{p}\right)\int_{B_{R}} \lvert \X u \rvert^{p}. 
	\end{align}
 As $a$ is $C^{0, \mu}$ and $p \le 2,$ this implies 
	\begin{align}\label{decay of w -v term}
		\int_{B_{R}}\left\lvert \X w - \X v\right\rvert^{p} \leq c(n, p, \nu, L, \delta)R^{p\mu + Q-\delta}	\left\lVert \X u \right\rVert_{\mathrm{L}^{p, Q-\delta}}^{p}.  
	\end{align}
	For the other comparison estimate, for some constant $c\equiv c(n,p,\nu,L)>0,$ we have 
	\begin{align*}
			&\int_{B_{R}}\left\lvert \X u - \X  w\right\rvert^{p} \\&\quad\leq c \int_{B_{R}}\left\lvert V\left( \X u \right) - V \left(\X w \right) \right\rvert^{2} + c\int_{B_{R}} \left\lvert V\left( \X u \right) - V \left(\X w \right) \right\rvert^{p}\left\lvert \X u\right\rvert^{\frac{p(2-p)}{2}} \\
		&\quad\leq c \int_{B_{R}}\left\lvert V\left( \X u \right) - V \left(\X w \right) \right\rvert^{2}  + c\left(\int_{B_{R}}\left\lvert V\left( \X u \right) - V \left(\X w \right) \right\rvert^{2} \right)^{\frac{p}{2}}\left( \int_{B_{R}} \left\lvert \X u\right\rvert^{p}\right)^{\frac{2-p}{2}}. 
	\end{align*}
	Now using \eqref{firstcomparisonestimate campanato} on the right, we deduce   
	\begin{align*}
		\int_{B_{R}}&\left\lvert \X u - \X  w\right\rvert^{p} 	\\	&\leq
			c\int_{B_{R}} \left\lvert F -\xi\right\rvert \left\lvert \X u - \X w\right\rvert   + c\left(\int_{B_{R}} \left\lvert F -\xi\right\rvert \left\lvert \X u - \X w\right\rvert\right)^{\frac{p}{2}}\left( \int_{B_{R}} \left\lvert \X u\right\rvert^{p}\right)^{\frac{2-p}{2}}	\\
		&\leq \begin{aligned}[t]
			c\int_{B_{R}} &\left\lvert F -\xi\right\rvert \left\lvert \X u - \X w\right\rvert \\ &+ 	c\left(\int_{B_{R}} \left\lvert F -\xi\right\rvert^{\frac{p}{p-1}}\right)^{\frac{p-1}{2}} \left( \int_{{B_{R}}}\left\lvert \X u - \X w\right\rvert^{p}\right)^{\frac{1}{2}}\left( \int_{B_{R}} \left\lvert \X u\right\rvert^{p}\right)^{\frac{2-p}{2}} 
		\end{aligned}\\ 
		&\leq \begin{aligned}[t]
			\frac{1}{2}\int_{B_{R}}\left\lvert \X u - \X  w\right\rvert^{p} &+ c \int_{B_{R}} \left\lvert F -\xi\right\rvert^{\frac{p}{p-1}} \\ &+ c \left(\int_{B_{R}} \left\lvert F -\xi\right\rvert^{\frac{p}{p-1}}\right)^{p-1} \left( \int_{B_{R}} \left\lvert \X u\right\rvert^{p}\right)^{2-p}. 
		\end{aligned}
	\end{align*}
	Thus, we arrive at 
	\begin{align}\label{decay of u -w term1}
		\int_{B_{R}}\left\lvert \X u - \X  w\right\rvert^{p} \leq c &\int_{B_{R}} \left\lvert F -\xi\right\rvert^{\frac{p}{p-1}} \notag \\ &\qquad + c \left(\int_{B_{R}} \left\lvert F -\xi\right\rvert^{\frac{p}{p-1}}\right)^{p-1} \left( \int_{B_{R}} \left\lvert \X u\right\rvert^{p}\right)^{2-p}
	\end{align}
	for some constant $c\equiv c(n, p, \nu, L)>0$.	 
	Now choosing $\xi = \left(F\right)_{B_{R}}$ and recalling the fact that $F \in C^{0, \mu}$ and the Campanato characterization of H\"{o}lder continuity, we have 
	\begin{align*}
		&\int_{B_{R}}\left\lvert \X u - \X  w\right\rvert^{p} \notag \\&\leq
			R^{Q+ \frac{p\mu}{p-1}}\left[ F\right]^{\frac{p}{p-1}}_{\mathcal{L}^{\frac{p}{p-1},Q+ \frac{p\mu}{p-1} }}  + c R ^{\left[ (p-1)Q + p\mu \right]}\left[ F\right]^{p}_{\mathcal{L}^{\frac{p}{p-1},Q+ \frac{p\mu}{p-1}}}\cdot R^{(Q-\delta)(2-p)}\left\lVert \X u \right\rVert_{\mathrm{L}^{p, Q-\delta}}^{p\left(2-p\right)}\notag \\
		&\leq R^{Q+ \frac{p\mu}{p-1}}\left[ F\right]^{\frac{p}{p-1}}_{\mathcal{L}^{\frac{p}{p-1},Q+ \frac{p\mu}{p-1} }} + R^{Q +p\mu - \delta (2-p)}\left[ F\right]^{p}_{\mathcal{L}^{\frac{p}{p-1},Q+ \frac{p\mu}{p-1}}}\left\lVert \X u \right\rVert_{\mathrm{L}^{p, Q-\delta}}^{p\left(2-p\right)}.  
	\end{align*}	
	Now if $1<p <2$, we apply Young's inequality on the last term on the right with the the exponents $(1/(p-1), 1/(2-p)),$ to arrive at 
	\begin{align}\label{decay of u -w term}
		\int_{B_{R}}\left\lvert \X u - \X  w\right\rvert^{p} \leq cR^{Q +p\mu - \delta (2-p)} \left( \left[ F\right]^{\frac{p}{p-1}}_{\mathcal{L}^{\frac{p}{p-1},Q+ \frac{p\mu}{p-1}}} + \left\lVert \X u \right\rVert_{\mathrm{L}^{p, Q-\delta}}^{p} \right), 
	\end{align}
	where we used the fact that since $p < 2,$ we have $Q+\frac{p\mu}{p-1} > Q +p\mu - \delta (2-p).$ Note that, \eqref{decay of u -w term} holds trivially if $p=2$.
	Now, since $1 < p \leq 2, $ using \eqref{Main excess decay estimate Intro estimate} of  Theorem \ref{Main excess decay estimate Intro} with $q=p$ we have, in the standard way,   
	\begin{align*}
		\int_{B_{\rho}} \left\lvert \X u - \left(\X u\right)_{\rho}\right\rvert^{p} &\leq c \left(\frac{\rho}{R}\right)^{Q+ p\alpha}	\int_{B_{R}} \left\lvert \X u - \left(\X u\right)_{R}\right\rvert^{p} + 	\int_{B_{R}} \left\lvert \X u - \X v \right\rvert^{p}.  
	\end{align*}
	Utilizing \eqref{decay of u -w term} and \eqref{decay of w -v term} in this, we deduce 
	\begin{align*}
		\int_{B_{\rho}}\left\lvert \X u - \left(\X u\right)_{\rho}\right\rvert^{p} &\leq c \left(\frac{\rho}{R}\right)^{Q+ p\alpha}	\int_{B_{R}} \left\lvert \X u - \left(\X u\right)_{R}\right\rvert^{p} + cR^{ Q+ p\mu-\delta}	\left\lVert \X u \right\rVert_{\mathrm{L}^{p, Q-\delta}}^{p} \\&\qquad \qquad + cR^{Q +p\mu - \delta (2-p)} \left( \left[ F\right]^{\frac{p}{p-1}}_{\mathcal{L}^{\frac{p}{p-1},Q+ \frac{p\mu}{p-1}}} + \left\lVert \X u \right\rVert_{\mathrm{L}^{p, Q-\delta}}^{p} \right). 
	\end{align*} 
	Note that for $1<p\le2,$ the smallest exponent of $R$ on the right is $Q+p\mu -\delta.$ Thus, choosing $0 < \delta < p \mu, $	 using Lemma 5.13 in \cite{giaquinta-martinazzi-regularity}  and standard covering arguments, together with Camapanto's characterization of H\"{o}lder continuity, the estimate above implies 
	$\X u \in C^{0, \mu -\frac{\delta}{p}}_{\text{loc}}$ for any $0 < \delta < p\mu. $ But then we also have $\X u \in L^{\infty}_{\text{loc}}$ and consequently, $\X u \in \mathrm{L}^{p, Q}_{\text{loc}}.$ Now we go back to \eqref{decay of w -v term1} and \eqref{decay of u -w term1} and derive \eqref{decay of w -v term} and \eqref{decay of u -w term} with $\delta =0$ with constants depending only on $n,p,\nu$ and $L$.  Consequently, we have proved $\X u \in C^{0, \mu}_{\text{loc}}.$ Tracing back our estimates, we have the estimate 
	\begin{align*}
		\left\lVert \X u \right\rVert_{C^{0, \mu}\left( \overline{\Omega_{2}}; \mathbb{R}^{2n}\right)} \leq C \left( \left\lVert \X u \right\rVert_{L^{p}\left( \Omega_{1}; \mathbb{R}^{2n}\right)} + \left\lVert F \right\rVert_{C^{0, \mu}\left( \overline{\Omega_{1}}; \mathbb{R}^{2n}\right)}\right),  
	\end{align*}
	for any $\Omega_{2} \subset \subset \Omega_{1} \subset \subset \Omega,$ where the constant $C$ now depends also on $\Omega_{2}$ and $\Omega_{1}.$ 	
	
	Now we prove the case $p > 2.$ This is quite easy. We use  Lemma \ref{firstcomparisoncampanato} to derive
	\begin{align*}
		\int_{{B_{R}}} \left\lvert V(\X u) - V(\X w) \right\rvert^{2}&\leq c\int_{{B_{R}}} \left\lvert F - \xi \right\rvert \left\lvert \X u - \X w \right\rvert \\
		&\leq \varepsilon \int_{{B_{R}}}\left\lvert \Xu - \X w \right\rvert^{p} + c \int_{{B_{R}}}\left\lvert  F - \xi  \right\rvert^{\frac{p}{p-1}}\\
		&\stackrel{\eqref{constant cv}}{\leq}c_V \varepsilon\int_{{B_{R}}} \left\lvert V(\X u) - V(\X w) \right\rvert^{2} + c \int_{{B_{R}}}\left\lvert  F - \xi  \right\rvert^{\frac{p}{p-1}} 
	\end{align*}
	Choosing $\varepsilon >0$ small enough, we have 
	\begin{align}\label{estimate vxu-vxw}
		\int_{{B_{R}}} \left\lvert V(\X u) - V(\X w) \right\rvert^{2} \leq c \int_{{B_{R}}}\left\lvert  F - \xi  \right\rvert^{\frac{p}{p-1}}.
	\end{align}
	Similarly, using \eqref{w minimizes} in \eqref{l1omega campanato}, we obtain 
	\begin{align}\label{estimate vxw-vxv}
		\int_{{B_{R}}} \left\lvert V(\X w) - V(\X v) \right\rvert^{2} \le c\left[ \omega\left(R\right) \right]^{2} \int_{{B_{R}}}\left\lvert  \X u  \right\rvert^{p}= c\left[ \omega\left(R\right) \right]^{2} \int_{{B_{R}}}\left\lvert  V(\X u)  \right\rvert^{2} .  
	\end{align}
	Now, using \eqref{Morrey bound p less 2 case} and choosing $\xi=0$ we have 
	\begin{align*}
		\int_{{B_{\rho}}}\left\lvert  V(\X u)  \right\rvert^{2} &=\int_{B_{\rho}}\left\lvert \X u \right\rvert^{p} \leq  c\left( \frac{\rho}{R}\right)^{Q}\int_{B_{R}}\left\lvert \X u \right\rvert^{p}    +  	c\int_{B_{R}}\left\lvert \X u - \X v\right\rvert^{p} \\
		&\stackrel{\eqref{constant cv}}{\leq}  c\left( \frac{\rho}{R}\right)^{Q}\int_{B_{R}}\left\lvert V(\X u) \right\rvert^{2}    +  	c\int_{B_{R}}\left\lvert V(\X u) - V(\X v)\right\rvert^{2}\\
		&\stackrel{\eqref{estimate vxu-vxw}, \eqref{estimate vxw-vxv}}{\leq} c\left[\left( \frac{\rho}{R}\right)^{Q}+ [\omega(R)]^2\right] \int_{B_R}\left\lvert V(\X u) \right\rvert^{2}+ c \int_{{B_{R}}}\left\lvert  F  \right\rvert^{\frac{p}{p-1}}.
	\end{align*}
As before, choosing $R$ small enough and using the iteration Lemma 5.13 in \cite{giaquinta-martinazzi-regularity}, this implies $V(\Xu) \in \mathrm{L}^{2, Q-\delta}_{\text{loc}}$ for every $\delta >0.$ Now, we have, using \eqref{Main excess decay estimate Intro estimate V} of  Theorem \ref{Main excess decay estimate V Intro} with $q=2$, 
	\begin{align*}
		\int_{B_{\rho}}&\left\lvert V(\X u) - \left(V(\X u)\right)_{\rho}\right\rvert^{2}  \\ &\qquad \begin{aligned}
			\leq c \left(\frac{\rho}{R}\right)^{Q+ 2\beta}	\int_{B_{R}} \left\lvert V(\X u) - \left(V(\X u)\right)_{R}\right\rvert^{2}    &+ cR^{ Q+ 2\mu-\delta}	\left\lVert V(\X u) \right\rVert_{\mathrm{L}^{2, Q-\delta}}^{2} \\& + cR^{Q +\frac{p\mu}{p-1}}  \left[ F\right]^{\frac{p}{p-1}}_{\mathcal{L}^{\frac{p}{p-1},Q+ \frac{p\mu}{p-1}}} .
		\end{aligned} 
	\end{align*}
	Recall that $\mu<\beta$. Also, since $p>2$ implies $p/(p-1) < 2,$ we can choose $\delta >0$ small enough such that the minimum power of $R$ on the right is  $Q +\frac{p\mu}{p-1}.$ This, by standard iteration lemma and covering arguments imply $V(\X u) \in \mathcal{L}_{\text{loc}}^{2, Q+ \frac{p\mu}{p-1}}$. Hence, using \eqref{constant cv} we conclude that $\X u \in \mathcal{L}_{\text{loc}}^{p, Q+ \frac{p\mu}{p-1}}.$ By Campanato's characterization, this implies $\X u \in C^{0, \frac{\mu}{p-1}}_{\text{loc}}.$ 
	This completes the proof. 
\end{proof}

\begin{proof}{\textbf{(of Corollary \ref{campanato estimates nondivergence})}}
	This follows immediately from Theorem \ref{campanato estimates} as given any $f \in L^{q}_{\text{loc}}\left( \Omega\right),$ with $q>Q,$ we can always write it as a divergence of a locally H\"{o}lder continuous vector field. More precisely, we can find $F \in C^{0, \frac{q-Q}{q}}_{\text{loc}}\left( \Omega; \mathbb{R}^{2n}\right)$ such that $ \operatorname{div}_{\mathbb{H}} F = f $ in $\Omega.$ Indeed, we just find a solution of 
	\begin{align*}
		\operatorname{div}_{\mathbb{H}} \left( \X \psi \right) = f \text{ in } \Omega, 
	\end{align*}
	and set $F = \X \psi.$ By standard $L^{p}$ estimates for the subLaplacian, (see \cite[page 917]{folland-cz-linear}) we deduce $F \in HW^{1,q}_{\text{loc}}\left( \Omega; \mathbb{R}^{2n}\right).$ Since $q>Q,$ by Proposition \ref{Morrey's theorem}, we have $F \in C^{0, \frac{q-Q}{q}}_{\text{loc}}\left( \Omega; \mathbb{R}^{2n}\right).$ This completes the proof. \end{proof}

\section{Homogeneous equation with Dini coefficients}\label{Homogeneous Dini section} 
\subsection{General setting and comparison estimates}
In this section, our goal is to prove Theorem \ref{dwcontinuityhomogeneousDini}. Let $B(x_{0}, 2R) \subset \Omega$ be a fixed ball and for $i \geq 0,$ we set 
\begin{equation}\label{shrinkingballs1}
	B_{i} \equiv B(x_{0}, R_{i}), \qquad R_{i}:= \sigma^{i} R, \qquad \sigma \in (0, \frac{1}{2}) .
\end{equation}
Let $w$ be a weak solution of \eqref{p subLaplace with dini coeff}. We define the maps $v_{i} \in w + HW_{0}^{1,p}\left( B_{i} \right)$ to be the unique solution of 
\begin{align}\label{p subLaplace homogeneous frozen i}
	\left\lbrace \begin{aligned}
		\operatorname{div}_{\mathbb{H}}	 ( a(x_{0}) \lvert \X v_{i} \rvert^{p-2} \X v_{i}) )  &= 0   &&\text{ in } B_{i},\\
		v_{i} &=w &&\text{  on } \partial B_{i}.                \end{aligned} 
	\right. 
\end{align}
Also, we define the quantities, for $i \geq 0$ and $r \geq 1,$ 
$$ m_{i}(G) := \left\lvert \left( G \right)_{B_{i}}\right\rvert \qquad \text{ and } \qquad 
E_{r}(G, B_{i}):=  \left( \fint_{B_{i}} \lvert G -  \left( G \right)_{B_{i}} \rvert^{r}\right)^{\frac{1}{r}}.$$
Now we record the following comparison estimate, which is proved exactly as Lemma 3 in \cite{Sil_nonlinearStein}. 
\begin{lemma}\label{secondcomparison1}
	Assume that $w,v_{i}$ satisfy \eqref{p subLaplace with dini coeff} and \eqref{p subLaplace homogeneous frozen i} respectively and  $i \geq 0.$ Then there exists a constant $c_{4} \equiv c_{4}\left( n, p, \gamma, L \right)$ such that we have the inequality
	\begin{align}\label{l1omega}
		\fint_{B_{i}} \lvert V( \X v_{i}) - V(\X w) \rvert^{2}  &\leq c_{4}\left[ \omega\left(R_{i}\right) \right]^{2}  
		\fint_{B_{i}} \lvert \X w \rvert^{p}.
	\end{align}
\end{lemma}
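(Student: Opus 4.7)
The plan is to use $\varphi := w - v_{i} \in HW_{0}^{1,p}(B_{i})$ as a test function in the weak formulations of both \eqref{p subLaplace with dini coeff} and \eqref{p subLaplace homogeneous frozen i}, and then subtract. After adding and subtracting $a(x_{0})|\X w|^{p-2}\X w$, this yields the identity
\begin{align*}
	\int_{B_{i}} a(x_{0}) \left\langle |\X w|^{p-2}\X w - |\X v_{i}|^{p-2}\X v_{i},\, \X w - \X v_{i}\right\rangle\, dx
	= \int_{B_{i}} (a(x_{0}) - a(x))\left\langle |\X w|^{p-2}\X w,\, \X w - \X v_{i}\right\rangle\, dx.
\end{align*}
On the left hand side, the lower bound $a(x_{0}) \geq \gamma$ together with the monotonicity estimate \eqref{monotonicity} gives a lower bound of the form
$c(n,p,\gamma)\int_{B_{i}} (|\X w| + |\X v_{i}|)^{p-2} |\X w - \X v_{i}|^{2}\, dx,$
and the upper bound in \eqref{constant cv} turns this into a multiple of $\int_{B_{i}} |V(\X w) - V(\X v_{i})|^{2}\, dx$.

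Next I would control the right hand side. Since $x \in B_{i} = B(x_{0}, R_{i})$, the very definition of the modulus of continuity yields $|a(x) - a(x_{0})| \leq \omega(R_{i})$, so the right hand side is bounded by
$\omega(R_{i})\int_{B_{i}} |\X w|^{p-1}\,|\X w - \X v_{i}|\, dx.$
Writing $|\X w|^{p-1} \leq (|\X w| + |\X v_{i}|)^{p/2} \cdot (|\X w| + |\X v_{i}|)^{(p-2)/2}$ and invoking the upper bound in \eqref{constant cv} on the second factor together with $|\X w - \X v_{i}|$, this integral is at most
$c\int_{B_{i}}(|\X w| + |\X v_{i}|)^{p/2}\,|V(\X w) - V(\X v_{i})|\, dx,$
which, by Cauchy--Schwarz, is at most
$c\,\bigl(\int_{B_{i}}(|\X w| + |\X v_{i}|)^{p}\,dx\bigr)^{1/2}\bigl(\int_{B_{i}}|V(\X w) - V(\X v_{i})|^{2}\,dx\bigr)^{1/2}.$

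The last ingredient is the energy bound $\int_{B_{i}}|\X v_{i}|^{p}\,dx \leq \int_{B_{i}}|\X w|^{p}\,dx$, which follows from the fact that $v_{i}$ is the unique minimizer of $v \mapsto \frac{a(x_{0})}{p}\int_{B_{i}}|\X v|^{p}$ in $w + HW_{0}^{1,p}(B_{i})$ (Proposition \ref{minimizerexistenceprop}). Combining everything, I obtain
\begin{align*}
	\int_{B_{i}}|V(\X w) - V(\X v_{i})|^{2}\, dx \leq c\,\omega(R_{i})\left(\int_{B_{i}}|\X w|^{p}\,dx\right)^{1/2}\left(\int_{B_{i}}|V(\X w) - V(\X v_{i})|^{2}\,dx\right)^{1/2},
\end{align*}
and applying Young's inequality to absorb the last factor on the left yields the claimed estimate after dividing through by $|B_{i}|$. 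The mildly delicate step is ensuring the treatment works uniformly for the full range $1 < p < \infty$: since we bounded $|\X w|^{p-1}|\X w - \X v_{i}|$ via the symmetric sum $(|\X w| + |\X v_{i}|)^{p-2}$, no case split on $p$ is required, and the constants depend only on $n,p,\gamma,L$ as claimed.
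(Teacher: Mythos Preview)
Your proof is correct and follows exactly the standard argument that the paper defers to (Lemma 3 in \cite{Sil_nonlinearStein}): test both equations with $w-v_i$, subtract, use monotonicity and \eqref{constant cv} on the left, the modulus of continuity bound and Cauchy--Schwarz on the right, and the minimality of $v_i$ to control $\int |\X v_i|^p$ by $\int |\X w|^p$. One tiny terminological slip: when bounding $(|\X w|+|\X v_i|)^{(p-2)/2}|\X w-\X v_i|$ by $c_V|V(\X w)-V(\X v_i)|$ you are invoking the \emph{left} inequality in \eqref{constant cv}, not the upper bound; the mathematics is unaffected.
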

\subsection{Proof of Theorem \ref{dwcontinuityhomogeneousDini}}
Now we prove the following pointwise estimate, which immediately implies the sup estimate. 
\begin{theorem}\label{dwsupboundhomogeneousDini}
	Let $w \in HW_{\text{loc}}^{1,p}\left( \Omega \right)$ be be a local weak solution of \eqref{p subLaplace with dini coeff} with $1<p<\infty.$ Then there exists a constant 
	$c = c(n, p, \gamma, L, \omega( \cdot )) \geq 1$ and a positive radius $R_{1}= R_{1}(n, p, \gamma, L, \omega( \cdot )) >0$ such that the pointwise estimate 
	\begin{align}\label{pointwise bound dini}
		\left\lvert \X  w (x) \right\rvert \leq c \left( \fint_{B(x, R)} \left\lvert \X w \right\rvert^{p}\right)^{\frac{1}{p}},
	\end{align}
	holds whenever $B(x,2R) \subset \Omega$, $2R \leq R_{1}$ and $x$ is a Lebesgue point of $\X w.$ If $a(\cdot )$ is a constant function, the estimate holds without any restriction on $R.$
\end{theorem}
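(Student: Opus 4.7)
\medskip

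The plan is to iterate a comparison argument with the frozen-coefficient solutions $v_i$ along the shrinking balls $B_i = B(x, \sigma^i R)$, working consistently in the $V$-space: this is crucial because Lemma \ref{secondcomparison1} there gives a comparison error that is \emph{linear} in $\omega(R_i)$, which is precisely what the Dini hypothesis $\int_0^1 \omega(\rho)/\rho\,d\rho<\infty$ is meant to control. Fix a Lebesgue point $x$ of $\X w$, let $\sigma\in(0,1/4)$ be a parameter, and set $B_i := B(x,R_i)$ with $R_i := \sigma^i R$. For each $i\ge 0$, let $v_i$ solve \eqref{p subLaplace homogeneous frozen i} on $B_i$. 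I track the three scale-dependent quantities
\[
A_i := \Bigl(\fint_{B_i} |\X w|^p\Bigr)^{1/2},\qquad m_i := (V(\X w))_{B_i},\qquad E_i := \Bigl(\fint_{B_i} |V(\X w)-m_i|^2\Bigr)^{1/2},
\]
connected by the orthogonality identity $A_i^2 = |m_i|^2 + E_i^2$. Since $x$ is a Lebesgue point and $V$ is locally Lipschitz, $m_i \to V(\X w(x))$ as $i\to\infty$.

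Three inputs enter at each scale $B_i$. First, Theorem \ref{thm:lip} applied to $v_i$ on the inner ball $B(x,R_i/2)\subset\subset B_i$, combined with the minimality of $v_i$ for the frozen Dirichlet energy, gives $\sup_{B(x,R_i/2)}|\X v_i|^p \leq c\,A_i^{2}$. Second, Lemma \ref{secondcomparison1} furnishes the $V$-comparison $\bigl(\fint_{B_i} |V(\X w)-V(\X v_i)|^2\bigr)^{1/2} \leq c\,\omega(R_i) A_i$. Third, Theorem \ref{Main excess decay estimate V Intro} applied to the constant-coefficient solution $v_i$ yields the excess-to-excess decay
\[
\Bigl(\fint_{B_{i+1}} |V(\X v_i)-(V(\X v_i))_{B_{i+1}}|^2\Bigr)^{1/2}
\leq c\,\sigma^{\beta}\,\Bigl(\fint_{B(x,R_i/2)} |V(\X v_i)-(V(\X v_i))_{B(x,R_i/2)}|^2\Bigr)^{1/2},
\]
with $\beta\in(0,1)$ depending only on $n,p$. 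Putting these three together via the triangle inequality, the minimality of the mean, and the first-input sup bound on $|V(\X v_i)|=|\X v_i|^{p/2}$, I obtain the coupled recursion
\[
E_{i+1}\;\leq\;c_0\,\sigma^{\beta} E_i \;+\; c_1\,\sigma^{-Q/2}\,\omega(R_i)\,A_i,\qquad |m_{i+1}-m_i|\;\leq\;\sigma^{-Q/2} E_i.
\]

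I then close a bootstrap induction. First pick $\sigma$ so small that $c_0\sigma^{\beta}\leq 1/4$; next pick $R_1=R_1(n,p,\gamma,L,\omega(\cdot))$ so small that the tail $W(R):=\sum_{j\geq 0}\omega(R_j)$, which is finite and small by the Dini hypothesis for $R\leq R_1$, satisfies a threshold depending only on $n,p,\gamma,L,\sigma$. Under the inductive assumption $A_j\leq c_* A_0$ for all $j\leq i$, the recursion for $E_j$ is iterated to give $\sum_j E_j \leq c(A_0 + c_* A_0 W)$, and summing $|m_{j+1}-m_j|\leq \sigma^{-Q/2}E_j$ gives $|m_{i+1}|\leq A_0\bigl(1+c\sigma^{-Q/2}(1+c_*W)\bigr)$. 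The identity $A_{i+1}^2=|m_{i+1}|^2+E_{i+1}^2$ then closes the induction provided $c_*$ and $W$ are chosen in the right order. Passing $i\to\infty$ at the Lebesgue point $x$ yields $|V(\X w(x))|=\lim|m_i|\leq c A_0$, which is exactly $|\X w(x)|^p\leq c\,\fint_{B(x,R)}|\X w|^p$ as required. When $a(\cdot)$ is constant, the comparison error in Lemma \ref{secondcomparison1} vanishes identically, $v_0=w$, and the bound follows directly from Theorem \ref{thm:lip} without any restriction on $R$.

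The main obstacle is the ball-shrinkage blow-up factor $\sigma^{-Q/2}$ that multiplies the comparison term in the recursion for $E_{i+1}$: naively this would cause the iteration to diverge. It is overcome, first, by working in $V$-space, where Lemma \ref{secondcomparison1} is linear in $\omega$ (whereas a direct bound on $\fint|\X w-\X v_i|^p$ would only give $\omega^{2/p}$-dependence for $p>2$, insufficient against the Dini condition); second, by the excess-to-excess form of Theorem \ref{Main excess decay estimate V Intro}, which supplies the decisive decay factor $\sigma^{\beta}$ in the homogeneous part of the recursion; and third, by choosing $\sigma$ before $R$, so that the Dini tail $W(R)$ can be made small enough, relative to the fixed $\sigma^{-Q/2}$, to close the bootstrap.
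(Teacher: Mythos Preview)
Your proposal is correct and follows essentially the same approach as the paper's own proof: both work in $V$-space, compare $w$ to the frozen-coefficient solutions $v_i$ via Lemma~\ref{secondcomparison1}, apply the excess decay Theorem~\ref{Main excess decay estimate V Intro} to $v_i$, and close an induction on $m_i+E_i$ (the paper) or equivalently on $A_i$ (your version) using the Dini smallness of $\sum_j\omega(R_j)$. The only cosmetic difference is bookkeeping: the paper fixes $\lambda^{p/2}:=H_1 A_0$ with a large structural constant $H_1$ and proves $m_i+E_i\le\lambda^{p/2}$ directly, whereas you phrase the same bound as a bootstrap $A_j\le c_* A_0$; also, you apply the excess decay on $B(x,R_i/2)$ rather than $B_i$, which is in fact slightly cleaner since $v_i$ only solves the equation in $B_i$.
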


\begin{proof}  \emph{ \textbf{Step 1: Choice of constants.}} We pick an arbitrary point $x_{0} \in \Omega$ and a arbitrary positive radius $R_{1} > 0$ such that $B(x_{0}, R_{1}) \subset \Omega.$ We pick $0< R < R_{1}/2$ and for now set 
	$B(x_{0},R)$ as our starting ball and consider the chain of shrinking balls as explained in \eqref{shrinkingballs1} for some parameter $\sigma \in (0,\frac{1}{4}).$ 
	We shall soon make specific choices of both the parameters $R_{1}$ and $\sigma.$
	We define the constant $\lambda$ as 
\begin{align}\label{lambda def}
	 \lambda^{\frac{p}{2}}:= H_{1}\left( \fint_{B(x_{0},R)} \left\lvert V(\X w)\right\rvert^{2}\right)^{\frac{1}{2}} ,
	 \end{align}
	where $H_{1}$ will be chosen soon. 
	Clearly, we can assume $\lambda > 0.$ In view of Theorem \ref{Main excess decay estimate V Intro}, we choose $\sigma \in (0,\frac{1}{4})$ small enough such that 
	\begin{equation}\label{choiceofsigma1}
		C_{0,V} \ \sigma^{\beta} \leq 4^{-(Q+4)}.
	\end{equation}
	Now that we have chosen $\sigma,$ we set 
	\begin{equation}\label{choiceofH11}
		H_{1} : = 10^{5Q} \sigma^{-2Q}.
	\end{equation}
	Note that $H_{1}$ depends only on $n$ and $p.$ 
	Now, we chose the radius $R_{1} > 0$ small enough such that we have 
	\begin{equation}\label{smallnessradius1}
		\omega (R_{1})  + \int_{0}^{2R_{1}} \omega(\varrho) \frac{d \varrho}{\varrho} \leq 
		\frac{\sigma^{2Q}}{6^Q10^6 c_{4}} .
	\end{equation}
	Note that $R_{1}$ depends on $n,p,\gamma, L$ \emph{and} $\omega(\cdot).$ Also, if $a(\cdot)$ is a constant function, the dependence on $\omega( \cdot )$ is redundant. With this, we have chosen all the relevant parameters.\smallskip 
	
	\emph{ \textbf{Step 2: Excess decay.}} We now want to prove 
	\begin{claim}[excess decay estimate]
		If 
		\begin{align}\label{upper bound}
			\left( \fint_{B_{i}} \left\lvert \X w \right\rvert^{p}\right)^{\frac{1}{p}} \leq \lambda  ,
		\end{align} 
		then 
		\begin{equation}\label{excess decay eqn}
			E_{2}(V(\X w), B_{i+1}) \leq \frac{1}{4} E_{2}(V(\X w), B_{i}) + \frac{2c_{4}\lambda^{\frac{p}{2}}}{\sigma^{Q}}\omega\left( R_{i} \right), 
		\end{equation}
		where $c_4$ is given by Lemma \ref{secondcomparison1}.
	\end{claim}
	By the excess decay estimate \eqref{Main excess decay estimate Intro estimate V} of  Theorem \ref{Main excess decay estimate V Intro} with $q=2$ and the choice of $\sigma$ in \eqref{choiceofsigma1}, we have, 
	\begin{equation}\label{e1}
		E_{2}(V(\X v_{i}), B_{i+1}) \leq \frac{1}{4^{Q+4}} E_{2}(V(\X v_{i}), B_{i}).
	\end{equation}
	Now using the property of the mean, triangle inequality and elementary estimates along with Lemma \ref{secondcomparison1}, we have, 
	\begin{align}
		E_{2}(V(\X w), B_{i+1}) &\leq  E_{2}(V(\X v_{i}), B_{i+1}) + \sigma^{-Q/2} \left( \fint_{B_{i}} \lvert V(\X v_{i}) - V(\X w )\rvert^{2} \right)^{\frac{1}{2}} \notag \\
		&\stackrel{\eqref{l1omega},\eqref{upper bound}}{\leq} E_{2}(V(\X v_{i}),B_{i+1}) + \sigma^{-Q} c_{4}\omega\left(R_{i} \right) \lambda^{\frac{p}{2}} \label{e2}.
	\end{align}
	Also similarly, 
	\begin{align}\label{e3}
		E_{2}(V(\X v_{i}),B_{i}) &\stackrel{\eqref{l1omega},\eqref{upper bound}}{\leq} E_{2}(V(\X w),B_{i}) + c_{4}\omega\left(R_{i} \right) \lambda^{\frac{p}{2}} .
	\end{align}
	Combining \eqref{e1}, \eqref{e2} and \eqref{e3}, we have \eqref{excess decay eqn}.\smallskip

	\emph{\textbf{Step 3: Control on composite quantities.}} Now we want to show, by induction that 
	\begin{equation}\label{induction for m +E}
		m_{i}(V(\X w)) + E_{2}(V(\X w), B_{i}) \leq \lambda^{\frac{p}{2}}  \qquad \text{ for all } i \geq 1.
	\end{equation}
	This is true for $i = 1$ by our choice of $H_{1}$ in \eqref{choiceofH11} and elementary estimates. Thus we assume this is true for all $j \in \lbrace 1, \ldots, i\rbrace$ and prove it for $i+1.$ But we have , 
	\begin{align*}
		\fint_{B_{j}} \left\lvert \X w \right\rvert^{p} = \fint_{B_{j}} \left\lvert V(\X w ) \right\rvert^{2} &\leq \left[ m_{j}(V(\X w))\right]^{2} + \left[ E_{2}(V(\X w), B_{j})\right]^{2}\\ &\leq \left[ m_{j}(V(\X w)) +  E_{2}(V(\X w), B_{j})\right]^{2}. 
	\end{align*} Thus the induction hypotheses 
	implies the bound 
	\begin{align*}
		\left( \fint_{B_{j}} \left\lvert \X w \right\rvert^{p}\right)^{\frac{1}{p}} \leq \lambda \text{ for all } j \in \lbrace 1, \ldots, i\rbrace. 
	\end{align*}
	Thus, by the estimates \eqref{excess decay eqn} for each $j$ and summing, we obtain, 
	\begin{align*}
		\sum_{j=2}^{i+1} E_{2}(V(\X w), B_{j}) \leq \frac{1}{2} \sum_{j=1}^{i} E_{2}(V(\X w), B_{j}) + \frac{2c_{4}\lambda^{\frac{p}{2}}}{\sigma^{Q}} \sum_{j=1}^{i}  
		\omega\left( R_{j} \right).
	\end{align*}
	Note that 
	\begin{align}\label{sum modulus of continuity}
		\int_{0}^{2R} \omega(\varrho) \frac{d \varrho}{\varrho} &= \sum_{i=0}^{\infty} \int_{R_{i+1}}^{R_{i}} \omega(\varrho) \frac{d \varrho}{\varrho} 
		+ \int_{R}^{2R} \omega(\varrho) \frac{d \varrho}{\varrho} \notag \\
		&\geq \sum_{i=0}^{\infty} \omega(R_{i+1}) \int_{R_{i+1}}^{R_{i}} \frac{d \varrho}{\varrho} + \omega(R) \int_{R}^{2R} \frac{d \varrho}{\varrho} \notag \\
		&= \log(\frac{1}{\sigma}) \sum_{i=0}^{\infty} \omega(R_{i+1}) + \omega(R) \log 2  \geq \log 2\sum_{i=0}^{\infty} \omega(R_{i}). 
	\end{align}
	
	Thus, we deduce, using \eqref{lambda def}, \eqref{choiceofH11}, \eqref{sum modulus of continuity} and \eqref{smallnessradius1}, 
	\begin{align}\label{e4}
		\sum_{j=1}^{i+1} E_{2}(V(\X w), B_{j}) \leq 2 E_{2}(V(\X w), B_{1}) + \frac{4c_{4}\lambda^{\frac{p}{2}}}{\sigma^{Q}} \sum_{j=1}^{i}\omega\left( R_{j} \right)
		\leq \frac{\sigma^{Q}\lambda^{\frac{p}{2}}}{100}.
	\end{align}
	Now, using this and writing the difference of averages as a telescoping sum, we deduce, 
	\begin{align}
		m_{i+1}(V(\X w)) -  m_{1}(V(\X w)) &= \sum_{j=1}^{i} \left( m_{j+1}(V(\X w)) -  m_{j}(V(\X w))\right) \notag\\
		&\leq \sum_{j=1}^{i} \fint_{B_{j+1}}\left\lvert V(\X w) - (V(\X w))_{B_{j}} \right\rvert \notag \\
		&\leq \sigma^{-Q}\sum_{j=1}^{i} E_{2}(V(\X w), B_{j}) \leq \frac{\lambda^{\frac{p}{2}}}{100}. \label{e5}
	\end{align}
	Now \eqref{e4} and \eqref{e5} together yields \eqref{induction for m +E}. \smallskip

	\emph{\textbf{Step 4:}} Finally, by \eqref{induction for m +E}, we have, for $i \geq 1,$  
	\begin{align*}
		\fint_{B_{i}}	\left\lvert   \X  w  \right\rvert^{p} =  \fint_{B_{i}}\left\lvert V(\X w) \right\rvert^{2}  
		&\leq \left[ m_{i}(V(\X w))\right]^{2} + \left[ E_{2}(V(\X w), B_{i})\right]^{2}\\ &\leq \left[ m_{i}(V(\X w)) +  E_{2}(V(\X w), B_{i})\right]^{2} \leq \lambda^{p}.
	\end{align*}
	But this implies, if $x_{0}$ is a Lebesgue point of $\left\lvert \X w \right\rvert^{p},$ then we have, 
	\begin{align}
		\left\lvert \X w(x_{0})\right\rvert^{p} =  \lim_{i \rightarrow \infty} 	\fint_{B_{i}}	\left\lvert   \X  w  \right\rvert^{p}  \leq \lambda^{p}.
	\end{align}
	This immediately implies \eqref{pointwise bound dini} and completes the proof. \end{proof}

\begin{proof}[Proof of Theorem \ref{dwcontinuityhomogeneousDini}] Pick $\Omega_1 \subset \subset \Omega$ and set $\lambda := \lVert \X w \rVert_{L^{\infty}(\Omega_1)} + 1.$ Clearly, $\lambda <\infty$ by Theorem \ref{dwsupboundhomogeneousDini}.  We first prove that $V(\X w)$ is continuous. The strategy of the proof is to show that 
	$V(\X w)$ is the locally uniform limit of a net of continuous maps, defined by the averages 
	$$ x \mapsto \left( V(\X w)\right)_{B(x, \rho)}.$$ To do this, we pick any $\Omega' \subset \subset \Omega_1$. We show that for every 
	$\varepsilon > 0,$ there exists a radius $$0 < r_{\varepsilon} \leq \operatorname*{dist}(\Omega', \partial\Omega_1)/1000 = R^{\ast} ,$$ depending only on 
	$n,p, \gamma, L, \omega( \cdot ), \varepsilon$ such that for every $x \in \Omega',$ the estimate 
	\begin{equation}\label{oscillationVdw}
		\left\lvert \left( V(\X w)\right)_{B(x, \rho)} - \left( V(\X w)\right)_{B(x, \varrho)} \right\rvert \leq \lambda^{\frac{p}{2}} \varepsilon \qquad \text{ holds for every } \rho, \varrho \in 
		(0, r_{\varepsilon}].
	\end{equation}
	This would imply that the sequence of maps $x \mapsto \left( V(\X w)\right)_{B(x, \rho)}$ are uniformly Cauchy and would conclude the continuity of $V(\X w).$ \smallskip 
	
	\emph{ \textbf{Step 1: Choice of constants.}} We fix $\varepsilon >0.$ Now we choose the constants as in the proof of boundedness, but in the the scale $\varepsilon.$ More precisely, we choose now, 
	we choose $\sigma \in (0,\frac{1}{4})$ small enough such that 
	\begin{equation}\label{choiceofsigma1epsilon}
		C_{0,V} \ \sigma^{\beta} \leq \frac{\varepsilon}{4^{(Q+4)}},
	\end{equation}
	where $C_{0,V}$ and $\beta$ are constants given by Theorem \ref{Main excess decay estimate V Intro}. Clearly, $\sigma$ depend only on $\varepsilon, n$ and $p$.
	Now, we fix a radius $R_{1} > 0$ small enough such that we have 
	\begin{equation}\label{smallnessradius1epsilon}
		\omega (R_{1})  + \int_{0}^{2R_{1}} \omega(\varrho) \frac{d \varrho}{\varrho} \leq 
		\frac{\varepsilon\sigma^{2Q}}{6^Q10^6 c_{4}} .
	\end{equation}
	Note that $R_{1}$ depends on $n, p,\gamma, L$ ,$\omega(\cdot),$ and this time, also on $\varepsilon.$ Also, if $a(\cdot)$ is a constant function, the dependence on $\omega( \cdot )$ is redundant. 
	With this, we have chosen all the relevant parameters.\smallskip

	\emph{ \textbf{Step 2: Smallness of the excess.}} Fix $R< \min \{R^*, R_1\}$ and consider the chain of shrinking balls as described in \eqref{shrinkingballs1}.  Proceeding exactly as in Step 2 of the proof of Theorem \ref{dwsupboundhomogeneousDini}, we obtain the following.  
	
	For any $i \geq 1,$ we have 
	\begin{equation}\label{i+1 to i estimate for excess of xw}
		E_{2}(V(\X w), B_{i+1}) \leq \frac{\varepsilon}{4} E_{2}(V(\X w), B_{i}) + \frac{2c_{4}\lambda^{\frac{p}{2}}}{\sigma^{Q}}\omega\left( R_{i} \right). 
	\end{equation}
	Now in view of \eqref{sum modulus of continuity} and \eqref{smallnessradius1epsilon} we have from \eqref{i+1 to i estimate for excess of xw}
	\begin{align*}
	E_{2}(V(\X w), B_{i+1}) \leq \frac{\varepsilon}{4} E_{2}(V(\X w), B_{i}) + \frac{\varepsilon\sigma^{Q} \lambda^{\frac{p}{2}}}{3^Q10^6}. 
	\end{align*}
Now iterating this we obtain, 
\begin{align*}
E_{2}(V(\X w), B_{i+1}) \leq \left(\frac{\varepsilon}{4}\right)^{i} E_{2}(V(\X w), B_{1}) + \frac{\varepsilon\sigma^{Q} \lambda^{\frac{p}{2}}}{3^Q10^6}\sum_{j=0}^\infty \left(\frac{\varepsilon}{4}\right)^{i}.
\end{align*}
Now we choose $i_0\equiv i_0(n, p,\gamma, L$, $\omega(\cdot), \varepsilon)\geq 2$, such that $\varepsilon^{i_0-1} \leq \sigma^Q$. Then this implies for any $i\geq i_0+1 $ we have
 \begin{align*}
 E_{2}(V(\X w), B_{i}) \leq \varepsilon \sigma^Q \lambda^{\frac{p}{2}}.
 \end{align*}
	This now easily implies that, given $\varepsilon \in (0,1),$ there exists a positive radius $r_{\varepsilon} = r_{\varepsilon} ( n, p,\gamma, L$, $\omega(\cdot), \varepsilon )$ such that we have 
	\begin{equation}\label{smallness of the excess dini}
		E_{2}(V(\X w), B_{\rho}) \leq \lambda^{\frac{p}{2}} \varepsilon, 
	\end{equation}
	whenever $0 < \rho \leq r_{\varepsilon}$ and $B_{\rho} \subset \subset \Omega.$
	
	\emph{ \textbf{Step 3: Cauchy estimate.}}\smallskip 
	
	\noindent Now we finish the proof. First we fix a radius one last time. 
	Given $\varepsilon \in (0, 1),$ using \eqref{smallness of the excess dini} we choose a radius $R_{3} \equiv R_{3}(n,p,\gamma, L$, $\omega(\cdot), \varepsilon )$ such that 
	\begin{equation}\label{excess very small for epsilon}
		\sup_{0 < \rho \leq R_{3}} \sup_{x \in \Omega_{0}} E_{2}(V(\X w), B_{\rho}) \leq \frac{\sigma^{4Q}\lambda^{\frac{p}{2}} \varepsilon}{10^{10}}.
	\end{equation}
	We set $$ R_{0} := \frac{\min{\lbrace R^{\ast}, R_{3}, R_{1} \rbrace} }{16}$$
	and again consider the chain of shrinking balls as described in \eqref{shrinkingballs1} with the starting radius $R_0$.
	Now we want to show that given two integers $2 \leq i_{1} < i_{2},$ we have the estimate 
	\begin{equation}\label{integer ball average oscillation}
		\left\lvert (V(\X w))_{B_{i_{1}}} - (V(\X w))_{B_{i_{2}}} \right\rvert \leq \frac{\lambda^{\frac{p}{2}} \varepsilon}{10}.
	\end{equation}
	Note that this will complete the proof, since for any $0 < \rho < \varrho \leq \sigma^2 R_0,$ there exist integers such that 
	$$ \sigma^{i_{1}+1}R_{0} < \varrho \leq \sigma^{i_{1}}R_{0} \quad \text{ and } \sigma^{i_{2}+1}R_{0} < \rho \leq \sigma^{i_{2}}R_{0}. $$
	Also, we have the easy estimates
	\begin{align*}
		\left\lvert (V(\X w))_{B_{\varrho}} - (V(\X w))_{B_{i_{1}+1}} \right\rvert &\leq \fint_{B_{i_{1}+1}} \left\lvert  V(\X w) - (V( \X w))_{B_{\varrho}}  \right\rvert \\
		&\leq \frac{\lvert B_{\varrho}\rvert}{\lvert B_{i_{1}+1}\rvert} \fint_{B_{\varrho}} \left\lvert  V(\X w) - (V(\X w))_{B_{\varrho}}  \right\rvert \\
		& \le\sigma^{-Q} E_{2}(V(\X w), B_{\varrho}) \stackrel{\eqref{excess very small for epsilon}}{\leq}\frac{\lambda^{\frac{p}{2}} \varepsilon}{1000}
	\end{align*}
	and similarly 
	$$  \left\lvert (V(\X w))_{B_{\rho}} - (V(\X w))_{B_{i_{2}+1}} \right\rvert \leq \frac{\lambda^{\frac{p}{2}} \varepsilon}{1000}.$$
	These two estimates combined with \eqref{integer ball average oscillation} will establish \eqref{oscillationVdw}. Thus it only remains to establish \eqref{integer ball average oscillation}. Summing up \eqref{i+1 to i estimate for excess of xw} from 
	$i = i_{1}-1$ to $i_{2}-2,$ and using \eqref{excess very small for epsilon}, \eqref{smallnessradius1epsilon} and \eqref{sum modulus of continuity} we obtain 
	$$\sum_{i=i_{1}}^{i_{2}-1}E_{2}(V(\X w), B_{i}) \leq 2 E_{2}(V(\X  w), B_{i_{1}-1}) + \frac{4c_{4}\lambda^{\frac{p}{2}}}{\sigma^{Q}}\sum_{i=1}^{\infty} \omega\left( R_{i} \right)
	\leq \frac{ \sigma^{2Q} \lambda^{\frac{p}{2}} \varepsilon}{50}. $$
	This yields \eqref{integer ball average oscillation} via the elementary estimate 
	\begin{align*}
		\left\lvert (V(\X w))_{B_{i_{1}}} - (V(\X w))_{B_{i_{2}}} \right\rvert &\leq \sum_{i=i_{1}}^{i_{2}-1} \fint_{B_{i+1}} \left\lvert V(dw) - (V(\X w))_{B_{i}} \right\rvert \\
		&\leq \sigma^{-Q} \sum_{i=i_{1}}^{i_{2}-1}E_{2}(V(\X  w), B_{i}) .
	\end{align*}
	This establishes \eqref{oscillationVdw}.\smallskip 
	
	\emph{ \textbf{Step 4: Final conclusions.}} Now we prove Theorem \ref{dwcontinuityhomogeneousDini}. Note that \eqref{oscillationVdw} implies $V(\X w)$ is continuous. \eqref{sup estimate dini} follows from Theorem \ref{dwsupboundhomogeneousDini}. Next, fix $x\in \Omega$, $A, R$ and $\lambda$ satisfying \eqref{sup bound for osc dini}. Now there exists  $0<r_\varepsilon<R/16$ which depends only on $n,p, \gamma, L, \omega( \cdot ),$ $ \varepsilon$, such that for every $\tilde{x}\in B(x,R)$
	\begin{align}
		\left\lvert \left( V(\X w)\right)_{B(\tilde{x}, \rho)} - \left( V(\X w)\right)_{B(\tilde{x}, \varrho)} \right\rvert &\leq (A\lambda)^{\frac{p}{2}} \varepsilon \qquad  \text{ and } \label{smallness of the excess diniFC1}\\ E_{2}(V(\X w), B_{\rho}) &\leq (A\lambda)^{\frac{p}{2}} \varepsilon, \label{smallness of the excess diniFC}
	\end{align}	
	whenever $0 < \rho, \varrho \leq r_{\varepsilon}$.  The proof is similar to  \eqref{oscillationVdw} and \eqref{smallness of the excess dini}.  
	Since $V \left(\X w\right)$ is continuous, by letting $\rho \rightarrow 0$ in \eqref{smallness of the excess diniFC1}, we obtain
	\begin{align*}
		\left\lvert  V(\X w)\left( \tilde{x}\right) - \left( V(\X w)\right)_{B(\tilde{x}, \varrho)} \right\rvert \leq \left(A\lambda\right)^{\frac{p}{2}} \varepsilon \qquad \text{ holds for every }  \varrho \in 
		(0, r_{\varepsilon}].
	\end{align*}
	Choose $\sigma_1 >0$ sufficiently small such that $\sigma_1 R \leq r_{\varepsilon}/{64}.$
	Now we have 
	\begin{align*}
		B(\tilde{y}, r_{\varepsilon}/8)  \subset B(\tilde{x}, r_{\varepsilon }) \subset  B( x, R/4 ), \quad \text{ for any } \tilde{y}, \tilde{x} \in B(x , \sigma_1 R).
	\end{align*}
	We estimate 
	\begin{multline*}
		\left\lvert   \left( V(\X w)\right)_{B(\tilde{y},  r_{\varepsilon}/8)} - \left( V(\X w)\right)_{B(\tilde{x},  r_{\varepsilon}/8)} \right\rvert  \\ \begin{aligned}
		&\leq \fint_{B(\tilde{y},  r_{\varepsilon}/8)}\left\lvert  V(\X w) - \left( V(\X w)\right)_{B(\tilde{x},  r_{\varepsilon}/8)} \right\rvert \\
		&\leq 8^Q\fint_{B(\tilde{x},  r_{\varepsilon})}\left\lvert  V(\X w) - \left( V(\X w)\right)_{B(\tilde{x},  r_{\varepsilon}/8)} \right\rvert \\
		& \leq 8^Q \fint_{B(\tilde{x},  r_{\varepsilon})}\left\lvert  V(\X w) - \left( V(\X w)\right)_{B(\tilde{x},  r_{\varepsilon})} \right\rvert\\
		&\qquad+ 8^Q \left\lvert \left( V(\X w)\right)_{B(\tilde{x},  r_{\varepsilon})} - \left( V(\X w)\right)_{B(\tilde{x},  r_{\varepsilon}/8)} \right\rvert  \\
		&\leq 2\cdot8^{2Q} \fint_{B(\tilde{x},  r_{\varepsilon})}\left\lvert  V(\X w) - \left( V(\X w)\right)_{B(\tilde{x},  r_{\varepsilon})} \right\rvert \\
		&\leq 16^{2Q} \left(\fint_{B(\tilde{x},  r_{\varepsilon})}\left\lvert  V(\X w) - \left( V(\X w)\right)_{B(\tilde{x},  r_{\varepsilon})} \right\rvert^{2} \right)^{\frac{1}{2}} \stackrel{\eqref{smallness of the excess diniFC}}{\leq}16^{2Q}\left(A\lambda\right)^{\frac{p}{2}}\varepsilon.
		\end{aligned}
	\end{multline*} 
Thus, by triangle inequality, we have 
	\begin{multline*}
		\left\lvert  V(\X w)\left( \tilde{x}\right) -  V(\X w)\left( \tilde{y}\right) \right\rvert \\ \begin{multlined}[t]
			\leq 	\left\lvert  V(\X w)\left( \tilde{x}\right) - \left( V(\X w)\right)_{B(\tilde{x}, r_\varepsilon/8)} \right\rvert + 	\left\lvert  V(\X w)\left( \tilde{y}\right) - \left( V(\X w)\right)_{B(\tilde{y}, r_\varepsilon/8)} \right\rvert \\ + \left\lvert  \left( V(\X w)\right)_{B(\tilde{y},  r_{\varepsilon}/8)} - \left( V(\X w)\right)_{B(\tilde{x},  r_{\varepsilon}/8)} \right\rvert, 
		\end{multlined}
	\end{multline*}
	for any $\tilde{x}, \tilde{y} \in B(x , \sigma_1 R).$
	Thus, we deduce 
	\begin{align}\label{prefinal osc estimate dini}
		\left\lvert  V(\X w)\left( \tilde{x}\right) -  V(\X w)\left( \tilde{y}\right) \right\rvert \leq 48^{2Q} \left(A\lambda\right)^{\frac{p}{2}}\varepsilon. 
	\end{align}
	Now, we first prove \eqref{osc estimate dini} when $p \geq 2.$ For any given $\delta >0,$ clearly we can assume 
	\begin{align*}
		\max \left\lbrace \left\lvert \X w\left( \tilde{x}\right)\right\rvert , \left\lvert \X w\left( \tilde{y}\right)\right\rvert\right\rbrace  \geq \frac{\delta \lambda}{2}, 
	\end{align*}
	as the estimate follows trivially otherwise. Using the fact that $p\geq 2,$ we deduce 
	\begin{align*}
		\left\lvert \X w\left( \tilde{x}\right) - \X w\left( \tilde{y}\right)\right\rvert &\leq c_{V} \frac{\left\lvert  V(\X w)\left( \tilde{x}\right) -  V(\X w)\left( \tilde{y}\right) \right\rvert}{\left(\left\lvert \X w\left( \tilde{x}\right)\right\rvert + \left\lvert \X w\left( \tilde{y}\right)\right\rvert\right)^{\frac{p-2}{2}}} \\
		&\leq c_{V}2^{\frac{p}{2}-1} \left( \delta \lambda \right)^{1-\frac{p}{2}}\left\lvert  V(\X w)\left( \tilde{x}\right) -  V(\X w)\left( \tilde{y}\right) \right\rvert \\
		&\stackrel{\eqref{prefinal osc estimate dini}}{\leq} c_{V}2^{\frac{p}{2}-1}48^{2Q} A^{\frac{p}{2}} \delta^{1-\frac{p}{2}}\lambda \varepsilon. 
	\end{align*}
	On the other hand, for $1< p <2,$ we have
	\begin{align*}
		\left\lvert \X w\left( \tilde{x}\right) - \X w\left( \tilde{y}\right)\right\rvert &\leq c_{V} \frac{\left\lvert  V(\X w)\left( \tilde{x}\right) -  V(\X w)\left( \tilde{y}\right) \right\rvert}{\left(\left\lvert \X w\left( \tilde{x}\right)\right\rvert + \left\lvert \X w\left( \tilde{y}\right)\right\rvert\right)^{\frac{p-2}{2}}} \\
		&\stackrel{\eqref{sup bound for osc dini}}{\leq} c_{V}\left(2A\lambda\right)^{\frac{2-p}{2}}\left\lvert  V(\X w)\left( \tilde{x}\right) -  V(\X w)\left( \tilde{y}\right) \right\rvert \stackrel{\eqref{prefinal osc estimate dini}}{\leq} c_{V} 2^{\frac{p-2}{2}}48^{2Q} A\lambda\varepsilon. 
	\end{align*}
	Thus, we choose 
	\begin{align*}
		\varepsilon := \left\lbrace \begin{aligned}
			&\frac{\delta^{\frac{p}{2}}}{c_{V}2^{\frac{p}{2}-1}48^{2Q} A^{\frac{p}{2}}} &&\text{ if } p\geq 2, \\
			&\frac{\delta}{c_{V} 2^{\frac{p-2}{2}}48^{2Q} A} &&\text{ if } 1 < p < 2. 
		\end{aligned}\right. 
	\end{align*}
	Then we can determine the radius $r_{\varepsilon}$ and consequently $\sigma_1$ depending on this choice of $\varepsilon$ which depends only on $n,p, \gamma, L, \omega( \cdot ), \delta$ as desired. This proves \eqref{osc estimate dini}, which implies the continuity of $\X w$. This completes the proof.
\end{proof}
\section{Nonlinear Stein theorem}\label{Nonlinear Stein section}
\subsection{General setting}
 Let $u \in HW_{\text{loc}}^{1,p} \left(\Omega \right)$ be a weak solution to \eqref{Main equation Stein Heisenberg}. Pick $x_{0} \in \Omega$ and $0 < R <1 $ such that $B(x_{0}, 2R) \subset \subset \Omega.$ Then clearly, $u \in HW^{1,p} \left(B_{2R} \right)$ and satisfies 
\begin{align}\label{main equation in a ball}
	\operatorname{div}_{\mathbb{H}}\left(  a(x) \lvert \X u \rvert^{p-2} \X u) \right)   &= f   &&\text{ in } B_{2R}. 
\end{align}
For $j\geq 0,$ we set 

\begin{align}\label{concentric balls}
	B_{j}:= B(x_{0}, r_{j}), \qquad r_{j}:= \sigma^{j}R, \quad \sigma \in (0,1/4).
\end{align} 
With a slight abuse of notation we denote 
$$ cB_j: = B(x_0,cr_j), \text{ for any } c>0.$$
\noindent For $j \geq 0,$ we define $w_{j} \in u + HW_{0}^{1,p}\left( B_{j}\right)$ to be the unique solution of 
\begin{align}\label{quasilinear homogeneous general}
	\left\lbrace \begin{aligned}
		\operatorname{div}_{\mathbb{H}}\left(  a(x) \lvert \X w_{j} \rvert^{p-2} \X w_{j}) \right) &= 0   &&\text{ in } B_{j},\\
		w_{j} &= u &&\text{  on } \partial B_{j},
	\end{aligned} 
	\right. 
\end{align}
and $v_{j} \in w_{j} + HW_{0}^{1,p}\left( B(x_0,R_j/2)\right)$ to be the unique solution of 
\begin{align}\label{quasilinear homogeneous frozen general}
	\left\lbrace \begin{aligned}
		\operatorname{div}_{\mathbb{H}} \left(  a(x_{0}) \lvert \X v_{j} \rvert^{p-2} \X v_{j}) \right)  &= 0   &&\text{ in }  B(x_0,R_j/2),\\
		v_{j} &= w_{j} &&\text{  on } \partial \left(  B(x_0,R_j/2)\right).
	\end{aligned} 
	\right. 
\end{align}
For $1<p<\infty$, $p'$ denotes the number $\frac{p}{p-1}$ and let $p_{0}>p$ be any real number. Set 
\begin{align}\label{qdef}
		p^{\ast} := \left\lbrace \begin{aligned}
		&\frac{Qp}{Q-p}, &&\text{if } p < Q \\
		&p_{0}, &&\text{if } p\geq Q,
	\end{aligned}\right. \quad \text{ and } \quad 	q := \left\lbrace \begin{aligned}
	&\frac{Qp}{Q+p} &&\text{if } p \geq 2 \\
	&\frac{Qp}{Qp -Q +p} &&\text{if } 1 < p < 2 .
\end{aligned}\right.
\end{align}
\subsection{New ingredients}
Now we record the new ingredients supplied by our analysis in the present work. The first one is a consequence of Theorem \ref{dwcontinuityhomogeneousDini}. 
\begin{lemma}\label{estimate for wj}
Let $w_{j} \in u + HW_{0}^{1,p}\left( B_{j}\right)$ to be the unique weak solution of \eqref{quasilinear homogeneous general} for $j \geq 0$. Then there exists a constant 
	$C_{2} \equiv C_{2}(n, p, \gamma, L, \omega( \cdot )) \geq 1$ and a positive radius $R_{1}= R_{1}(n, p, \gamma, L, \omega( \cdot )) >0$ such that if $R \leq R_{1}$, then the following holds:  
	\begin{align}\label{sup estimate dini inside proof}
		\sup\limits_{\frac{1}{2}B_{j}} \left\lvert \X w_{j} \right\rvert \leq C_{2}  \fint_{B_{j}} \left\lvert \X w_{j} \right\rvert.
	\end{align}
Furthermore, for any $A \geq 1$ and any $\delta \in (0,1), $ there exists a positive constant 
$\sigma_{1} \equiv \sigma_{1} (n, p, \gamma, L, \omega( \cdot ), A, \delta ) \in (0, \frac{1}{4})$ such that for any $\lambda>0, $ 
	\begin{align}\label{osc estimate dini inside proof}
		\sup\limits_{\frac{1}{2}B_{j}} \left\lvert \X  w_{j} \right\rvert \leq A\lambda \implies \sup_{x,y \in \sigma_{1}B_{j}} \left\lvert \X w_{j}(x) - \X w_{j}(y) \right\rvert \leq \delta \lambda.
	\end{align}
\end{lemma}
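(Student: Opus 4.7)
The plan is that both assertions follow as immediate specializations of Theorem \ref{dwcontinuityhomogeneousDini}, applied to the function $w_{j}$ on the ball $B_{j}=B(x_{0},r_{j})$. Indeed, $w_{j}$ is by construction a local weak solution of the homogeneous equation \eqref{p subLaplace with dini coeff} (with the same Dini continuous coefficient $a$ with values in $[\gamma,L]$) in the open set $B_{j}\subset\Omega$, so the hypotheses of Theorem \ref{dwcontinuityhomogeneousDini} are met.

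For the first statement \eqref{sup estimate dini inside proof}, I would invoke Theorem \ref{dwcontinuityhomogeneousDini} (i) with the center $x=x_{0}$ and the radius $R=r_{j}$. Since $r_{j}=\sigma^{j}R\le R\le R_{1}$ by the standing hypothesis $R\le R_{1}$, the estimate \eqref{sup estimate dini} with $C_{2}$ and $R_{1}$ depending only on $n,p,\gamma,L,\omega(\cdot)$ applies verbatim, yielding $\sup_{\frac{1}{2}B_{j}}|\X w_{j}|\le C_{2}\fint_{B_{j}}|\X w_{j}|$, which is exactly \eqref{sup estimate dini inside proof}. Note that when $a(\cdot)$ is constant, the restriction on $r_{j}$ can be dropped, consistently with the corresponding remark in Theorem \ref{dwcontinuityhomogeneousDini}.

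For the second statement \eqref{osc estimate dini inside proof}, suppose $\sup_{\frac{1}{2}B_{j}}|\X w_{j}|\le A\lambda$ for some $A\ge 1$ and $\lambda>0$. This is precisely the hypothesis \eqref{sup bound for osc dini} of Theorem \ref{dwcontinuityhomogeneousDini} (ii) applied with $x=x_{0}$ and $R=r_{j}$. Consequently, for any prescribed $\delta\in(0,1)$, the theorem provides a constant $\sigma_{1}\equiv\sigma_{1}(n,p,\gamma,L,\omega(\cdot),A,\delta)\in(0,\tfrac{1}{4})$ such that, choosing $\sigma=\sigma_{1}$ in \eqref{osc estimate dini}, one obtains
\[
\sup_{x,y\in B(x_{0},\sigma_{1} r_{j})}\left\lvert \X w_{j}(x)-\X w_{j}(y)\right\rvert\le \delta\lambda,
\]
which, since $B(x_{0},\sigma_{1}r_{j})=\sigma_{1}B_{j}$, is exactly \eqref{osc estimate dini inside proof}.

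There is really no obstacle here: the lemma is a clean restatement of Theorem \ref{dwcontinuityhomogeneousDini} for the frozen comparison functions $w_{j}$, and the only point to verify is the scaling consistency (that $r_{j}\le R_{1}$ under the hypothesis $R\le R_{1}$ and that the center $x_{0}$ lies in the domain of definition of $w_{j}$), both of which are built into our construction \eqref{concentric balls} and \eqref{quasilinear homogeneous general}. In particular, the constants $C_{2}$, $R_{1}$, and $\sigma_{1}$ are inherited directly from Theorem \ref{dwcontinuityhomogeneousDini} with the same dependencies.
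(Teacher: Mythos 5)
Your proposal is correct and matches the paper's (implicit) argument: the paper simply states that Lemma \ref{estimate for wj} is a consequence of Theorem \ref{dwcontinuityhomogeneousDini}, and you have spelled out the straightforward specialization to $w_{j}$ on $B_{j}$ with center $x_{0}$ and radius $r_{j}$, correctly noting that $r_{j}\le R\le R_{1}$ and that the restricted coefficient $a|_{B_{j}}$ has modulus of continuity dominated by $\omega_{a,\Omega}(\cdot)$, so the constants are independent of $j$.
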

The following follows from Theorems \ref{thm:lip Lq bound} and \ref{thm:holder} and does not need our results. 
\begin{lemma}\label{estimates for vj}
	Let $v_{j} \in w_{j} + HW_{0}^{1,p}\left( B(x_0,R_j/2)\right)$ to be the unique weak solution of \eqref{quasilinear homogeneous frozen general} for $j \geq 0$. Then there exists a constant $C_3 \equiv C_3(n, p)\ge 1$ such that 
	\begin{align}
		\sup_{B(x_0, R_j/4)} \lvert \X v_j \rvert \leq C_3 \fint _{B_(x_0, R_j/2)} \lvert \X v_j \rvert .\label{sup estimate for vj} 
	\end{align}
	For every $A\geq 1$ there exists a constants $C_4\equiv C_4(n, p, A) \equiv \tilde{C}_4(n, p) A$ and  $\alpha\equiv \alpha(n,p) \in (0,1)$, such that for any $\lambda>0$ and for any $\tau \in (0, 1/4),$ we have 
	\begin{align}\label{osc estimate for vj}
		\sup_{B(x_0, R_j/4)} \lvert \X v_j \rvert  \leq A\lambda \implies  \sup_{x, y \in B(x_0, \tau R_j)} \lvert \X v_j(x) -\X v_j(y) \lvert  \leq C_4 \tau^\alpha \lambda. 
	\end{align}
\end{lemma}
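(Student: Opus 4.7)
The plan is to observe that this lemma is essentially a restatement of the already-established results Theorem \ref{thm:lip Lq bound} and Theorem \ref{thm:holder} for the constant-coefficient equation. Since $a(x_0) > 0$ is a fixed positive constant, dividing the equation \eqref{quasilinear homogeneous frozen general} through by $a(x_0)$ shows that $v_j$ is a weak solution of the standard homogeneous $p$-subLaplace equation
\[
\operatorname{div}_{\mathbb{H}} \left( \left\lvert \X v_j \right\rvert^{p-2} \X v_j \right) = 0 \quad \text{ in } B(x_0, R_j/2),
\]
i.e.\ precisely an equation of the form \eqref{equation:main}. Hence Zhong's Lipschitz and H\"older estimates apply to $v_j$ on any ball compactly contained in $B(x_0, R_j/2)$.

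For the sup-$L^1$ bound \eqref{sup estimate for vj}, I would apply Theorem \ref{thm:lip Lq bound} with exponent $q=1$ to $v_j$ on the ball $B(x_0, R_j/2)$, with inner radius $r = R_j/4$ so that $B_{2r} = B(x_0, R_j/2)$. This yields
\[
\sup_{B(x_0, R_j/4)} \left\lvert \X v_j \right\rvert \leq c_1 \fint_{B(x_0, R_j/2)} \left\lvert \X v_j \right\rvert,
\]
where $c_1$ depends only on $n$ and $p$. Setting $C_3 := c_1$ gives the first assertion.

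For the oscillation decay \eqref{osc estimate for vj}, I would apply Theorem \ref{thm:holder} to $v_j$ with the choices $r_0 = R_j/4$ and $r = \tau R_j$. The condition $\tau \in (0, 1/4)$ ensures $r \leq r_0,$ and the ball $B(x_0, r_0) \subset \subset B(x_0, R_j/2)$ lies inside the domain of the equation, so Theorem \ref{thm:holder} gives
\[
\sup_{x,y \in B(x_0, \tau R_j)} \left\lvert \X v_j(x) - \X v_j(y) \right\rvert \leq c_h \left( \frac{\tau R_j}{R_j/4}\right)^{\alpha_1} \sup_{B(x_0, R_j/4)} \left\lvert \X v_j \right\rvert \leq c_h \, 4^{\alpha_1} A \lambda \, \tau^{\alpha_1},
\]
where in the last step we have used the standing hypothesis $\sup_{B(x_0, R_j/4)} \lvert \X v_j \rvert \leq A\lambda$. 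Setting $\alpha := \alpha_1$ and $C_4 := (c_h \, 4^{\alpha_1}) A = \tilde C_4(n,p)\, A$ yields the second assertion with the required dependence of $C_4$ on $A$.

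There is no real obstacle here: both parts follow by direct citation of previously established results applied to the constant-coefficient equation, together with the trivial rescaling by $a(x_0)$. The only minor point worth emphasizing in the write-up is the bookkeeping that $C_4$ depends linearly on $A$, which is visible because the constant $c_h$ in Theorem \ref{thm:holder} is independent of the solution and the hypothesis enters only through the single factor $\|\X v_j\|_{\infty, R_j/4} \leq A\lambda$.
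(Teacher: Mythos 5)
Your argument is correct and is exactly what the paper intends: the paper itself remarks that Lemma \ref{estimates for vj} ``follows from Theorems \ref{thm:lip Lq bound} and \ref{thm:holder} and does not need our results,'' and your write-up supplies precisely that derivation, together with the small but important bookkeeping that the oscillation bound scales linearly in $A$ because the hypothesis $\sup_{B(x_0,R_j/4)}\lvert \X v_j\rvert \le A\lambda$ enters Theorem \ref{thm:holder} only through the single factor $\lVert \X v_j\rVert_{\infty,r_0}$.

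One point worth being slightly more careful about in a final write-up: you apply Theorem \ref{thm:lip Lq bound} with $B_{2r}(x_0) = B(x_0, R_j/2)$ equal to the full domain on which $v_j$ solves the equation, rather than a strictly compactly contained ball. This is consistent with the way the theorem is stated (the hypothesis is $B_{2r}(x_0)\subset\Omega$, not $\subset\subset\Omega$, with a constant depending only on $n,p$), and can in any case be justified by applying the estimate on $B_{2r'}(x_0)$ for $r'<R_j/4$ and letting $r'\uparrow R_j/4$, since $\X v_j\in L^1(B(x_0,R_j/2))$; but you may wish to make this explicit so that a reader does not worry about a constant degenerating as the ball touches the boundary.
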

The second new ingredient is the following, which is just Theorem \ref{Main excess decay estimate Intro}. 
\begin{lemma}\label{smallness from excess decay}
	Let $v_j$ be as in \eqref{quasilinear homogeneous frozen general} for $j\geq 0$. Given $\bar{\varepsilon}>0$, there exists $\sigma_2\equiv \sigma_2(n,p, \gamma, L, \bar{\varepsilon})\in (0,1/4)$, such that if $\sigma \in (0,\sigma_2]$ then for all $t\in [0,1]$, we have
\begin{align}\label{smallness from excess decay estimate}
		\left(\fint_{B_{\sigma R_j}} \left \lvert \X v_j - \left( \X v_j\right)_{\sigma R_j} \right\rvert^t \right)^\frac{1}{t} \leq \bar{\varepsilon} \left(\fint_{B_{\frac{1}{2} R_j}} \left \lvert \X v_j - \left( \X v_j\right)_{\frac{1}{2} R_j} \right\rvert^t \right)^\frac{1}{t}.
	\end{align}
\end{lemma}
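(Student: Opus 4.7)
The key observation is that, after dividing by the positive constant $a(x_0) \in [\gamma, L]$, the function $v_j$ solves the constant-coefficient homogeneous $p$-Laplace equation
\begin{align*}
\operatorname{div}_{\mathbb{H}}\bigl(|\X v_j|^{p-2}\X v_j\bigr) = 0 \quad \text{in } B(x_0, R_j/2),
\end{align*}
so the results of Theorems \ref{Main excess decay estimate Intro}, \ref{thm:lip Lq bound}, and \ref{thm:holder} apply to $v_j$ directly on $B(x_0, R_j/2)$. The argument reduces to transferring the excess decay of Theorem \ref{Main excess decay estimate Intro} from the range $q \in [1,2]$ to the full $L^t$ scale for $t \in (0,1]$.

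The plan is to apply Theorem \ref{Main excess decay estimate Intro} with $q = 1$, inner radius $\sigma R_j$, and outer radius $R_j/2$, yielding
\begin{align*}
\fint_{B_{\sigma R_j}} |\X v_j - (\X v_j)_{\sigma R_j}| \,\mathrm{d}x \leq C_0 (2\sigma)^\alpha \fint_{B_{R_j/2}} |\X v_j - (\X v_j)_{R_j/2}| \,\mathrm{d}x,
\end{align*}
with $\alpha \in (0,1)$ and $C_0 \geq 1$ depending only on $n, p$. For $t \in (0,1]$, Jensen's inequality for the concave map $x \mapsto x^t$ gives $(\fint|f|^t)^{1/t} \leq \fint|f|$, so the left-hand side of the claim at exponent $t$ is controlled by the $L^1$ excess at scale $\sigma R_j$, and hence by $C_0 (2\sigma)^\alpha$ times the $L^1$ excess at scale $R_j/2$.

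To close the argument I plan to dominate the $L^1$ excess at scale $R_j/2$ on the right by the $L^t$ excess, i.e.\ to establish a uniform reverse-H\"older-type inequality
\begin{align*}
\fint_{B_{R_j/2}} |\X v_j - (\X v_j)_{R_j/2}| \leq C \Bigl(\fint_{B_{R_j/2}} |\X v_j - (\X v_j)_{R_j/2}|^t\Bigr)^{1/t}.
\end{align*}
The idea is to combine Theorem \ref{thm:lip Lq bound} at exponent $t$ with the quantitative H\"older continuity of Theorem \ref{thm:holder} to obtain $L^\infty$ control of $\X v_j - (\X v_j)_{R_j/2}$ on an inner subball by the $L^t$ excess at scale $R_j/2$; the Kolmogorov-type pointwise interpolation $\fint|g| \leq \|g\|_{L^\infty}^{1-t}\fint|g|^t$ then upgrades this to the desired reverse bound on a subball, which transfers back to scale $R_j/2$ through a H\"older-continuity comparison. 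Choosing $\sigma_2$ so that the compound constant $C \cdot C_0 (2\sigma_2)^\alpha$ is bounded by $\bar\varepsilon$ then finishes the proof.

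The main obstacle is making this reverse-H\"older step uniform in $t \in (0,1]$: the constant in Theorem \ref{thm:lip Lq bound} at exponent $t$ deteriorates as $t \to 0^+$, while the Kolmogorov step produces a factor $1/(1-t)$ that blows up as $t \to 1^-$. Both obstructions must be balanced by splitting $(0,1]$ at a convenient threshold $t_0 \in (0,1)$ and invoking the monotonicity of the $L^t$-excess in $t$ for $t \geq t_0$ to replace $t$ by $t_0$ on the right-hand side at the cost of an innocuous constant. This is the subelliptic counterpart of Lieberman's ``exponent-decreasing'' argument \cite{Lieberman_expoentdecreasing}, which (as noted in the introduction) extends excess decay estimates to smaller exponents in the Euclidean case, and produces a single $\sigma_2$ depending only on $n, p, \gamma, L, \bar\varepsilon$.
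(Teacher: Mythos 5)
Your Steps 1 and 2 (Theorem \ref{Main excess decay estimate Intro} at $q=1$, then Jensen on the inner ball for $t \leq 1$) are fine, but Step 3 has a genuine gap: the cited ingredients do not produce the reverse H\"older bound $\fint_{B_{R_j/2}} \lvert \X v_j - (\X v_j)_{R_j/2}\rvert \leq C \bigl(\fint_{B_{R_j/2}} \lvert \X v_j - (\X v_j)_{R_j/2}\rvert^t\bigr)^{1/t}$. Theorem \ref{thm:lip Lq bound} controls $\sup\lvert\X v_j\rvert$ by the $L^t$ average of $\lvert\X v_j\rvert$, not of the excess, and Theorem \ref{thm:holder} controls $\sup_{x,y\in B'}\lvert\X v_j(x)-\X v_j(y)\rvert$ by $\lVert\X v_j\rVert_\infty$; composing these bounds the oscillation on a subball by $\bigl(\fint\lvert\X v_j\rvert^t\bigr)^{1/t}$, an $L^t$ average of $\lvert\X v_j\rvert$ rather than the $L^t$ excess needed on the right-hand side of your Kolmogorov step. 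When $\X v_j$ is close to a large constant vector $\xi_0$ these two quantities are not comparable with a uniform constant: the first is of order $\lvert\xi_0\rvert$ while the excess can be arbitrarily small, so the Kolmogorov interpolation cannot be closed as written.

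What is actually needed is a sup-over-$L^t$ bound on the excess itself, and that requires the degenerate/nondegenerate dichotomy of the paper's proof of Theorem \ref{Main excess decay estimate Intro} (Claims \ref{sup controls oscillation claim} and \ref{nondegenerate case must occur claim}): either $\lvert(\X v_j)_{R_j/2}\rvert$ is small relative to $\lVert\X v_j\rVert_\infty$, in which case $\lvert\X v_j\rvert$ and the excess are pointwise comparable and the sup bound transfers, or not, in which case one is in the nondegenerate regime and Lemma \ref{reverse holder inequality lemma} applies. Those Claims, as written, use the triangle inequality for the $L^q$ norm and Jensen's $\fint\lvert g\rvert \leq (\fint\lvert g\rvert^q)^{1/q}$, both of which reverse when $q<1$, so for $t<1$ the dichotomy must genuinely be rerun for small exponents, not merely cited, and the $t$-dependence of the resulting constants tracked. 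Finally, the proposed ``monotonicity'' reduction at a threshold $t_0$ is incorrect as stated: the Kolmogorov inequality $\fint\lvert g\rvert \leq \lVert g\rVert_\infty^{1-t}\fint\lvert g\rvert^t$ has no $1/(1-t)$ singularity as $t\to 1^-$, and replacing $t$ by a smaller $t_0$ on the right-hand side of \eqref{smallness from excess decay estimate} only makes the target inequality \emph{stronger}, since $(\fint\lvert g\rvert^{t_0})^{1/t_0} \leq (\fint\lvert g\rvert^t)^{1/t}$, so it cannot serve as a reduction to a fixed exponent.
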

\subsection{Comparison estimates}
We begin by recording a number of comparison estimates. The following two can be proved analogously to  Lemma 5 in \cite{Sil_nonlinearStein} and Lemma 3 in \cite{Sil_nonlinearStein} and is valid for $1 < p < \infty.$
\begin{lemma}\label{u w comparison any p easy}
	Let $u$ be as in \eqref{main equation in a ball} and $w_{j}$ be as in \eqref{quasilinear homogeneous general} and $j \geq 0.$ There exists a constant 
	$c_{5} \equiv c_{5}\left( n, p, \gamma, L, \right)$ such that for any $p >1$, the following inequality holds. 
	\begin{equation}\label{estimateuwforanyp}
		\fint_{B_{j}}\left( \left\lvert \X u \right\rvert + \left\lvert \X w_{j} \right\rvert  \right)^{p-2}\left\lvert \X u  - \X w_{j}\right\rvert^{2}  
		\leq c_{5} \fint_{B_{j}} \lvert f \rvert \lvert u -w_{j}\rvert .
	\end{equation}
\end{lemma}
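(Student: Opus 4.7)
The plan is the standard subtraction-of-weak-formulations argument combined with the monotonicity estimate \eqref{monotonicity}. Since both $u$ and $w_{j}$ belong to $HW^{1,p}(B_{j})$ and agree on $\partial B_{j}$ in the trace sense (by definition of $w_{j}$), the difference $\varphi := u - w_{j}$ lies in $HW^{1,p}_{0}(B_{j})$ and is therefore a legitimate test function in the weak formulations of both \eqref{main equation in a ball} and \eqref{quasilinear homogeneous general}.

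First, I would test the weak formulation of \eqref{main equation in a ball} with $\varphi$ to obtain
\begin{align*}
\int_{B_{j}} a(x) \lvert \X u \rvert^{p-2} \langle \X u, \X u - \X w_{j}\rangle\, dx = -\int_{B_{j}} f (u - w_{j})\, dx,
\end{align*}
and similarly test \eqref{quasilinear homogeneous general} with $\varphi$ to obtain
\begin{align*}
\int_{B_{j}} a(x) \lvert \X w_{j} \rvert^{p-2} \langle \X w_{j}, \X u - \X w_{j}\rangle\, dx = 0.
\end{align*}
Subtracting the two identities yields
\begin{align*}
\int_{B_{j}} a(x) \left\langle \lvert \X u \rvert^{p-2} \X u - \lvert \X w_{j} \rvert^{p-2} \X w_{j},\, \X u - \X w_{j}\right\rangle dx = -\int_{B_{j}} f (u - w_{j})\, dx.
\end{align*}

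Next I would apply the monotonicity estimate \eqref{monotonicity} pointwise to the integrand on the left, which provides the lower bound $c_{M}^{-1}(\lvert \X u\rvert + \lvert \X w_{j}\rvert)^{p-2}\lvert \X u - \X w_{j}\rvert^{2}$, and use the uniform lower bound $a(x) \geq \gamma >0$ from hypothesis (ii). Bounding the right-hand side above by $\int_{B_{j}} \lvert f\rvert \lvert u - w_{j}\rvert$ gives
\begin{align*}
\frac{\gamma}{c_{M}}\int_{B_{j}} \bigl(\lvert \X u\rvert + \lvert \X w_{j}\rvert\bigr)^{p-2}\lvert \X u - \X w_{j}\rvert^{2} dx \leq \int_{B_{j}} \lvert f\rvert\, \lvert u - w_{j}\rvert\, dx.
\end{align*}
Dividing through by $\lvert B_{j}\rvert$ and setting $c_{5} := c_{M}/\gamma$ produces exactly \eqref{estimateuwforanyp}.

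There is no real obstacle here: the entire argument rests on two ingredients that are already part of the setup, namely the admissibility of $u - w_{j}$ as a test function (which follows from $u - w_{j} \in HW^{1,p}_{0}(B_{j})$) and the classical monotonicity inequality \eqref{monotonicity} of Lemma \ref{prop of V}, which is valid for every $p>1$ with a constant depending only on $n$ and $p$. No distinction between the degenerate ($p\geq 2$) and singular ($1<p<2$) regimes is needed because \eqref{monotonicity} already encodes the correct weight $(\lvert \X u\rvert + \lvert \X w_{j}\rvert)^{p-2}$ in both cases.
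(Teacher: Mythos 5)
Your proof is correct and takes the standard route the paper itself relies on (it cites Lemma~5 of \cite{Sil_nonlinearStein} as the model): test both weak formulations with $\varphi = u - w_j \in HW^{1,p}_0(B_j)$, subtract, and invoke the monotonicity inequality \eqref{monotonicity} together with $a(x)\ge\gamma$. The only cosmetic point is that the constant $c_5 = c_M/\gamma$ you extract in fact depends only on $n,p,\gamma$, which is consistent with (and slightly sharper than) the stated dependence on $n,p,\gamma,L$.
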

\begin{lemma}\label{secondcomparison2}
	Let $w_{j},v_{j}$ be as before and  $j \geq 0.$ Then there exists a constant $c \equiv c\left( n, p, \gamma, L \right)$ such that we have the inequality 
	\begin{align}\label{l1omegageneralV}
		\fint_{\frac{1}{2}B_{j}} \lvert V\left( \X v_{j} \right)  - V \left( \X w_{j} \right)  \rvert^{2}   &\leq c\left[ \omega\left(r_{j}\right) \right]^{2}  
		\fint_{\frac{1}{2} B_{j}} \lvert \X w_{j} \rvert^{p}.
	\end{align}
\end{lemma}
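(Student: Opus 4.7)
The plan is to use $\varphi := w_j - v_j \in HW^{1,p}_0(\tfrac{1}{2}B_j)$ as a test function in the weak formulations of \eqref{quasilinear homogeneous general} (noting that $\varphi$ extends by zero to $B_j$) and \eqref{quasilinear homogeneous frozen general}, and then subtract. This gives the identity
\begin{equation*}
\int_{\frac{1}{2}B_j} a(x_0)\bigl[|\X w_j|^{p-2}\X w_j - |\X v_j|^{p-2}\X v_j\bigr]\cdot \X\varphi \, dx = \int_{\frac{1}{2}B_j}\bigl[a(x_0)-a(x)\bigr]|\X w_j|^{p-2}\X w_j\cdot \X\varphi \, dx.
\end{equation*}
First I would apply the monotonicity estimate \eqref{monotonicity} to the LHS, together with the lower bound $a(x_0)\ge \gamma$ and \eqref{constant cv}, to obtain
\begin{equation*}
c\int_{\frac{1}{2}B_j} \bigl|V(\X w_j) - V(\X v_j)\bigr|^2 \, dx \le \int_{\frac{1}{2}B_j} \bigl|a(x_0)-a(x)\bigr|\,|\X w_j|^{p-1}|\X w_j - \X v_j| \, dx,
\end{equation*}
where the right-hand side is further bounded by $\omega(r_j)\int_{\frac{1}{2}B_j}|\X w_j|^{p-1}|\X w_j - \X v_j|$ using that $|a(x_0)-a(x)|\le \omega(r_j)$ for $x\in B_j$.

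The remaining task is to absorb the RHS, and here I would split into two regimes. For $p\ge 2$, the standard splitting $|\X w_j|^{p-1}|\X w_j-\X v_j| = |\X w_j|^{(p-2)/2}|\X w_j-\X v_j|\cdot |\X w_j|^{p/2}$ combined with Young's inequality with exponents $(2,2)$ gives a term $\varepsilon|\X w_j|^{p-2}|\X w_j-\X v_j|^2 + c_\varepsilon \omega^2(r_j)|\X w_j|^p$, and the first term is absorbed using $|\X w_j|^{p-2}\le (|\X w_j|+|\X v_j|)^{p-2}$ (valid for $p\ge 2$) together with \eqref{constant cv}. For $1<p<2$, straightforward Young fails because $|\X w_j|^{p-2}$ can blow up. Here the plan is to invoke \eqref{v estimate p less 2} to write
\begin{equation*}
|\X w_j-\X v_j| \le c\,\bigl|V(\X w_j)-V(\X v_j)\bigr|^{2/p} + c\,\bigl|V(\X w_j)-V(\X v_j)\bigr|\,|\X v_j|^{(2-p)/2}.
\end{equation*}
Inserting this into the RHS and applying Young's inequality with conjugate exponents $(p',p)$ to the first resulting term absorbs $\varepsilon|V(\X w_j)-V(\X v_j)|^2$ into the LHS while leaving $c_\varepsilon\omega^{p'}(r_j)|\X w_j|^p$; since $p'=p/(p-1)\ge 2$ and $\omega\le 1$ on small balls, this dominates $\omega^2(r_j)|\X w_j|^p$. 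For the second term, applying Young on the $|V|$ factor again absorbs $\varepsilon|V|^2$ and leaves $c_\varepsilon\omega^2(r_j)|\X w_j|^{2(p-1)}|\X v_j|^{2-p}\le c\omega^2(r_j)(|\X w_j|+|\X v_j|)^p$. Finally, the minimization property of $v_j$ yields $\int |\X v_j|^p\le \int |\X w_j|^p$, closing the estimate.

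The main obstacle is the $1<p<2$ case, where the degeneracy of $|\X w_j|^{p-2}$ near the origin prevents the direct Young argument that works for $p\ge 2$; the remedy is the delicate use of \eqref{v estimate p less 2} in combination with the precise form of conjugate Young exponents to produce an $\omega^2$ factor (rather than just $\omega$, which would be all that a naive H\"older/Young argument gives). Dividing the resulting estimate by $|\tfrac{1}{2}B_j|$ yields \eqref{l1omegageneralV}.
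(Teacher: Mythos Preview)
Your proposal is correct and follows the standard approach that the paper's cited reference (Lemma 3 in \cite{Sil_nonlinearStein}) employs: test with $w_j-v_j$, apply monotonicity \eqref{monotonicity} and \eqref{constant cv} on the left, bound the coefficient difference by $\omega(r_j)$ on the right, then absorb via Young's inequality, splitting into the regimes $p\ge 2$ and $1<p<2$ with \eqref{v estimate p less 2} handling the latter. One small correction: your justification ``$\omega\le 1$ on small balls'' for controlling $\omega^{p'}$ by $\omega^2$ is not quite what is needed, since the lemma carries no smallness restriction on $r_j$; the correct observation is that $\omega\le L-\gamma$ always (as $a$ takes values in $[\gamma,L]$), so $\omega^{p'}\le (L-\gamma)^{p'-2}\omega^2$, and the extra factor is absorbed into the constant $c(n,p,\gamma,L)$.
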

The following three lemmas are proved analogously to Lemma 7, Lemma 8 and Lemma 9 of \cite{Sil_nonlinearStein}, respectively,  using Lemma \ref{estimate for wj}, Lemma \ref{estimates for vj} and Lemma \ref{smallness from excess decay} in place of Theorem 9, Theorem 8 and Lemma 2, respectively, of \cite{Sil_nonlinearStein}. These provide the comparison estimates for the case $p>2.$ 
\begin{lemma}\label{secondcomparisonpgeq2}
	Let $w_{j}$ be as in \eqref{quasilinear homogeneous general}, $v_{j}$ be as in \eqref{quasilinear  homogeneous frozen general} with $p > 2 $ and $j \geq 0.$ Then there exists a constant $C_{7} \equiv C_{7}\left( n,  p, \gamma, L \right)$ such that we have the inequality
	\begin{align}\label{l1omegageneral}
		\left( \fint_{\frac{1}{2}B_{j}} \lvert \X v_{j} - \X w_{j} \rvert^{p} \right)^{\frac{1}{p}}  &\leq C_{7}\left[ \omega\left(r_{j}\right) \right]^{\frac{2}{p}}  
		\left( \fint_{\frac{1}{2} B_{j}} \lvert \X w_{j} \rvert^{p}\right)^{\frac{1}{p}}.
	\end{align}
\end{lemma}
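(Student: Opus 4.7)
The plan is to derive the $L^p$ comparison estimate for $\X v_j - \X w_j$ directly from the $L^2$ comparison estimate for $V(\X v_j) - V(\X w_j)$ supplied by Lemma \ref{secondcomparison2}, exploiting a pointwise algebraic inequality between $|z_1-z_2|^p$ and $|V(z_1)-V(z_2)|^2$ that is available in the regime $p\geq 2$. All the Heisenberg-specific analysis (existence and energy control of $w_j$ and $v_j$, Dini-coefficient comparison) has already been absorbed into Lemma \ref{secondcomparison2}, so no further PDE argument will be required at this stage.

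First, I will record the algebraic ingredient. Squaring \eqref{constant cv} yields the pointwise bound
\begin{equation*}
\bigl(|z_1|+|z_2|\bigr)^{p-2}\,|z_1-z_2|^2 \leq c_V^{\,2}\,\bigl|V(z_1)-V(z_2)\bigr|^2 \qquad\text{for all } z_1,z_2\in\mathbb{R}^{2n}\text{ not both zero.}
\end{equation*}
Because $p\geq 2$, the triangle inequality gives $|z_1-z_2|^{p-2}\leq \bigl(|z_1|+|z_2|\bigr)^{p-2}$, and therefore
\begin{equation*}
|z_1-z_2|^p = |z_1-z_2|^{p-2}\,|z_1-z_2|^2 \leq \bigl(|z_1|+|z_2|\bigr)^{p-2}\,|z_1-z_2|^2 \leq c_V^{\,2}\,\bigl|V(z_1)-V(z_2)\bigr|^2.
\end{equation*}
The degenerate case $z_1=z_2=0$ is trivial, so this inequality holds pointwise on all of $\mathbb{R}^{2n}\times\mathbb{R}^{2n}$.

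Next I will apply this pointwise with $z_1=\X v_j(x)$ and $z_2=\X w_j(x)$, integrate over $\tfrac{1}{2}B_j$, and invoke Lemma \ref{secondcomparison2} to obtain
\begin{equation*}
\fint_{\frac{1}{2}B_j}|\X v_j - \X w_j|^p \leq c_V^{\,2}\fint_{\frac{1}{2}B_j}\bigl|V(\X v_j)-V(\X w_j)\bigr|^2 \leq c\,\bigl[\omega(r_j)\bigr]^2\fint_{\frac{1}{2}B_j}|\X w_j|^p,
\end{equation*}
with $c\equiv c(n,p,\gamma,L)$. Taking $p$-th roots then delivers \eqref{l1omegageneral} with a constant $C_7\equiv C_7(n,p,\gamma,L)$.

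There is essentially no obstacle here: for $p\geq 2$ the conversion from the $V$-excess to the $\X$-excess is purely algebraic, and the only quantitative input is the already-established Dini-coefficient comparison in Lemma \ref{secondcomparison2}. The genuine difficulty is postponed to the $1<p<2$ case (the subsequent lemma), where the analogous pointwise inequality fails and one must use \eqref{v estimate p less 2} together with a sup bound on $\X w_j$ (via Lemma \ref{estimate for wj}) to absorb the unfavorable weight $|\X w_j|^{(2-p)/2}$ arising from $V^{-1}$.
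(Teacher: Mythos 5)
Your proof is correct, and it follows the standard route that the paper implicitly adopts (the paper does not write the argument out, instead citing the analogous Lemma 7 of \cite{Sil_nonlinearStein}): for $p \geq 2$ the pointwise inequality $|z_1 - z_2|^p \leq c_V^2\,|V(z_1) - V(z_2)|^2$, obtained by squaring the left half of \eqref{constant cv} and using $|z_1-z_2|^{p-2} \leq (|z_1|+|z_2|)^{p-2}$, converts the $V$-excess comparison of Lemma \ref{secondcomparison2} directly into the gradient comparison, with no further PDE input. Your closing remark correctly identifies that this purely algebraic reduction fails for $1<p<2$, where the unfavorable weight from $V^{-1}$ must be absorbed by the sup bound on $\X w_j$, and that this is why the subelliptic machinery (Lemma \ref{estimate for wj}) only enters the subsequent lemmas.
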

\begin{lemma}\label{firstcomparison}
	Let $u$ be as in \eqref{main equation in a ball} with $p >2 $ and $w_{j}$ be as in \eqref{quasilinear homogeneous general} and $j \geq 0.$ There exists a constant
	$C_{5} \equiv C_{5}\left( n, p,\tilde{q}, \gamma, L \right)$
	such that the following inequality
	\begin{equation}\label{fq u and w}
		\left( \fint_{B_{j}} \lvert \X  u - \X  w_{j}\rvert^{p} \right)^{\frac{1}{p}}  \leq C_{5}
		\left( r_{j}^{q} \fint_{B_{j}} \lvert f \rvert^{q} \right)^{\frac{1}{q(p-1)}},
	\end{equation}
	holds for every $\tilde{q} \geq ( p^{*})^{'} $ when $p<Q$ and for every $\tilde{q} >1$ when $ p \geq Q.$ Moreover, when $j \geq 1,$ there exists another constant 
	$C_{6} \equiv C_{6}\left( n,  p, \tilde{q}, \gamma, L, \sigma  \right) > 0$ such that the following inequality holds 
	\begin{equation}\label{fq j j-1}
		\left( \fint_{B_{j}} \lvert \X  w_{j-1} - \X  w_{j}\rvert^{p} \right)^{\frac{1}{p}}  \leq C_{6}
		\left( r_{j-1}^{q} \fint_{B_{j-1}} \lvert f \rvert^{\tilde{q}} \right)^{\frac{1}{q(p-1)}}. 
	\end{equation}
\end{lemma}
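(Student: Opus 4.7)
The plan is to deduce both estimates from Lemma~\ref{u w comparison any p easy} by combining it with H\"older's inequality, the scale-invariant sub-elliptic Poincar\'e--Sobolev inequality, and a triangle inequality argument.

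For \eqref{fq u and w}, since $p - 2 \geq 0$ we have the pointwise bound $(\lvert \X u\rvert + \lvert \X w_j\rvert)^{p-2} \geq \lvert \X u - \X w_j\rvert^{p-2}$, so Lemma~\ref{u w comparison any p easy} immediately yields
\begin{align*}
\fint_{B_j}\lvert \X u - \X w_j\rvert^{p} \leq c\,\fint_{B_j}\lvert f\rvert\,\lvert u - w_j\rvert.
\end{align*}
On the right, I would apply H\"older with exponents $\tilde q$ and $\tilde q'$ and then bound $\lVert u - w_j\rVert_{L^{\tilde q'}(B_j)}$ by the Poincar\'e--Sobolev inequality~\eqref{poincaresobolevineq} applied to $u - w_j \in HW^{1,p}_0(B_j)$. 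The constraint $\tilde q \geq (p^*)'$ (equivalently $\tilde q' \leq p^*$) lets one pass from $L^{\tilde q'}$ to $L^{p^*}$ by monotonicity of averages; here $p^* = Qp/(Q-p)$ when $p < Q$, while for $p\geq Q$ one uses any $p_0 > p$ as a substitute for $p^*$ (via Morrey's theorem in Proposition~\ref{Morrey's theorem} when $p > Q$, and the standard borderline embedding into any $L^{p_0}$ when $p = Q$). The outcome is a self-improving bound of the shape $X^{p} \leq c\,r_j\,\bigl(\fint_{B_j}\lvert f\rvert^{\tilde q}\bigr)^{1/\tilde q}\,X$ where $X := \bigl(\fint_{B_j} \lvert \X u - \X w_j\rvert^p \bigr)^{1/p}$; dividing by $X$ and raising to the power $1/(p-1)$ yields exactly \eqref{fq u and w}.

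For \eqref{fq j j-1}, I would use the triangle inequality $\lvert \X w_{j-1} - \X w_j\rvert \leq \lvert \X u - \X w_{j-1}\rvert + \lvert \X u - \X w_j\rvert$ on $B_j$, the inclusion $B_j \subset B_{j-1}$, and the scaling $\lvert B_j\rvert = \sigma^Q \lvert B_{j-1}\rvert$, to reduce matters to \eqref{fq u and w} applied on both $B_j$ and $B_{j-1}$. The contribution coming from $B_{j-1}$ dominates after absorbing the factor $\sigma^{-Q/p}$ into the constant, which is precisely how $C_6$ acquires its dependence on $\sigma$.

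The argument is essentially routine, mirroring the Euclidean strategy of Lemma~7 in \cite{Sil_nonlinearStein}; the only mild care-point is the range $p\geq Q$, where the choice of a surrogate $p^* = p_0$ (and the correspondingly larger admissible $\tilde q$) forces the constants to depend on $\tilde q$, as is already recorded in the statement. No new structural idea beyond the $p$-Laplace energy monotonicity and the sub-elliptic Poincar\'e--Sobolev inequality is required.
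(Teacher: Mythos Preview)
Your proposal is correct and follows precisely the route the paper indicates: the paper defers the proof to the analogous Euclidean argument (Lemma~8 in \cite{Sil_nonlinearStein}), which is exactly the combination of Lemma~\ref{u w comparison any p easy}, the pointwise lower bound $(|\X u|+|\X w_j|)^{p-2}\geq |\X u-\X w_j|^{p-2}$ for $p>2$, H\"older, and the Poincar\'e--Sobolev inequality that you outline, together with the triangle-inequality reduction for \eqref{fq j j-1}. (A minor remark: the lemma you cite from \cite{Sil_nonlinearStein} is Lemma~8 there, not Lemma~7.)
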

\begin{lemma}\label{finalcomparispbigger2}
	Let $u$ be as in \eqref{main equation in a ball} with $p >2 $ and $v_{j},w_{j}$ be as before in 
	\eqref{quasilinear homogeneous general}, \eqref{quasilinear homogeneous frozen general}, respectively, for $j \geq 1$ and let $q$ be as in 
	\eqref{qdef}. Suppose for some $\lambda >0$ we have  
	\begin{equation}\label{upper bound of f}
		r_{j-1} \left( \fint_{B_{j-1}}\lvert f \rvert^{q} \right)^{\frac{1}{q}} \leq \lambda^{p-1}
	\end{equation}
	and for some constants $A, B \geq 1,$ the estimates 
	\begin{equation}\label{both side bounds on w}
		\sup\limits_{\frac{1}{2}B_{j}} \left\lvert \X w_{j} \right\rvert \leq A\lambda \qquad \text{ and } \qquad \frac{\lambda}{B} \leq \left\lvert \X w_{j-1} \right\rvert 
		\leq A\lambda \quad \text{ in } B_{j}
	\end{equation}
	hold. Then there exists a constant $C_{8} \equiv C_{8}\left( n, p, \gamma, L, A, B, \sigma \right) >0$ such that 
	\begin{align}\label{grad u -grad v pbigger2}
		\left( \fint_{\frac{1}{2}B_{j}} \left\lvert \X  u - \X  v_{j} \right\rvert^{p'}\right)^{\frac{1}{p'}} \leq C_{8}\omega(r_{j})\lambda 
		+ C_{8}\lambda^{2-p}r_{j-1}\left( \fint_{B_{j-1}} \left\lvert f \right\rvert^{q} \right)^{\frac{1}{q}}.
	\end{align}
\end{lemma}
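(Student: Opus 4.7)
The plan is to establish the estimate through the classical two-step comparison scheme, decomposing
\[
\X u - \X v_j = (\X u - \X w_j) + (\X w_j - \X v_j)
\]
on $\frac{1}{2}B_j$ and estimating each piece in $L^{p'}$ separately, then applying the triangle inequality. The crucial role of the hypotheses will be to provide the nondegeneracy needed to pass from the natural $V$-function estimates (which are $L^2$ based) to the desired $L^{p'}$ estimates, exploiting the fact that $p' < 2$ when $p > 2$.

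For the piece $\X w_j - \X v_j$, I would invoke the $V$-function comparison \eqref{l1omegageneralV} from Lemma \ref{secondcomparison2}, which together with $\sup_{\frac{1}{2}B_j}|\X w_j|\leq A\lambda$ yields
$\fint_{\frac{1}{2}B_j}|V(\X w_j)-V(\X v_j)|^2 \leq c[\omega(r_j)]^2(A\lambda)^p$. To convert to $L^{p'}$, I would invoke \eqref{constant cv} in the form $|\X w_j - \X v_j| \leq c_V(|\X w_j|+|\X v_j|)^{-(p-2)/2}|V(\X w_j)-V(\X v_j)|$, which requires a pointwise lower bound of order $\lambda$ for $|\X w_j|+|\X v_j|$. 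I would extract this nondegeneracy by combining the hypothesis $\lambda/B \leq |\X w_{j-1}| \leq A\lambda$ in $B_j$ with the closeness of $\X w_j$ to $\X w_{j-1}$ coming from \eqref{fq j j-1}, \eqref{upper bound of f}, and the oscillation estimates supplied by Lemma \ref{estimate for wj}(ii) for $w_j$ and Lemma \ref{estimates for vj} for $v_j$. This should produce the $C_8\omega(r_j)\lambda$ contribution.

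For the piece $\X u - \X w_j$, I would start from the energy estimate \eqref{estimateuwforanyp} of Lemma \ref{u w comparison any p easy}. Hölder's inequality with exponent pair $(q,(p^{*})')$ and the scale-invariant Sobolev inequality \eqref{poincaresobolevineq} applied to $u-w_j \in HW^{1,p}_0(B_j)$, together with the a priori bound $(\fint_{B_j}|\X(u-w_j)|^p)^{1/p} \lesssim \lambda$ (which follows from \eqref{fq u and w}, \eqref{upper bound of f}, and the sup bound on $\X w_j$), would reduce the estimate to
\[
\fint_{B_j}(|\X u|+|\X w_j|)^{p-2}|\X u-\X w_j|^2 \leq C r_{j-1}\Bigl(\fint_{B_{j-1}}|f|^q\Bigr)^{1/q}\lambda.
\]
Using \eqref{constant cv} together with pointwise nondegeneracy of $|\X u|+|\X w_j|$ of order $\lambda$ on $\frac{1}{2}B_j$, propagated from $\X w_{j-1}$ via \eqref{fq u and w} and the oscillation machinery, this would convert into
\[
\bigl(\fint_{\frac{1}{2}B_j}|\X u - \X w_j|^{p'}\bigr)^{1/p'} \leq C\lambda^{2-p}\, r_{j-1}\Bigl(\fint_{B_{j-1}}|f|^q\Bigr)^{1/q},
\]
providing the second contribution. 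The power $\lambda^{2-p}$ arises precisely as the "linearization coefficient" $(|\X u|+|\X w_j|)^{(2-p)/2}$ evaluated at the scale $\lambda$ — this is where the hypothesized nondegeneracy of $\X w_{j-1}$ is essential.

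The hard part, I expect, will be the careful propagation of nondegeneracy: the hypothesis directly gives $|\X w_{j-1}| \sim \lambda$ pointwise in $B_j$, but the $V$-function to $L^{p'}$ conversion genuinely demands analogous information for $\X u$, $\X w_j$, and $\X v_j$ on $\frac{1}{2}B_j$. This passage rests delicately on the newly available Heisenberg-group oscillation estimates of Lemma \ref{estimate for wj} and Lemma \ref{estimates for vj}, whose role is precisely to replace the Euclidean Hölder-continuity-of-the-gradient input used in the corresponding Step in Lemma~9 of \cite{Sil_nonlinearStein}. Modulo this replacement, the proof then follows the Euclidean template mutatis mutandis.
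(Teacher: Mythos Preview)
Your proposal is correct and matches the paper's approach: the paper itself simply states that this lemma is proved analogously to Lemma~9 of \cite{Sil_nonlinearStein}, using Lemma~\ref{estimate for wj}, Lemma~\ref{estimates for vj} and Lemma~\ref{smallness from excess decay} as the Heisenberg-group substitutes for the corresponding Euclidean ingredients, which is precisely the template you outline. One small wording issue: you speak of propagating ``pointwise nondegeneracy of $|\X u|+|\X w_j|$'', but no pointwise lower bound on $\X u$ is ever available---the conversion from the weighted $L^2$ estimate to $L^{p'}$ uses directly the hypothesized pointwise bound $|\X w_{j-1}|\geq \lambda/B$ in $B_j$ (so that $|\X u|+|\X w_{j-1}|\geq \lambda/B$), together with \eqref{fq j j-1} to pass between $w_{j-1}$ and $w_j$; otherwise your sketch is on target.
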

The following Lemma is proved analogously to Lemma 10 in \cite{Sil_nonlinearStein}, using Lemma \ref{estimates for vj} and Lemma \ref{smallness from excess decay} in place  of Theorem 8 and Lemma 2, respectively, of \cite{Sil_nonlinearStein}. This provides the comparison estimate for the case $1 < p \leq 2.$ 
\begin{lemma}\label{finalcomparispsmaller2}
	Let $u$ be as in \eqref{main equation in a ball} with $1 < p \leq 2 $ and $v_{j},w_{j}$ be as before in 
	\eqref{quasilinear homogeneous general}, \eqref{quasilinear homogeneous frozen general}, respectively, for $j \geq 1$ and let $q$ be as in 
	\eqref{qdef}. Suppose for some $\lambda >0$, we have  
	\begin{align*}
		r_{j} \left( \fint_{B_{j}}\lvert f \rvert^{q} \right)^{\frac{1}{q}} \leq \lambda^{p-1} \qquad \text{ and } \qquad \left( \fint_{B_{j}}\lvert \X u \rvert^{p} \right)^{\frac{1}{p}} \leq \lambda. 
	\end{align*}
Then there exists a constant $C_{8} \equiv C_{8}\left( n,  p, \gamma, L \right) >0$ such that 
	\begin{equation}\label{lambdafirstpsmaller2d}
		\left( \fint_{\frac{1}{2}B_{j}} \left\lvert \X  u - \X  v_{j} \right\rvert^{p}\right)^{\frac{1}{p}} \leq C_{8}\omega(r_{j})\lambda 
		+ C_{8}\lambda^{2-p}r_{j}\left( \fint_{B_{j}} \left\lvert f \right\rvert^{q} \right)^{\frac{1}{q}}. 
	\end{equation}
	Moreover, there exists another constant $C_{5} =C_{5}(n, p, \nu, L ) >0 $ such that  
	\begin{equation}\label{lambdasecondpsmaller2}
		\left( \fint_{B_{j}} \left\lvert \X u - \X  w_{j} \right\rvert^{p}\right)^{\frac{1}{p}} \leq 
		C_{5}\lambda^{2-p}r_{j}\left( \fint_{B_{j}} \left\lvert f \right\rvert^{q} \right)^{\frac{1}{q}}.
	\end{equation} 
\end{lemma}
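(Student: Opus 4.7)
\textbf{Plan for the proof of Lemma \ref{finalcomparispsmaller2}.} The subquadratic case requires a different route from the superquadratic one, since the quantity $(|\X u|+|\X w_j|)^{p-2}$ that appears in the basic monotonicity estimate \eqref{estimateuwforanyp} now blows up when the gradients are small. The plan is to first establish \eqref{lambdasecondpsmaller2} (the $\X u$ versus $\X w_j$ comparison on the full ball $B_j$) and then deduce \eqref{lambdafirstpsmaller2d} by triangle inequality via an intermediate comparison with $w_j$ on the ball $\tfrac{1}{2}B_j$, using Lemma~\ref{secondcomparison2}.

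For the second estimate, I would start from the key identity
\[
|\X u - \X w_j|^p = \bigl[(|\X u|+|\X w_j|)^{p-2}|\X u - \X w_j|^2\bigr]^{p/2} (|\X u|+|\X w_j|)^{p(2-p)/2},
\]
and apply H\"older's inequality with exponents $(2/p, 2/(2-p))$. The first factor is then controlled by Lemma \ref{u w comparison any p easy}, while the second factor is controlled by the minimality of $w_j$ (which gives $\fint_{B_j}|\X w_j|^p \leq c \fint_{B_j}|\X u|^p \leq c\lambda^p$) together with the hypothesis $\fint_{B_j} |\X u|^p \leq \lambda^p$. To bound the right-hand side $\fint_{B_j} |f|\,|u-w_j|$ coming from Lemma \ref{u w comparison any p easy}, I would use H\"older with conjugate exponents $(q,q')$ and then the Poincar\'e--Sobolev inequality \eqref{poincaresobolevineq}; the crucial arithmetic identity here is that with the definition \eqref{qdef} of $q$ for $1<p<2$ one has $q'=p^{*}=Qp/(Q-p)$ (note $p<Q$ since $Q\geq 4$), so the embedding fits perfectly with the Dirichlet boundary condition $u-w_j\in HW_0^{1,p}(B_j)$. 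After this H\"older/Sobolev step we obtain an inequality of the form $X^p \leq c\,(FX)^{p/2}\lambda^{p(2-p)/2}$ where $X = (\fint_{B_j}|\X u - \X w_j|^p)^{1/p}$ and $F = r_j (\fint_{B_j}|f|^q)^{1/q}$, from which the desired bound $X \leq c\,\lambda^{2-p} F$ follows by reabsorption on the left.

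For the first estimate, I would split
\[
\left(\fint_{\tfrac{1}{2}B_j}|\X u - \X v_j|^p\right)^{1/p} \leq \left(\fint_{\tfrac{1}{2}B_j}|\X u - \X w_j|^p\right)^{1/p} + \left(\fint_{\tfrac{1}{2}B_j}|\X w_j - \X v_j|^p\right)^{1/p}.
\]
The first piece is dominated by the second estimate just obtained (after absorbing the constant from replacing $\fint_{B_j}$ by $\fint_{\tfrac{1}{2}B_j}$). For the $w_j$ versus $v_j$ piece, I would convert the $L^2$ control on $V(\X w_j)-V(\X v_j)$ furnished by Lemma \ref{secondcomparison2} into an $L^p$ control on $\X w_j - \X v_j$ by using the comparability $|\X w_j - \X v_j|^2 \leq c(|\X w_j|+|\X v_j|)^{2-p}|V(\X w_j)-V(\X v_j)|^2$ from \eqref{constant cv}, then raising to the $p/2$ power and applying H\"older with exponents $(2/p, 2/(2-p))$; the minimality of both $w_j$ (in $B_j$) and $v_j$ (in $\tfrac{1}{2}B_j$ with boundary data $w_j$) provides $\fint_{\frac{1}{2}B_j}(|\X w_j|+|\X v_j|)^p \leq c\lambda^p$. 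A short calculation then gives the bound $c\,\omega(r_j)\lambda$, concluding the proof.

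\textbf{Main obstacle.} The principal technical difficulty, as alluded to above, is that for $1<p<2$ the exponent $p-2$ is negative, so the quantity $(|\X u|+|\X w_j|)^{p-2}$ in \eqref{estimateuwforanyp} is not directly usable as a lower bound on $|\X u - \X w_j|^2$; one must trade the exponent $2$ for $p$ at the cost of a factor $\lambda^{2-p}$, and this is precisely what produces the factor $\lambda^{2-p}$ on the right-hand side of both \eqref{lambdafirstpsmaller2d} and \eqref{lambdasecondpsmaller2}. The second subtlety is matching the exponents so that the Poincar\'e--Sobolev estimate can be applied: this requires the precise choice \eqref{qdef} of $q$, which forces $q'=p^{*}$ and thus allows the Dirichlet zero boundary condition to compensate for the loss. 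Once these two points are recognized, the rest of the argument is a routine adaptation of the Euclidean proof in \cite{Sil_nonlinearStein}, with Lemma \ref{estimates for vj} and Lemma \ref{smallness from excess decay} (i.e., the excess decay Theorem \ref{Main excess decay estimate Intro} and the H\"older regularity Theorem \ref{thm:holder}) supplying the required ingredients in the Heisenberg setting.
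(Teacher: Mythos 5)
Your proposal is correct and follows what is, in substance, the route the paper delegates to Lemma~10 of \cite{Sil_nonlinearStein}: for \eqref{lambdasecondpsmaller2}, write $|\X u - \X w_j|^p$ as $\bigl[(|\X u|+|\X w_j|)^{p-2}|\X u-\X w_j|^2\bigr]^{p/2}(|\X u|+|\X w_j|)^{p(2-p)/2}$, apply H\"older with exponents $(2/p, 2/(2-p))$, feed the first factor into \eqref{estimateuwforanyp}, bound the RHS of \eqref{estimateuwforanyp} via H\"older with $(q,q')$ and Poincar\'e--Sobolev (noting $q'=p^*$ under \eqref{qdef}), bound the second factor by minimality of $w_j$ together with the energy hypothesis, and reabsorb. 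For \eqref{lambdafirstpsmaller2d}, split $\X u - \X v_j = (\X u - \X w_j)+(\X w_j - \X v_j)$ on $\tfrac12 B_j$; the first piece is controlled by \eqref{lambdasecondpsmaller2}, and the second piece is controlled by \eqref{constant cv}, H\"older with $(2/p,2/(2-p))$, Lemma~\ref{secondcomparison2}, and the minimality of both $w_j$ and $v_j$. All constant dependencies come out as stated.

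One small discrepancy worth flagging: your closing paragraph attributes the argument to Lemma~\ref{estimates for vj} and Lemma~\ref{smallness from excess decay}, presumably echoing the paper's blanket remark about which ingredients replace Theorem~8 and Lemma~2 of \cite{Sil_nonlinearStein}. But the argument you actually sketch does \emph{not} invoke either of these---it is self-contained modulo Lemma~\ref{u w comparison any p easy}, Lemma~\ref{secondcomparison2}, \eqref{constant cv}, minimality, and Poincar\'e--Sobolev. The oscillation/sup estimates for $v_j$ and the excess decay play no role in this particular lemma (they are needed elsewhere in the Stein-theorem machinery, e.g.\ Lemma~\ref{secondcomparisonpgeq2} and Lemma~\ref{finalcomparispbigger2}), so either the paper's attribution is overbroad or you are listing unused dependencies. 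This does not affect the correctness of your proof.
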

\subsection{Proof of Theorem \ref{Nonlinear Stein theorem intro}}
\begin{theorem}\label{pointwise gradient bound theorem}
	Let $u$ be as in Theorem \ref{Nonlinear Stein theorem intro}, solves the equation \eqref{Main equation Stein Heisenberg}. Then $\Xu$ is locally bounded in $\Omega$. Moreover, there exists a constant $c\ge 1$ and a positive radius $R_0$, both depending only on $Q, p , \gamma, L$ and $\omega(\cdot)$ such that the pointwise estimate 
	
	\begin{equation}\label{pointwise gradient bound estimate}
		\lvert \Xu (x_0) \rvert \leq c \left( \fint_{B(x_0,r)} \lvert \Xu \rvert^s \right)^\frac{1}{s} + c \lvert \lvert f \rvert \rvert^{\frac{1}{p-1}}_{L^{(Q,1)}(B(x_0,r))}
	\end{equation}
	holds whenever $B(x_0, 2r) \subset \Omega$, $x_0$ is a Lebesgue point of $\Xu$ and $2r\leq R_0$. Estimate \eqref{pointwise gradient bound estimate} holds with no restriction on $r$ when the coefficient function $a(\cdot)$ is constant.
\end{theorem}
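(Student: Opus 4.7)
The plan is to adapt the strategy of Kuusi--Mingione \cite{KuusiMingione_nonlinearStein} to our setting, inserting our new ingredients (Lemmas \ref{estimate for wj}, \ref{estimates for vj}, \ref{smallness from excess decay}) wherever their Euclidean counterparts were used. The argument proceeds along the dyadic chain of concentric balls $B_j = B(x_0, r_j)$, $r_j = \sigma^j R$, for a suitable $\sigma \in (0, 1/4)$. First I would fix
\[
\lambda^{p-1} := H_1 \left( \fint_{B(x_0, R)} |\X u|^s \right)^{(p-1)/s} + H_2 \, \|f\|_{L^{(Q,1)}(B(x_0, R))},
\]
with $H_1, H_2 \geq 1$ large constants to be chosen, select $\sigma$ so that the excess-decay ratio supplied by Lemma \ref{smallness from excess decay} is well below $\sigma^Q/4$, and select $R_0$ so that $\omega(R_0)$ and the Dini tail $\int_0^{R_0} \omega(\varrho)\, d\varrho/\varrho$ are small compared to $\sigma^Q$.

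The core claim would then be an inductive bound of the form $m_j + E_j \leq c\lambda$ for all $j \geq 0$, where $m_j := |(\X u)_{B_j}|$ and $E_j$ is an appropriate $s$-excess of $\X u$ over $B_j$. The base case $j=0$ is by the choice of $H_1$. For the induction step, write $m_{j+1} \leq m_j + |(\X u)_{B_{j+1}} - (\X u)_{B_j}|$ (and analogously for $E_{j+1}$), and bound the increment by inserting $\X w_j$ and $\X v_j$ and applying the triangle inequality: the passage $\X u \to \X w_j$ is controlled by Lemma \ref{u w comparison any p easy} (refined by Lemma \ref{firstcomparison} if $p>2$ or Lemma \ref{finalcomparispsmaller2} if $1 < p \leq 2$), yielding an error of size $r_j(\fint_{B_j}|f|^q)^{1/q}$; the passage $\X w_j \to \X v_j$ is controlled by Lemma \ref{secondcomparison2} (or \ref{secondcomparisonpgeq2}), yielding an error of size $\omega(r_j)\lambda$; and the oscillation of $\X v_j$ on $\sigma B_j$ is absorbed by the excess-decay Lemma \ref{smallness from excess decay}, which costs only a small fraction of $E_j$. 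Summing across scales, the $f$-terms bundle into the Lorentz series $S_q(x_0, R, \sigma)$ and are tamed by Lemma \ref{Lorenz series estimate lemma}; the $\omega$-terms form a convergent Dini series; and the excess terms form a geometric series with small ratio. Provided $H_1, H_2$ are large enough, the induction closes and, passing to the limit $j \to \infty$ at a Lebesgue point $x_0$ of $\X u$, yields the pointwise estimate \eqref{pointwise gradient bound estimate}.

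The hard part, and the principal obstacle already present in the Euclidean setting of \cite{KuusiMingione_nonlinearStein}, is the case $p > 2$, where the comparison Lemma \ref{finalcomparispbigger2} requires the two-sided nondegeneracy bound $\lambda/B \leq |\X w_{j-1}| \leq A\lambda$ on $B_j$. Propagating this bound through all scales requires an exit-time dichotomy: either nondegeneracy persists on every scale, in which case the iteration in the previous paragraph applies directly, or there is a first scale $j_0$ at which nondegeneracy fails, in which case Lemma \ref{estimate for wj} applied to $w_{j_0}$ (our homogeneous Dini-coefficient result, Theorem \ref{dwcontinuityhomogeneousDini}) forces $|\X w_{j_0}|$ to be uniformly small on some $\sigma_1 B_{j_0}$, a further comparison transfers this smallness to $|\X u|$, and the argument restarts at the scale $\sigma_1 r_{j_0}$ with a strictly smaller value of $\lambda$. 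This is precisely the scheme worked out in the final sections of \cite{KuusiMingione_nonlinearStein}; with our Lemmas \ref{estimate for wj}--\ref{smallness from excess decay} substituted for the Euclidean excess decay and the Euclidean Dini-coefficient continuity result, the remainder of their argument transcribes to the Heisenberg setting essentially verbatim and delivers Theorem \ref{pointwise gradient bound theorem}, from which Theorem \ref{Nonlinear Stein theorem intro} follows by the standard continuity-via-Lorentz-absolute-continuity argument.
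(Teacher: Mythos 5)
Your proposal coincides with the paper's approach: the paper's proof of this theorem is a one-paragraph pointer to Theorem 4 of \cite{KuusiMingione_nonlinearStein}, listing exactly the lemma substitutions you invoke (Lemmas \ref{estimate for wj}, \ref{estimates for vj}, \ref{smallness from excess decay}, \ref{firstcomparison}, \ref{secondcomparisonpgeq2}, \ref{finalcomparispbigger2}, \ref{finalcomparispsmaller2} and \ref{Lorenz series estimate lemma} in place of their Theorems 2, 3 and Lemmas 1, 4--7) without filling in any further detail. Your sketch of the iteration scheme --- $\lambda$ defined via $H_1, H_2$, the inductive control of $m_j + E_j$, absorption of comparison errors into a Dini series, a Lorentz series and a geometric excess series, and the exit-time dichotomy propagating the nondegeneracy hypothesis of Lemma \ref{finalcomparispbigger2} when $p>2$ --- is a faithful reconstruction of the Kuusi--Mingione argument, so there is nothing to flag.
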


\begin{proof}
	The proof of the theorem now proceeds analogously to the proof of Theorem 4 in \cite{KuusiMingione_nonlinearStein}, where $D$ is replaced by $\X,$ $n$ is replaced by $Q$ and the constants are replaced by analogous constants. We use 
	Lemma \ref{estimate for wj}, Lemma \ref{estimates for vj}, Lemma \ref{smallness from excess decay}, Lemma \ref{firstcomparison}, Lemma \ref{secondcomparisonpgeq2}, Lemma \ref{finalcomparispbigger2}, Lemma \ref{finalcomparispsmaller2} and Lemma \ref{Lorenz series estimate lemma} as substitutes for Theorem 2, Theorem 3, Lemma 3, Lemma 4, Lemma 5, Lemma 6, Lemma 7 and Lemma 1, respectively, of \cite{KuusiMingione_nonlinearStein}. We remark that our excess decay estimate is crucial for proving these replacements. \end{proof}  

\begin{proof}[Proof of Theorem \ref{Nonlinear Stein theorem intro}]  Now the proof of Theorem \ref{Nonlinear Stein theorem intro} follows exactly analogously to the proof of Theorem 1 in \cite{KuusiMingione_nonlinearStein}, with the same replacements as above. 
\end{proof}

\end{document}